\newcommand{\nc}{\newcommand}
\nc{\les}{\lesssim}
\nc{\nit}{\noindent}
\nc{\nn}{\nonumber}
\nc{\D}{\partial}
\nc{\diff}[2]{\frac{d #1}{d #2}}
\nc{\diffn}[3]{\frac{d^{#3} #1}{d {#2}^{#3}}}
\nc{\pdiff}[2]{\frac{\partial #1}{\partial #2}}
\nc{\pdiffn}[3]{\frac{\partial^{#3} #1}{\partial{#2}^{#3}}}
\nc{\abs}[1] {\lvert #1 \rvert}
\nc{\cAc}{{\cal A}_c}
\nc{\cE}{{\cal E}}
\nc{\cF}{{\cal F}}
\nc{\cP}{{\cal P}}
\nc{\cV}{{\cal V}}
\nc{\cQ}{{\cal Q}}
\nc{\cGin}{{\cal G}_{\rm in}}
\nc{\cGout}{{\cal G}_{\rm out}}
\nc{\cO}{{\cal O}}
\nc{\Lav}{{\cal L}_{\rm av}}
\nc{\cL}{{\cal L}}
\nc{\cB}{{\cal B}}
\nc{\cZ}{{\cal Z}}
\nc{\cR}{{\cal R}}
\nc{\cT}{{\cal T}}
\nc{\cY}{{\cal Y}}
\nc{\cX}{{\cal X}}
\nc{\cXT}{{{\cal X}(T)}}
\nc{\cBT}{{{\cal B}(T)}}
\nc{\vD}{{\vec \mathcal{D}}}
\nc{\efield}{\mathcal{E}}
\nc{\vE}{{\vec \efield}}
\nc{\vB}{{\vec \mathcal{B}}}
\nc{\vH}{{\vec \mathcal{H}}}
\nc{\ty}{{\tilde y}}
\nc{\tu}{{\tilde u}}
\nc{\tV}{{\tilde V}}
\nc{\Pc}{{\bf P_c}}
\nc{\bx}{{\bf x}}
\nc{\bX}{{\bf X}}
\nc{\bXYZ}{{\bf XYZ}}
\nc{\bY}{{\bf Y}}
\nc{\bF}{{\bf F}}
\nc{\bS}{{\bf S}}
\nc{\dV}{{\delta V}}
\nc{\dE}{{\delta E}}
\nc{\TT}{{\Theta}}
\nc{\dPsi}{{\delta\Psi}}
\nc{\order}{{\cal O}}
\nc{\Rout}{R_{\rm out}}
\nc{\eplus}{e_+}
\nc{\eminus}{e_-}
\nc{\epm}{e_\pm}
\nc{\eps}{\varepsilon}
\nc{\vnabla}{{\vec\nabla}}
\nc{\G}{\Gamma}
\nc{\w}{\omega}
\nc{\mh}{h}
\nc{\mg}{g}
\nc{\vphi}{\varphi}
\nc{\tlambda}{\tilde\lambda}
\nc{\be}{\begin{equation}}
\nc{\ee}{\end{equation}}
\nc{\ba}{\begin{eqnarray}}
\nc{\ea}{\end{eqnarray}}
\nc{\g}{\gamma}
\nc{\ol}{\overline}
\newtheorem{theorem}{Theorem}[section]
\newtheorem{lemma}[theorem]{Lemma}
\newtheorem{prop}[theorem]{Proposition}
\newtheorem{corollary}[theorem]{Corollary}
\newtheorem{defin}[theorem]{Definition}
\nc{\pT}{\partial_T}
\nc{\pz}{\partial_z}
\nc{\pt}{\partial_t}
\nc{\la}{\langle}
\nc{\ra}{\rangle}
\nc{\infint}{\int_{-\infty}^{\infty}}
\nc{\halfwidth}{6.5cm}
\nc{\figwidth}{10cm}
\newcommand{\f}{\frac}
\nc{\nlayers}{L} \nc{\nsectors}{M}
\nc{\indicator}{\mathbf{1}}
\nc{\Rhole}{R_{\rm hole}}
\nc{\Rring}{R_{\rm ring}}
\nc{\neff}{n_{\rm eff}}
\nc{\Frem}{F_{\rm rem}}
\nc{\R}{\mathbb R}
\nc{\Z}{\mathbb Z}
\nc{\DD}{\Delta}
\nc{\cD}{\mathcal D}
\nc{\lnorm}{\left\|}
\nc{\rnorm}{\right\|}
\nc{\rnormp}{\right\|_{\ell^{p,\eps}}}
\nc{\rar}{\rightarrow}
\date{\today}
\title [A WEIGHTED ESTMATE WHEN ZERO IS THE RESONANCE OF THE FIRST KIND]{ A WEIGHTED ESTIMATE FOR TWO DIMENSIONAL SCHR\"ODINGER, MATRIX SCHR\"ODINGER AND WAVE EQUATIONS WITH RESONANCE OF FIRST KIND AT ZERO ENERGY}
\author{EBRU TOPRAK}
\thanks{The author was partially supported by NSF grants DMS-1201872 and DMS-1501041, and would like to thank Burak Erdo\u{g}an for the financial support and for his expert advise. }
\begin{document}
\address{Department of Mathematics \\
University of Illinois \\
Urbana, IL 61801, U.S.A.}
\email{toprak2@illinois.edu}
\maketitle
\thispagestyle{empty}

\begin{abstract}
 We study the two dimensional Schr\"odinger operator, $H=-\Delta+V$, in the weighted $L^1(\R^2) \rightarrow L^{\infty}(\R^2)$ setting  when there is a resonance of the first kind at zero energy. In particular, we show that if $|V(x)|\les \la x \ra ^{-3-}$ and there is only s-wave resonance at zero of $H$, then   
 $$ \big\| w^{-1} \big( e^{itH}P_{ac} f - {\f 1 t } F f \big) \big\| _{\infty} \leq \frac {C} {|t| (\log|t|)^2 } \|wf\|_1,\,\,\,\,\,\,|t|>2,$$
 with $w(x)=\log^2(2+|x|)$. Here $Ff=c \psi\la f,\psi \ra$, where $\psi$ is an s-wave resonance function. We also extend this result to matrix Schr\"odinger and wave equations with potentials under similar conditions. 
\end{abstract}

\section{ Introduction}
Recall the propagator of  the free Schr\"odinger equation:
\begin{align} \label{free}
 e^{-it\Delta} f(x) = \frac{1}{(4\pi i t )^{n/2}} \int_{\R^n} e^{{-i |x-y|^2}/{4t}} f(y) dy 
   \end{align} 
which satisfies the dispersive estimate 
     \begin{align}
  \label{decay}
  \|e^{-it\Delta} f \|_\infty \les  t^{-n/2} \|f\|_1 
  \end{align}
  for any $n\geq 1$.
There are many works concerning the validity of such an estimate for the perturbed Schr\"odinger operator $H=-\Delta + V $ where $V(x)$ is a real-valued and bounded potential with sufficient decay at infinity, see for example \cite{JSS,RodSch,GS,goldE,GV, Yaj,FY,CCV,EG1}. 
 
Since $H$ may have eigenvalues on $(-\infty,0]$, the inequality \eqref{decay} cannot hold in general. Therefore, we consider  $e^{itH}P_{ac}(H)$ where $P_{ac}(H)$ is the orthogonal projection onto the absolutely continuous subspace of $L^2(\R^n)$. It was observed that the time decay of the operator $e^{itH}P_{ac}(H)$ is affected by resonances or an eigenvalue at zero energy (see, e.g., \cite{Rauch,JenKat,Mur,Jen,Jen2,ES2,Yaj3, goldE, Bec, EG}). 

Recall that, in two dimensions, a distributional solution of  $H\psi = 0$   is called an s-wave resonance if $\psi \in L^{\infty}(\R^2)$ but $\psi \notin L^p(\R^2)$ for any $p<{\infty} $,  and it is called a p-wave resonance if $\psi \in L^p(\R^2)$ for $ 2<p\leq \infty $, but $\psi\not\in L^2(\R^2)$. We also say there is a resonance of the first kind at zero if there is only an  s-wave  resonance at zero but there are no p-wave resonances or an eigenvalue at zero. It is important to recall that in this case there is only one s-wave resonance function upto a multiplicative constant. There are similar definitions for resonance in dimensions $n=1, 3$, $4$, and there are no zero energy resonances in dimensions $n\geq 5$ . 
     
  We note that by these definitions  constant function $1$ is an s-wave resonance  in dimension two for the free Schr\"odinger operator. In addition, using the formula (\ref{free}), one  can easily prove that 
\begin{align}\label{freemain}
\big\|  w^{-1} \big( e^{-it\Delta} f -\frac1{t} \psi \la f,\psi\ra  \big)\big\|_{L ^{\infty}(\R^2)} \les \frac1{t\log^2 t}  \|wf\|_{L^1(\R^2)}
\end{align} 
where $w(x)= \log^2(2+|x|)$, and $\psi$ is the resonance function $\psi(x)=1$. This suggests that the perturbed Schr\"odinger evolution should satisfy a similar weighted estimate with an integrable decay rate in the case of an s-wave resonance. Indeed, our main result in this paper is
\begin{theorem} \label{main1} Let $|V (x)| \les \la x \ra ^{-2\beta} $ for some $ \beta > {3/2}.$ If there is a resonance of the first kind at zero for $H = -\Delta + V$, then we have
 $$ \big\| w^{-1} \big( e^{itH}P_{ac} f - {\f 1 t } F f  \big) \big\|_{L ^{\infty}(\R^2)} \leq \frac {C} {|t| (\log|t|)^2 } \|wf\|_{L_1(\R^2)}  $$
for $|t|>2$ where $F$ is a rank $1$ operator  and  $w(x)=\log^2(2+|x|)$.
Furthermore, $Ff=c \psi\la f,\psi \ra$, where $\psi$ is an s-wave resonance function.
\end{theorem}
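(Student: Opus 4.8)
The plan is to realize the evolution through Stone's formula and reduce everything to the low-energy behaviour of the perturbed resolvent, where the s-wave resonance manifests itself. Writing $R_V^\pm(\l^2)=(H-(\l^2\pm i0))^{-1}$, one has
\be
e^{itH}P_{ac}f=\frac{1}{2\pi i}\int_0^\infty e^{it\l^2}\,2\l\,\big[R_V^+(\l^2)-R_V^-(\l^2)\big]f\,d\l .
\ee
I would first insert a smooth cutoff $\chi(\l)$ supported in $\l<\l_0$ (with $\l_0$ small) and handle the complementary high-energy piece $1-\chi$ separately. Away from zero the resonance is irrelevant: using the finite Born expansion of $R_V^\pm$ together with repeated integration by parts in $\l$ (exploiting the oscillation $e^{it\l^2}$ and the decay $|V|\les\la x\ra^{-2\beta}$, $\beta>3/2$), this piece decays faster than $|t|^{-1}(\log|t|)^{-2}$ in the weighted norm and is absorbed into the error.

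The heart of the matter is the low-energy piece, for which I would use the symmetric resolvent identity
\be
R_V^\pm(\l^2)=R_0^\pm(\l^2)-R_0^\pm(\l^2)\,v\,M^\pm(\l)^{-1}\,v\,R_0^\pm(\l^2),\qquad M^\pm(\l)=U+vR_0^\pm(\l^2)v ,
\ee
with $v=|V|^{1/2}$ and $U=\mathrm{sign}\,V$. Inserting the small-$\l$ expansion of the two-dimensional free resolvent,
\be
R_0^\pm(\l^2)(x,y)=g^\pm(\l)-\tfrac{1}{2\pi}\log|x-y|+\cO(\l^2\log\l\,|x-y|^2),\qquad g^\pm(\l)=\pm\tfrac{i}{4}-\tfrac{1}{2\pi}\big(\log(\l/2)+\gamma\big),
\ee
one writes $M^\pm(\l)=T+g^\pm(\l)\,\la\cdot,v\ra v+(\text{error})$, where $T=U+vG_0v$ is the $\l$-independent part and $G_0$ has kernel $-\tfrac{1}{2\pi}\log|x-y|$. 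That zero is a resonance of the first kind means precisely that $T$ is singular with a one-dimensional kernel spanned by the (projected) s-wave resonance, and that no further degeneracy occurs (no p-wave resonance or eigenvalue). The Jensen--Nenciu / Feshbach inversion procedure then yields a singular expansion
\be
M^\pm(\l)^{-1}=\frac{1}{g^\pm(\l)}\,S+B(\l),
\ee
where $S$ is the rank-one operator associated with the resonance and $B(\l)$ stays bounded as $\l\to0$; the factor $g^\pm(\l)^{-1}\sim -2\pi/\log\l$ encodes the fact that the logarithmic term lifts the degeneracy only at a logarithmic rate.

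Substituting this into the resolvent identity and then into Stone's formula, the dominant contribution comes from the $g^\pm(\l)^{-1}S$ term. The resonance function $\psi$ emerges (up to normalization) through $G_0v$ applied to the vector spanning the range of $S$, so that $S$ being rank one forces $F$ to be rank one. Evaluating the scalar oscillatory integral $\int_0^\infty e^{it\l^2}\,\l\,\chi(\l)\,g^\pm(\l)^{-1}\,d\l$, whose mass localizes at $\l\sim|t|^{-1/2}$, and combining it with the leading $t^{-1}$ behaviour of the free piece $R_0^\pm$, produces exactly $\frac1t Ff=\frac ct\psi\la f,\psi\ra$, matching the free computation \eqref{freemain} with $\psi\equiv1$ and fixing the constant $c$. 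Everything else — the bounded part $B(\l)$, the $\cO(\l^2\log\l)$ errors of the free expansion, and the difference between the oscillatory integral and its leading term — must be shown to contribute at most $C|t|^{-1}(\log|t|)^{-2}\|wf\|_1$.

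I expect the main obstacle to be this last error analysis. Because the singularity of $M^\pm(\l)^{-1}$ is merely logarithmic rather than a clean power of $\l$, the van der Corput / integration-by-parts estimates must track the logarithmic factors with care to reproduce exactly the $(\log|t|)^{-2}$ gain rather than a weaker one. Moreover, the kernels $R_0^\pm(\l^2)v$ grow like $\log|x-y|$, so the weight $w(x)=\log^2(2+|x|)$ is indispensable: the factors $w^{-1}$ on the output and $w$ on the input are precisely what dominate this logarithmic growth. Verifying the weighted $L^1\to L^\infty$ mapping bounds for each remainder kernel while simultaneously extracting the sharp temporal decay — and confirming that the potential decay $\beta>3/2$ renders the relevant $v$-sandwiched operators Hilbert--Schmidt so that all expansions converge — is the technically delicate step of the argument.
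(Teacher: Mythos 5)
Your scaffolding (Stone's formula, low/high energy split, symmetric resolvent identity, Jensen--Nenciu inversion, stationary phase, logarithmic weights, and quoting the free computation to identify the constant) is the same as the paper's, but your central structural claim is false, and it fails exactly at the point where the s-wave resonance enters. You assert that a resonance of the first kind yields
\begin{equation*}
M^\pm(\lambda)^{-1}=\frac{1}{g^\pm(\lambda)}\,S+B(\lambda),\qquad B(\lambda)=O(1)\ \text{ as }\ \lambda\to0 .
\end{equation*}
This is the expansion valid when zero is \emph{regular}; it is what Lemma~\ref{matrix} gives for the regularized operator $M^\pm(\lambda)+S_1$. In the resonant case $M^\pm(\lambda)^{-1}$ cannot remain bounded: let $\phi$ span $\ker QTQ$ with $\|\phi\|_2=1$. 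Using $P\phi=0$, $QT\phi=QTQ\phi=0$ and $PT\phi=\|V\|_1^{-1}\la T\phi,v\ra v=c_0v$, one gets $M^\pm(\lambda)\phi=c_0v+E_1^\pm(\lambda)\phi$, so that for $f_\lambda:=\phi-\frac{c_0}{g^\pm(\lambda)}v$,
\begin{equation*}
M^\pm(\lambda)f_\lambda=E_1^\pm(\lambda)\phi-\frac{c_0}{g^\pm(\lambda)}\big(Tv+E_1^\pm(\lambda)v\big)=O\big(|\log\lambda|^{-1}\big),\qquad \|f_\lambda\|_2\to 1 ,
\end{equation*}
whence $\|M^\pm(\lambda)^{-1}\|_{L^2\to L^2}\gtrsim|\log\lambda|\to\infty$. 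The correct expansion (Corollary~\ref{cor}) therefore contains the logarithmically \emph{growing} term $-h_\pm(\lambda)S_1/(c_0^2\|V\|_1)$ with $h_\pm(\lambda)\sim\log\lambda$, together with bounded pieces $SS_1$, $S_1S$, $QD_0Q$, decaying pieces $S/h_\pm$, $SS_1S/h_\pm$, and an error; your proposed form omits the most singular term entirely.

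This error propagates through everything downstream. The rank-one operator $F$ is not produced by a single oscillatory integral against $g^\pm(\lambda)^{-1}$: in the paper each piece of the expansion leaves its own $\frac1t$ boundary term ($F_1,\dots,F_5$ in Propositions~\ref{pprop}, \ref{propss1} and Corollary~\ref{corol}), the free resolvent contributes $-\frac1{4t}$, that contribution is cancelled by $F_5$, and only the sum collapses (Proposition~\ref{psipsi}) to $-\psi(x)\psi(y)/(4c_0^2)$ -- using the identities $\psi=c_0+G_0v\phi$ and $c_0=\|V\|_1^{-1}\la v,T\phi\ra$. Moreover, the $(\log t)^{-2}$ error terms come from the $S/h_\pm$-type pieces, while the resonance term $h_\pm(\lambda)S_1$ is estimated with polynomial weights as $O(t^{-1-\alpha})$ -- the opposite of the roles your outline assigns. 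Finally, even granting all kernel bounds, your closing step is incomplete: several remainders only satisfy $\la x\ra^{3/2}\la y\ra^{3/2}t^{-1-\alpha}$ bounds, which do not imply the stated $w$-weighted estimate; the paper closes this gap by interpolating the pointwise expansion with the unweighted $|t|^{-1}$ bound of \cite{EG} via $\min\big(1,\f{a}{b}\big)\les \f{\log^2(a)}{\log^2(b)}$, a step absent from your proposal.
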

  
We then extend this result to matrix Schr\"odinger operator and to the low-energy evolution of the solution of the two-dimensional wave equation, [see Theorem~\ref{main2} and Theorem~\ref{main3}].

The dispersive estimate
\begin{align}
 \label{n/2}
     \| e^{itH}P_{ac}f\|_{L^\infty(\R^n)} \leq C |t|^{-\f n 2 } \|f\|_{L_1(\R^n)}
     \end{align}
 in dimensions one and two were studied in \cite{GS,Sc2,goldbeg1,KZ,Mou,EG,EGw}. In fact, (\ref{n/2}) is established by Goldberg-Schlag for $n=1$ in \cite{GS} and Schlag for $n=2$ in \cite{Sc2} assuming zero is regular,  that is when there is neither a resonance nor an eigenvalue at zero.  The result in dimension two is then improved by Erdo\u{g}an-Green to a more general case. They showed the same estimate when there is a resonance of the first kind at zero.

 The main concern for these estimates is that they are not integrable in time  at infinity. An estimate which is integrable at infinity is very useful in the study of nonlinear asymptotic stability of (multi) solitons in lower dimensions. See \cite{scns,KZ, Mur, BP, SW,PW,Wed} for other applications of weighted dispersive estimates to nonlinear PDEs.

  One of the earliest integrable decay rates is established by Murata in weighted $L^2$ spaces. In \cite{Mur}, [Theorem 7.6], Murata proved the following statement in polynomially weighted spaces by assuming sufficient decay on $V$. If zero is a regular point of the spectrum, then for $|t| > 2$
          \begin{align}
          \label{mur1}
           \| w_1^{-1} e^{itH} P_{ac}(H)f\|_{L^2(\R)} \leq  C t^{-\f 32} \|w_1f\|_{L^2(\R)}, 
              \end{align}
           \begin{align} \label{mur2}
                \| w_2^{-1} e^{itH} P_{ac}(H)f\|_{L^2(\R^2)} \leq \frac {C} {|t| (\log|t|)^2 } \|w_2f\|_{L^2(\R^2)}.
                      \end{align}
                      
   In \cite{Scs} Schlag improved Murata's 1-d result (\ref{mur1}) to weighted $L^1 \rightarrow L^{\infty}$ setting and he showed if zero is regular then 
        $$ \| w^{-1} e^{itH} P_{ac}(H)f\|_{L^{\infty}(\R)} \leq  C t^{-\f 32} \|w^{-1}f\|_{L^1(\R^2)} $$
for $w(x)=\la x  \ra $ provided that $|V| \les \la x \ra ^{-4}$. 
  
  Constant functions being resonance in dimension one together with (\ref{freemain}) led Goldberg to ask whether a similar estimate as in Theorem~\ref{main1} can be obtained when zero is not regular. Specifically, Goldberg showed that if $ (1+|x|)^4 V \in L^1(\R) $ then
      \begin{align} \label{mikegold}
      \| (1+|x|)^{-2} (e^{itH} P_{ac}{H} -(-4 \pi i t)^{-\f 12 })F)f \|_{L^{\infty}} \les t^{-\f 32} \|(1+|x|^2) f\|_{1} 
      \end{align}
where $F$ is a projection on a bounded function $f_0$ satisfying $ Hf_0 = 0 $ and $\lim_{x\to \infty} (|f_0(x)|+|f_0(-x)|)=2$.
  
  Murata's (\ref{mur2}) result for dimension two was also improved by Erdo\u{g}an-Green. In \cite{EGw}, it was proved that if zero is regular then 
         \begin{align} \label{ergr}
          \| w^{-1} e^{itH} P_{ac}(H)f\|_{L^1(\R^2)} \leq \frac {C} {|t| (\log|t|)^2 } \|w f\|_{L^{\infty}(\R^2)}
          \end{align}
  for $w(x)= \log(2+x^2)$ and $|V| \les \la x \ra ^{-\beta} $ for $\beta>3 $.           
 Theorem~\ref{main1} above was  motivated by Goldberg's result \eqref{mikegold} and  Erdo\u{g}an-Green's result \eqref{ergr}.     
     
     There have been also studies of the Schr\"odinger operator in dimensions $n=3,4$ and $n>4$. For more details about these dimensions one can see \cite{Scs,ES2,ES3,EGG,GG1,GG2}
  
   We define the resolvent operator as $R_V^{\pm}(\lambda^2) = \lim_{\epsilon \to 0}(H - ( \lambda^2 \pm i\epsilon))^{-1}$. By Agmon's limiting absorption principle \cite{agmon}, this limit is  well-defined as an operator from $L^{2,\sigma}$ to $ \mathcal{H}_{2,-\sigma}$ for $\sigma > {\f 1 2}$ where $L^{2,\sigma} =  \{ f : \la x \ra ^{\sigma}f \in L^2(\R^n)\}$ and $\mathcal{H}_{2,\sigma}=\{ f : D^{\alpha}f\in L^{2,\sigma}(\R^n), \hspace{5mm}  0\leq |\alpha| \leq 2 \} $. The proof of Theorem~\ref{main1} 
   relies on expansions of $R_V^\pm$ around zero energy and Stone's formula for self-adjoint operators:  
\begin{align}
\label{stone}
 \ e^{itH} P_{ac}(H) \chi(H) f(x) =\frac1{2\pi i}  \int_0^{\infty} e^{it\lambda^2} \lambda  \chi( \lambda )  [R_V^+(\lambda^2)-R_V^{-}(\lambda^2)]  f(x) d\lambda,\,\,\,f \in \mathcal S(\R^2).
 \end{align}
   Here $\chi$ is an even smooth cut-off function supported in $[-\lambda_1,\lambda_1]$ for a fixed sufficiently small $\lambda_1>0$ and it is equal to one if  $|\lambda| \leq \frac{\lambda_1}{2}$. We note that, in our analysis $V$ has enough decay to ensure that $H$ has finitely many eigenvalues of finite multiplicity on $(-\infty, 0]$, with $\sigma_{ac}(H)=[0,\infty)$, see \cite{RS1}.

   We then extend our result to the non-self adjoint matrix Schr\"odinger operator. The non self-adjoint matrix Schr\"odinger operator  is defined as
 \begin{align} \label{nonself}
                  \mathcal{H}=\mathcal{H}_0+ V =  \left[\begin{array}{cc} -\Delta + \mu & 0 \\ 0 & \Delta - \mu \end{array}\right]+ \left[\begin{array}{cc} -V_1 & -V_2\\ V_2 & V_1 \end{array}\right]
     \end{align}
on $L^2(\R^2)\times L^2(\R^2)$ where $ \mu > 0$ and $V_1$, $ V_2$ are real valued potentials.
Note that if we diagonalize $\mathcal{H}$ with the matrix $ \left[\begin{array}{cc} 1 & i \\ 1 & -i  \end{array}\right] $ we obtain $\left[\begin{array}{cc} 0 & iL\_ \\ iL_+ & 0 \end{array}\right] $. That matrix together with Weyl's criterion gives us  $\sigma_{ess}(\mathcal{H})= (-\infty , - \mu] \cup [\mu , \infty) $  assuming some decay on $V_1$ and $ V_2$. 

We need the following assumptions, \\
A1) - $\sigma_3 V $ is a positive matrix  where  $\sigma_3$ is the Pauli spin matrix $$  \sigma_3 = \left[\begin{array}{cc} 1 & 0 \\ 0 & -1 \end{array}\right] , $$
A2) $L \_ = - \Delta + \mu - V_1 + V_2 \ \geq0 $, \\
A3) $|V_1| + |V_2| \les \la x \ra ^{-\beta} $ for some $\beta > 3 $, \\
A4) There are no embedded eigenvalues in $(-\infty , - \mu) \cup (\mu , \infty) $.

It is known that the first three assumptions are to hold in the case when the Schr\"odinger equation is linearized about a positive ground state standing wave $\psi(t,x)=e^{it\mu}\phi(x)$. We need the fourth assumption to be able to define the spectral measure from $X_\sigma$ to $X_{-\sigma}$ where $X_\sigma = L^{2,\sigma} \times L^{2,\sigma}$. For more details one can see \cite{ES3} and \cite{EGm}.

Dispersive estimates for the operator (\ref{nonself}) is studied in \cite{Cuc,RodSch1,scs, ES2, Cuct, mar, green}. In the case when thresholds are  regular the following result is obtained in dimension two.
\begin{theorem}[Theorem 1.1 in \cite{EGm}] Under the assumptions $A1)-A4)$ if $\pm \mu$ are regular points of   $\mathcal{H}=\mathcal{H}_0+ V$ we have
 $$ \|e^{itH} P_{ac} f \|_{L^{\infty} \times L^{\infty} } \les {\f 1 {|t|}} \|f\|_{L^1 \times L^1} $$
 and
\begin{align*}
         \| w^{-1}  e^{itH}P_{ac} f \| _{L^{\infty} \times L^{\infty}} \les \frac {1} {|t|(\log|t|)^2} \|wf\|_{L^1 \times L^1} ,   \hspace{3mm}  |t|>2 
                  \end{align*}
where  $w(x)=\log^2(2+|x|)$.
  \end{theorem}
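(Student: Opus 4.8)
The plan is to follow the strategy already used for the scalar operator in Theorem~\ref{main1}, adapting each step to the matrix setting through a Stone-type spectral representation. Under $A1)$--$A4)$ the spectral measure of $\mathcal{H}$ is well defined as an operator from $X_\sigma$ to $X_{-\sigma}$, so I would begin from a representation of the form
\begin{align*}
e^{it\mathcal{H}}P_{ac} f = \frac{1}{2\pi i}\int_{|\lambda|\geq \mu} e^{it\lambda}\,\big[\cR_V^+(\lambda)-\cR_V^-(\lambda)\big]\,f\,d\lambda, \qquad \cR_V^\pm(\lambda)=(\mathcal{H}-(\lambda\pm i0))^{-1}.
\end{align*}
Because the Pauli matrix $\sigma_1=\left[\begin{array}{cc} 0 & 1 \\ 1 & 0 \end{array}\right]$ satisfies $\sigma_1\mathcal{H}\sigma_1=-\mathcal{H}$, the evolution near the threshold $-\mu$ is conjugate to the one near $+\mu$, so it is enough to analyze the integral near $\lambda=\mu$ after inserting a smooth even cut-off $\chi$ localizing $\lambda-\mu$ to a small neighborhood of $0$. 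The complementary high-energy piece decays faster than required by the standard arguments available for this operator, so the entire difficulty is concentrated at the threshold.

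Near $\lambda=\mu$ I would pass to the free matrix resolvent $\cR_0^\pm(\lambda)=(\mathcal{H}_0-\lambda)^{-1}$, whose diagonal entries are the scalar free resolvents of $-\Delta+\mu$ and $\Delta-\mu$. Writing $\lambda=\mu+z^2$, the $(1,1)$ entry reduces to the free two dimensional resolvent $R_0^\pm(z^2)$ of $-\Delta$ evaluated near zero energy, which carries the threshold expansion $R_0^\pm(z^2)=\f{1}{2\pi}\log\f1z + c^\pm + O(z^2\log z)$, while the $(2,2)$ entry remains away from its own threshold and is smooth in $z$. The key algebraic device is the symmetric resolvent identity
\begin{align*}
\cR_V^\pm = \cR_0^\pm - \cR_0^\pm\, v\,(M^\pm)^{-1}\, v\,\cR_0^\pm, \qquad M^\pm(\lambda)=U+v\,\cR_0^\pm(\lambda)\,v,
\end{align*}
with $v=|V|^{1/2}$ and $U$ the matrix of signs of the entries of $V$. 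The hypothesis that $\mu$ is a regular point of $\mathcal{H}$ is precisely the statement that $M^\pm(\lambda)$ stays boundedly invertible on the relevant weighted space as $z\to0$, so one can expand $(M^\pm)^{-1}$ in a Neumann series around its invertible leading term, obtaining an expansion mixing integer powers of $z$ with powers of $\log z$.

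I would then substitute these expansions into the spectral representation and estimate the resulting oscillatory integrals
\begin{align*}
\int_0^{\lambda_1} e^{it(\mu+z^2)}\, z\,\chi(z)\,\big[\,\cdots\,\big]\, dz.
\end{align*}
Separating off the contribution of $\cR_0^\pm$ reproduces, on the threshold channel, the free estimate \eqref{freemain}, which already furnishes both the unweighted $|t|^{-1}$ bound and, with weights, the $\f{1}{|t|(\log|t|)^2}$ rate; the perturbative term $\cR_0^\pm v(M^\pm)^{-1}v\cR_0^\pm$ is controlled by integration by parts in $z$, where regularity of $(M^\pm)^{-1}$ removes the worst threshold singularity and each $\partial_z$ gains a factor $t^{-1}$. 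Throughout, the spatial $\log|x|$, $\log|y|$ growth coming from the two dimensional free kernels is absorbed by the weight $w(x)=\log^2(2+|x|)$, which is exactly what produces the two logarithms in the denominator, in the same spirit as \eqref{ergr}.

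I expect the principal obstacle to be the careful bookkeeping of the logarithmic singularities at the threshold. Unlike the higher dimensional or polynomially decaying cases, the two dimensional free resolvent is only logarithmically, not H\"older, regular at zero energy, so the expansion of $(M^\pm)^{-1}$ genuinely intertwines powers of $z$ with powers of $\log z$, and one must verify that the most singular combinations either cancel or become integrable against $e^{itz^2} z\,dz$ after integration by parts. Establishing that this yields the sharp $(\log|t|)^{-2}$ factor, rather than a single logarithm or none, and that the weight required is exactly $\log^2$, is the delicate point; the matrix structure adds the extra bookkeeping of a truly non-self-adjoint spectral representation, but by the $\sigma_1$ symmetry together with the positivity assumptions $A1)$--$A2)$ this reduces to two scalar-type threshold analyses of the kind carried out for Theorem~\ref{main1}.
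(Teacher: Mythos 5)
First, a point of orientation: this statement is not proved anywhere in the paper --- it is quoted verbatim from \cite{EGm} as background --- so the only meaningful comparison is with the method of \cite{EGm}, which is the same machinery the paper itself runs in Sections~\ref{sec:scalar} and~\ref{matrixx} for the resonance case. Your skeleton (Stone formula on $|\lambda|>\mu$, the symmetry $\sigma_1\mathcal{H}\sigma_1=-\mathcal{H}$ reducing the threshold $-\mu$ to $+\mu$, the substitution $\lambda=\mu+z^2$ under which the $(1,1)$ channel becomes a zero-energy scalar problem while the $(2,2)$ channel stays smooth and yields no contribution to $\mathfrak{R}_0^+-\mathfrak{R}_0^-$, and the symmetric resolvent identity) is indeed that method's skeleton. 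But one of your steps is wrong in a way that sinks the argument. You claim that separating off $\mathcal{R}_0^\pm$ ``reproduces the free estimate \eqref{freemain}, which already furnishes both the unweighted $|t|^{-1}$ bound and, with weights, the $\f{1}{|t|(\log|t|)^2}$ rate.'' Estimate \eqref{freemain} gives the $(t\log^2 t)^{-1}$ rate only \emph{after subtracting} the rank-one term $\f1t\psi\la f,\psi\ra$; the free evolution by itself has a genuine $1/t$ tail. Concretely, by \eqref{freematrix} the free matrix term contributes $-\f{1}{4t}M_{11}$ plus acceptable errors, and a constant kernel of size $1/t$ is $O(1/t)$, not $O(1/(t\log^2 t))$, in the weighted norm. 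Since the theorem you are proving (the regular case) subtracts nothing, the proof must show that this $1/t$ piece \emph{cancels} against the $1/t$ boundary terms produced by the perturbative term $\mathfrak{R}_0 v_1 (M^\pm)^{-1} v_2 \mathfrak{R}_0$ --- precisely the mechanism visible in the paper's resonance analysis, where the boundary term $F_5$ (resp.\ $\mathfrak{F}_5$) arising from $h_\pm(\lambda)^{-1}S$ ``cancels out the operator coming from the free resolvent,'' and only then do the remaining boundary terms assemble into the rank-one $F$. Your plan treats the free and perturbative pieces as separately acceptable (``each $\partial_z$ gains a factor $t^{-1}$''), but stationary phase at $z=0$ (Lemma~\ref{lem:ibp}) always leaves an $\mathcal{E}(0)/t$ boundary term; without tracking and cancelling these terms across the expansion you cannot get below $1/t$, and the weighted estimate fails.

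Two further points in your setup are off. The factorization ``$v=|V|^{1/2}$, $U$ the matrix of signs of the entries of $V$'' does not make sense for the matrix potential; one needs assumption A1) ($\sigma_3V\geq 0$) to write $V=v_1v_2$ with $v_1=-\sigma_3 v$, $v_2=v$, $v=(\sigma_3V)^{1/2}$, and then $M^\pm=I+v_2\mathfrak{R}_0^\pm v_1$. More seriously, regularity of $\mu$ does \emph{not} make $(M^\pm)^{-1}$ amenable to a Neumann series: $M^\pm(\lambda)$ contains the logarithmically divergent rank-one piece $\mathfrak{g}^\pm(\lambda)P$, and even in the regular case the Jensen--Nenciu inversion (Lemma~\ref{closed}, Lemma~\ref{matrix} with $S_1=0$) gives
\begin{align*}
(M^\pm(\lambda))^{-1}=h_\pm(\lambda)^{-1}S+QD_0Q+W_1^\pm(\lambda),
\end{align*}
with $S$ as in \eqref{S_defn}. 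The term $h_\pm(\lambda)^{-1}S$ is not an error term one can integrate by parts away: its boundary contribution supplies the cancellation described above, and the difference $h_+(\lambda)^{-1}-h_-(\lambda)^{-1}=O(1/\log^2\lambda)$ is exactly what generates the $(\log t)^{-2}$ time decay and dictates the weight $\log^2(2+|x|)$. So the two issues you defer as ``delicate bookkeeping'' --- the cancellation of the $1/t$ terms and the origin of the two logarithms --- are not refinements of your argument; they \emph{are} the argument, and as written your proposal would at best recover the unweighted $1/t$ bound.
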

Our main result for the matrix Schr\"odinger operator is Theorem~\ref{main2}.
\begin{theorem}\label{main2} Under the conditions A1)-A4), if there is a resonance of the first kind at the threshold $ \mu$ then we have
 $$ \|e^{itH} P_{ac} f \|_{L^{\infty} \times L^{\infty} } \les {\f 1 {|t|}} \|f\|_{L^1\times L^1} $$
 and
 \begin{align*}
         \| w^{-1} \big( e^{itH}P_{ac} f - {\f 1 t } \mathfrak{F} f \big) \| _{L^{\infty} \times L^{\infty}} \les \frac {C} {|t|(\log|t|)^2} \|wf\|_{L^1 \times L^1} ,   \hspace{3mm}  |t|>2. 
                  \end{align*}
 Here $\mathfrak{F}(x,y)=c \psi(x)\sigma_3\psi(y)$, where $\psi(x)$ is an s-wave resonance  and $w(x)=\log^2(2+|x|)$.
 
 A similar statement holds if there is a resonance of first kind at $-\mu$.  
  \end{theorem}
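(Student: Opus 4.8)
The plan is to transplant the scalar resonance analysis behind Theorem~\ref{main1} to the matrix operator $\mathcal{H}$, using the fact that near a threshold the singular part of the free matrix resolvent is scalar. Writing $\mathcal{R}_V^\pm(z)=\lim_{\epsilon\to0}(\mathcal{H}-(z\pm i\epsilon))^{-1}$, Stone's formula represents $e^{it\mathcal{H}}P_{ac}$ as a spectral integral over $\sigma_{ess}(\mathcal{H})=(-\infty,-\mu]\cup[\mu,\infty)$ of $e^{itz}[\mathcal{R}_V^+(z)-\mathcal{R}_V^-(z)]$. I would split this with smooth cut-offs into pieces localized near $\mu$ and $-\mu$ and a high-energy remainder; assumption A4) together with the limiting absorption principle controls the high-energy remainder by the standard resolvent-iteration and stationary-phase arguments, at a rate faster than claimed. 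The symmetry $\sigma_1\mathcal{H}\sigma_1=-\mathcal{H}$, where $\sigma_1$ is the off-diagonal Pauli matrix, interchanges the two thresholds and carries the resonance at $\mu$ to a resonance at $-\mu$, so it suffices to analyze the contribution near $\mu$. Setting $z=\mu+\lambda^2$, this contribution has exactly the shape of \eqref{stone}, with a harmless phase $e^{it\mu}$ and the scalar resolvents replaced by matrix ones.

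The structural point is that at $z=\mu+\lambda^2$ the free matrix resolvent is diagonal,
\[
\mathcal{R}_0^\pm(\mu+\lambda^2)=\begin{pmatrix} R_0^\pm(\lambda^2) & 0\\[1mm] 0 & -(-\Delta+2\mu+\lambda^2)^{-1}\end{pmatrix},
\]
so that only the $(1,1)$ entry carries the logarithmically singular two-dimensional free resolvent $R_0^\pm(\lambda^2)$, while the $(2,2)$ entry is analytic and uniformly bounded for small $\lambda$ because $2\mu+\lambda^2$ stays away from the spectrum of $-\Delta$. I would then apply the symmetric resolvent identity $\mathcal{R}_V^\pm=\mathcal{R}_0^\pm-\mathcal{R}_0^\pm\,v\,(M^\pm(\lambda))^{-1}\,v\,\mathcal{R}_0^\pm$, with a factorization $V=v^*Uv$ of the (non-self-adjoint) matrix potential and $M^\pm(\lambda)=U+v\,\mathcal{R}_0^\pm(\mu+\lambda^2)\,v$. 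Since the singular part of $\mathcal{R}_0^\pm$ sits entirely in the $(1,1)$ block, the small-$\lambda$ expansion of $M^\pm(\lambda)$ and its Jensen--Nenciu inversion proceed as in the scalar case: under the resonance-of-first-kind hypothesis, $(M^\pm(\lambda))^{-1}$ has a rank-one leading term proportional to $(\log\lambda)^{-1}$ that projects onto the s-wave resonance, and — this is the delicate point — the logarithmic growth of the factors $\mathcal{R}_0^\pm v$ is exactly cancelled against this projection, leaving a finite, $O(1)$ leading coefficient together with $O((\log\lambda)^{-1})$ corrections.

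Inserting this expansion into the spectral integral, I would organize $\lambda[\mathcal{R}_V^+(\mu+\lambda^2)-\mathcal{R}_V^-(\mu+\lambda^2)]$ in decreasing powers of $\log\lambda$. The leading, $\lambda$-independent coefficient — assembled from the constant imaginary parts of the free resolvent and the rank-one resonance projection — is the operator with kernel $c\,\psi(x)\sigma_3\psi(y)$; integrating it against $\int_0^\infty e^{it\lambda^2}\lambda\,\chi(\lambda)\,d\lambda\sim \tfrac{i}{2t}$ produces the main term $\frac1t\mathfrak{F}$, precisely as the constant part of $[R_0^+-R_0^-]$ yields the $\frac1t$ term in \eqref{freemain}. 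The $\sigma_3$ structure emerges when the factorization $V=v^*Uv$ is propagated back through $\mathcal{R}_0^\pm v(M^\pm)^{-1}v\mathcal{R}_0^\pm$ together with the opposite signs of the $(1,1)$ and $(2,2)$ diagonal blocks. The remaining coefficients decay like $(\log\lambda)^{-1}$, and the corresponding oscillatory integrals satisfy $\int_0^\infty e^{it\lambda^2}\lambda\,\chi(\lambda)(\log\lambda+c^\pm)^{-1}\,d\lambda=O\big(|t|^{-1}(\log|t|)^{-2}\big)$, which gives the remainder bound $\frac{C}{|t|(\log|t|)^2}$; here the logarithmic weight $w(x)=\log^2(2+|x|)$ is used to absorb the $\log|x-y|$ growth of the free-resolvent kernels in the passage from $L^1\times L^1$ to $L^\infty\times L^\infty$. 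The unweighted estimate $\|e^{it\mathcal{H}}P_{ac}f\|_{L^\infty\times L^\infty}\les|t|^{-1}\|f\|_{L^1\times L^1}$ then follows, since $\frac1t\mathfrak{F}$ is rank one with bounded kernel decaying like $|t|^{-1}$ and the remainder decays faster.

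The main obstacle will be the two intertwined points hidden in the middle paragraph: establishing the precise low-energy expansion of $(M^\pm(\lambda))^{-1}$ in the matrix setting, including the cancellation of logarithmic divergences that leaves a single rank-one $O(1)$ main term with the claimed $\sigma_3$ structure, and then controlling the weighted remainder uniformly once this term is removed. One must check that the off-diagonal coupling through $V_2$ and the regular $(2,2)$ block do not generate additional threshold singularities beyond the single scalar s-wave resonance, and that every correction is either smooth enough in $\lambda$ to decay faster than $(\log|t|)^{-2}$ or is already absorbed into $\mathfrak{F}$. This is exactly where the positivity hypotheses A1)--A2), the decay A3), and the first-kind assumption enter, ensuring that the non-self-adjointness of $\mathcal{H}$ does not spoil the scalar resonance picture and that the threshold is governed by one rank-one resonance.
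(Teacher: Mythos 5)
Your overall route is the paper's: reduce to the threshold $\mu$ (the symmetry $\sigma_1\mathcal H\sigma_1=-\mathcal H$ you invoke is correct and disposes of $-\mu$), use that only the $(1,1)$ entry of $\mathfrak{R}_0(\mu+\lambda^2)$ is singular, run the symmetric resolvent identity with the factorization $v_1=-\sigma_3 v$, $v_2=v$, invert $M^\pm(\lambda)$ by the Jensen--Nenciu scheme, and read off a rank-one $\f1t$ boundary term. However, two of your steps fail as stated. First, your expansion of $(M^\pm(\lambda))^{-1}$ is the one valid at a \emph{regular} threshold, not at a resonance of the first kind. In the resonance case the inversion produces a singular part that \emph{grows} logarithmically: the leading term is proportional to $h_\pm(\lambda)S_1$ with $h_\pm(\lambda)\sim\log\lambda$ (in the paper's notation $(M^\pm(\lambda))^{-1}=-\f{h_\pm(\lambda)}{c_0^2\|a^2+b^2\|_1}S_1+O(1)+O\big(h_\pm(\lambda)^{-1}\big)$); there is no rank-one term proportional to $(\log\lambda)^{-1}$ ``projecting onto the resonance'', and no cancellation occurs at the level of $(M^\pm)^{-1}$ itself. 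The $\log\lambda$ growth is neutralized only inside the oscillatory integral, through the orthogonality $M_{11}v_1S_1=S_1v_2M_{11}=0$ of \eqref{m0}, which permits replacing $J_0(\lambda p)$, $Y_0(\lambda p)$ by the differences $F,G$ of Lemma~\ref{FG}; it is exactly this mechanism that generates the boundary term $\mathfrak F_1$ and, via $\phi=v_2\psi_1=\psi_2 v_1$ and $\sigma_3\psi_1=\psi_2$ (Lemma~\ref{G0matrix}), the kernel $c\,\psi\,\sigma_3\,\psi$. With the expansion as you wrote it, the term actually responsible for $\mathfrak F$ is absent, so organizing ``in decreasing powers of $\log\lambda$'' cannot produce the claimed main term.

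Second, your last paragraph gets the logical order of the two estimates backwards, and this is not cosmetic. The low-energy expansion cannot yield a remainder controlled by logarithmic weights alone: the regimes where the Bessel arguments are large, $\lambda|x-x_1|\gtrsim1$ etc., unavoidably produce errors of size $\la x\ra^{3/2}\la y\ra^{3/2}t^{-1-\alpha}$; this polynomial weight is present in the paper's Theorem~\ref{t22} and cannot be removed at that stage. Consequently the remainder is \emph{not} uniformly $O(t^{-1})$ in $x,y$, and the unweighted bound $\|e^{it\mathcal H}P_{ac}f\|_{L^\infty\times L^\infty}\les |t|^{-1}\|f\|_{L^1\times L^1}$ does not ``follow'' from the weighted expansion. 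The paper proves it separately (Theorem~\ref{t21}, which requires its own analysis of the kernels $A_i$, $B_i$ in both small and large Bessel-argument regimes), and only then obtains the weighted statement of Theorem~\ref{main2} by interpolating the two results using $\min(1,a/b)\les\log^2(a)/\log^2(b)$, which converts $t^{-1}\min\big(1,\la x\ra^{3/2}\la y\ra^{3/2}t^{-\alpha}\big)$ into $w(x)w(y)/(t\log^2 t)$. Both the independent proof of the $t^{-1}$ bound and this interpolation step are missing from your proposal; without them neither of the two claimed inequalities is reached.
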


  The resolvent expansions we obtain to prove Theorem~\ref{main1}  for Schr\"odinger evolution are also applicable to   the two-dimensional wave equation with a potential. Recall that the perturbed wave equation is given as 
\begin{equation}
 \label{wave}
   u_{tt} - \Delta u + V(x)u=0, \hspace{5mm} u(x,0)=f(x), \hspace{5mm} u_t(x,0)=g(x)
    \end{equation}
with the solution formula
       $$ u(x,t)=\cos(t\sqrt{H}) f(x) + \frac{\sin(t\sqrt{H})}{\sqrt{H}} g(x) $$
for $f \in W^{2,1}$ and $ g\in W^{1,1}$. By  Stone's formula, we have the representations 
\begin{align}
\label{stonecos}
 &\cos(t\sqrt{H}) P_{ac}f(x)=\frac{1}{2\pi i} \int_0^{\infty} \cos(t \lambda)\lambda [ R_V^{+}(\lambda^2)-R_V^{-}(\lambda^2) ] f(x) d\lambda \\
& \frac{\sin(t\sqrt{H})}{\sqrt{H}} P_{ac}g(x)=\frac{1}{2\pi i} \int_0^{\infty} \sin(t\lambda) [ R_V^{+}(\lambda^2)-R_V^{-}(\lambda^2) ] g(x) d\lambda. \label{stonesin}
     \end{align}

For the low energy, that is when $0< \lambda \ll 1 $ this representation leads us to a similar result as in Theorem~\ref{main1}. On the other hand, for the large energy, $\lambda \gtrsim 1 $,  one needs regularizing powers of $\la H \ra ^ {-\alpha} $ for some $ \alpha > 0$ which reflects the loss of derivatives of initial data, see., e.g. \cite{green}. 

In dimension two, these type of estimates are studied in  \cite{CCV1,Mou, greenw}. In \cite{greenw}, Green proved that  if there is a resonance of first kind at zero, then
 \begin{align} \label{lem:greenwave}
    \begin{split}
   &\big\|  \cos(t\sqrt{H})\la H \ra^{-3/4-}  P_{ac} f(x) \big\| _{L^{\infty}}\les |t|^{-\f1{2}} \|f\|_{L^1}, \\
   & \big\| \frac{\sin(t\sqrt{H})}{\sqrt{H}} \la H \ra^{-1/4-}  P_{ac}f(x) \big\| _{L^{\infty}}\les  |t|^{-\f1{2}} \|f\|_{L^1} ; 
   \end{split}
        \end{align}   
and if zero is regular, then 
\begin{align} 
    \begin{split}
   &\big\| \la x \ra ^{-{\f 12}-} \cos(t\sqrt{H})\la H \ra^{-3/4-}  P_{ac} f(x) \big\| _{L^{\infty}}\les |t|^{-\f1{2}} \| \la x \ra ^{{\f 12}+} f\|_{L^1}, \\
   & \big\| \frac{\la x \ra ^{-{\f 12}-}\sin(t\sqrt{H})}{\sqrt{H}} \la H \ra^{-1/4-}  P_{ac}f(x) \big\| _{L^{\infty}}\les  |t|^{-\f1{2}} \| \la x \ra ^{{\f 12}+}f\|_{L^1}.
   \end{split}
        \end{align}

The techniques we present below to obtain Theorem~\ref{main1} for the Schr\"odinger evolution can be easily adapted to the wave evolution to obtain:
\begin{theorem} \label{main3}
Let $|V (x)| \les \la x \ra ^{-2\beta} $ for some $ \beta > {3/2} $. If there is a resonance of first kind at zero, then we have
\begin{align*}
& \big\| \la x \ra ^{-{\f 12}-} \cos(t\sqrt{H})\chi(H)P_{ac} f \big \| _{L^\infty} \leq \frac {C} {|t| (\log|t|)^2 } \|\la x\ra ^{{\f 12}+}f\|_{L^1}\\
&\big \|\la x \ra ^{-{\f 12}-}  \big( \frac{\sin(t\sqrt{H})}{\sqrt{H}}\chi(H)P_{ac} g - {\f 1 t } \widetilde{F} g \big) \big \| _{L^\infty} \leq \frac {C} {|t| (\log|t|)^2 } \|\la x\ra ^{{\f 12}+}f\|_{L^1}
\end{align*}
for $|t|>2$, where $\widetilde{F}(x,y)=c \psi(x)\psi(y)$ were $\psi$ is an s-wave resonance function.
   \end{theorem}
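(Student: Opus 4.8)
The plan is to reuse, essentially unchanged, the low-energy resolvent expansion of $R_V^\pm(\lambda^2)$ about $\lambda=0$ constructed for Theorem~\ref{main1}, and to feed it into the wave representations \eqref{stonecos}--\eqref{stonesin} in place of Stone's formula for the Schr\"odinger group. The crucial point is that this expansion is a property of $H$ alone and is indifferent to the functional calculus applied to it; consequently the decomposition of $E(\lambda):=R_V^+(\lambda^2)-R_V^-(\lambda^2)$ into a leading resonance term plus a remainder carries over verbatim, and only the \emph{scalar} oscillatory integral in $\lambda$ changes. Thus I would substitute the expansion into
\[
\cos(t\sqrt H)\chi(H)P_{ac}f=\frac{1}{2\pi i}\int_0^\infty\cos(t\lambda)\,\lambda\,\chi(\lambda)\,E(\lambda)f\,d\lambda,
\]
and into the analogous sine integral with weight $d\lambda$ in place of $\lambda\,d\lambda$, and analyze the resulting integrals term by term.

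The organizing principle is a comparison of three scalar integrals that govern the leading (resonance) contribution. In Theorem~\ref{main1} the factor $\int_0^\infty e^{it\lambda^2}\lambda\,\chi(\lambda)\,d\lambda\sim t^{-1}$ converted the leading term of $E$ into $\tfrac1t Ff$. Here the sine integral obeys $\int_0^\infty\sin(t\lambda)\chi(\lambda)\,d\lambda=\tfrac1t+O(t^{-2})$, so the same leading term of $E$ is converted into $\tfrac1t\widetilde F g$ with $\widetilde F(x,y)=c\,\psi(x)\psi(y)$, while the $O(t^{-2})$ error is swept into the remainder. By contrast the cosine integral satisfies $\int_0^\infty\cos(t\lambda)\,\lambda\,\chi(\lambda)\,d\lambda=O(t^{-2})$ after one integration by parts, so its leading term carries no $t^{-1}$ piece and no subtraction is required, precisely matching the two displayed estimates.

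It then remains to bound the remainders. Writing $\cos(t\lambda)=\tfrac12(e^{it\lambda}+e^{-it\lambda})$ and $\sin(t\lambda)=\tfrac1{2i}(e^{it\lambda}-e^{-it\lambda})$ reduces everything to integrals $\int_0^\infty e^{\pm it\lambda}a(\lambda)\,d\lambda$, whose amplitudes $a$ inherit from the expansion the logarithmic weights $(\log\lambda)^{-k}$ characteristic of the two-dimensional s-wave resonance together with the Bessel kernels $J_0(\lambda|x-y|),Y_0(\lambda|x-y|)$ coming from $R_0^\pm$. One integration by parts extracts a factor $t^{-1}$; the derivative falling on $(\log\lambda)^{-k}$ produces $\lambda^{-1}(\log\lambda)^{-k-1}$, whose $\lambda$-integral near zero is responsible for the sharp gain $(\log t)^{-2}$, exactly as in the free estimate \eqref{freemain}. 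The polynomial weights reflect the two-dimensional wave kernel rather than the Schr\"odinger one: the Bessel asymptotics $J_0(z),Y_0(z)\sim z^{-1/2}$ and the stationary interaction of $e^{\pm it\lambda}$ with the oscillation of $J_0(\lambda|x-y|)$ generate factors of $|x-y|^{1/2}$ in the remainder kernel, which are absorbed through $|x-y|^{1/2}\les\langle x\rangle^{1/2}\langle y\rangle^{1/2}$; this is what dictates the loss $\langle x\rangle^{-1/2-}$ on the output and the gain $\langle x\rangle^{1/2+}$ on the data, with the $0+$ in the exponents soaking up residual logarithms from the Bessel expansion.

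The main obstacle is the oscillatory analysis with the \emph{linear} phase $e^{\pm it\lambda}$ rather than the quadratic $e^{it\lambda^2}$ of the Schr\"odinger problem. The quadratic phase permitted the clean substitution $s=\lambda^2$ and a stationary-phase gain; with a linear phase there is no interior critical point, so decay in $t$ must come entirely from repeated integration by parts, and every derivative landing on the singular amplitude $(\log\lambda)^{-k}$ or on $J_0(\lambda|x-y|)$ must be shown integrable near $\lambda=0$. Balancing the $t^{-1}$ gained at each integration by parts against the $\lambda^{-1}$ lost in differentiating the logarithm---while still extracting the optimal $(\log t)^{-2}$---and verifying that the Bessel factors yield exactly the stated $\langle x\rangle^{\pm(1/2+)}$ weights, is the technical heart of the argument; beyond this the estimates follow the template already set for Theorem~\ref{main1}.
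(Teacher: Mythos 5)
Your proposal follows essentially the same route as the paper: reuse the low-energy resolvent expansion of $R_V^\pm(\lambda^2)$ verbatim, insert it into \eqref{stonecos}--\eqref{stonesin}, and observe that only the sine evolution produces a $\tfrac1t$ boundary term under integration by parts (hence the rank-one subtraction $\widetilde F$ appears only there), with the linear-phase integration by parts yielding the $(\log t)^{-2}$ gain and the Bessel asymptotics dictating the $\la x\ra^{\f12+}$ weights with no interpolation step. The paper compresses the oscillatory-integral work by citing Propositions 5.10, 5.11 and 5.15 of \cite{greenw} in place of its Schr\"odinger-phase propositions, which is precisely the analysis you sketch by hand.
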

   
Theorem~\ref{main3} is valid only for the low energy part but  by including regularizing powers and combining it with the high energy result \eqref{lem:greenwave} of Green, we can extend it to all energies. 
\section{Scalar case} \label{sec:scalar}
In this section we prove that
\begin{theorem} \label{thm:mainineq}
	Under the assumptions of Theorem~\ref{main1}, we have
	for $t>2$
	\be\label{weighteddecay}
		 \bigg| \int_0^\infty e^{it\lambda^2}\lambda \chi(\lambda )
		 [R_V^+(\lambda^2)-R_V^-(\lambda^2)](x,y) d\lambda- {\f 1 t}F(x,y) \bigg|
		 \les \f{\sqrt{w(x)w(y)}}{t\log^2(t)}+
		\f{\la x\ra^{\f32} \la y\ra^{\f32}}{t^{1+\alpha}}
	\ee
	where $0<\alpha<\min(\f14, \beta-\f32)$ and $F(x,y)=- \f{ \psi(x)\psi(y)}{4c_0^2} $ where $\psi $ is the generator of the space of s-wave resonance.
	\end{theorem}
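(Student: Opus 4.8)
The plan is to insert the symmetric resolvent identity into \eqref{stone}, expand the free resolvent and $M^\pm(\lambda)^{-1}$ about $\lambda=0$, split off the rank-one piece that produces the main term $\f1tF$, and estimate the leftover oscillatory integrals. Write $v=|V|^{1/2}$ and $U=\text{sign}(V)$, so $V=Uv^2$, and recall
\[
R_V^\pm(\lambda^2)=R_0^\pm(\lambda^2)-R_0^\pm(\lambda^2)\,v\,M^\pm(\lambda)^{-1}\,v\,R_0^\pm(\lambda^2),\qquad M^\pm(\lambda)=U+vR_0^\pm(\lambda^2)v,
\]
where $R_0^\pm$ is the free resolvent. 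In two dimensions $R_0^\pm(\lambda^2)(x,y)=g^\pm(\lambda)+G_0(x,y)+O(\lambda^2\log\lambda)$, with $g^\pm(\lambda)=-\f1{2\pi}\log\lambda\pm\f i4+\text{(real constant)}$ carrying the logarithmic singularity, $G_0(x,y)=-\f1{2\pi}\log|x-y|$, and a remainder whose $\lambda$-derivatives carry the growing factors $|x-y|^2$. Substituting gives $M^\pm(\lambda)=T+g^\pm(\lambda)(v\otimes v)+O(\lambda^2\log\lambda)$ with $T=U+vG_0v$ self-adjoint and $\lambda$-independent.

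The second step is the inversion of $M^\pm(\lambda)$. Since the leading operator $g^\pm(\lambda)(v\otimes v)$ is rank one and of size $\log\lambda$, I would invert by a Feshbach/Schur reduction relative to the projection onto $v$, organized as in the Jensen--Nenciu lemma adapted to the logarithmic growth. The resonance-of-first-kind hypothesis is exactly the rank-one, nondegenerate obstruction in this reduction; it yields $M^\pm(\lambda)^{-1}=\f1{g^\pm(\lambda)}\Theta+\Gamma^\pm(\lambda)$ with $\Theta$ a fixed rank-one operator associated with the s-wave resonance and $\Gamma^\pm(\lambda)$ a strictly milder remainder (entering at order $1/(\log\lambda)^2$ or $\lambda^2\log\lambda$). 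Feeding this back through the resolvent identity, the $g^\pm(\lambda)$-order (leading, logarithmically divergent) part of $R_V^\pm(\lambda^2)$ collapses to $g^\pm(\lambda)$ times a rank-one operator with kernel proportional to $\psi(x)\psi(y)$, where $\psi$ is the bounded s-wave resonance associated with the generator of the obstruction. Forming the spectral density $E(\lambda)(x,y):=[R_V^+-R_V^-](\lambda^2)(x,y)$, the real divergent parts of $R_V^+$ and $R_V^-$ coincide and cancel while the constant jump $g^+(\lambda)-g^-(\lambda)=\f i2$ survives, so $E$ extends continuously to $\lambda=0$ with $E(0)(x,y)=\f{i}{2c_0^2}\psi(x)\psi(y)$; this finite limit is the source of the main term. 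The same computation exhibits $E(\lambda)-E(0)$ as a logarithmic part, of size $1/(\log\lambda)^2$ and with $(x,y)$-profile built from $\psi$ and from kernels $G_0v(\cdot)(x)=O(\log\la x\ra)$ hence controlled by $\sqrt{w(x)w(y)}$, plus a power part from the $O(\lambda^2\log\lambda)$ tails, whose $|x-y|^2$-type growth is controlled by $\la x\ra^{3/2}\la y\ra^{3/2}$.

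Third, I would extract the main term and estimate the remainder. Writing
\begin{align*}
\int_0^\infty e^{it\lambda^2}\lambda\chi(\lambda)E(\lambda)(x,y)\,d\lambda
&=E(0)(x,y)\int_0^\infty e^{it\lambda^2}\lambda\chi(\lambda)\,d\lambda\\
&\quad+\int_0^\infty e^{it\lambda^2}\lambda\chi(\lambda)[E(\lambda)-E(0)](x,y)\,d\lambda,
\end{align*}
the substitution $s=\lambda^2$ and one integration by parts give $\int_0^\infty e^{it\lambda^2}\lambda\chi(\lambda)\,d\lambda=\f{i}{2t}+O(t^{-N})$, so the first term equals $\f1t\cdot\f i2E(0)(x,y)+O(t^{-N})=\f1tF(x,y)+O(t^{-N})$ with $F(x,y)=-\f{\psi(x)\psi(y)}{4c_0^2}$. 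Subtracting $E(0)$ makes the remaining integrand vanish at $\lambda=0$, so the integration by parts via $e^{it\lambda^2}=\f1{2it\lambda}\partial_\lambda e^{it\lambda^2}$ produces no boundary term and a clean factor $1/t$: the logarithmic part, whose $\lambda$-derivative is of size $1/(\lambda\log^2\lambda)$ with spatial profile $\les\sqrt{w(x)w(y)}$, integrates to $\f{\sqrt{w(x)w(y)}}{t\log^2 t}$ (the effective frequency $\lambda\sim t^{-1/2}$ turning $1/\log^2\lambda$ into $1/\log^2 t$), while the power part, after trading a fractional power $\lambda^{2\alpha}$ for time decay and absorbing $|x-y|^{2\alpha}$ into $\la x\ra^{3/2}\la y\ra^{3/2}$, integrates to $\f{\la x\ra^{3/2}\la y\ra^{3/2}}{t^{1+\alpha}}$, valid for $0<\alpha<\min(\f14,\beta-\f32)$.

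The main obstacle is the sharp treatment of the logarithmic remainder. The symbol $1/g^\pm(\lambda)$ is non-smooth and its derivative, of size $1/(\lambda\log^2\lambda)$, is only borderline integrable at $\lambda=0$, so the integration by parts that produces the factor $1/t$ must be arranged so that the residual $\lambda$-integral contributes exactly $\log^{-2}t$ and no worse power of the logarithm; this is what pins down the weight $w=\log^2$ and the rate $t^{-1}\log^{-2}t$, matching the free model \eqref{freemain}. A subsidiary but essential difficulty is the bookkeeping of the spatial weights: every factor $\log|x-y|$ or $|x-y|^2$ generated by $R_0^\pm$ and by $G_0v$ must be absorbed into $\sqrt{w(x)w(y)}$ or $\la x\ra^{3/2}\la y\ra^{3/2}$ respectively, and the convergence of the associated $v$-integrals against these growing kernels, under $|V|\les\la x\ra^{-2\beta}$, is what forces the restriction $\alpha<\beta-\f32$.
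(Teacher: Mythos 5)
Your skeleton --- symmetric resolvent identity, expansion of $M^\pm(\lambda)^{-1}$ near zero, main term extracted as the $\lambda=0$ boundary value in the stationary phase, remainder estimated after subtracting it --- is the same as the paper's (its Lemmas~\ref{lem:ibp} and \ref{lem:ibp2} are exactly your integration by parts). But your second step contains a genuine error: the claimed inversion
\be\nn
M^\pm(\lambda)^{-1}=\f{1}{g^\pm(\lambda)}\Theta+\Gamma^\pm(\lambda),\qquad \Gamma^\pm(\lambda)\ \text{strictly milder},
\ee
has the singularity structure backwards. When there is an s-wave resonance, $M^\pm(\lambda)^{-1}$ does not vanish like $1/\log\lambda$; it \emph{grows} like $\log\lambda$. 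The paper's Corollary~\ref{cor} gives
\be\nn
M^{\pm}(\lambda)^{-1} = -\frac{h_{\pm}(\lambda)}{c_0^2\|V\|_1}\,S_1+\big(\text{bounded terms}\big)+\f{1}{h_{\pm}(\lambda)}\big(\text{finite rank}\big)+E^{\pm}(\lambda),\qquad h_\pm(\lambda)\sim\log\lambda,
\ee
and the growth is forced: in the basis $\{v/\|v\|_2,\phi\}$ one has, modulo bounded couplings, $M(\lambda)\approx\left(\begin{smallmatrix} g^\pm(\lambda) & c\\ c & 0\end{smallmatrix}\right)$ with $c=\sqrt{\|V\|_1}\,c_0\neq0$, because $\la \phi,T\phi\ra=\la\phi,QTQ\phi\ra=0$ on the resonance space; inverting this matrix puts $-g^\pm(\lambda)/c^2$ in the $\phi\phi$ entry. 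Your ansatz would make $M^\pm(\lambda)^{-1}\to0$ as $\lambda\to0$, which is false even when zero is regular (there the limit is $QD_0Q\neq 0$).

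Because of this, the proposal never confronts the actual difficulty of the theorem: the contribution to \eqref{stone} of the logarithmically \emph{growing} term $h_\pm(\lambda)S_1$, which occupies most of the paper's proof (Proposition~\ref{pprop}, Lemmas~\ref{lem;asin}--\ref{lem;S_12}). The taming mechanism there is the orthogonality \eqref{zero}, $\int fS_1v=\int vS_1f=0$, which allows $\chi(\lambda p)Y_0(\lambda p)$ and $\chi(\lambda q)J_0(\lambda q)$ to be replaced by the differences $F,G$ of Lemma~\ref{FG} that are controlled at $\lambda=0$; nothing in your outline plays that role. Two further assertions also fail inside your framework. First, feeding $\f1{g^\pm}\Theta$ through the resolvent identity, the most singular piece of $R_0^\pm vM^\pm(\lambda)^{-1}vR_0^\pm$ is $\f{g^\pm(\lambda)}{\|V\|_1^2}\iint v\Theta v$, a multiple of $g^\pm$ that is \emph{constant} in $(x,y)$ --- not the rank-one kernel $\psi(x)\psi(y)$ you assert; in the paper that kernel emerges only after assembling the five boundary terms $F_1,\dots,F_5$ produced by $h_\pm S_1$, $SS_1$, $S_1S$, $SS_1S/h_\pm$, $S/h_\pm$ together with the free resolvent, via the identities $vSS_1=-vTS_1$ and $G_0v\phi=\psi-c_0$ (Proposition~\ref{psipsi}). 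Second, the logarithmic bookkeeping is off by one power: a remainder whose derivative is of size $1/(\lambda\log^2\lambda)$ yields only $1/(t\log t)$ from $\f1t\int_0^{t^{-1/2}}|\mathcal E^\prime|\,d\lambda$; the rate $1/(t\log^2 t)$ requires the $+/-$ cancellation $\f1{h_+}-\f1{h_-}=O(1/\log^2\lambda)$, exploited in the paper's proposition generalizing Proposition~4.4 of \cite{EGw}, which makes the remainder itself $O(1/\log^2\lambda)$ with derivative $O(1/(\lambda\log^3\lambda))$ --- consistent with your stated size but not with your stated derivative. So although your limiting value of the spectral density and your formula for $F$ agree with the paper's, the argument offered for them collapses precisely at the points where the resonance makes the problem hard.
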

	Here the constant $c_0$  depends on the resonance function $\psi$. Our analysis below determines $c_0$ explicitly. (see, Remark~3 below)
	
Combining \eqref{weighteddecay} with the high energy result obtained in 
\cite{EGw}:
 \begin{theorem}[Theorem 5.1 in \cite{EGw}]  Let $|V (x)| \les \la x \ra ^{-2\beta} $ for some $ \beta > {3/2} $. We have
 $$ \sup_{L\geq 1} \bigg| \int_0^\infty e^{it\lambda^2}\lambda \widetilde{\chi}(\lambda) \chi(\lambda/L)
		 [R_V^+(\lambda^2)-R_V^-(\lambda^2)](x,y) d\lambda \bigg| \les
		\f{\la x\ra^{\f32} \la y\ra^{\f32}}{t^{3/2}}, $$
for $|t|>2$ where $w(x)=\log^2(2+|x|)$.
 \end{theorem}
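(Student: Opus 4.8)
The plan is to expand the perturbed resolvent in a finite Born series and to estimate each free iterate by stationary phase in $\lambda$, absorbing the remainder through the limiting absorption principle. Writing
$R_V^\pm(\lambda^2)=\sum_{k=0}^{N}(-1)^k R_0^\pm(\lambda^2)\big(VR_0^\pm(\lambda^2)\big)^k+(-1)^{N+1}\big(R_0^\pm(\lambda^2)V\big)^{N+1}R_V^\pm(\lambda^2)$,
I would take $N$ large (depending on $\beta$) so that the tail carries enough free resolvents to be integrable after the $\lambda$-integration. The single analytic input used repeatedly is the large-argument form of the free kernel $R_0^\pm(\lambda^2)(x,y)=\pm\tfrac{i}{4}H_0^\pm(\lambda|x-y|)$, namely $R_0^\pm(\lambda^2)(x,y)=c\,\lambda^{-1/2}|x-y|^{-1/2}e^{\pm i\lambda|x-y|}\,\omega_\pm(\lambda|x-y|)$ on $\lambda|x-y|\gtrsim1$ with $\omega_\pm$ a symbol of order zero, together with its smooth bounded behaviour on $\lambda|x-y|\les1$. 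Since the measure $\lambda\,d\lambda$ multiplies a kernel decaying only like $\lambda^{-(k+1)/2}$ in the $k$-th iterate, the integrand grows like $\lambda^{1/2}$ for $k=0$ and is merely $O(1)$ for $k=1$; there is no absolute convergence, which is precisely why the statement is phrased with $\sup_{L\ge1}$ over the smooth upper cutoff $\chi(\lambda/L)$, and every bound must come from oscillation that is stable as $L\to\infty$.

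For the free term $k=0$ I would insert the Bessel asymptotics and split according to whether $|x-y|\les t$ or $|x-y|\gtrsim t$. Writing the cosine as a sum of $e^{\pm i\lambda|x-y|}$, the branch with phase $t\lambda^2+\lambda|x-y|$ has derivative $2t\lambda+|x-y|\gtrsim t$ on $\mathrm{supp}\,\widetilde\chi$, so repeated integration by parts via $e^{it\lambda^2}=(2it\lambda)^{-1}\D_\lambda e^{it\lambda^2}$ gives $O(t^{-N})$ for every $N$, uniformly in $L$ because the boundary terms are killed by the cutoffs and by oscillation at $\lambda=\infty$. The branch with phase $t\lambda^2-\lambda|x-y|$ has a stationary point $\lambda_\ast=|x-y|/(2t)$, which lies in $\mathrm{supp}\,\widetilde\chi$ only when $|x-y|\gtrsim t$; there stationary phase against the amplitude $\sim\lambda^{1/2}|x-y|^{-1/2}$ yields a contribution of size $|x-y|^{-1/2}\lambda_\ast^{1/2}|\phi''|^{-1/2}\sim t^{-1}$, while for $|x-y|\les t$ the absence of a stationary point again gives $O(t^{-N})$. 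The point is that the worst case $|x-y|\gtrsim t$ forces $t\les|x-y|\les\la x\ra\la y\ra$, so $t^{-1}=t^{-3/2}t^{1/2}\les t^{-3/2}(\la x\ra\la y\ra)^{1/2}\les t^{-3/2}\la x\ra^{3/2}\la y\ra^{3/2}$, and the claimed bound holds with room to spare. Thus the extra half-power of $t$ in the statement is purchased entirely by the spatial weight, the unweighted pointwise rate being the classical $t^{-1}$.

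For the iterated terms $1\le k\le N$ I would write the kernel as a $k$-fold integral over intermediate points $y_1,\dots,y_k$ and, after inserting the Bessel expansions, reduce to a one-dimensional oscillatory integral with phase $t\lambda^2+\lambda\Phi$, where $\Phi=\pm|x-y_1|\pm\cdots\pm|y_k-y|$ depends on the branch choices. The same non-stationary/stationary dichotomy applies, each stationary phase producing $t^{-1}$, and the potential decay $|V|\les\la x\ra^{-2\beta}$ with $2\beta>3$ ensures absolute convergence of the $y_1,\dots,y_k$ integrals against the intermediate factors $|y_j-y_{j+1}|^{-1/2}$; the endpoint factors $|x-y_1|^{-1/2}$ and $|y_k-y|^{-1/2}$ are converted into powers of $\la x\ra$ and $\la y\ra$ precisely when the corresponding distances exceed $t$, which is what fixes the weight at $\la x\ra^{3/2}\la y\ra^{3/2}$. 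The tail $(R_0^\pm V)^{N+1}R_V^\pm$ I would handle by the limiting absorption principle: the high-energy bound $\|R_V^\pm(\lambda^2)\|_{L^{2,\sigma}\to L^{2,-\sigma}}\les\lambda^{-1}$ for large $\lambda$, combined with the smoothing supplied by the $N+1$ free resolvents and a sufficient number of integrations by parts in $\lambda$, renders this term integrable and $O(t^{-3/2})$ in the weighted sense once $N$ exceeds a threshold determined by $\beta$.

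The step I expect to be the main obstacle is controlling the conditional convergence uniformly in $L$. Because the free integrand grows like $\lambda^{1/2}$ and the first Born term is only $O(1)$, no estimate may be made absolutely, so each bound must be produced by integration by parts or stationary phase that survives relaxing the cutoff $\chi(\lambda/L)$; one must check that differentiating $\chi(\lambda/L)$ contributes only $O(L^{-1})$ over a region of length $O(L)$, hence $O(1)$ uniformly, and that the stationary-phase remainders decay in $t$ at least as fast as the leading terms. Interlaced with this is the need to sum the Born series, which forces the geometric accumulation of $\la\cdot\ra^{3/2}$ weights and $\lambda$-powers to be beaten by the potential decay; this is where the hypothesis $2\beta>3$ is essential and where the bookkeeping is heaviest.
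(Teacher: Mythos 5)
The paper does not reprove this statement at all; it is imported verbatim as Theorem~5.1 of \cite{EGw}, and the proof given there is essentially the route you propose: a finite Born series expansion of $R_V^{\pm}$, oscillatory-integral analysis of the phases $t\lambda^2\pm\lambda\Phi$ for each iterate (non-stationary regimes handled by repeated integration by parts uniformly in the cutoff $\chi(\lambda/L)$, the stationary regime occurring only when the relevant distance exceeds $t$, which is exactly how the spatial weight $\la x\ra^{\f32}\la y\ra^{\f32}$ buys the extra half power of $t^{-1}$), and the limiting absorption principle bounds $\|\partial_\lambda^k R_V^{\pm}(\lambda^2)\|_{L^{2,\sigma}\to L^{2,-\sigma}}\les \lambda^{-1}$ for the tail, with the potential decay $2\beta>3$ supplying the intermediate weights. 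Your outline is therefore correct and matches the cited argument, up to one harmless misstatement: on $\lambda|x-y|\les 1$ the free kernel is logarithmically singular rather than ``smooth and bounded,'' but since $|H_0^{\pm}(z)|\les z^{-1/2}$ holds for all $z>0$ and the singularity is locally integrable against $V$ and the intermediate variables, no step of your scheme is affected.
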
 
 we obtain 
 $$
\big| e^{itH}P_{ac}(H)(x,y) -\frac1t F(x,y)\big|\les \f{\sqrt{w(x)w(y)}}{t\log^2(t)}+
		\f{\la x\ra^{\f32} \la y\ra^{\f32}}{t^{1+\alpha}}.
 $$ 
Interpolating this with 
  \be\nn
	\big| e^{itH}P_{ac}(H)(x,y)  \big| \les \frac{1}{t}
\ee 
from  \cite{EG} which is satisfied when there is a resonance of the first kind at zero and using the inequality (see, e.g., \cite{EGw}):
$$\min\big(1,\f{a}{b}\big)\les \f{\log^2(a)}{\log^2(b)},\,\,\,\,\,\,\,a,b>2, $$	
we  obtain 
\begin{align*}
    \big| e^{itH}P_{ac}(H)(x,y) -\frac1t F(x,y)\big|\les \f{w(x)w(y)}{t \log^2(t)},\,\,\,\,\,\,\,\,t>2.
	\end{align*}
This implies Theorem~\ref{main1}.

\subsection{The Free Resolvent and Resolvent expansion around zero when there is a resonance of the first kind at zero} \label{sec:exp}  
This subsection is devoted to obtain an expansion for the spectral density $[R_V^+(\lambda^2)-R_V^-(\lambda^2)](x,y)$. Recall that in $\R^n$ the integral kernel of the free resolvent is given by Hankel functions \cite{JN}.

For $n=2$ 
\begin{align}\label{R0 def}
\begin{split}
	R_0^\pm(\lambda^2)(x,y)=\pm {\f i 4 } H_0^{\pm}( \lambda |x-y|) =\pm {\f i 4 } \big[J_0(\lambda|x-y| ) \pm i Y_0(\lambda |x-y| ) \big] 
	     \end{split}
	     \end{align}
Here $J_0(z)$ and $Y_0(z)$ are  Bessel functions of the first and second kind of order zero. We use the notation $f=\widetilde{O}(g) $ to indicate 
  \begin{align} 
  \label{frak}
   \frac{d^j}{d\lambda ^j} f = O(\frac{d^j}{d\lambda^j} g ), \hspace{4mm} j=0,1,2,....,. 
     \end{align}
If (\ref{frak}) is satisfied only for $j=1,2,3,..,k$ we use the notation $f=\widetilde{O}_k(g) $.

 For $|z|\ll1$, we have the series expansions for Bessel functions, (see, e.g., \cite{AS,EG}),    
\begin{align}
 	J_0(z)&=1-\frac{1}{4}z^2+\frac{1}{64}z^4+\widetilde O_6(z^6),\label{J0 def}\\
 	Y_0(z)&=\frac2\pi \log(z/2)+\f{2\gamma}{\pi}+ \widetilde O(z^2\log(z)) \label{Y0 def}.
               \end{align} 
For any $\mathcal C\in\{J_0, Y_0\}$ we also have the following representation if $ |z| \gtrsim1 $.
     \begin{align}
     \label{largr}
      	\mathcal C(z)=e^{iz} \omega_+(z)+e^{-iz}\omega_-(z), \qquad
     	\omega_{\pm}(z)=\widetilde O\big((1+|z|)^{-\frac{1}{2}}\big)
          \end{align}
          
 We prove two lemmas on the behavior of $R^{\pm}_0(\lambda^2)(x,y)$ for sufficiently small $\lambda$.\   
\begin{lemma} 
\label{YJ}
Let  $\chi$ be a smooth cutoff for $[-1,1]$, and $\widetilde \chi=1-\chi$. Define $\widetilde{J_0}(z)= \widetilde{\chi}(z)J_0(z).$ Then
 $$|\widetilde J_0(\lambda |z|)| \les \lambda ^{1/2}|z|^{1/2}, \quad |\partial_\lambda \widetilde{J}_0(\lambda |z|)|\les \lambda ^{-1/2}|z|^{1/2}, \quad |\partial_\lambda^2 \widetilde{J}_0(\lambda|z|)|\les \lambda ^{-1/2}|z|^{3/2} .$$
The same bound is satisfied when $J_0(\lambda |z|)$ is replaced with $Y_0(\lambda |z|)$ or $H_0(\lambda |z|)$.
  \end{lemma}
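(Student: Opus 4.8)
The plan is to reduce everything to the large-argument representation \eqref{largr} together with the product and chain rules, exploiting that the cutoff $\widetilde\chi(\lambda|z|)$ forces the Bessel argument to satisfy $\lambda|z|\gtrsim 1$. First I would record the key pointwise fact: writing the generic argument as $u$, the representation $\mathcal C(u)=e^{iu}\omega_+(u)+e^{-iu}\omega_-(u)$ with $\omega_\pm=\widetilde O((1+|u|)^{-1/2})$ shows that not only $\mathcal C(u)$ but also $\mathcal C'(u)$ and $\mathcal C''(u)$ are $O(u^{-1/2})$ for $u\gtrsim 1$. Indeed, each $u$-derivative either falls on the oscillatory factor $e^{\pm iu}$, producing a harmless $O(1)$ multiple, or on $\omega_\pm$, which only improves the decay; so the worst term is always of size $u^{-1/2}$.

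For the zeroth-order bound this is immediate: on the support of $\widetilde\chi(\lambda|z|)$ we have $\lambda|z|\gtrsim 1$, hence $|\widetilde J_0(\lambda|z|)|\les(\lambda|z|)^{-1/2}\les(\lambda|z|)^{1/2}=\lambda^{1/2}|z|^{1/2}$, where the second inequality only uses $u^{-1/2}\le u^{1/2}$ for $u\ge 1$. The stated bound is deliberately the weaker, growing one, which is the form convenient for the later $\lambda$-integrations.

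For the derivatives I would apply the chain rule, which produces a factor $|z|$ per differentiation, and then the product rule in $\partial_\lambda$. Writing $\partial_\lambda\widetilde J_0(\lambda|z|)=|z|\big[\widetilde\chi' J_0+\widetilde\chi J_0'\big](\lambda|z|)$, the second term is controlled by $|z|(\lambda|z|)^{-1/2}=\lambda^{-1/2}|z|^{1/2}$ using the derivative bound above, while the term with $\widetilde\chi'$ is supported on the annulus $\lambda|z|\sim 1$, on which $|z|\sim\lambda^{-1}$ and hence $|z|\sim\lambda^{-1/2}|z|^{1/2}$. The second derivative is handled identically: expanding $\partial_\lambda^2\widetilde J_0(\lambda|z|)=|z|^2\big[\widetilde\chi'' J_0+2\widetilde\chi' J_0'+\widetilde\chi J_0''\big](\lambda|z|)$, the $\widetilde\chi$ term gives $|z|^2(\lambda|z|)^{-1/2}=\lambda^{-1/2}|z|^{3/2}$, while the two cutoff-derivative terms are localized to $\lambda|z|\sim 1$, where $|z|^2\sim\lambda^{-1/2}|z|^{3/2}$.

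Finally, the same argument applies verbatim to $Y_0$ because \eqref{largr} holds for any $\mathcal C\in\{J_0,Y_0\}$, and to the Hankel function $H_0=J_0+iY_0$ by linearity. I do not expect a genuine obstacle here; the only point requiring care is the bookkeeping for the terms in which a $\lambda$-derivative lands on the cutoff, where one must use the localization $\lambda|z|\sim 1$ to convert the extra powers of $|z|$ into the negative powers of $\lambda$ appearing in the claimed estimates.
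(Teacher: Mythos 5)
Your proposal is correct and follows essentially the same route as the paper: both arguments rest on the large-argument representation \eqref{largr}, the fact that $\lambda|z|\gtrsim 1$ on the support of $\widetilde\chi$, and the observation that each $\lambda$-derivative produces a factor $|z|$ while the amplitude decay $(1+\lambda|z|)^{-1/2}$ (or better, when the derivative hits $\omega_\pm$) converts it into the stated powers of $\lambda$ and $|z|$. If anything, you are slightly more thorough than the paper's displayed computation, since you explicitly handle the terms where derivatives land on the cutoff $\widetilde\chi$ via the localization $\lambda|z|\sim 1$, a step the paper leaves implicit.
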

\begin{proof} 
Using (\ref{largr}) we have
   \begin{align*}
   \widetilde{J_0}(\lambda z)&=\Big|\widetilde O\Big(\f{e^{i\lambda|z|}}{(1+\lambda|z|)^{1/2}}\Big)\Big| \les |\lambda z|^{0+} \les \lambda ^{1/2} |z|^{1/2} \\
  |\partial_\lambda \widetilde{J_0}(\lambda z)|&=\widetilde {O}\Big(\dfrac{ze^{i\lambda z}}{(1+\lambda z)^{1/2}} +\dfrac{ze^{i\lambda z}}{(1+\lambda z)^{3/2}}\Big) \les \lambda ^{-1/2}|z|^{1/2}\big[e^{i\lambda z}+\dfrac{e^{i\lambda z}}{\lambda z}] \les \lambda ^{-1/2} |z|^{1/2} \\
   |\partial_\lambda^2 \widetilde{J_0}(\lambda z)|&=\widetilde O\Big(\dfrac{z^2e^{i\lambda z}}{(1+\lambda z)^{1/2}} +\dfrac{z^2e^{i\lambda z}}{(1+\lambda z)^{3/2}}+\dfrac{z^2e^{i\lambda z}}{(1+\lambda z)^{5/2}}\Big) \les \lambda ^{-1/2}|z|^{3/2} 
    \end{align*}
\end{proof}
Define
	\begin{align}
		G_0f(x)&:=-\frac{1}{2\pi}\int_{\R^2} \log|x-y|f(y)\,dy, \label{G0 def}\\
	\label{g form}
			g^{\pm}(\lambda)&:= \|V\|_1\Big(\pm \frac{i}{4}-\frac{1}{2\pi}\log(\lambda/2)-\frac{\gamma}{2\pi} \Big).
	\end{align}
The following lemma and its corollary are Lemma 3.1 and Corollary 3.2 in \cite{EGw}.
\begin{lemma}\label{R0 exp cor}	
	The following expansion is valid for the kernel of the free resolvent
	\begin{align*}
		R_0^{\pm}(\lambda^2)(x,y)=\frac{1}{\|V\|_1} g^{\pm}(\lambda)+G_0(x,y)
		+E_0^{\pm}(\lambda)(x,y).
	\end{align*}
		$G_0(x,y)$ is the kernel of the operator $G_0$ in \eqref{G0 def}, and $E_0^{\pm}$ satisfies the bounds
	\begin{align*}
		|E_0^{\pm}|\les \lambda^{\frac{1}{2} }|x-y|^{\frac{1}{2} }, \qquad
		|\partial_\lambda E_0^{\pm}|\les \lambda^{-\frac{1}{2} }|x-y|^{\frac{1}{2} }, \qquad|\partial_\lambda^2 E_0^{\pm}|\les \lambda^{-\frac{1}{2} }|x-y|^{\frac{3}{2}}.
	\end{align*}
\end{lemma}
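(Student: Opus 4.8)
The plan is to read off the expansion directly from the closed form of the kernel in \eqref{R0 def}. Writing $z=\lambda|x-y|$, formula \eqref{R0 def} gives
$$R_0^\pm(\lambda^2)(x,y)=\pm\frac{i}{4}J_0(z)-\frac14 Y_0(z).$$
Feeding in the leading terms of the small-argument expansions \eqref{J0 def}--\eqref{Y0 def}, namely $J_0(z)\approx1$ and $Y_0(z)\approx\frac2\pi\log(z/2)+\frac{2\gamma}{\pi}$, produces
$$\pm\frac i4-\frac1{2\pi}\log(z/2)-\frac{\gamma}{2\pi}=\pm\frac i4-\frac1{2\pi}\log(\lambda/2)-\frac{\gamma}{2\pi}-\frac1{2\pi}\log|x-y|,$$
which is exactly $\frac{1}{\|V\|_1}g^\pm(\lambda)+G_0(x,y)$ by the definitions \eqref{G0 def} and \eqref{g form}. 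I would therefore \emph{define} $E_0^\pm$ to be the difference of $R_0^\pm$ and this leading part; the whole content of the lemma is the three derivative bounds on $E_0^\pm$, which I would establish by splitting into the ranges $z\les1$ and $z\gtrsim1$.

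On the low-frequency range $z\les1$ I would use the \emph{quantitative} remainders already recorded in \eqref{J0 def} and \eqref{Y0 def}: after removing the constant of $J_0$ and the logarithm and constant of $Y_0$, what survives is $J_0(z)-1=\widetilde O(z^2)$ and $Y_0(z)-\frac2\pi\log(z/2)-\frac{2\gamma}{\pi}=\widetilde O(z^2\log z)$, so that $E_0^\pm=\widetilde O(z^2\log z)$ there. Differentiating $z^2\log z$ in $\lambda$ (with $\partial_\lambda z=|x-y|$) and tracking the powers of $\lambda$ and $|x-y|$, the three claimed bounds reduce to the elementary inequalities $z^{3/2}(1+|\log z|)\les1$ and $z^{1/2}(1+|\log z|)\les1$ for $0<z\les1$, which hold since each left-hand side tends to $0$ as $z\to0$ and is continuous on $(0,1]$.

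On the high-frequency range $z\gtrsim1$ the two pieces of $E_0^\pm$ are treated separately. The resolvent piece $\pm\frac i4 J_0(z)-\frac14 Y_0(z)$ is controlled directly by Lemma~\ref{YJ}, which already yields $|R_0^\pm|\les\lambda^{1/2}|x-y|^{1/2}$, $|\partial_\lambda R_0^\pm|\les\lambda^{-1/2}|x-y|^{1/2}$ and $|\partial_\lambda^2 R_0^\pm|\les\lambda^{-1/2}|x-y|^{3/2}$. The subtracted leading part depends on $\lambda$ only through $\log(\lambda/2)$, so its first two $\lambda$-derivatives are $-\frac1{2\pi\lambda}$ and $\frac1{2\pi\lambda^2}$; the required bounds then amount to $1+|\log z|\les z^{1/2}$, $\lambda^{-1}\les\lambda^{-1/2}|x-y|^{1/2}$ and $\lambda^{-2}\les\lambda^{-1/2}|x-y|^{3/2}$, each of which is a positive power of $z$ being $\gtrsim1$ and hence valid on this range. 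Combining the two ranges gives the stated estimates on $E_0^\pm$.

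The only delicate point is the bookkeeping required to make one pair of bounds hold uniformly in both ranges. The key is to recombine $\log(\lambda/2)$ and $\log|x-y|$ into the single logarithm $\log(\lambda|x-y|/2)=\log z$ before comparing with powers, since it is only in the combined variable $z$ that the clean comparisons $z^a(1+|\log z|)\les z^c$ are available, with $c$ taken small when $z\les1$ and large when $z\gtrsim1$. The $\lambda^{-1/2}$ factors in the derivative bounds, which look singular as $\lambda\to0$, are exactly what differentiation of the logarithmic and error terms produces and are rendered harmless by the constraint on $z$; beyond this, the argument needs nothing more than the Bessel data in \eqref{J0 def}--\eqref{largr} and Lemma~\ref{YJ}.
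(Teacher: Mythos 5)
Your proposal is correct: the identification of the leading part with $\frac{1}{\|V\|_1}g^{\pm}(\lambda)+G_0(x,y)$, the reduction of the low-frequency bounds (via $E_0^{\pm}=\widetilde O(z^2\log z)$, $z=\lambda|x-y|$) to the elementary inequalities $z^{3/2}(1+|\log z|)\les 1$ and $z^{1/2}(1+|\log z|)\les 1$, and the high-frequency treatment via Lemma~\ref{YJ} together with the explicit $\lambda$-derivatives of the logarithmic part all check out. Note that the paper gives no proof of its own here --- it imports the statement verbatim as Lemma 3.1 and Corollary 3.2 of \cite{EGw} --- and your argument is essentially the standard proof found in that reference, so there is nothing to add.
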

\begin{corollary} \label{lipbound} For $0<\alpha<1$  and $b>a>0$ we have
$$
|\partial_\lambda E_0^\pm(b)-\partial_\lambda E_0^\pm(a)|\les a^{-\f12} |b-a|^{\alpha} |x-y|^{\frac12+\alpha}.
$$
\end{corollary}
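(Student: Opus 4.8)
The plan is to obtain the claimed H\"older estimate by interpolating between two elementary bounds, both of which follow immediately from the derivative estimates for $E_0^\pm$ recorded in Lemma~\ref{R0 exp cor}. First I would establish the endpoint corresponding to $\alpha=0$. Using the triangle inequality together with the first-derivative bound $|\partial_\lambda E_0^\pm(\lambda)|\les\lambda^{-1/2}|x-y|^{1/2}$, and the hypothesis $b>a>0$ (so that $b^{-1/2}\le a^{-1/2}$), I get
$$ |\partial_\lambda E_0^\pm(b)-\partial_\lambda E_0^\pm(a)|\leq |\partial_\lambda E_0^\pm(b)|+|\partial_\lambda E_0^\pm(a)|\les a^{-1/2}|x-y|^{1/2}. $$

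Next I would establish the endpoint corresponding to $\alpha=1$. By the fundamental theorem of calculus and the second-derivative bound $|\partial_\lambda^2 E_0^\pm(s)|\les s^{-1/2}|x-y|^{3/2}$, then estimating $s^{-1/2}\le a^{-1/2}$ for $s\in[a,b]$,
$$ |\partial_\lambda E_0^\pm(b)-\partial_\lambda E_0^\pm(a)|=\bigg|\int_a^b \partial_\lambda^2 E_0^\pm(s)\,ds\bigg|\les a^{-1/2}\,(b-a)\,|x-y|^{3/2}. $$

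Finally I would interpolate between these: raising the first bound to the power $1-\alpha$ and the second to the power $\alpha$ and multiplying, the quantity $|\partial_\lambda E_0^\pm(b)-\partial_\lambda E_0^\pm(a)|$ is controlled by
$$ \big(a^{-1/2}|x-y|^{1/2}\big)^{1-\alpha}\big(a^{-1/2}(b-a)|x-y|^{3/2}\big)^{\alpha}=a^{-1/2}\,|b-a|^{\alpha}\,|x-y|^{\frac12+\alpha}, $$
which is exactly the asserted inequality. There is no genuine obstacle here; the only care needed is in the bookkeeping of the interpolation exponents, namely checking that the power of $a$ telescopes to $-1/2$ independently of $\alpha$, that the power of $|x-y|$ combines to $\frac12+\alpha$, and that the power of $(b-a)$ becomes $\alpha$, together with invoking $b>a$ in both endpoint steps so that $a^{-1/2}$ is the dominant weight.
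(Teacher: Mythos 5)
Your proof is correct and is essentially the argument behind this corollary: the paper cites it from Corollary 3.2 of \cite{EGw}, whose proof (and the analogous derivation of \eqref{eq;differ} in this paper) is exactly this interpolation between the triangle-inequality bound from $|\partial_\lambda E_0^\pm|\les \lambda^{-1/2}|x-y|^{1/2}$ and the mean-value/fundamental-theorem bound from $|\partial_\lambda^2 E_0^\pm|\les \lambda^{-1/2}|x-y|^{3/2}$. Your exponent bookkeeping checks out, so there is nothing to add.
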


Define $U(x)$ as $U(x)=1$ when $V(x)>0$ and $U(x)=-1$ when $V(x) \leq 0$, and $v(x)=|V(x)|^{1/2}$. Then using the symmetric resolvent identity for  $\Im\lambda>0$, we have
\begin{align}  \label{rv} 
  R_V ^ {\pm}(\lambda ^2) 
 =R_0 ^ {\pm}(\lambda^2) - R_0 ^ {\pm}(\lambda^2)v M^{\pm}(\lambda)^{-1}vR_0 ^ {\pm}(\lambda^2), 
\end{align}
 where 
\begin{align}
 M^{\pm}(\lambda) = U + vR_0 ^ {\pm}(\lambda^2)v. 
\end{align}
Here we  derive an expansion for $M^{\pm}(\lambda)^{-1}$ in a small neighborhood of zero when there is a resonance of the first kind at zero. This derivation is similar to that in \cite{EGw}. However, we need finer control on the error term.

 Let $K: L^2(\R^n) \rightarrow L^2(\R^n) $ with kernel $K(x,y)$. We define the Hilbert-Schmidt norm of K as

$$\|K\|_{HS} := \sqrt{\int_{\R^n} \int_{\R^n} |K(x,y)|^2 dy dx }.$$

\begin{lemma} \label{lem:M_exp} Let $0<\alpha<1$.
	For $\lambda>0$ define $M^\pm(\lambda):=U+vR_0^\pm(\lambda^2)v$. Then
	\begin{align*}
		M^{\pm}(\lambda)=g^{\pm}(\lambda)P+T+E_1^{\pm}(\lambda).
	\end{align*}
	Here
$T=U+vG_0v$ where $G_0$ is an
	integral operator defined in \eqref{G0 def} and P is the orthogonal projection onto $v$.
In addition, the error term satisfies the bound
	\begin{multline*}
		\big\| \sup_{0<\lambda<\lambda_1} \lambda^{-\frac{1}{2}} |E_1^{\pm}(\lambda)|\big\|_{HS}
		+\big\| \sup_{0<\lambda<\lambda_1} \lambda^{\frac{1}{2}} |\partial_\lambda E_1^{\pm}(\lambda)|\big\|_{HS}	
		\\+\big\| \sup_{0<\lambda<b<\lambda_1} \lambda^{\frac{1}{2}} (b-\lambda)^{-\alpha} |\partial_\lambda E_1^{\pm}(b)-\partial_\lambda E_1^\pm(\lambda)|\big\|_{HS}	
		\les 1
	\end{multline*}
	provided that $v(x)\lesssim \langle x\rangle^{-\frac{3}{2}-\alpha-}$.
	
\end{lemma}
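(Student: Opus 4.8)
The plan is to substitute the free resolvent expansion of Lemma~\ref{R0 exp cor} directly into the definition of $M^\pm(\lambda)$ and read off the three pieces. Sandwiching the expansion
$$R_0^{\pm}(\lambda^2)(x,y)=\tfrac{1}{\|V\|_1} g^{\pm}(\lambda)+G_0(x,y)+E_0^{\pm}(\lambda)(x,y)$$
between factors of $v$ gives the kernel
$$v(x)R_0^{\pm}(\lambda^2)(x,y)v(y)=\tfrac{g^\pm(\lambda)}{\|V\|_1}\,v(x)v(y)+v(x)G_0(x,y)v(y)+v(x)E_0^\pm(\lambda)(x,y)v(y).$$
Since $P$ is the orthogonal projection onto $v$, its kernel is $P(x,y)=v(x)v(y)/\|v\|_2^2$ and $\|v\|_2^2=\int|V|=\|V\|_1$, so the first term is exactly $g^\pm(\lambda)P$. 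Adding $U$ and setting $T=U+vG_0v$, I obtain $M^\pm(\lambda)=g^\pm(\lambda)P+T+E_1^\pm(\lambda)$ with the identification $E_1^\pm(\lambda)(x,y)=v(x)E_0^\pm(\lambda)(x,y)v(y)$. Thus the entire content of the lemma reduces to the three Hilbert--Schmidt bounds on $E_1^\pm$.

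For the first two bounds I would argue pointwise in the kernel and only afterwards take the Hilbert--Schmidt norm. Using the first two estimates of Lemma~\ref{R0 exp cor}, $\lambda^{-1/2}|E_1^\pm(\lambda)(x,y)|\les v(x)|x-y|^{1/2}v(y)$ and $\lambda^{1/2}|\partial_\lambda E_1^\pm(\lambda)(x,y)|\les v(x)|x-y|^{1/2}v(y)$, and crucially both right-hand sides are independent of $\lambda$; hence the supremum over $0<\lambda<\lambda_1$ is controlled by the same quantity. It then suffices to show $v(x)|x-y|^{1/2}v(y)$ is Hilbert--Schmidt, i.e. $\int\int v(x)^2|x-y|v(y)^2\,dx\,dy<\infty$. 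Using $|x-y|\les\la x\ra\la y\ra$ this factors as $\big(\int v(x)^2\la x\ra\,dx\big)^2$, and $v(x)^2\la x\ra\les\la x\ra^{-2-2\alpha-}$ is integrable on $\R^2$.

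For the third (Hölder-difference) bound I would invoke Corollary~\ref{lipbound} with $a=\lambda<b$: it gives $|\partial_\lambda E_0^\pm(b)-\partial_\lambda E_0^\pm(\lambda)|\les\lambda^{-1/2}(b-\lambda)^\alpha|x-y|^{1/2+\alpha}$, so the weights $\lambda^{1/2}(b-\lambda)^{-\alpha}$ cancel exactly and
$$\sup_{0<\lambda<b<\lambda_1}\lambda^{1/2}(b-\lambda)^{-\alpha}\,|\partial_\lambda E_1^\pm(b)-\partial_\lambda E_1^\pm(\lambda)|\les v(x)|x-y|^{1/2+\alpha}v(y).$$
Its Hilbert--Schmidt norm is finite provided $\int v(x)^2\la x\ra^{1+2\alpha}\,dx<\infty$, and $v(x)^2\la x\ra^{1+2\alpha}\les\la x\ra^{-2-}$ is integrable on $\R^2$.

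The main obstacle---indeed the only place the hypotheses are used sharply---is this last integrability check: it is what forces the decay assumption $v(x)\les\la x\ra^{-3/2-\alpha-}$ (equivalently $|V|\les\la x\ra^{-3-2\alpha-}$). I would therefore take care to keep the difference estimate of Corollary~\ref{lipbound} at the borderline power $|x-y|^{1/2+\alpha}$, since any loss here would degrade the admissible range of $\alpha$ and hence the decay required of $V$. Finally, the exchange of supremum and Hilbert--Schmidt norm is legitimate precisely because each pointwise bound is already $\lambda$-independent, so no dominated-convergence subtlety arises.
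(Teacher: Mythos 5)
Your proposal is correct and follows essentially the same route as the paper: identify $E_1^{\pm}(\lambda)=vE_0^{\pm}(\lambda)v$ via Lemma~\ref{R0 exp cor} (using $P(x,y)=v(x)v(y)/\|V\|_1$), then apply the pointwise bounds of Lemma~\ref{R0 exp cor} and Corollary~\ref{lipbound} together with the fact that $v(x)|x-y|^k v(y)$ is Hilbert--Schmidt on $L^2(\R^2)$ when $v(x)\les\la x\ra^{-k-1-}$ with $k\le\f12+\alpha$. You merely spell out the $\lambda$-uniformity of the kernel bounds and the integrability checks that the paper leaves implicit.
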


\begin{proof} Note that
$$
		E_1^{\pm}(\lambda) =M^{\pm}(\lambda)-[g^{\pm}(\lambda)P+T]=  vR_0^{\pm}(\lambda^2)v - g^{\pm}(\lambda)P-vG_0v =vE_0^\pm(\lambda)v.
$$
Lemma~\ref{R0 exp cor} and Corollary~\ref{lipbound} yield the lemma since $v(x)|x-y|^k v(y)$ is Hilbert-Schmidt on $L^{2}(\R^2)$ provided that $v(x)\les \la x\ra^{-k-1-}$. In our case  $k\leq \f12+\alpha$ and $v(x)\les \la x\ra^{-3/2-\alpha}$.
\end{proof}
The following definitions are from  \cite{Sc2} and \cite{JN} respectively,

\begin{defin}
We say that an operator $T:L^2(\R^2)\to L^2(\R^2)$ with kernel
	$T(\cdot,\cdot)$ is absolutely bounded if the operator with kernel
	$|T(\cdot,\cdot)|$ is bounded from $L^2(\R^2)$ to $L^2(\R^2)$.
\end{defin}

Hilbert-Schmidt operators and finite rank operators are absolutely bounded. 

\begin{defin} \label{swave}
(1)\ Let $Q:=\mathbbm{1}-P$, then zero is defined to be a regular point of the spectrum of $ H= -\Delta +V $ if $QTQ = Q(U+vG_0v)Q$ is invertible on $QL^2(\R^2)$.\\
(2)\ If zero is not a regular point of spectrum then $QTQ+S_1$ is invertible on $QL^2(\R^2)$ and we define $D_0=(QTQ+S_1)^{-1}$ as an operator on $QL^2(\R^2)$. Here $S_1$ is defined as the Riesz projection onto the Kernel of $QTQ$ as an operator on $QL^2(\R^2)$. \\
(3)\ We say there is a resonance of the first kind at zero if the operator $T_1:= S_1TPTS_1$ is invertible on $S_1L^2(\R^2)$ and we define $D_1$ as the inverse of $T_1$ as an operator on $S_1L^2$.
  \end{defin}
\noindent
{\bf Remarks.}  
\begin{enumerate}
\item Throughout this paper we assume there is a resonance of the first kind at zero. Thus, $QTQ$ is not invertible on $QL^2 $ but $QTQ+S_1$ and $T_1:= S_1TPTS_1$ are invertible on $QL^2 $ and $S_1 L^2$ respectively. 
 
\item By Jensen and Nenciu, we know that Range$(S_1 - S_2)$, $S_2$  being the orthogonal projection on Ker $ T_1$  has dimension at most one [Theorem 6.2 in \cite{JN}]. Since in our case $S_2 \equiv 0$, Range $S_1$ has dimension one and we can write $S_1f=\phi \la f, \phi\ra$ for some $\phi\in S_1L^2$ with $\|\phi\|_{L^2}=1$.
\item Again by Jensen and Nenciu for $v \les \la x \ra ^{-1-} $,  we have $\phi= \omega \psi $ for   an s-wave resonance $\psi\in L^\infty \backslash \big(\cup_{p<\infty} L^p\big)$  such that   $H\psi=0$ in the sense of distributions,  and 
\begin{align}\label{j1}
  \psi = c_0 + G_0v\phi,
    \end{align}
where   
\begin{align} \label{j2}
  c_0 = \frac1{\|V\|_{L^1}} \la v, T\phi\ra = \frac{1}{\|V\| _1}\int v(x) \  T\phi(x) dx.
    \end{align}   
    \item In light of Remark (3)  we have 
    $$ T_1=S_1PTPS_1=\|V\|_1 c_0^2  S_1,\,\,\,\,\,\,  D_1= T_1^{-1}= \f 1{\|V\|_1 c_0^2} S_1.$$ 
\end{enumerate}
The following lemmas are given without proofs.          
 \begin{lemma}  [Lemma 2.1 in \cite{JN}] \label{closed} Let $A$ be closed operator on a Hilbert space  $\mathcal{H}$ and $S$ a projection. Assume $A+S$ has a bounded inverse. Then A has bounded inverse if and only if $ B:= S-S(A+S)^{-1}S$ has a bounded inverse in               $S\mathcal{H}$ and in this case

 $$ A^{-1}=(A+S)^{-1}+(A+S)^{-1}SB^{-1}S(A+S)^{-1} . $$
 
\end{lemma}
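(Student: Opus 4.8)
The plan is to reduce the whole statement to a single Neumann-type factorization and an elementary swap identity. First I would set $C:=(A+S)^{-1}$, which is a \emph{bounded} operator by hypothesis, with $\mathrm{Ran}(C)=\mathrm{Dom}(A+S)=\mathrm{Dom}(A)$. Writing $A=(A+S)-S$ and using $(A+S)CS=S$, I get the factorization $A=(A+S)(I-CS)$, valid as an identity on $\mathrm{Dom}(A)$. Since $A+S$ is a bijection $\mathrm{Dom}(A)\to\mathcal{H}$ with bounded inverse $C$, this reduces the bounded invertibility of the (possibly unbounded, closed) operator $A$ to that of the \emph{bounded} operator $I-CS$ on $\mathcal{H}$, and in the invertible case $A^{-1}=(I-CS)^{-1}C$.

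The main engine is then the elementary fact (Jacobson/Weinstein--Aronszajn type) that for $K\colon S\mathcal{H}\to\mathcal{H}$ and $L\colon\mathcal{H}\to S\mathcal{H}$ the operator $I_{\mathcal{H}}-KL$ is boundedly invertible if and only if $I_{S\mathcal{H}}-LK$ is, with $(I-KL)^{-1}=I+K(I-LK)^{-1}L$. Here $CS$ factors through the range of the projection: taking $L=S$ (viewed as a map onto $S\mathcal{H}$) and $K=C|_{S\mathcal{H}}$, one has $KL=CS$ while $LK=SCS$ on $S\mathcal{H}$. Since $I_{S\mathcal{H}}-LK=S-SCS=B$, this identity simultaneously yields the claimed equivalence ``$A$ invertible $\iff$ $B$ invertible'' and, upon substitution, $(I-CS)^{-1}=I+CSB^{-1}S$; composing with $C$ gives exactly $A^{-1}=C+CSB^{-1}SC=(A+S)^{-1}+(A+S)^{-1}SB^{-1}S(A+S)^{-1}$.

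If one prefers to avoid invoking the swap identity, I would instead verify the two directions by hand. For sufficiency I would simply \emph{guess} the answer $X:=C+CSB^{-1}SC$ and check $XA=AX=I$ by direct algebra, the only inputs being the readily verified relations $CA=I-CS$, $SCA=S-SCS=B$, $AC=I-SC$, and $ACS=B$; these collapse every cross term and leave $I$. For necessity, assuming $A^{-1}$ exists, I would exhibit the inverse of $B$ explicitly as $B^{-1}=S+SA^{-1}S$ on $S\mathcal{H}$ (this is forced by the formula, since $SA^{-1}S=SCS+SCS\,B^{-1}\,SCS=mB^{-1}$ with $m=SCS=S-B$, giving $B^{-1}-S$) and confirm $B(S+SA^{-1}S)=(S+SA^{-1}S)B=S$.

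The genuinely delicate point, and the one I expect to cost the most care, is \emph{not} the algebra but the domain bookkeeping forced by $A$ being merely closed, hence possibly unbounded: ``bounded inverse'' must be read as ``$A$ is a bijection of $\mathrm{Dom}(A)$ onto $\mathcal{H}$ with bounded inverse.'' One must track that $C$ is bounded with $\mathrm{Ran}(C)=\mathrm{Dom}(A)$, that $CS$ is bounded on all of $\mathcal{H}$ and maps $\mathrm{Dom}(A)$ into itself, so that $I-CS$ preserves $\mathrm{Dom}(A)$ and the factorization $A=(A+S)(I-CS)$ is a legitimate operator identity on $\mathrm{Dom}(A)$ rather than a purely formal one. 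Checking at each multiplication that the composite lands in $\mathrm{Dom}(A)$ before $A$ (or $A+S$) is applied is the only subtlety; once the domains are handled, every remaining step is routine.
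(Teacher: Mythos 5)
Your proof is correct, but note there is nothing in the paper to compare it against: the paper states this lemma verbatim as Lemma 2.1 of Jensen--Nenciu \cite{JN} and explicitly says it is ``given without proofs.'' The natural benchmark is therefore the original argument in \cite{JN}, which is essentially your second route: one guesses $X=C+CSB^{-1}SC$ and checks $AX=XA=I$ directly, and for necessity one exhibits $B^{-1}=S+SA^{-1}S$ and verifies it, the key inputs being the two resolvent-type identities $A^{-1}-C=CSA^{-1}=A^{-1}SC$. Your first route --- factorizing $A=(A+S)(I-CS)$ on $\mathrm{Dom}(A)$ and then invoking the $KL$/$LK$ swap identity with $K=C|_{S\mathcal{H}}$, $L=S$, so that $I_{S\mathcal{H}}-LK=B$ --- is a genuinely cleaner packaging: it delivers the equivalence and the inversion formula in a single stroke instead of requiring the inverse to be guessed and checked separately in each direction. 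Two small points deserve care. First, the identities you record as $SCA=B$ and $ACS=B$ are really $SCA=BS$ (on $\mathrm{Dom}(A)$) and $ACS=BS$ (on $\mathcal{H}$), i.e.\ $ACS|_{S\mathcal{H}}=B$, since $B$ acts on $S\mathcal{H}$; this is notational and harmless. Second, in the necessity direction of your first route, the step ``$A$ boundedly invertible $\Rightarrow$ $I-CS$ boundedly invertible on all of $\mathcal{H}$'' does not follow from the factorization alone: the factorization only shows $(I-CS)|_{\mathrm{Dom}(A)}$ is a bijection of $\mathrm{Dom}(A)$, and since $\mathrm{Dom}(A)$ is not assumed dense one cannot extend by continuity; one should instead write down the inverse $I+A^{-1}S$ explicitly and verify $(I-CS)(I+A^{-1}S)=(I+A^{-1}S)(I-CS)=I$ using $CSA^{-1}=A^{-1}-C=A^{-1}SC$. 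This is one line, and your second (direct-verification) route covers the same ground independently, so the proposal stands; your closing emphasis on domain bookkeeping is indeed the only delicate point of the whole lemma.
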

\begin{lemma}[Lemma 2.5 in \cite{EG}] \label{matrix}
 Suppose that zero is not a regular point of the spectrum of $-\Delta+V$, and let $S_1$ be the corresponding Riesz projection. Then for sufficiently small $ \lambda_1 > 0 $, the operators $M^{\pm}(\lambda)+S_1$ are invertible for all $0< \lambda < \lambda_1 $ as bounded operators on $L^2(\R^2)$. And one has 

   \begin{align}\label{m+s}
    \big(M^{\pm}(\lambda)+S_1 \big)^{-1}= h_{\pm}(\lambda)^{-1}S +QD_0Q+ W_1^{\pm}(\lambda ),
        \end{align}
   
provided $ v(x)\les \la x\ra ^{-3/2-\alpha-}$. Here $h_{\pm}(\lambda)= g^{\pm}(\lambda)+c $ where $c\in\R$
and
  	\be\label{S_defn}
  	 	 S=\left[\begin{array}{cc} P & -PTQD_0Q\\ -QD_0QTP & QD_0QTPTQD_0Q
		\end{array}\right]
  	\ee
	is a finite-rank operator with real-valued kernel.  Furthermore, the error term satisfies the bound
	\begin{multline*}
		\big\| \sup_{0<\lambda<\lambda_1} \lambda^{-\frac{1}{2} } |W_1{\pm}(\lambda)|\big\|_{HS}
		+\big\| \sup_{0<\lambda<\lambda_1} \lambda^{\frac{1}{2} } |\partial_\lambda W_1{\pm}(\lambda)|\big\|_{HS}	\\
		+\big\| \sup_{0<\lambda<b\les\lambda<\lambda_1}  \lambda^{\frac{1}{2}+\alpha} (b-\lambda)^{-\alpha} |\partial_\lambda W_1{\pm}(b)-\partial_\lambda W_1\pm(a)| \big\|_{HS}	
		\les 1.
	\end{multline*}
	\end{lemma}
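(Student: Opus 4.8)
The plan is to invert $M^\pm(\lambda)+S_1$ by a Feshbach/Schur-complement reduction in the grading $L^2(\R^2)=PL^2\oplus QL^2$, starting from the expansion $M^\pm(\lambda)=g^\pm(\lambda)P+T+E_1^\pm(\lambda)$ of Lemma~\ref{lem:M_exp}. Writing $M^\pm(\lambda)+S_1$ as a $2\times2$ block operator and using $PS_1=S_1P=0$ together with $PQ=0$, the blocks are $a:=g^\pm(\lambda)P+PTP+PE_1^\pm P$ in the $PP$ slot, $b:=PTQ+PE_1^\pm Q$ and $c:=QTP+QE_1^\pm P$ off the diagonal, and $d:=QTQ+S_1+QE_1^\pm Q$ in the $QQ$ slot.

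First I would eliminate the $Q$ block. Since $QTQ+S_1=D_0^{-1}$ is invertible on $QL^2$ and $\|QE_1^\pm Q\|_{HS}\lesssim\lambda^{1/2}$ by Lemma~\ref{lem:M_exp}, a Neumann series shows $d$ is invertible for $\lambda<\lambda_1$ (with $\lambda_1$ small) and $d^{-1}=QD_0Q$ plus an error of Hilbert--Schmidt size $\lesssim\lambda^{1/2}$. The inversion then reduces to the Schur complement $s:=a-bd^{-1}c$ on the one-dimensional space $PL^2=\mathrm{span}(v)$. To leading order $s_0=g^\pm(\lambda)P+P\big(T-TQD_0QT\big)P=\big(g^\pm(\lambda)+c\big)P=h_\pm(\lambda)P$, where $c=\|V\|_1^{-1}\langle(T-TQD_0QT)v,v\rangle$ is real because $T$ and $D_0$ are self-adjoint with real kernels. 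As $g^\pm(\lambda)\sim-\tfrac{\|V\|_1}{2\pi}\log(\lambda/2)\to\infty$ when $\lambda\to0$, $h_\pm(\lambda)$ is nonzero on $(0,\lambda_1)$, so $s$ is invertible; this is exactly what yields invertibility of $M^\pm(\lambda)+S_1$. Inserting $s^{-1}=h_\pm(\lambda)^{-1}P+\cdots$ and $d^{-1}=QD_0Q+\cdots$ into the block-inversion (Feshbach) formula, which is the operator content of Lemma~\ref{closed}, reproduces at leading order precisely the four blocks of $h_\pm(\lambda)^{-1}S+QD_0Q$, with $S$ the stated finite-rank operator, which has a real-valued kernel since $P,Q,T,D_0$ do.

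Everything not captured by this leading term is collected into $W_1^\pm(\lambda)$, and the task is to verify the three Hilbert--Schmidt bounds. The remainder has three sources: the explicit $E_1^\pm$ factors in $a,b,c$; the deviation $d^{-1}-QD_0Q=-(QD_0Q)\,E_1^\pm\,(QD_0Q)$ plus terms quadratic in $E_1^\pm$; and the scalar deviation $s^{-1}-h_\pm(\lambda)^{-1}P=-h_\pm(\lambda)^{-2}e(\lambda)P+\cdots$, where $e(\lambda)$ gathers the $E_1^\pm$ contributions to the Schur complement and satisfies $|e(\lambda)|\lesssim\lambda^{1/2}$. In each term the $\lambda$-dependence enters only through $E_1^\pm$ and through the scalar $h_\pm(\lambda)^{-1}$, while $P,Q,T,D_0,S_1$ are $\lambda$-independent and bounded (finite-rank, hence absolutely bounded); composing a Hilbert--Schmidt factor with bounded factors stays Hilbert--Schmidt. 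Thus each piece of $W_1^\pm$ inherits the size and first-derivative bounds $|W_1^\pm|\lesssim\lambda^{1/2}$ and $|\partial_\lambda W_1^\pm|\lesssim\lambda^{-1/2}$ from the $E_1^\pm$ bounds of Lemma~\ref{lem:M_exp}, with the powers of $h_\pm(\lambda)^{-1}\sim(\log\lambda)^{-1}$ and its derivative $\partial_\lambda h_\pm(\lambda)^{-1}=-h_\pm(\lambda)^{-2}(g^\pm)'\sim(\lambda\log^2\lambda)^{-1}$ only helping, since the singular $h_\pm^{-1}S$ part has already been subtracted off.

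I expect the genuine work to be the third, H\"older-type estimate on $\partial_\lambda W_1^\pm$. Proving it means differentiating the products produced by the Neumann series and the Feshbach formula, then bounding the increments $\partial_\lambda W_1^\pm(b)-\partial_\lambda W_1^\pm(\lambda)$ by combining the H\"older continuity of $\partial_\lambda E_1^\pm$ (Lemma~\ref{lem:M_exp} and Corollary~\ref{lipbound}) with mean-value bounds for $h_\pm^{-1}$, $h_\pm^{-2}$ and $(g^\pm)'$ across $[\lambda,b]$. The bookkeeping is delicate because $\partial_\lambda$ may fall on an $E_1^\pm$ factor in one term and on an $h_\pm^{-1}$ factor in another; the saving feature is that the worst factor $(g^\pm)'\sim\lambda^{-1}$ always appears multiplied by at least two powers of $h_\pm^{-1}\sim(\log\lambda)^{-1}$, so the weight $\lambda^{1/2+\alpha}(b-\lambda)^{-\alpha}$ is absorbed and the Hilbert--Schmidt norm stays bounded, under the decay hypothesis $v(x)\lesssim\langle x\rangle^{-3/2-\alpha-}$ inherited from Lemma~\ref{lem:M_exp}.
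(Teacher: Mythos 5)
Your proposal is correct and is essentially the argument behind the paper's statement: the paper quotes this lemma without proof from \cite{EG} (Lemma 2.5), and the proof there is exactly your Feshbach/Schur-complement reduction with respect to $PL^2\oplus QL^2$ --- Neumann series to invert the $Q$-block $QTQ+S_1+QE_1^{\pm}Q$, the Schur complement collapsing to $h_{\pm}(\lambda)P$ plus an $O(\lambda^{1/2})$ scalar on the one-dimensional space $PL^2$, and the $E_1^{\pm}$ bounds of Lemma~\ref{lem:M_exp} propagated through the block-inversion formula to give the three Hilbert--Schmidt estimates on $W_1^{\pm}$. The one cosmetic slip is attributing the block-inversion identity to Lemma~\ref{closed} (the Jensen--Nenciu projection lemma) rather than to the Feshbach formula proper (Lemma 2.3 in \cite{EG}); the formula you actually write down is the correct one, so this does not affect the argument.
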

	\begin{prop} Assuming $ v(x)\les \la x\ra ^{-3/2-\alpha-}$, in the case of resonance of the first kind at zero $ B_{\pm} (\lambda)= S_1-S_1(M^{\pm}(\lambda+S_1)^{-1}S_1$ is invertible on $S_1L^2(\R^2)$ and we have 
	 $$  B^{-1}_{\pm} (\lambda)= -\frac{h_\pm(\lambda)}{c_0^2 \|V\|_1} S_1+a^{\pm}(\lambda)S_1,$$ 
	 where $c_0$ is as in the third remark and 
\begin{multline*} 
	       \sup_{0<\lambda<\lambda_1} \lambda^{-\frac{1}{2}+ } |a{\pm}(\lambda)|
	 		+ \sup_{0<\lambda<\lambda_1} \lambda^{\frac{1}{2}- } |a^{\prime}{\pm}(\lambda)|
	 		\\+ \sup_{0<\lambda<\theta \les\lambda<\lambda_1}  \lambda^{\frac{1}{2}+\alpha-} (\theta-\lambda)^{-\alpha} |a^{\prime}{\pm}(\theta)-a^{\prime}{\pm}(\omega))| 	
	 		\les 1.
	 	\end{multline*}
      \end{prop}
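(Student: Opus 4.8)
The plan is to substitute the expansion of $(M^{\pm}(\lambda)+S_1)^{-1}$ from Lemma~\ref{matrix} directly into $B_{\pm}(\lambda)=S_1-S_1(M^{\pm}(\lambda)+S_1)^{-1}S_1$ and to compute the three resulting sandwiched pieces using the mapping properties of the rank-one projection $S_1$. Since $S_1\leq Q$ we have $QS_1=S_1Q=S_1$ and $PS_1=S_1P=0$; because $v\in\mathrm{Ran}\,P$ this gives $S_1v=0$. Moreover, as $S_1$ is the Riesz projection onto $\ker(QTQ)$, for $f\in\mathrm{Ran}\,S_1$ we have $QTf=QTQf=0$, hence $Tf=PTf$; that is, $TS_1=PTS_1$ and, taking adjoints, $S_1T=S_1TP$. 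Finally $(QTQ+S_1)S_1=S_1$ yields $D_0S_1=S_1D_0=S_1$.

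With these identities the three pieces of $S_1(M^{\pm}(\lambda)+S_1)^{-1}S_1$ are immediate. The term $QD_0Q$ contributes $S_1QD_0QS_1=S_1D_0S_1=S_1$. Writing the finite-rank operator of \eqref{S_defn} in the factored form $S=(P-QD_0QTP)(P-PTQD_0Q)$, one computes $S_1S=-S_1TP+S_1TPTQD_0Q$; multiplying by $S_1$ on the right annihilates the first term (as $PS_1=0$) and collapses the second via $QD_0QS_1=S_1$, so that $S_1SS_1=S_1TPTS_1=T_1=\|V\|_1c_0^2S_1$ by Definition~\ref{swave}(3) and the remarks following it. Keeping the error piece $S_1W_1^{\pm}(\lambda)S_1$ and the factor $h_{\pm}(\lambda)^{-1}$ in front of the $S$ term, the two explicit copies of $S_1$ cancel and we obtain
\[
  B_{\pm}(\lambda)=-\frac{\|V\|_1c_0^2}{h_{\pm}(\lambda)}\,S_1-S_1W_1^{\pm}(\lambda)S_1 .
\]

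It remains to invert $B_{\pm}(\lambda)$ on the one-dimensional space $\mathrm{Ran}\,S_1=\phi\la\cdot,\phi\ra$. Factoring out the scalar leading term gives $B_{\pm}(\lambda)=-\frac{\|V\|_1c_0^2}{h_{\pm}(\lambda)}\big(S_1+R_{\pm}(\lambda)\big)$ with $R_{\pm}(\lambda)=\frac{h_{\pm}(\lambda)}{\|V\|_1c_0^2}S_1W_1^{\pm}(\lambda)S_1$. Since $|h_{\pm}(\lambda)|\sim|\log\lambda|$ while $\|W_1^{\pm}(\lambda)\|_{HS}\les\lambda^{1/2}$ by Lemma~\ref{matrix}, we have $\|R_{\pm}(\lambda)\|\les|\log\lambda|\,\lambda^{1/2}\to0$ as $\lambda\to0$, so $B_{\pm}(\lambda)$ is invertible for $\lambda<\lambda_1$ with $\lambda_1$ small and $(S_1+R_{\pm})^{-1}$ is given by a convergent Neumann series. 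Reading off the leading term produces $B_{\pm}^{-1}(\lambda)=-\frac{h_{\pm}(\lambda)}{\|V\|_1c_0^2}S_1+a^{\pm}(\lambda)S_1$, and since everything acts on the rank-one range of $S_1$ the remainder is the explicit scalar $a^{\pm}(\lambda)=\frac{h_{\pm}(\lambda)^2\,\la\phi,W_1^{\pm}(\lambda)\phi\ra}{(\|V\|_1c_0^2)^2\,(1+r_{\pm}(\lambda))}$ with $r_{\pm}(\lambda)=\frac{h_{\pm}(\lambda)}{\|V\|_1c_0^2}\la\phi,W_1^{\pm}(\lambda)\phi\ra$.

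The quantitative bounds on $a^{\pm}$ then follow by propagating the three estimates on $W_1^{\pm}$ from Lemma~\ref{matrix} through this closed formula, using $|h_{\pm}(\lambda)|\les|\log\lambda|$, $|h_{\pm}'(\lambda)|=|g^{\pm\prime}(\lambda)|\sim\lambda^{-1}$, the product rule, and the fact that $1+r_{\pm}(\lambda)$ stays bounded away from zero for small $\lambda$; the extra factor $h_{\pm}^2$ contributes only logarithmic corrections, which are absorbed into the infinitesimal adjustments of the exponents. I expect this last step, and specifically the Hölder-seminorm bound on $a^{\pm\prime}$, to be the main obstacle: differentiating the Neumann series produces several products involving $h_{\pm}$, $h_{\pm}'$, $W_1^{\pm}$, $\partial_\lambda W_1^{\pm}$ and the Hölder seminorm of $\partial_\lambda W_1^{\pm}$, and each must be tracked carefully to confirm the stated powers of $\lambda$. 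By contrast, the algebraic identity $S_1SS_1=T_1$ together with the resulting cancellation is the conceptual core and follows directly from the factored structure of $S$ and the mapping properties of $S_1$.
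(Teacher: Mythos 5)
Your proposal is correct and follows essentially the same route as the paper's proof: substitute the expansion of $(M^{\pm}(\lambda)+S_1)^{-1}$ from Lemma~\ref{matrix}, use the projection identities $QS_1=S_1Q=S_1D_0=D_0S_1=S_1$ and $PS_1=S_1P=0$ to get $S_1QD_0QS_1=S_1$ and $S_1SS_1=S_1TPTS_1=\|V\|_1c_0^2S_1$, and thus reduce $B_{\pm}(\lambda)$ to $-h_{\pm}^{-1}(\lambda)c_0^2\|V\|_1S_1-S_1W_1^{\pm}(\lambda)S_1$, then invert on the one-dimensional range of $S_1$. The paper writes $S_1W_1^{\pm}(\lambda)S_1=w(\lambda)S_1$ and inverts the resulting scalar directly, which on a rank-one space is exactly your Neumann-series/explicit formula for $a^{\pm}$, and, like you, it leaves the final propagation of the bounds on $w$ through that formula to the reader.
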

\begin{proof} We apply Lemma~\ref{closed} to obtain (suppressing '$\pm$' notation)
  \begin{multline*}
          B(\lambda)= S_1-S_1\big(h^{-1}(\lambda)S+QD_0Q+W_1(\lambda)\big)S_1 =- h^{-1}(\lambda)S_1SS_1-S_1W_1(\lambda) S_1 \\
          =h^{-1}(\lambda)S_1TPTS_1 -S_1W_1(\lambda)S_1 =-h^{-1}(\lambda)c_0^2\|V\|_1S_1 -S_1W_1(\lambda)S_1.
                    \end{multline*}
The second equality follows from the identity $Q S_1=S_1Q=S_1D_0=D_0S_1=S_1$. The third also uses the identity $PS_1=S_1P=0$ and the definition of $S$. The last equality follows from Remark 4 above.  

Writing $S_1W_1(\lambda)S_1=w(\lambda) S_1$ (where the function $w$ satisfies the error bound of $W_1$), and noting that by definition of s-wave resonance $c_0\neq 0 $, we obtain $-h^{-1}(\lambda)c_0^2\|V\|_1 -w(\lambda)\neq 0$ for sufficiently small $\lambda$. Therefore
\begin{align} \label{b()}
        B(\lambda)^{-1}= \frac1{-h^{-1}(\lambda)c_0^2\|V\|_1 -w(\lambda)} S_1  
        =  -\frac{h(\lambda)}{c_0^2\|V\|_1}S_1+a(\lambda)S_1.
\end{align} 
The bounds on $a(\lambda)$ follows from the definition of $h$ and the bounds on $w$.
\end{proof}
Using (\ref{m+s}) and (\ref{b()}) in Lemma~\ref{closed}, we obtain the following expansion for $M^{\pm}(\lambda)^{-1}$:
      \begin{corollary} \label{cor} Assume that $v(x)\les \la x\ra ^{-3/2-\alpha-} $. For all $0< \lambda < \lambda_1 $, we have the following expansion for $M^{\pm}(\lambda)^{-1} $ in case of a resonance of the first kind: 
     \begin{multline*}
             M^{\pm}(\lambda)^{-1} = -\frac{h_{\pm}(\lambda)S_1}{c_0^2\|V\|_1} -{\f {SS_1} {c_0^2\|V\|_1}}-{\f {S_1S} {c_0^2\|V\|_1}}-
             \f {SS_1S} {c_0^2\|V\|_1h_{\pm}(\lambda)} +QD_0Q+\f {S}{h_{\pm}(\lambda)}
              +E(\lambda)(x,y)
                              \end{multline*}
where  $E(\lambda)(x,y)$ is such that
   \begin{multline}\label{omega}
 	 		\big\| \sup_{0<\lambda<\lambda_1} \lambda^{-\frac{1}{2}+ } |E_{\pm}(\lambda)|\big\|_{HS}
 	 		+\big\| \sup_{0<\lambda<\lambda_1} \lambda^{\frac{1}{2} } |\partial_\lambda E_{\pm}(\lambda)|\big\|_{HS}	\\
 	 		+\big\| \sup_{0<\lambda<b<\lambda_1}  \lambda^{\frac{1}{2}+\alpha} (b-\lambda)^{-\alpha} |\partial_\lambda E_{\pm}(b)-\partial_\lambda E_{\pm}(a)| \big\|_{HS}	
 	 		\les 1.
 	 	\end{multline}
 	 	
 \end{corollary}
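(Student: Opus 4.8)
The plan is to apply Lemma~\ref{closed} with $A=M^{\pm}(\lambda)$ and $S=S_1$. This yields
$$M^{\pm}(\lambda)^{-1}=G+GS_1B_{\pm}(\lambda)^{-1}S_1G,\qquad G:=\big(M^{\pm}(\lambda)+S_1\big)^{-1},$$
into which I would substitute the two expansions already in hand: \eqref{m+s}, giving $G=h_{\pm}(\lambda)^{-1}S+QD_0Q+W_1^{\pm}(\lambda)$, and \eqref{b()}, giving $B_{\pm}(\lambda)^{-1}=\beta_{\pm}(\lambda)S_1$ with $\beta_{\pm}(\lambda)=-\f{h_{\pm}(\lambda)}{c_0^2\|V\|_1}+a^{\pm}(\lambda)$. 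Because $B_{\pm}(\lambda)^{-1}$ is a scalar multiple of $S_1$, the sandwiched term collapses to $\beta_{\pm}(\lambda)\,GS_1G$, so the entire computation reduces to expanding $GS_1G$ and grouping by powers of $h_{\pm}$ and $W_1^{\pm}$.

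The algebraic step rests on the projection identities used already in the Proposition, namely $QS_1=S_1Q=S_1D_0=D_0S_1=S_1$ and $PS_1=S_1P=0$. These give $GS_1=h_{\pm}^{-1}SS_1+S_1+W_1^{\pm}S_1$ and $S_1G=h_{\pm}^{-1}S_1S+S_1+S_1W_1^{\pm}$, the bare $S_1$ coming from $QD_0QS_1=S_1QD_0Q=S_1$. Multiplying out $GS_1G=(GS_1)(S_1G)$ and then by $\beta_{\pm}$, the four contributions carrying the factor $-h_{\pm}/(c_0^2\|V\|_1)$ and no $W_1^{\pm}$ reproduce exactly $-\f{h_{\pm}S_1}{c_0^2\|V\|_1}-\f{SS_1}{c_0^2\|V\|_1}-\f{S_1S}{c_0^2\|V\|_1}-\f{SS_1S}{c_0^2\|V\|_1 h_{\pm}}$, while the two terms $h_{\pm}^{-1}S+QD_0Q$ survive from $G$ itself. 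All remaining pieces — the $W_1^{\pm}$ in $G$, every product of $W_1^{\pm}$ with $S$ or $S_1$ (with its accompanying power of $h_{\pm}$), the double product $W_1^{\pm}S_1W_1^{\pm}$, and the nine terms proportional to $a^{\pm}$ — I would collect into $E_{\pm}(\lambda)$. This bookkeeping produces precisely the six explicit terms of the corollary.

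It remains to check that $E_{\pm}(\lambda)$ obeys \eqref{omega}. I would treat $S$, $S_1$ and $QD_0Q$ as fixed, $\lambda$-independent, absolutely bounded finite-rank operators (hence both bounded and Hilbert--Schmidt, with vanishing $\lambda$-derivative), and import the weighted bounds on $W_1^{\pm}$ from Lemma~\ref{matrix} and on $a^{\pm}$ from the Proposition, together with $|h_{\pm}(\lambda)|\les|\log\lambda|$, $|h_{\pm}(\lambda)^{-1}|\les1$, $|\partial_\lambda h_{\pm}(\lambda)|\les\lambda^{-1}$ and $|\partial_\lambda h_{\pm}(\lambda)^{-1}|\les\lambda^{-1}|\log\lambda|^{-2}$ for $0<\lambda<\lambda_1$. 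The size bound (first line of \eqref{omega}) is then immediate termwise: for a representative term $h_{\pm}W_1^{\pm}S_1$ one has $\lambda^{-\f12+}|h_{\pm}W_1^{\pm}S_1|\les\lambda^{-\f12+}|\log\lambda|\,\lambda^{\f12}\les\lambda^{0+}|\log\lambda|\les1$, the key mechanism being that the $\lambda^{0+}$ slack hidden in the exponent $-\f12+$ swallows the logarithm thrown off by $h_{\pm}$; the $a^{\pm}$ terms are identical, using $\lambda^{-\f12+}|a^{\pm}|\les1$.

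The main obstacle will be the derivative and Hölder lines of \eqref{omega}, which — unlike the first line — carry the weight $\lambda^{\f12}$ (resp. $\lambda^{\f12+\alpha}(b-\lambda)^{-\alpha}$) with no extra positive slack to absorb logarithms. The genuinely delicate terms are the mixed products $h_{\pm}W_1^{\pm}S_1$ and $h_{\pm}S_1W_1^{\pm}$: upon differentiating, the piece $h_{\pm}\partial_\lambda W_1^{\pm}$ pairs the undecaying factor $|\log\lambda|$ with the $\lambda^{-\f12}$-sized $\partial_\lambda W_1^{\pm}$, so that controlling it against the tight $\lambda^{\f12}$ weight requires the finer control on $\partial_\lambda W_1^{\pm}$ furnished by Lemma~\ref{matrix} (as in \cite{EGw}); by contrast the double product $W_1^{\pm}S_1W_1^{\pm}$ is harmless, carrying two factors $\lambda^{\f12}$ against a single derivative. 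For the Hölder line I would use, for each product $f(\lambda)g(\lambda)$, the decomposition $\partial_\lambda(fg)(b)-\partial_\lambda(fg)(\lambda)=[f'(b)-f'(\lambda)]g(b)+f'(\lambda)[g(b)-g(\lambda)]+[f(b)-f(\lambda)]g'(b)+f(\lambda)[g'(b)-g'(\lambda)]$, reducing the modulus of continuity of the product to the Hölder moduli of $\partial_\lambda W_1^{\pm}$, $\partial_\lambda a^{\pm}$ and $\partial_\lambda h_{\pm}$ (all available) paired with the sizes and Lipschitz constants of the remaining factors, again matching each power of $\lambda$ to the prescribed weight. Once the mixed $h_{\pm}W_1^{\pm}$ terms are controlled, every summand of $E_{\pm}$ satisfies a bound of the form \eqref{omega} and the corollary follows.
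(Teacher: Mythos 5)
Your algebraic derivation is precisely the paper's proof, which consists of the single instruction ``Using \eqref{m+s} and \eqref{b()} in Lemma~\ref{closed}'': you apply Lemma~\ref{closed} with $A=M^{\pm}(\lambda)$, $S=S_1$, use $QD_0QS_1=S_1QD_0Q=S_1$, $PS_1=S_1P=0$ and $S_1^2=S_1$ to get $GS_1=h_{\pm}^{-1}SS_1+S_1+W_1^{\pm}S_1$ and $S_1G=h_{\pm}^{-1}S_1S+S_1+S_1W_1^{\pm}$, collapse the sandwiched term to $\beta_{\pm}GS_1G$, and your bookkeeping of which of the resulting products form the six displayed operators and which are dumped into $E_{\pm}$ is correct. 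Your verification of the first line of \eqref{omega}, and your observation that $h_{\pm}W_1^{\pm}S_1W_1^{\pm}$ is harmless, are also correct.

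There is, however, a genuine gap exactly at the point you yourself flag as the main obstacle. You dispose of the mixed terms $h_{\pm}W_1^{\pm}S_1$ and $h_{\pm}S_1W_1^{\pm}$ by invoking ``the finer control on $\partial_\lambda W_1^{\pm}$ furnished by Lemma~\ref{matrix}'', but Lemma~\ref{matrix} contains no such refinement: it gives exactly $\big\| \sup_{0<\lambda<\lambda_1}\lambda^{\f{1}{2}}|\partial_\lambda W_1^{\pm}(\lambda)|\big\|_{HS}\les 1$ and nothing stronger. Consequently the best bound obtainable for the dangerous piece is $\|h_{\pm}(\lambda)\partial_\lambda W_1^{\pm}(\lambda)S_1\|_{HS}\les |\log\lambda|\,\lambda^{-\f{1}{2}}$, and since the weight in the second line of \eqref{omega} is $\lambda^{\f{1}{2}}$ with no positive slack (unlike the first line), one is left with $\lambda^{\f{1}{2}}\cdot|\log\lambda|\,\lambda^{-\f{1}{2}}=|\log\lambda|$, which is unbounded as $\lambda\to 0$; the identical logarithm is lost in the third (H\"older) line. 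So your proof of the second and third lines does not close as written. The repair consistent with how the corollary is actually used is to carry the same $\lambda^{0\pm}$ slack in all three lines of \eqref{omega} (i.e.\ weights $\lambda^{\f{1}{2}+}$ and $\lambda^{\f{1}{2}+\alpha+}$, absorbing $|\log\lambda|\les \lambda^{0-}$), exactly as the first line and the bounds on $a^{\pm}$ already do; this weaker statement suffices for Proposition~\ref{error}, where $\alpha<\f{1}{4}$ may be decreased slightly to absorb logarithms. The alternative would be to prove a genuine improvement for $\partial_\lambda\big(W_1^{\pm}S_1\big)$, for instance by exploiting the vanishing $S_1v=0$ inside the Neumann series defining $W_1^{\pm}$, but neither your argument nor anything in the paper supplies such a bound, so you should replace the appeal to a nonexistent refinement by the weakened (and sufficient) form of \eqref{omega}.
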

 
 Substituting the expansion above for $M^{\pm}(\lambda)^{-1}$ in (\ref{rv}), we obtain  the identity 
\begin{multline} \label{RV}
       R_V^{\pm}(\lambda)=  R^{\pm}_0(\lambda ^2)  + R^{\pm}_0(\lambda ^2) v \big[ \frac{h_{\pm}(\lambda)S_1}{c_0^2 \|V\|_1} +{\f {SS_1} {c_0^2\|V\|_1}}+\f {S_1S} {c_0^2\|V\|_1}+ \f {SS_1S} {c_0^2\|V\|_1h_{\pm}(\lambda)} \\-QD_0Q
            -\f {S}{h_{\pm}(\lambda)}+E^{\pm}(\lambda) \big]  v R^{\pm}_0(\lambda ^2).
                               \end{multline}
                            
\subsection{Proof of the Theorem~\ref{thm:mainineq}} \hspace{10mm} \\
The following proposition takes care of the contribution of the free resolvent in \eqref{RV} to   \eqref{stone}.
\begin{prop}[Proposition 4.3 in \cite{EGw}] \label{freeevol} We have
$$\int_0^\infty e^{it\lambda^2}\lambda \chi(\lambda)  [R_0^+(\lambda^2)-R_0^-(\lambda^2)](x,y)
d\lambda = - \frac1{4t} +O\Big(\f{\la x \ra^{\f32 } \la y\ra^{\f32 }}{t^{\f54}}\Big).
$$
\end{prop}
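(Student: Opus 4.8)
The plan is to reduce the whole integral to a single scalar oscillatory integral by first exploiting the simple structure of $R_0^+(\lambda^2)-R_0^-(\lambda^2)$. Using the expansion of Lemma~\ref{R0 exp cor}, the static kernel $G_0(x,y)$ and the logarithmic and Euler terms in $g^\pm$ are identical for the two signs and cancel in the difference, while the imaginary parts add. Since $\frac{1}{\|V\|_1}\big(g^+(\lambda)-g^-(\lambda)\big)=\frac{i}{2}$, we obtain
$$R_0^+(\lambda^2)(x,y)-R_0^-(\lambda^2)(x,y)=\frac{i}{2}+\big(E_0^+(\lambda)(x,y)-E_0^-(\lambda)(x,y)\big),$$
which is just $\frac{i}{2}J_0(\lambda|x-y|)$ read off from \eqref{R0 def}. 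Splitting the integral along this decomposition isolates a universal main term from an error that is governed entirely by the quantitative bounds on $E_0^\pm$ recorded in Lemma~\ref{R0 exp cor} and Corollary~\ref{lipbound}.

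First I would extract the main term $\frac{i}{2}\int_0^\infty e^{it\lambda^2}\lambda\chi(\lambda)\,d\lambda$. Writing $\lambda e^{it\lambda^2}=\frac{1}{2it}\partial_\lambda e^{it\lambda^2}$ and integrating by parts, the boundary contribution at $\lambda=0$ (where $\chi(0)=1$) produces exactly $-\frac{1}{2it}$, while the leftover integral carries $\chi'$, which is supported away from the origin where the phase $\lambda^2$ has no critical point; repeated non-stationary-phase integration by parts renders it $O(t^{-N})$. Hence this term equals $\frac{i}{2}\cdot\big(-\frac{1}{2it}\big)+O(t^{-N})=-\frac{1}{4t}+O(t^{-N})$, which is the asserted leading behavior.

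The real work is the error $\int_0^\infty e^{it\lambda^2}\lambda\chi(\lambda)\big(E_0^+(\lambda)-E_0^-(\lambda)\big)\,d\lambda$, for which the target is $O\big(\langle x\rangle^{3/2}\langle y\rangle^{3/2}t^{-5/4}\big)$. I would change variables to $u=\lambda^2$ and set $\widetilde E(u)=(E_0^+-E_0^-)(\sqrt u)$; since $|E_0^\pm|\lesssim\lambda^{1/2}|x-y|^{1/2}\to0$ as $\lambda\to0$, one integration by parts in $u$ has vanishing boundary terms and yields a factor $t^{-1}$ times $\int_0^\infty e^{itu}\widetilde E'(u)\,du$. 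The bound $|\partial_\lambda E_0^\pm|\lesssim\lambda^{-1/2}|x-y|^{1/2}$ translates into $|\widetilde E'(u)|\lesssim u^{-3/4}|x-y|^{1/2}$, which is integrable at the origin but too singular for a second clean integration by parts. To gain the remaining $t^{-1/4}$ I would use a half-period shift $u\mapsto u-\pi/t$, which flips the sign of $e^{itu}$, writing the integral as the average of $\widetilde E'(u)-\widetilde E'(u-\pi/t)$: on $u\gtrsim 1/t$ the Hölder bound of Corollary~\ref{lipbound} controls this difference and supplies the decay, while on $0<u\lesssim 1/t$ one integrates the pointwise bound to get $\int_0^{1/t}u^{-3/4}\,du\sim t^{-1/4}$. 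Combining the two factors gives $O(t^{-5/4})$ with an $|x-y|$-power no worse than $|x-y|^{3/2}$, which is then absorbed into $\langle x\rangle^{3/2}\langle y\rangle^{3/2}$.

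The main obstacle is precisely this last step: because $\partial_\lambda E_0^\pm$ is singular like $\lambda^{-1/2}$ at zero energy, the extra quarter power of $t^{-1}$ beyond the trivial $t^{-1}$ cannot be produced by differentiation, and must instead be extracted from the Hölder continuity of $\partial_\lambda E_0^\pm$ together with the integrable singularity near $\lambda=0$. Carefully tracking the $|x-y|$ dependence through these estimates, and verifying that the edge corrections of the shift argument near $u=0$ and near the top of the support of $\chi$ are harmless, is the delicate part; the rest is a direct consequence of the resolvent expansion.
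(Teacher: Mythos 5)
Your argument is correct, and in substance it follows the same route as the proof in \cite{EGw} that the paper cites (the paper itself does not reprove Proposition~\ref{freeevol}). The identity $R_0^+(\lambda^2)-R_0^-(\lambda^2)=\frac{i}{2}J_0(\lambda|x-y|)$, the extraction of $-\frac{1}{4t}$ from the boundary term at $\lambda=0$, and your half-period shift in the variable $u=\lambda^2$ are exactly the mechanisms packaged in the paper's quoted stationary-phase lemmas: note that $\big(\lambda\sqrt{1+\pi t^{-1}\lambda^{-2}}\big)^2=\lambda^2+\pi/t$, so the second term in Lemma~\ref{lem:ibp2} is literally your shifted difference, and the $\frac{i\mathcal E(0)}{2t}$ term of Lemma~\ref{lem:ibp} is your boundary contribution with $\mathcal E(0)=\frac i2$. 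Where you genuinely deviate is the decomposition: instead of splitting into the regimes $\lambda|x-y|\les 1$ and $\lambda|x-y|\gtrsim 1$ and estimating Bessel functions in each (as in \cite{EGw}), you run the shift argument on the resolvent-expansion error $E_0^\pm$ using the uniform bounds of Lemma~\ref{R0 exp cor} and Corollary~\ref{lipbound}; this is legitimate and somewhat cleaner, and with $\alpha=\frac14$ your Hölder term yields $t^{-5/4}|x-y|^{3/4}$, better than the stated weight. Two details you should write out to close the argument: (i) in $\widetilde E'(u)-\widetilde E'(u-\pi/t)$ there is, besides the Hölder piece, a chain-rule piece $\big(\frac{1}{2\sqrt{u}}-\frac{1}{2\sqrt{v}}\big)\partial_\lambda E_0^\pm(\sqrt{v})$ with $v=u-\pi/t$, bounded by $t^{-1}v^{-7/4}|x-y|^{1/2}$, which is non-integrable as $v\to 0$; so the region $\pi/t\le u\les 2\pi/t$ must also be handled by bounding $|a(u)|+|a(u-\pi/t)|$ separately (which still gives $t^{-1/4}$), and the difference estimate used only for $u\gtrsim 2\pi/t$; (ii) the term where $\partial_u$ hits $\chi(\sqrt{u})$ is supported away from $u=0$, and a second integration by parts there, using the second-derivative bound of Lemma~\ref{R0 exp cor}, gives $O(t^{-2}|x-y|^{3/2})$. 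With these spelled out, the proof is complete.
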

 
 Below, we obtain similar estimates for each operator included in (\ref{RV}). Simplifying the boundary terms which appear as operators having ${\f 1 t}$ decay gives us Theorem~\ref{thm:mainineq}. 

The following two stationary phase lemmas from \cite{EGw} will be useful for   further calculations.
 \begin{lemma} \label{lem:ibp} For $t>2$, we have
 $$
 \Big|\int_0^\infty e^{it\lambda^2} \lambda \, \mathcal E(\lambda) d\lambda  - \frac{i\mathcal{E}(0)}{2t}\Big| \les \f1t\int_0^{t^{-1/2}}|\mathcal E^\prime(\lambda)| d\lambda+ \Big|\frac{\mathcal{E}^\prime(t^{-1/2})}{t^{3/2}}\Big|
 +\f1{t^2}\int_{t^{-1/2}}^\infty \Big|\Big(\frac{\mathcal E^\prime(\lambda)}{\lambda}\Big)^\prime\Big| d\lambda.
 $$
 \end{lemma}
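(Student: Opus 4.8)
The plan is to extract the leading term $\f{i\mathcal E(0)}{2t}$ from a single integration by parts, and then to control the remainder by a second integration by parts performed only on the oscillatory region $\lambda>t^{-1/2}$. The key algebraic identity is $\lambda e^{it\lambda^2}=\f{1}{2it}\,\partial_\lambda e^{it\lambda^2}$, which gives
$$\int_0^\infty e^{it\lambda^2}\lambda\,\mathcal E(\lambda)\,d\lambda=\f{1}{2it}\int_0^\infty\big(\partial_\lambda e^{it\lambda^2}\big)\mathcal E(\lambda)\,d\lambda.$$
Integrating by parts, the boundary term at infinity drops (in the applications $\mathcal E$ carries the energy cutoff $\chi$ and is compactly supported), while the boundary term at $\lambda=0$ produces $-\f{1}{2it}\mathcal E(0)=\f{i}{2t}\mathcal E(0)$, exactly the asserted main term. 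Hence the quantity left to estimate is
$$\int_0^\infty e^{it\lambda^2}\lambda\,\mathcal E(\lambda)\,d\lambda-\f{i\mathcal E(0)}{2t}=-\f{1}{2it}\int_0^\infty e^{it\lambda^2}\mathcal E'(\lambda)\,d\lambda.$$

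Next I would split this integral at the stationary-phase scale $\lambda=t^{-1/2}$, the threshold where the phase $t\lambda^2$ passes through size one. On the non-oscillatory piece $[0,t^{-1/2}]$ I bound $|e^{it\lambda^2}|\le1$ directly, which after the prefactor $\f1{2t}$ yields $\f1t\int_0^{t^{-1/2}}|\mathcal E'(\lambda)|\,d\lambda$, the first term. On the oscillatory piece $[t^{-1/2},\infty)$ I integrate by parts once more, now using $e^{it\lambda^2}=\f{1}{2it\lambda}\,\partial_\lambda e^{it\lambda^2}$. The boundary term at infinity again vanishes, and the boundary contribution at $\lambda=t^{-1/2}$ evaluates $\f{\mathcal E'(\lambda)}{\lambda}$ there; since $1/\lambda=t^{1/2}$ at that point, collecting the two $\f1{2t}$ prefactors produces a term of size $\big|\f{\mathcal E'(t^{-1/2})}{t^{3/2}}\big|$, the second term. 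Estimating the leftover integral by absolute values then gives $\f1{t^2}\int_{t^{-1/2}}^\infty\big|\big(\mathcal E'(\lambda)/\lambda\big)'\big|\,d\lambda$, the third term.

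The choice of splitting point is what makes the estimate sharp: for $\lambda<t^{-1/2}$ a further integration by parts would be counterproductive, since dividing by $\lambda$ generates an $\f1\lambda$ singularity that is not safely integrable against $\mathcal E'$ near the origin, so that region must be absorbed directly; for $\lambda>t^{-1/2}$ the phase oscillates rapidly enough that the gain from integrating by parts outweighs the $\f1\lambda$ loss. I do not expect a serious obstacle, as this is a clean two-stage integration by parts; the one point requiring care is the justification that the boundary terms as $\lambda\to\infty$ vanish, which is where the implicit compact support (or sufficient decay) of $\mathcal E$ enters and should be checked for each $\mathcal E$ to which the lemma is later applied.
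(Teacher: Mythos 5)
Your proof is correct and is essentially the argument behind this lemma: the paper itself states it without proof (citing \cite{EGw}), and the proof given there is exactly your two-stage integration by parts --- one integration by parts to extract the boundary term $\frac{i\mathcal E(0)}{2t}$, then a split of the remainder at the stationary-phase scale $\lambda = t^{-1/2}$, bounding the low part by absolute values and integrating by parts once more on the high part, whose boundary contribution at $\lambda=t^{-1/2}$ yields the middle term $|\mathcal E'(t^{-1/2})|/t^{3/2}$. The only point to keep in mind, which you correctly flag, is that the vanishing of boundary terms at infinity (and the convergence of the integrals when $\mathcal E'$ is singular at the origin) must be checked against the compact support of $\chi$ and the finiteness of the right-hand side in each application.
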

  \begin{lemma}\label{lem:ibp2} Assume that  $\mathcal E(0)=0$. For $t>2$, we have
\begin{multline*}
\Big|\int_0^\infty e^{it\lambda^2} \lambda \, \mathcal E(\lambda) d\lambda  \Big|
\les \f1t\int_0^{\infty}\frac{|\mathcal E^\prime(\lambda)|}{  (1+\lambda^2 t)} d\lambda
+\f1{t}\int_{t^{-1/2}}^\infty \big|  \mathcal E^\prime(\lambda \sqrt{1+ \pi t^{-1}\lambda^{-2}} )-\mathcal E^\prime(\lambda)  \big| d\lambda.
\end{multline*}
\end{lemma}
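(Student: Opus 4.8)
The plan is to linearize the phase, remove a derivative by a single integration by parts, and then extract cancellation from a half-period shift, reading off the two weighted terms after changing variables back.

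First I would substitute $u=\lambda^2$, which turns the integral into $\f12\int_0^\infty e^{itu}\mathcal E(\sqrt u)\,du$. Since $\mathcal E(0)=0$ and $\mathcal E$ carries the energy cutoff (so the endpoint at infinity is harmless), one integration by parts in $u$, using $e^{itu}=\f1{it}\,\f{d}{du}e^{itu}$, kills both boundary terms and gives $\int_0^\infty e^{it\lambda^2}\lambda\,\mathcal E(\lambda)\,d\lambda=-\f1{4it}\int_0^\infty e^{itu}G(u)\,du$, where $G(u):=\mathcal E'(\sqrt u)/\sqrt u$. This accounts for the common factor $\f1t$ in both terms on the right-hand side, so it remains to show $\big|\int_0^\infty e^{itu}G(u)\,du\big|\les\int_0^\infty\f{|\mathcal E'(\lambda)|}{1+\lambda^2 t}\,d\lambda+\int_{t^{-1/2}}^\infty|\mathcal E'(\lambda\sqrt{1+\pi t^{-1}\lambda^{-2}})-\mathcal E'(\lambda)|\,d\lambda$.

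For this I would use $e^{itu}=-e^{it(u+\pi/t)}$: shifting the integration variable by $\pi/t$ gives $\int_0^\infty e^{itu}G\,du=-\int_{\pi/t}^\infty e^{itu}G(u-\pi/t)\,du$, and averaging the two representations yields $\int_0^\infty e^{itu}G\,du=\f12\int_0^{\pi/t}e^{itu}G(u)\,du+\f12\int_{\pi/t}^\infty e^{itu}\big(G(u)-G(u-\pi/t)\big)\,du$, hence the bound $\f12\int_0^{\pi/t}|G|+\f12\int_{\pi/t}^\infty|G(u)-G(u-\pi/t)|\,du$. Returning to $\lambda$ by $u=\lambda^2$, the first piece is $\sim\int_0^{\sqrt{\pi/t}}|\mathcal E'(\lambda)|\,d\lambda$, which fits inside the first target term because $1+\lambda^2 t\sim1$ on that range. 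The heart of the matter is the difference, which I would split as $G(u)-G(u-\pi/t)=\f{\mathcal E'(\sqrt u)-\mathcal E'(\sqrt{u-\pi/t})}{\sqrt u}+\mathcal E'(\sqrt{u-\pi/t})\big(\f1{\sqrt u}-\f1{\sqrt{u-\pi/t}}\big)$. Writing $\lambda=\sqrt{u-\pi/t}$, so that $\sqrt u=\sqrt{\lambda^2+\pi/t}=\lambda\sqrt{1+\pi t^{-1}\lambda^{-2}}$ and $du=2\lambda\,d\lambda$, the argument-difference piece becomes $\int_0^\infty\f{2\lambda}{\sqrt{\lambda^2+\pi/t}}\,|\mathcal E'(\lambda\sqrt{1+\pi t^{-1}\lambda^{-2}})-\mathcal E'(\lambda)|\,d\lambda$; since $2\lambda/\sqrt{\lambda^2+\pi/t}\le2$ this is dominated by the second target term once the low range $\lambda\les t^{-1/2}$ is absorbed into the first. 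The amplitude-difference piece, using $\f1\lambda-\f1{\sqrt{\lambda^2+\pi/t}}=\f{\pi/t}{\lambda\sqrt{\lambda^2+\pi/t}(\lambda+\sqrt{\lambda^2+\pi/t})}$, converts to $\int_0^\infty|\mathcal E'(\lambda)|\,\f{2\pi/t}{\sqrt{\lambda^2+\pi/t}\,(\lambda+\sqrt{\lambda^2+\pi/t})}\,d\lambda\les\int_0^\infty\f{|\mathcal E'(\lambda)|}{1+\lambda^2 t}\,d\lambda$, i.e.\ the first target term.

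The step I expect to be the main obstacle is this final decomposition: one must check that the amplitude mismatch introduced by the $1/\sqrt u$ weight is of exact size $(1+\lambda^2 t)^{-1}$, so that it produces the weighted $|\mathcal E'|$ integral rather than contaminating the H\"older-type difference, and one must track the integration limits carefully so that the difference term is only needed on $[t^{-1/2},\infty)$ while the complementary region folds into the first term. By contrast, the oscillatory ingredients, namely the single integration by parts and the half-period shift $e^{itu}=-e^{it(u+\pi/t)}$, are standard, and essentially all of the work is the algebraic bookkeeping of $G(u)-G(u-\pi/t)$.
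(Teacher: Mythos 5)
Your proof is correct, and it follows essentially the same route as the argument behind this lemma (which the paper does not prove itself but imports from \cite{EGw}): one integration by parts made possible by $\mathcal E(0)=0$, followed by the half-period shift $\lambda^2\to\lambda^2+\pi/t$ to produce the difference $\mathcal E^\prime(\lambda\sqrt{1+\pi t^{-1}\lambda^{-2}})-\mathcal E^\prime(\lambda)$, with the Jacobian/amplitude mismatch and the low-frequency region $\lambda\les t^{-1/2}$ absorbed into the $(1+\lambda^2 t)^{-1}$-weighted term. Working in the variable $u=\lambda^2$ is only a cosmetic difference, and your bookkeeping of $G(u)-G(u-\pi/t)$, including the bound $\f{2\pi/t}{\sqrt{\lambda^2+\pi/t}\,(\lambda+\sqrt{\lambda^2+\pi/t})}\les (1+\lambda^2 t)^{-1}$, is accurate.
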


 \vspace{5mm}
We  start with the contribution of $h_{\pm}(\lambda)S_1$ from (\ref{RV}) to (\ref{stone}). Recall that 
$$h_{\pm}(\lambda)= g^{\pm}(\lambda)+c= a_1 \log\lambda+a_2 \pm \dfrac{\|V\|_1 i}{4},$$
where $c, a_i \in \R$, then using the definition \eqref{R0 def} of free resolvent, we write
\begin{multline}
\mathcal R_1 := h_{+}(\lambda)R_0^{+}(\lambda^2)(x,x_1) R_0^{+}(\lambda^2)(y_1,y)- h_{-}(\lambda)R_0^{-}(\lambda^2)(x,x_1) R_0^{-}(\lambda^2)(y_1,y)\\  
\label{calc}
      = 2ia \log(\lambda)[Y_0(\lambda p)J_0(\lambda q)+J_0(\lambda p)Y_0(\lambda q)]\\
        +\dfrac{\|V\|_1 i}{32}[J_0(\lambda p)J_0(\lambda q)+Y_0(\lambda p)Y_0(\lambda q)],
      \end{multline}
      where  $p=|x-x_1|$ and  $q=|y-y_1| $. The following proposition takes care of the contribution of $h_{\pm}(\lambda)S_1$ in \eqref{RV} to   \eqref{stone}.
\begin{prop} \label{pprop} For $t>2$ and $0<\alpha < \frac{1}{4}$, if $v(x)\les \la x\ra^{-3/2-}$, then
 we have 
       $$ \Big|\int_{\R^4}\int_0^\infty e^{it\lambda^2}\lambda \chi(\lambda)   \mathcal R_1(\lambda,p,q) [vS_1v](x_1,y_1)
      d\lambda dx_1 dy_1- \frac{1}{t}F_1(x,y)\Big| \les \dfrac{\la x\ra^{\f1{2}+\alpha}\la y\ra^{\f1{2}+\alpha}}{t^{1+ \alpha}}, $$
where  $$ F_1 = -\dfrac{\|V||_1}{16 \pi^2} \int_{\R^4}\log|x-x_1|v(x_1)S_1(x_1,y_1)v(y_1)\log|y-y_1| dx_1 dy_1. $$ 
\end{prop}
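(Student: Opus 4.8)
The plan is to use the orthogonality $S_1v=0$ to strip the logarithmically growing pieces off the Bessel factors, peel off a single $\lambda$‑independent term that produces $F_1$ through Lemma~\ref{lem:ibp}, and dispatch the remainder with Lemma~\ref{lem:ibp2} together with the Hölder bound of Corollary~\ref{lipbound}. First I would record the orthogonality: since $S_1\le Q=\mathbbm 1-P$ with $P$ the projection onto $v$, we have $S_1v=0$, hence $\int_{\R^2}v(x_1)S_1(x_1,y_1)\,dx_1=0$ and $\int_{\R^2}S_1(x_1,y_1)v(y_1)\,dy_1=0$. Therefore the double spatial integral is unchanged if, inside $\mathcal R_1$, one adds or subtracts any term constant in $p=|x-x_1|$ or constant in $q=|y-y_1|$. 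Subtracting from each factor in \eqref{calc} its $p$‑ (resp. $q$‑) independent part and using \eqref{J0 def}, \eqref{Y0 def}, the integral of $\mathcal R_1$ against $vS_1v$ equals that of
\[
2ia\log\lambda\big[\mathcal Y(\lambda p)\mathcal J(\lambda q)+\mathcal J(\lambda p)\mathcal Y(\lambda q)\big]+\frac{\|V\|_1 i}{32}\big[\mathcal J(\lambda p)\mathcal J(\lambda q)+\mathcal Y(\lambda p)\mathcal Y(\lambda q)\big],
\]
where $\mathcal J(\lambda p):=J_0(\lambda p)-1$ and $\mathcal Y(\lambda p):=Y_0(\lambda p)-\frac2\pi\log(\lambda/2)-\frac{2\gamma}\pi=\frac2\pi\log p+\Lambda(\lambda p)$ with $\Lambda(0)=0$. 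Comparing with Lemma~\ref{R0 exp cor} gives $E_0^{\pm}=\pm\frac i4\mathcal J-\frac14\Lambda$, so $\mathcal J$ and $\Lambda$ inherit the global (in $\lambda p$) bounds $|\mathcal J|,|\Lambda|\les\lambda^{1/2}p^{1/2}$, $|\partial_\lambda\mathcal J|,|\partial_\lambda\Lambda|\les\lambda^{-1/2}p^{1/2}$, and by Corollary~\ref{lipbound} the Hölder bound $\les a^{-1/2}|b-a|^{\alpha}p^{1/2+\alpha}$ for $\partial_\lambda\mathcal J,\partial_\lambda\Lambda$ when $b>a>0$.

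The only summand above that does not vanish at $\lambda=0$ is $\frac{\|V\|_1 i}{32}\cdot\frac4{\pi^2}\log p\log q=\frac{\|V\|_1 i}{8\pi^2}\log p\log q$. Writing the reduced kernel as this constant plus a remainder $\mathcal G(\lambda,p,q)$ with $\mathcal G(0,p,q)=0$, I would apply Lemma~\ref{lem:ibp} to $\mathcal E(\lambda)=\chi(\lambda)\frac{\|V\|_1 i}{8\pi^2}\log p\log q$: the leading term $\frac{i\mathcal E(0)}{2t}=-\frac{\|V\|_1}{16\pi^2 t}\log p\log q$ integrates against $vS_1v$ to exactly $\frac1t F_1(x,y)$, while the error is $O(t^{-2}\log p\log q)$ because $\chi'$ is supported away from $0$; this is harmless for $t>2$.

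For $\mathcal G$ I would use Lemma~\ref{lem:ibp2}, available since $\mathcal G(0,\cdot,\cdot)=0$. Every summand of $\mathcal G$ carries at least one factor $\Lambda$ or $\mathcal J$ (the pure $\log p\log q$ having been removed), so by the product rule and the global bounds the first term of Lemma~\ref{lem:ibp2} contributes $\les t^{-5/4}p^{1/2}q^{1/2}$ (up to a harmless $\log t$ from the $\log\lambda$ prefactors), while the second term, after using that the shift obeys $\lambda\sqrt{1+\pi t^{-1}\lambda^{-2}}-\lambda\sim(t\lambda)^{-1}$ and the Hölder bound on $\partial_\lambda\Lambda,\partial_\lambda\mathcal J$, contributes $\les t^{-1-\alpha}p^{1/2+\alpha}q^{1/2}$ together with its $p\leftrightarrow q$ mirror; here $\int_{t^{-1/2}}^{\lambda_1}\lambda^{-1/2-\alpha}\,d\lambda=O(1)$ for $\alpha<\frac12$. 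Finally, since $vS_1v(x_1,y_1)=v(x_1)\phi(x_1)\,\phi(y_1)v(y_1)$ decays rapidly in $x_1,y_1$ (from $v\les\la x\ra^{-3/2-}$ and the boundedness of the resonance $\psi$ associated with $\phi$ via Remark 3), one has $\int_{\R^2}|x-x_1|^{1/2+\alpha}|v\phi|(x_1)\,dx_1\les\la x\ra^{1/2+\alpha}$ and likewise in $y$, so integrating the remainder bounds yields $\les t^{-1-\alpha}\la x\ra^{1/2+\alpha}\la y\ra^{1/2+\alpha}$, which dominates the $t^{-5/4}$ and $t^{-2}$ pieces once $\alpha<\frac14$.

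The main obstacle is the loss in the spatial weight. Routing the $\mathcal E(0)=0$ terms naively through Lemma~\ref{lem:ibp} forces the second‑derivative bound $|\partial_\lambda^2\Lambda|\les\lambda^{-1/2}q^{3/2}$, producing a factor $q^{3/2}$ that cannot be integrated against $vS_1v$ within the target weight $\la y\ra^{1/2+\alpha}$. The decisive device — and the reason for the finer control on the error term arranged after Lemma~\ref{lem:M_exp} — is to use Lemma~\ref{lem:ibp2} and spend the Hölder continuity of $\partial_\lambda E_0^{\pm}$ from Corollary~\ref{lipbound}: this trades one power of the second derivative for the oscillatory gain $(t\lambda)^{-\alpha}$, simultaneously upgrading the time decay to $t^{-1-\alpha}$ and lowering the spatial power from $\frac32$ to $\frac12+\alpha$. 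Separating the regimes $\lambda p\lesssim1$ and $\lambda p\gtrsim1$ is automatic, since the bounds on $\mathcal J,\Lambda$ borrowed from Lemma~\ref{R0 exp cor} and Corollary~\ref{lipbound} hold globally in $\lambda p$.
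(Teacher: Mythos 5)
Your argument is correct, and it reaches Proposition~\ref{pprop} by a genuinely different route than the paper's. The paper treats each Bessel product in \eqref{calc} separately and, for each one, runs a case analysis over the regimes $\lambda p\les 1$ versus $\lambda p\gtrsim 1$ (and likewise in $q$): at low energy the orthogonality \eqref{zero} is spent on the cutoff differences $F,G$ of Lemma~\ref{FG} (subtraction at the reference point $|x|+1$), at high energy Lemma~\ref{YJ} takes over, and the boundary term is dug out in Lemma~\ref{lem;S_12} from the low--low piece of $Y_0(\lambda p)Y_0(\lambda q)$ with the cutoffs $\chi(\lambda p)\chi(\lambda q)$ still attached, so that the integration by parts in \eqref{C} produces the $\frac{1}{t}$ term plus cutoff-derivative errors controlled via the support of $\chi''(\lambda p)$, with Lemma~\ref{cutoff} handling the $\log\lambda\,\log(\lambda p)$ cross terms. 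Your key observation, which the paper never makes explicit, is that $E_0^{\pm}=\pm\frac i4\mathcal J-\frac14\Lambda$, so $\mathcal J$ and $\Lambda$ inherit the \emph{global} (in $\lambda p$) bounds of Lemma~\ref{R0 exp cor} and the H\"older bound of Corollary~\ref{lipbound}; this collapses the case analysis entirely, the two-sided orthogonality is spent once and for all, the unique $\lambda$-independent summand $\frac{i\|V\|_1}{8\pi^2}\log p\log q$ yields exactly $\frac1t F_1$ through Lemma~\ref{lem:ibp} with a clean $O(t^{-2})$ error (only $\chi'$ survives differentiation), and the remainder goes through Lemma~\ref{lem:ibp2} by the same H\"older-interpolation mechanism as the paper's \eqref{eq;differ}, applied uniformly. (Your direct computation also confirms the sign of $F_1$ as stated; the statement \eqref{SDS11} of Lemma~\ref{lem;S_12} carries a sign typo that its own proof corrects.) What the paper's heavier machinery buys is reusability: in Proposition~\ref{propss1} and throughout the matrix case, the projection $S_1$ sits on only one side of the operator, the two-sided subtraction you rely on is unavailable, and the regime-by-regime toolkit then becomes indispensable; your argument is shorter here precisely because $vS_1v$ is flanked by $S_1$'s orthogonality on both sides. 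Two points you should spell out in a final write-up: (i) in the H\"older step, the summands with a $\log\lambda$ prefactor produce, via the product rule, terms like $\frac1\lambda\big[\mathcal J(bq)-\mathcal J(\lambda q)\big]$ and $[\log b-\log\lambda]\,\partial_\lambda\mathcal J(\lambda q)$, which after interpolation give $\lambda^{-1/2-2\alpha}$ integrands --- this is where $\alpha<\frac14$ is genuinely needed, not just $\alpha<\frac12$; and (ii) the spatial bound $\int_{\R^2}\la x_1\ra^{1/2+\alpha}|v\phi|(x_1)\,dx_1\les 1$ requires $v\les\la x\ra^{-3/2-\alpha-}$ (Cauchy--Schwarz against $\|\phi\|_{L^2}=1$), the same implicit strengthening the paper itself uses in its closing estimate.
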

We   prove this proposition in a series of lemmas.     
\begin{lemma} 
\label{lem;asin} Under the conditions of Proposition~\ref{pprop}, we have 
 \begin{align}
 \label{SDS1}
      \Bigg| \int_{\R^4}\int_0^{\infty} e^{it\lambda^2}\lambda \chi(\lambda)\log(\lambda)Y_0(\lambda p) [vS_1v](x_1,y_1) J_0(\lambda q)d\lambda dx_1 dy_1 \Bigg| \les  \dfrac{\la x\ra^{\f1{2}+\alpha}\la y\ra^{\f1{2}+\alpha}}{t^{1+ \alpha}},
          \end{align}
    \begin{align}
    \label{SDS1*}
            \Bigg| \int_{\R^4}\int_0^\infty e^{it\lambda^2}\lambda \chi(\lambda)\log(\lambda)J_0(\lambda p)  [vS_1v](x_1,y_1)Y_0(\lambda q)d\lambda dx_1 dy_1 \Bigg|  \les  \dfrac{\la x\ra^{\f1{2}+\alpha}\la y\ra^{\f1{2}+\alpha}}{t^{1+ \alpha}},
                    \end{align}
      \begin{align}
   \label{SDS12}
  \Bigg| \int_{\R^4}\int_0^\infty e^{it\lambda^2}\lambda \chi(\lambda)J_0(\lambda p)  [vS_1v](x_1,y_1) J_0(\lambda q)d\lambda dx_1 dy_1 \Bigg|  \les   \dfrac{\la x\ra^{\f1{2}+\alpha}\la y\ra^{\f1{2}+\alpha}}{t^{1+ \alpha}}.
          \end{align}               
  \end{lemma}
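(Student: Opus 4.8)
The plan is to exploit the rank-one structure of $S_1$ together with the orthogonality $\la v,\phi\ra=0$, and then feed the resulting oscillatory integrals into Lemma~\ref{lem:ibp2}. Writing $S_1f=\phi\la f,\phi\ra$ with $\phi$ real and $\|\phi\|_{L^2}=1$ (Remark~2), we have $[vS_1v](x_1,y_1)=v(x_1)\phi(x_1)\,\phi(y_1)v(y_1)$, and since $\phi\in QL^2$ with $Q=\mathbbm{1}-P$ and $P$ the orthogonal projection onto $v$, the crucial cancellation $\int_{\R^2} v(x_1)\phi(x_1)\,dx_1=\la v,\phi\ra=0$ holds. For \eqref{SDS1} I would first carry out the $x_1$ and $y_1$ integrations by setting
\[
A(\lambda)=\int_{\R^2}Y_0(\lambda p)\,v(x_1)\phi(x_1)\,dx_1,\qquad B(\lambda)=\int_{\R^2}J_0(\lambda q)\,v(y_1)\phi(y_1)\,dy_1,
\]
so that the left side of \eqref{SDS1} equals $\big|\int_0^\infty e^{it\lambda^2}\lambda\,\chi(\lambda)\log(\lambda)A(\lambda)B(\lambda)\,d\lambda\big|$.

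Next I would use the small-argument expansions \eqref{J0 def}--\eqref{Y0 def} and the orthogonality to subtract the pieces of $Y_0$ and $J_0$ that are constant in $x_1$, respectively $y_1$. Since $\tfrac{2}{\pi}(\log(\lambda/2)+\gamma)$ is independent of $x_1$ and the leading term $1$ of $J_0$ is independent of $y_1$, both integrate to zero against $v\phi$, giving
\[
A(\lambda)=\int_{\R^2}\Big[Y_0(\lambda p)-\tfrac{2}{\pi}(\log(\lambda/2)+\gamma)\Big]v(x_1)\phi(x_1)\,dx_1,\qquad B(\lambda)=\int_{\R^2}\big[J_0(\lambda q)-1\big]v(y_1)\phi(y_1)\,dy_1.
\]
This is the heart of the argument: the dangerous $\log\lambda$ factor carried by $Y_0$ and the $O(1)$ leading term of $J_0$ are removed, so each factor now gains a power of $\lambda$, and in particular $\mathcal E(\lambda):=\chi(\lambda)\log(\lambda)A(\lambda)B(\lambda)$ satisfies $\mathcal E(0)=0$. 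Using the interpolated Bessel bounds $|J_0(z)-1|\les|z|^{\f12+\alpha}$ and $|J_0'(z)|=|J_1(z)|\les|z|^{-\f12+\alpha}$ (valid for all $z>0$ when $0<\alpha<\f14$), together with Lemma~\ref{YJ} for $|z|\gtrsim1$ and the analogous bounds for $Y_0(\lambda p)-\tfrac{2}{\pi}(\log(\lambda/2)+\gamma)$ and $pY_1(\lambda p)+\tfrac{2}{\pi\lambda}$ (again using $\la v,\phi\ra=0$ to remove the $x_1$-independent pieces), one obtains after the spatial integrations
\[
|B(\lambda)|\les\lambda^{\f12+\alpha}\la y\ra^{\f12+\alpha},\qquad |B'(\lambda)|\les\lambda^{-\f12+\alpha}\la y\ra^{\f12+\alpha},\qquad |A(\lambda)|+\lambda|A'(\lambda)|\les \la x\ra^{0+},
\]
where the spatial integrals converge because $\la\cdot\ra^{\f12+\alpha}v\in L^2(\R^2)$ and $\phi\in L^2(\R^2)$. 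Consequently $|\mathcal E'(\lambda)|\les\lambda^{-\f12+\alpha}\log(1/\lambda)\,\la x\ra^{0+}\la y\ra^{\f12+\alpha}$, with $\mathcal E'$ H\"older at the relevant scales.

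Finally, since $\mathcal E(0)=0$, I would apply Lemma~\ref{lem:ibp2}. For the first term $\tfrac1t\int_0^\infty\tfrac{|\mathcal E'(\lambda)|}{1+\lambda^2t}\,d\lambda$, splitting at $\lambda=t^{-1/2}$ and inserting the bound on $\mathcal E'$ gives $\les t^{-5/4-\alpha/2}\la x\ra^{\f12+\alpha}\la y\ra^{\f12+\alpha}$, and the H\"older-difference term (whose argument shift is $\sim(t\lambda)^{-1}$) is estimated in the same way; both are $\les t^{-1-\alpha}\la x\ra^{\f12+\alpha}\la y\ra^{\f12+\alpha}$ for $0<\alpha<\f14$. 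Estimate \eqref{SDS1*} follows by the symmetry $x\leftrightarrow y$, and \eqref{SDS12} is easier still: there is no $\log\lambda$ prefactor and the orthogonality is used in both variables, so one gains $\lambda^{1+2\alpha}$ and the weights $\la x\ra^{\f12+\alpha}\la y\ra^{\f12+\alpha}$ directly, yielding an even faster rate. The main obstacle is the second step: controlling the $\lambda$-derivatives of the spatial integrals $A,B$ uniformly across the transition $\lambda|x-x_1|\sim1$ (small- versus large-argument behaviour of the Bessel functions), and choosing the interpolated Bessel exponents so as to trade the gained power of $\lambda$ for exactly the weights $\la x\ra^{\f12+\alpha},\la y\ra^{\f12+\alpha}$ while keeping all spatial integrals convergent under the decay hypothesis on $v$.
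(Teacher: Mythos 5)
Your proposal is correct in substance and rests on the same two pillars as the paper's proof: the cancellation $\la v,\phi\ra=0$ (the paper's identity \eqref{zero}), used to subtract a function independent of $x_1$ (resp. $y_1$) so that the $\log\lambda$ singularity of $Y_0$ and the $O(1)$ part of $J_0$ drop out and $\mathcal E(0)=0$; and the stationary-phase Lemma~\ref{lem:ibp2} combined with interpolation between first- and second-derivative bounds to handle the H\"older-difference term. Where you genuinely differ is the implementation of the cancellation: the paper splits into three regimes with cutoffs $\chi(\lambda p)$, $\chi(\lambda q)$, subtracting in the low regime the cutoff Bessel functions at the comparison argument $r=|x|+1$ (the functions $F,G$ of Lemma~\ref{FG}) and invoking Lemma~\ref{YJ} for $\widetilde J_0,\widetilde Y_0$ in the high regimes, whereas you subtract the space-independent leading terms $\f2\pi\log(\lambda/2)+\f{2\gamma}{\pi}$ and $1$ globally and rely on uniform interpolated bounds such as $|J_0(z)-1|\les z^{\f12+\alpha}$, $|J_1(z)|\les z^{-\f12+\alpha}$, valid across both regimes. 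Your route buys a unified argument with a single application of Lemma~\ref{lem:ibp2} and no case analysis; the paper's cutoff decomposition buys cleaner intermediate bounds (e.g. $|\partial_\lambda F|\les 1/\lambda$ with only logarithmic spatial weight), at the price of three cases.

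One intermediate claim must be corrected, though it does not sink the argument: $\lambda|A'(\lambda)|\les\la x\ra^{0+}$ is false. The integrand of $A'$ is $-pY_1(\lambda p)-\f{2}{\pi\lambda}$; for $\lambda p\les 1$ the $\f1\lambda$ singularities indeed cancel, leaving $O(\lambda p^2|\log(\lambda p)|)$, but for $\lambda p\gtrsim1$ the best available bound is $|pY_1(\lambda p)|\les p(\lambda p)^{-1/2}=\lambda^{-1}(\lambda p)^{1/2}$, so uniformly $|A'(\lambda)|\les \lambda^{-1/2}\la x\ra^{1/2}\int\la x_1\ra^{1/2}|v\phi|\,dx_1$, i.e. the weight $\la x\ra^{1/2}$ (and factor $\lambda^{-1/2}$) rather than $\la x\ra^{0+}$ (and $\lambda^{-1}$). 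This is harmless: in $\mathcal E'=\chi'\log AB+\chi\lambda^{-1}AB+\chi\log A'B+\chi\log AB'$ the offending term is multiplied by $|B|\les\lambda^{\f12+\alpha}\la y\ra^{\f12+\alpha}$, so one still gets $|\mathcal E'|\les \lambda^{-\f12+\alpha}\log(1/\lambda)\la x\ra^{1/2}\la y\ra^{\f12+\alpha}$, and the final weight $\la x\ra^{1/2}\la y\ra^{\f12+\alpha}\leq\la x\ra^{\f12+\alpha}\la y\ra^{\f12+\alpha}$ stays within budget, with your subsequent estimates for both terms of Lemma~\ref{lem:ibp2} closing exactly as written for $0<\alpha<\f14$. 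The same correction propagates to the second-derivative bounds used for the H\"older step, which carry weights $\la x\ra^{3/2}\la x_1\ra^{3/2}$ before interpolation, precisely as in Lemma~\ref{YJ} and the paper's \eqref{eq;differ}; interpolation reduces them to $\la x\ra^{\f12+\alpha}\la x_1\ra^{\f12+\alpha}$ as you anticipate. Finally, note that convergence of the spatial integrals $\int\la x_1\ra^{\f12+\alpha}|v\phi|\,dx_1$ requires $v\les\la x\ra^{-3/2-\alpha-}$, consistent with the paper's standing convention $\alpha<\beta-\f32$.
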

 To prove Lemma~\ref{lem;asin} we need the following lemma from \cite{EGw}. Another version of it can be found in \cite{Sc2}.
 \begin{lemma}[Lemma 3.3 in \cite{EGw}] \label{FG}
 Let $p=|x-x_1| $, $r=|x|+1 $, and
  $$ F(\lambda , x ,x_1)=\chi(\lambda p)Y_0(\lambda p)-\chi(\lambda  r)Y_0(\lambda r),$$
  $$ G(\lambda , x ,x_1)=\chi(\lambda p)J_0(\lambda p)-\chi(\lambda r)J_0(\lambda r).$$
  We have 
   \begin{align*}
 |G(\lambda , x ,x_1)|\les \lambda ^\frac{1}{2}\la x_1\ra^\frac{1}{2}, \quad |\partial_\lambda G(\lambda ,x,x_1)|\les \lambda ^{-\f 1{2}} \la x_1\ra^\frac{1}{2},\quad |\partial_\lambda^2 G(\lambda ,x,x_1)|\les{\dfrac{1}{\lambda}}\la x_1\ra. 
        \end{align*}
 and       
 \begin{align*} 
  |F(\lambda , x ,x_1)|\les \int_0^{2\lambda_1} |F(\lambda,x,x_1)|+ |\partial_\lambda
  F(\lambda ,x,x_1)| d\lambda \les   k(x,x_1), \quad  \\
   |\partial_\lambda F(\lambda ,x,x_1)|\les \frac{1}{\lambda}, \quad |\partial_\lambda^2 
  F(\lambda ,x,x_1)|\les \dfrac{1}{\lambda^2}. 
\end{align*}
Here $k(x,x_1):=1+\log^-(|x-x_1|)+\log^+(|x_1| + 1 )$,  $\log^-(x)= -\log(x)\chi_{(0,1)}(x)$, and $\log^+(x)= \log(x)\chi_{(1,\infty)}(x)$.
\end{lemma}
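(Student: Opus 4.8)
The plan is to reduce both groups of estimates to a single device: write $F$ and $G$ as differences of a fixed profile evaluated at two comparable arguments, and then exploit that those arguments differ by at most $\la x_1\ra$. Set $\phi(s)=\chi(s)J_0(s)$ and $\eta(s)=\chi(s)Y_0(s)$, so that $G(\lambda,x,x_1)=\phi(\lambda p)-\phi(\lambda r)$ and $F(\lambda,x,x_1)=\eta(\lambda p)-\eta(\lambda r)$, with $p=|x-x_1|$ and $r=|x|+1$. The triangle inequality gives $|p-r|=\big||x-x_1|-|x|-1\big|\le |x_1|+1\les\la x_1\ra$, while $r\ge 1$. The observation that drives everything is the scaling identity $\partial_\lambda^j\big[\theta(\lambda p)-\theta(\lambda r)\big]=\lambda^{-j}\big[\Theta_j(\lambda p)-\Theta_j(\lambda r)\big]$, where $\Theta_j(s):=s^j\theta^{(j)}(s)$. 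For each fixed $j$ I would then use two competing bounds on the bracket: a trivial one, $|\Theta_j(\lambda p)-\Theta_j(\lambda r)|\le 2\sup|\Theta_j|$, and a mean-value one, $|\Theta_j(\lambda p)-\Theta_j(\lambda r)|\le \lambda|p-r|\sup|\Theta_j'|\les \lambda\la x_1\ra\sup|\Theta_j'|$ (applied on the interval between the two positive arguments, where $\Theta_j$ is smooth), and combine them through the elementary inequality $\min(a,b)\le a^{1-\vartheta}b^\vartheta$.

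For $G$ the profile $\phi$ is smooth and supported in $\{s\les1\}$, and since $J_0(0)=1$, $J_0'(0)=0$, the functions $\Theta_0^\phi=\phi$, $\Theta_1^\phi=s\phi'$, $\Theta_2^\phi=s^2\phi''$ are all bounded with bounded derivatives (the latter two vanishing at $s=0$). Hence the trivial bounds give $|\partial_\lambda^j G|\les\lambda^{-j}$ and the mean-value bounds give $|\partial_\lambda^jG|\les\lambda^{1-j}\la x_1\ra$; combining with $\vartheta=\tfrac12$ for $j=0,1$ yields $|G|\les\lambda^{1/2}\la x_1\ra^{1/2}$ and $|\partial_\lambda G|\les\lambda^{-1/2}\la x_1\ra^{1/2}$, while for $j=2$ the mean-value bound $\lambda^{-1}\la x_1\ra$ already dominates, giving $|\partial_\lambda^2G|\les\lambda^{-1}\la x_1\ra$.

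The $F$ estimates carry the extra feature of the logarithmic singularity of $Y_0$. From $Y_0(s)=\tfrac2\pi\log(s/2)+\tfrac{2\gamma}\pi+\widetilde O(s^2\log s)$ one has $\eta'(s)=\tfrac2{\pi s}+\widetilde O(s\log s)$ and $\eta''(s)=-\tfrac2{\pi s^2}+\widetilde O(\log s)$ near $0$, so although $\eta'$ and $\eta''$ are singular, the rescaled profiles $\Theta_1^\eta=s\eta'$ and $\Theta_2^\eta=s^2\eta''$ extend to bounded functions (with limits $\tfrac2\pi$ and $-\tfrac2\pi$ at $0$); this is exactly the cancellation of the leading singularity in the difference. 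The trivial bounds therefore give at once $|\partial_\lambda F|\les\lambda^{-1}$ and $|\partial_\lambda^2 F|\les\lambda^{-2}$. Likewise $|\eta(s)|\les 1+\log^-(s)$ with $\eta$ supported in $\{s\les1\}$, which controls $|F|$ pointwise.

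The main obstacle is the integral bound $\int_0^{2\lambda_1}\big(|F|+|\partial_\lambda F|\big)\,d\lambda\les k(x,x_1)$, because the sharp pointwise estimate $|\partial_\lambda F|\les\lambda^{-1}$ is \emph{not} integrable at $\lambda=0$; the point is that the genuine growth is only logarithmic. For $\int|\partial_\lambda F|$ I would supplement the trivial bound with the mean-value bound $|\partial_\lambda F|=\lambda^{-1}|\Theta_1^\eta(\lambda p)-\Theta_1^\eta(\lambda r)|\les|p-r|\les\la x_1\ra$, where the crucial input is $|(\Theta_1^\eta)'|\les1$, i.e.\ the cancellation $\eta'(s)+s\eta''(s)=\widetilde O(s\log s)$ which holds across the log. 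Thus $|\partial_\lambda F|\les\min(\lambda^{-1},\la x_1\ra)$, and splitting the integral at $\lambda=\la x_1\ra^{-1}$ produces $\int_0^{2\lambda_1}|\partial_\lambda F|\,d\lambda\les 1+\log\la x_1\ra\les 1+\log^+(|x_1|+1)$. For $\int|F|$ I would use $|F|\le|\eta(\lambda p)|+|\eta(\lambda r)|\les 2+\log^-(\lambda p)+\log^-(\lambda r)$ together with the elementary computation $\int_0^{2\lambda_1}\log^-(\lambda a)\,d\lambda\les 1+\log^-(a)$; since $r\ge1$ forces $\log^-(r)=0$, only the $p=|x-x_1|$ contribution survives, giving $\int|F|\,d\lambda\les 1+\log^-(|x-x_1|)$. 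Adding the two contributions reproduces exactly $k(x,x_1)=1+\log^-(|x-x_1|)+\log^+(|x_1|+1)$, and the pointwise estimate $|F|\les\int_0^{2\lambda_1}(|F|+|\partial_\lambda F|)\,d\lambda$ follows from the one-dimensional Sobolev embedding on the interval $(0,2\lambda_1)$. The single delicate verification is the uniform boundedness of $(\Theta_1^\eta)'$ through $s=0$, which is precisely what upgrades the non-integrable $\lambda^{-1}$ to the integrable $\min(\lambda^{-1},\la x_1\ra)$.
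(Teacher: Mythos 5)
Your proof is correct, and it is essentially the argument behind the lemma (which this paper quotes from Lemma 3.3 of \cite{EGw} without reproving): trivial sup-bounds on the rescaled profiles $\Theta_j(s)=s^j\theta^{(j)}(s)$ interpolated against mean-value bounds via $|p-r|\le |x_1|+1$ and $\min(a,b)\le a^{1-\vartheta}b^{\vartheta}$, with the cancellation $\eta'(s)+s\eta''(s)=\widetilde O(s\log s)$ across the logarithmic singularity of $Y_0$ supplying exactly the integrable bound $\min(\lambda^{-1},\la x_1\ra)$ for $|\partial_\lambda F|$. Your $\Theta_j$ packaging unifies the case-by-case small/large-argument Bessel analysis of \cite{EGw} but produces the same estimates, including the correct accounting of $k(x,x_1)$ from the split at $\lambda=\la x_1\ra^{-1}$ together with $\int_0^{2\lambda_1}\log^-(\lambda a)\,d\lambda\les 1+\log^-(a)$ and the $W^{1,1}\hookrightarrow L^\infty$ embedding on $(0,2\lambda_1)$.
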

\begin{proof}[Proof of Lemma~\ref{lem;asin}] 
We only prove the assertion  (\ref{SDS1}), the second and third assertions are analogous.  

We consider the low and high energy parts separately. To do that we divide the proof into cases,\\
Case $1$: $\lambda p \les 1$ and $\lambda q \les 1 $. Letting $ \chi(\lambda p)$, $ \chi(\lambda q)$ be  the cutoff functions as in Lemma~\ref{YJ}  for low-low energy, we consider   
\begin{align}
\label{cc}
   \int_{\R^4}\int_0^{\infty} e^{it\lambda^2}\lambda \chi(\lambda)\log(\lambda)Y_0(\lambda p) \chi(\lambda p)vS_1v(x_1,y_1)\chi(\lambda q) J_0(\lambda q)d\lambda dx_1 dy_1. 
                              \end{align}
Note that by definition of $S_1$ and $Q$, for any $ f \in L^2(\mathbb{R}^2)$ 
\begin{align}
   \label{zero}
  \int_{\R^4} f(x_1)[S_1](x_1,y_1)v(y_1) dx_1 dy_1 \ =\ \int_{\R^4} 
  v(x_1)[S_1](x_1,y_1)f(y_1) dx_1 dy_1 =0 
\end{align}
is satisfied. By using this fact we can replace $Y_0(\lambda p)\chi(\lambda p)$ with  $F(\lambda , x ,x_1)$; and  $J_0(\lambda q)\chi(\lambda q)$ with $G(\lambda , y ,y_1)$. Hence, the $\lambda $ integral of (\ref{cc}) is equal to
\begin{align}
 \label{SDS2}
\int_0^\infty e^{it\lambda^2}\lambda \chi(\lambda)\log(\lambda)F(\lambda ,x,x_1)G(\lambda ,y,y_1) d\lambda. 
         \end{align}
Letting $\mathcal E(\lambda)= \chi(\lambda)\log(\lambda)F(\lambda ,x,x_1)G(\lambda ,y,y_1)$ , we see that $\mathcal E(0)=0 $. Then by Lemma \ref{FG} we have

 $$ |\partial_\lambda\mathcal E(\lambda)| \les \chi(\lambda) \lambda^{-\f12-}  \la y_1\ra ^{\f12}k(x,x_1), $$
\begin{align*}
      \Big|\partial_\lambda\Big(\frac{\partial_\lambda \mathcal E (\lambda)}{\lambda}\Big)\Big| \les \chi(\lambda)\lambda^{-\f52- } \la y_1\ra k(x,x_1).
 \end{align*}
Using  Lemma~\ref{lem:ibp},
    \begin{multline*}
       |\eqref{SDS2}|
       \les {\f1t}\int_0^{t^{-1/2}}|\mathcal E^\prime(\lambda)| d\lambda+ \Big|\frac{\mathcal{E}^\prime(t^{-1/2})}{t^{3/2}}\Big|
       +{\f1{t^2}}\int_{t^{-1/2}}^\infty \Big|\Big(\frac{\mathcal  E^\prime(\lambda)}{\lambda}\Big)^\prime\Big| d\lambda \\
       \les  \la y_1\ra k(x,x_1)\Bigg[ {\f1t}\int_0^{t^{-1/2}} \lambda^{-\f12-} d\lambda +{t^{-\f54}}
      +\f{1}{t^2}\int_{t^{-1/2}}^\infty  \lambda^{-\f52- }  d\lambda \Bigg] 
      \les \f {k(x,x_1) \la y_1\ra  } {t^{\f54+}}.
   \end{multline*}
 Case 2: $ \lambda p \les 1$ and $ \lambda q \gtrsim 1$. The case $\lambda p \gtrsim 1$ and $ \lambda q \les 1$ is similar. Note that Lemma~\ref{FG} is valid for the low energy. Therefore, we can not use \eqref{zero} to exchange $J_0(\lambda q)\widetilde{\chi}(\lambda q)$ with $G(\lambda , y ,y_1)$. Instead we use the large energy expansion  \eqref{largr} of $J_0(\lambda q)$ and consider the following integral
  \begin{align} 
   \label{SDS3}
                     \int_0^\infty e^{it\lambda^2}\lambda \chi(\lambda)\log(\lambda)F(\lambda ,x,x_1)\widetilde{J_0}(\lambda q) d\lambda.
  \end{align}
Let $\mathcal {E} (\lambda)=\chi(\lambda)\log(\lambda)F(\lambda ,x,x_1)\widetilde{J_0}(\lambda q)$. Using the bounds in Lemma \ref{FG} and Lemma \ref{YJ}, we have the estimates
  \begin{align}
    & |\partial_\lambda\mathcal E(\lambda)| \les \chi(\lambda)\lambda ^{-\f12-}(\la y \ra \la y_1\ra)^{\f12}k(x,x_1), 
          \label{first}\\
     & \big| \partial^2_\lambda \mathcal {E}(\lambda)\big| \les \lambda^{-3/2-} k(x,x_1)\la y_1\ra ^{3/2} \la y \ra ^{3/2}.  \label{second}
           \end{align}
 Using (\ref{second}) for the mean value theorem we have
 $$ \big|\partial_\lambda \mathcal{E}(a) -\partial_\lambda \mathcal {E}(\lambda) \big| \les |a - \lambda|  \lambda^{-3/2-} k(x,x_1)\la y_1\ra ^{3/2} \la y \ra ^{3/2} $$
whose interpolation with (\ref{first}) gives us 
\begin{align}\label{eq;differ}
  \big|\partial_\lambda \mathcal{E}(a) -\partial_\lambda \mathcal {E}(\lambda) \big| \les |a-\lambda |^{\alpha}  \lambda^{-{\f 12}-\alpha} k(x,x_1)\la y_1\ra ^{{\f 12}+\alpha-} \la y \ra ^{{\f 12}+\alpha}.
    \end{align}
Noting $\mathcal{E}(0) =0 $ we can use Lemma~\ref{lem:ibp2} and obtain
\begin{align*}
   |\eqref{SDS3}|\les\f1t\int_0^{\infty}\frac{|\mathcal E^\prime(\lambda)|}{  (1+\lambda^2 t)} d\lambda
+\f1{t}\int_{t^{-1/2}}^\infty \big|  \mathcal E^\prime(\lambda \sqrt{1+ \pi t^{-1}\lambda^{-2}} )-\mathcal E^\prime(\lambda)  \big| d\lambda.
   \end{align*}
Using (\ref{first}) we can estimate the first integral as
$$ \frac{ \la y \ra^{\f12} \la y_1\ra^{\f12}k(x,x_1)}{t} \int_0^{\infty} \frac 1 {\lambda^{-1-}(1+t \lambda^2)} d\lambda \les \frac{\la y \ra^{\f12} \la y_1\ra^{\f12}k(x,x_1)}{t^{5/4+}} $$
To estimate the second integral we have, 
$$  \lambda\big(\sqrt{1+\pi t^{-1}\lambda^{-2}}-1\big) \sim \f 1{t \lambda}.$$
And that gives 
 $$ \f {k(x,x_1)\la y_1\ra ^{{\f 12}+\alpha} \la y \ra ^{{\f 12}+\alpha}}{t^{1+\alpha}} \int_{t^{-1/2}}^{\lambda_1} \lambda^{-{\f1 2}-2\alpha-} d \lambda \les \f {k(,x,x_1)\la y_1\ra ^{{\f 12}+\alpha} \la y \ra ^{{\f 12}+\alpha}}{t^{1+\alpha}} $$
 since $0<\alpha < \f 1{4}$.

\noindent  
Case 3: $\lambda p \gtrsim 1 $ and $\lambda q \gtrsim 1$. In this case we need to use the large energy expansion for both $Y_0(\lambda p)$ and $Y_0(\lambda q)$. Therefore, we consider the following integral 
 \begin{align} \label{eq:case4}
       \int_0^\infty e^{it\lambda^2}\lambda \chi(\lambda)\log(\lambda)\widetilde{Y}_0(\lambda p)\widetilde{J_0}(\lambda q) d\lambda. 
        \end{align}
Note that \eqref{eq:case4} has slightly faster decay than \eqref{SDS3} in terms of $\lambda$. Also the largest contribution to the weight function   comes when both  derivatives act on either $\widetilde{J}_0$ or $\widetilde{Y}_0$  as $\la \cdot\ra^\f 3{2}$. This  can be reduced to $\la \cdot\ra^{\f 1{2} + \alpha} $ using the argument that leads to \eqref{eq;differ} above.   

Hence, combining all four cases we see that
 $$ \Bigg|\int_0^\infty e^{it\lambda^2}\lambda \chi(\lambda)\log(\lambda)Y_0(\lambda p)J_0(\lambda q)d\lambda\Bigg| \les\f{\la y \ra ^{\f 1{2} + \alpha} \la x\ra^{\f 1{2} + \alpha} \la y_1\ra \la x_1\ra }{t^{1+\alpha}} $$
 for $ \alpha \in(0, 1/4 ) $.
That yields
      \begin{align*}
             \big|(\ref{SDS1})\big| &\les \dfrac{(\la x \ra \la y \ra )^{\f 1{2} + \alpha}}{t^{1+\alpha}} \int_{\R^4}k(x,x_1)\la x_1 \ra^{\f 1{2} + \alpha}v(x_1) [S_1](x_1,y_1)v(y_1)k(y,y_1)\la y_1 \ra^{1/2+\alpha} dx_1 dy_1   \\
           &  \les (\la y \ra \la y_1\ra)^{\f 1{2} + \alpha} \| k(x,x_1)\la x_1 \ra ^{1/2+\alpha} v(x_1) \|_{L^2_{x_1}} \| S_1\|_{L^2 \rightarrow L^2}  \| k(y,y_1)\la x_1 \ra  v(y_1) \|_{L^2_{y_1}} \\
            &\les \dfrac{\la x \ra^{\f 1{2} + \alpha} \la y \ra ^{\f 1{2} + \alpha}}{t^{1+\alpha}} .
   \end{align*} 
 The last inequality follows from the assumption     $v(x)\les \la x\ra^{-3/2-}$ which implies
      $$\|v(x_1)\la x_1\ra  k(x,x_1)\|_{L^2_{x_1}}\les 1.$$
  \end{proof}
\begin{lemma}\label{cutoff} Let $K(\lambda,y,y_1) = \chi\big(\lambda |y-y_1| \big) - \chi \big(\lambda (|y|+1)\big)$. Then 
 \begin{align*}
 |K(\lambda , y ,y_1)|\les \lambda \la y_1 \ra  \ ,\  |\partial_\lambda K(\lambda ,y,y_1)|\les \la y_1 \ra \ ,\ |\partial_\lambda^2 K(\lambda ,y,y_1)|\les \lambda ^{-1} \la y_1 \ra . 
 \end{align*}
 \end{lemma}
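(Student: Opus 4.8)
The plan is to reduce all three estimates to a single integral representation of $K$ in the radial variable, which makes the cancellation between the two cutoffs explicit. Set $p=|y-y_1|$ and $r=|y|+1$, so that $K(\lambda,y,y_1)=\chi(\lambda p)-\chi(\lambda r)$. First I record the elementary geometric bound $|p-r|\les\la y_1\ra$: by the triangle inequality $\big||y-y_1|-|y|\big|\le|y_1|$, and therefore $|p-r|=\big||y-y_1|-|y|-1\big|\le|y_1|+1\les\la y_1\ra$. This is exactly where the factor $\la y_1\ra$ on the right-hand side of each bound will come from.

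Next I would apply the fundamental theorem of calculus to write
\[
K(\lambda,y,y_1)=\chi(\lambda p)-\chi(\lambda r)=\int_r^p\lambda\,\chi'(\lambda s)\,ds .
\]
The merit of this formula is that it builds in the cancellation: differentiating $\chi(\lambda p)-\chi(\lambda r)$ directly in $\lambda$ produces terms such as $p\,\chi'(\lambda p)$ that are individually of size $\lambda^{-1}$, whereas writing $K$ as an integral over the short interval $[r,p]$, whose length is $|p-r|\les\la y_1\ra$, keeps the integrand uniformly controlled. The bound on $K$ itself is then immediate, since $|\chi'|\les1$ gives $|K|\les\lambda|p-r|\les\lambda\la y_1\ra$.

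For the derivatives I would differentiate under the integral sign, the integrand being smooth in $\lambda$, obtaining
\[
\partial_\lambda K=\int_r^p\big[\chi'(\lambda s)+\lambda s\,\chi''(\lambda s)\big]ds,\qquad \partial_\lambda^2 K=\int_r^p\big[2s\,\chi''(\lambda s)+\lambda s^2\,\chi'''(\lambda s)\big]ds .
\]
The structural fact that drives the estimates is that, since $\chi\equiv1$ near the origin, every derivative $\chi^{(k)}$ with $k\ge1$ is supported where $\lambda s\sim1$, i.e. $s\sim\lambda^{-1}$. Consequently $|\chi'(\lambda s)|\les1$ and $|\lambda s\,\chi''(\lambda s)|\les1$, so the first integrand is $\les1$ and hence $|\partial_\lambda K|\les|p-r|\les\la y_1\ra$; likewise $|s\,\chi''(\lambda s)|\les\lambda^{-1}$ and $|\lambda s^2\,\chi'''(\lambda s)|\les\lambda^{-1}$ on their supports, so the second integrand is $\les\lambda^{-1}$ and $|\partial_\lambda^2 K|\les\lambda^{-1}|p-r|\les\lambda^{-1}\la y_1\ra$.

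I do not expect a real obstacle here; the only point that requires care is the one just highlighted, namely that one must not estimate the two cutoffs separately, because each term alone degenerates like $\lambda^{-1}$ (and like $\lambda^{-2}$ at the level of the second derivative) as $\lambda\to0$. The integral representation over $[r,p]$ is precisely the device that trades this apparent singularity for the benign factor $|p-r|\les\la y_1\ra$, after which the localization of $\chi^{(k)}$ to $\lambda s\sim1$ supplies the correct remaining powers of $\lambda$.
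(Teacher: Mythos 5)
Your proposal is correct and follows essentially the same route as the paper: both exploit the cancellation between the two cutoffs through a mean-value-type estimate in the radial variable (the paper invokes the mean value theorem directly, you use the equivalent fundamental-theorem-of-calculus representation $K=\int_r^p \lambda\chi'(\lambda s)\,ds$), combined with the triangle-inequality bound $|p-r|\les \la y_1\ra$ and the localization of $\chi^{(k)}$, $k\ge 1$, to the region $\lambda s\sim 1$. The only difference is organizational: differentiating under your integral sign makes fully explicit the estimates for $\partial_\lambda K$ and $\partial_\lambda^2 K$ that the paper states tersely as differences such as $q\chi'(\lambda q)-(|y|+1)\chi'(\lambda(|y|+1))$.
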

 \begin{proof}
Noting that $\chi \in C^{\infty}$, for the first inequality we use the mean value theorem to conclude
    \begin{align*}
          |K(\lambda , y ,y_1)|= \big|\chi\big(\lambda q \big) - \chi \big(\lambda (|y|+1)\big) \big| \leq \lambda \la y_1 \ra \max_{x} |\chi^{\prime} (x)| \les \lambda \la y_1 \ra .
       \end{align*}
 For the second inequality note that $ \partial_\lambda \chi( \lambda q) = q \chi^ \prime(\lambda q) $. Using this and the fact that $\chi \in C^{\infty}$, we obtain  
     \begin{align*}
          |\partial_\lambda K(\lambda , y ,y_1)| = \big|q \chi^{\prime}\big(\lambda q \big) - (|y|+1)\chi^{\prime} \big(\lambda (|y|+1)\big) \big| \les  \la y_1 \ra.  
       \end{align*}
  Finally for the third inequality, note that $ \chi^{\prime\prime}(\lambda q) $ is supported when $\lambda \sim \f1{q} $. Using this and the second derivative of the cut-off functions in terms of $\lambda$, we have  
       \begin{align*}
          |\partial_\lambda^2 K(\lambda , y ,y_1)| \leq \big|q^2 \chi^{\prime\prime}\big(\lambda q \big) - (|y|+1)^2\chi^{\prime\prime} \big(\lambda (|y|+1)\big) \big| \les \lambda^{-1} \big|q-(|y|+1)\big|.       
       \end{align*}      
       \end{proof}  
\begin{lemma}\label{lem;S_12} 
Under the same conditions of Proposition \ref{pprop}, we have
\begin{multline}        
  \label{SDS11}
   \int_{\R^4}\int_0^\infty e^{it\lambda^2}\lambda \chi(\lambda)Y_0(\lambda p)vS_1vY_0(\lambda q)d\lambda dx_1 dy_1 \\= - \frac{2i}{t \pi^2} \int_{\R^4} \log|x-x_1| [vS_1v] (x_1,y_1) \log|y-y_1| dx_1 dy_1 + \widetilde{O}\big( t^{-5/4} \la x\ra \la y\ra \big).
                \end{multline}
     \end{lemma}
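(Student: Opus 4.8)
The plan is to isolate the rank-one main term from the doubly-logarithmic regime and to collect everything else into the $\widetilde O(t^{-5/4}\la x\ra\la y\ra)$ remainder, following the case split of Lemma~\ref{lem;asin}. First I would split each Bessel factor as $Y_0(\lambda p)=\chi(\lambda p)Y_0(\lambda p)+\widetilde\chi(\lambda p)Y_0(\lambda p)$, and likewise in $q$, and treat the four products separately. In the three regions where at least one argument is large ($\lambda p\gtrsim1$ or $\lambda q\gtrsim1$) there is no main term: one inserts the large-argument expansion \eqref{largr}, uses Lemma~\ref{YJ}, and estimates the $\lambda$-integral by Lemma~\ref{lem:ibp2} (the relevant amplitude vanishes at $\lambda=0$), exactly as in Cases~2 and 3 of Lemma~\ref{lem;asin}. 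The extra powers of $\lambda$ available there, together with the interpolation that produced \eqref{eq;differ}, bound these contributions by $t^{-5/4}\la x\ra\la y\ra$ after integrating against $[vS_1v]$ and using $v(x)\les\la x\ra^{-3/2-}$.

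The main term comes solely from the low--low region $\lambda p\les1$, $\lambda q\les1$. There I would insert the small-argument expansion $Y_0(z)=\tfrac2\pi\log(z/2)+\tfrac{2\gamma}\pi+\widetilde O(z^2\log z)$ into each factor, so that
\begin{align*}
Y_0(\lambda p)=\tfrac2\pi\log|x-x_1|+\Big(\tfrac2\pi\log\tfrac\lambda2+\tfrac{2\gamma}\pi\Big)+\widetilde O\big((\lambda p)^2\log(\lambda p)\big),
\end{align*}
and symmetrically in $q$. The parenthesized factor depends only on $\lambda$, and the orthogonality relations \eqref{zero}, i.e. $\int v(x_1)S_1(x_1,y_1)\,dx_1=0$ and $\int S_1(x_1,y_1)v(y_1)\,dy_1=0$, annihilate every term that is independent of $x_1$ (resp. $y_1$) when integrated against $[vS_1v]$. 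Hence all cross terms containing the $\lambda$-only factor drop out and the leading surviving integrand is exactly $\tfrac4{\pi^2}\log|x-x_1|\,[vS_1v]\,\log|y-y_1|$, which is constant in $\lambda$. For this piece the $\lambda$-integral reduces to $\int_0^\infty e^{it\lambda^2}\lambda\chi(\lambda)\,d\lambda=\tfrac{i}{2t}+O(t^{-N})$ by Lemma~\ref{lem:ibp} with amplitude $\chi$, and combining this with the constant $\tfrac4{\pi^2}$ yields the rank-one main term recorded in \eqref{SDS11}.

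The remaining low--low pieces are error. Any term carrying a factor $\widetilde O((\lambda\cdot)^2\log)$ gains two powers of $\lambda$ and a vanishing value at $\lambda=0$, so Lemma~\ref{lem:ibp2} gives decay faster than $t^{-5/4}$, the weight $\la x\ra\la y\ra$ being supplied by the moment bounds on $v$ (after the same interpolation as in \eqref{eq;differ} to keep the $x$- and $y$-powers linear). I must also reconcile the cutoffs, since the clean kernel $\log|x-x_1|\log|y-y_1|$ I am extracting carries no factor $\chi(\lambda p)\chi(\lambda q)$: the discrepancy $\chi(\lambda p)\chi(\lambda q)-1$ is supported on $\{\lambda p\gtrsim1\}\cup\{\lambda q\gtrsim1\}$ and therefore falls back into the high-energy analysis of the first paragraph. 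To apply \eqref{zero} cleanly \emph{in the presence} of the cutoffs, I would replace $\chi(\lambda p)Y_0(\lambda p)$ by the subtracted object $F(\lambda,x,x_1)=\chi(\lambda p)Y_0(\lambda p)-\chi(\lambda r)Y_0(\lambda r)$ of Lemma~\ref{FG} with $r=|x|+1$, whose subtracted part is independent of $x_1$ and hence legitimately removable.

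The main obstacle is exactly this bookkeeping: extracting the \emph{cutoff-free} kernel $\log|x-x_1|\log|y-y_1|$ with its precise constant while simultaneously showing that the subleading Bessel terms, the three mixed and high-energy regions, and the cutoff mismatch all collapse into one remainder of size $t^{-5/4}\la x\ra\la y\ra$. The delicate point is the mixed region $\lambda p\les1$, $\lambda q\gtrsim1$ (and its mirror), where one factor is logarithmic and the other oscillatory: there \eqref{zero} can be invoked in only one of the two variables, so the orthogonality that kills the constant-in-$\lambda$ growth on one side must be combined with the $F,G$-subtraction and the interpolation of \eqref{eq;differ} on the other, in order to hold the weight growth down to the claimed $\la x\ra\la y\ra$.
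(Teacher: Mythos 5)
Your skeleton does match the paper's: split into four regions via $\chi(\lambda p)$, $\chi(\lambda q)$; treat every region with a large argument as in Cases 2--3 of Lemma~\ref{lem;asin}; and in the low--low region expand both Bessel factors by \eqref{Y0 def}, so that the main term comes from the $\lambda$-independent product $\frac{4}{\pi^2}\log|x-x_1|\log|y-y_1|$, whose $\lambda$-integral is $\frac{i}{2t}+O(t^{-N})$. The gap is in the step you use to discard everything else in the low--low region. You assert that \eqref{zero} ``annihilates'' all cross terms containing a $\lambda$-only factor; but those terms carry the cutoffs, e.g. $\log|x-x_1|\chi(\lambda p)\cdot\big(\frac{2}{\pi}\log\frac{\lambda}{2}+\frac{2\gamma}{\pi}\big)\chi(\lambda q)$, and the $q$-side factor is \emph{not} independent of $y_1$ because of $\chi(\lambda q)$, so orthogonality does not kill it. All that \eqref{zero} permits is subtracting the $y_1$-independent function $(\cdots)\chi(\lambda(|y|+1))$, which leaves $(\cdots)K(\lambda,y,y_1)$ with $K(\lambda,y,y_1)=\chi(\lambda q)-\chi(\lambda(|y|+1))$. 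This remainder does not vanish; it must be estimated, and the needed bounds ($|K|\les\lambda\la y_1\ra$, so the $\log\lambda$ singularity is neutralized and Lemma~\ref{lem:ibp} gives $t^{-3/2+}$ with no boundary term) are exactly the content of the paper's Lemma~\ref{cutoff} and its treatment of the operator $A(\lambda,p,q)$ --- none of which appears in your proposal.

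Neither of your two patches closes this hole. First, the mismatch term $[\chi(\lambda p)\chi(\lambda q)-1]\log|x-x_1|\log|y-y_1|$ does not ``fall back into the high-energy analysis'': Cases 2--3 of Lemma~\ref{lem;asin} run on the oscillatory decay \eqref{largr} of $\widetilde{Y}_0,\widetilde{J}_0$, which this amplitude lacks; controlling it requires a separate argument exploiting that $p\chi'(\lambda p)$ is supported where $\lambda\sim 1/p$ and that on the support of the mismatch $\la x\ra\la x_1\ra\gtrsim p\gtrsim 1/\lambda$, so the spatial weight absorbs the loss. The paper avoids creating this term at all by keeping the cutoffs inside the main term \eqref{B} and extracting the $-\frac{1}{2it}$ boundary value by one integration by parts in \eqref{C}. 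Second, substituting $F(\lambda,\cdot,\cdot)$ for $\chi(\lambda\cdot)Y_0(\lambda\cdot)$ on \emph{both} sides is precisely the route the paper flags as failing at the start of its proof: one is left with $\int_0^\infty e^{it\lambda^2}\lambda\chi(\lambda)F(\lambda,x,x_1)F(\lambda,y,y_1)\,d\lambda$, where $F(0,x,x_1)=\frac{2}{\pi}\log\frac{|x-x_1|}{|x|+1}\neq 0$, so a $1/t$ term is unavoidable, and with only Lemma~\ref{FG}'s bound $|\partial_\lambda F|\les 1/\lambda$ the error term $\frac{1}{t}\int_0^{t^{-1/2}}|\mathcal{E}'(\lambda)|\,d\lambda$ in Lemma~\ref{lem:ibp} is logarithmically divergent. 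Making that route rigorous requires the refined behavior of $\partial_\lambda F$ near $\lambda=0$ (the two $\frac{2}{\pi\lambda}$ singularities cancel, leaving a term of size $|K|/\lambda=O(\la x_1\ra)$), which is again the substance of the paper's expansion into the main term, $A$, and $E_1,E_2$. So the decisive low--low bookkeeping --- the one place where this lemma genuinely differs from Lemma~\ref{lem;asin} --- is missing, not merely deferred.
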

\begin{proof} The proof is very similar to the proof of Lemma~\ref{lem;asin} except in the case when $\lambda p , \lambda q \les 1 $. This is because the identity
\eqref{zero} leads to an integral with operators $F(\lambda,x,x_1)F(\lambda,y,y_1)$, which  doesn't give better decay rate than $1/t$. We have to be more careful obtaining the term behaving like $1/t$ explicitly. 
 By the expansion  $ Y_0(z)= \dfrac{2}{\pi}\log(z/2)+\dfrac{2\gamma}{\pi}+\widetilde{O}(z^2\log z ) $ of Bessel's function for small energy, we have 
 
       $$Y_0(\lambda p)Y_0(\lambda q) =\f 4 {\pi^2}\log|x-x_1| \log|y-y_1| + A(\lambda,p,q)+ E(\lambda,p,q) $$
where
 \begin{align*}A(\lambda,p,q) := c_1\log(\lambda) [\log(\lambda p)+\log(\lambda q)]+c_2[\log(\lambda p)+\log(\lambda q)] + c_3 \ ,\  where \ c_j \in \R-\{0\} 
     \end{align*}  and  $$E_1(\lambda,p,q) := \widetilde{O}\big(\log(\lambda p)(\lambda q)^2 \log(\lambda q)\big), \quad E_2(\lambda,p,q)=\big( \log(\lambda p)(\lambda p)^2 \log(\lambda q)\big) $$
 To handle the terms in the operator $A(\lambda,p,q)$, we need Lemma~\ref{cutoff}. 
 Consider only the first term in $A(\lambda,p,q)$ then we have
\begin{align} \label{A}
  \int_{\R^4}\int_0^\infty e^{it\lambda^2}\lambda 
  \chi(\lambda)\log\lambda\log(\lambda p) \chi(\lambda p) [vS_1v](x_1,y_1)\chi(\lambda q)d\lambda dx_1 dy_1.
  \end{align}    
Note that by using (\ref{zero}) we can subtract  $\chi(\lambda (|x|+1))$ from the left side of $v(x_1)$ and $\chi \big(\lambda (|y|+1) \big)$ from the right side of $v(y_1)$. Hence (\ref{A}) is controlled by
$$  \int_{\R^4}\int_0^\infty e^{it\lambda^2}\lambda 
  \chi(\lambda)\log \lambda K(\lambda,x,x_1)[vS_1v](x_1,y_1)K(\lambda,y,y_1)d\lambda dx_1 dy_1$$ 
which does not leave any boundry term. Indeed using Lemma \ref{cutoff} and Lemma \ref{lem:ibp} it can be bounded by $  \la y_1 \ra \la x_1 \ra  t^{-3/2+} $. 

For the error term $E_1(\lambda,p,q)$, note that by using  the projection property of $S_1$ we can subtract $\chi(\lambda(|x|+1)\log(\lambda(|x|+1))$ from the left side of the operator $vS_1v$ to replace $\log(\lambda p)$ with $k(x,x_1)$. Then, using $\lambda q \les 1$, we have
\begin{align*}
    & |\partial_{\lambda} [(\lambda q)^2 \log(\lambda q)]| \les q (\lambda q)^{1-} \les \la y \ra \la y_1 \ra, \quad \Big|\partial_{\lambda} \big( \frac{\partial_{\lambda}[(\lambda q)^2 \log(\lambda q)]}{\lambda} \big) \Big| \les \frac{q^2}{\lambda} \les  \frac{ \la y \ra \la y_1 \ra}{\lambda ^2}.
                     \end{align*}
The bound $\frac{k(x,x_1)\la y \ra \la y_1 \ra }{t^{5/4}}$ follows by Lemma~\ref{lem:ibp}. Similarly, the error $E_2(\lambda,p,q)$ can be bounded by $\frac{k(y,y_1)\la x \ra \la x_1 \ra }{t^{5/4}}$.\\
Finally we consider the integral
\begin{align} \label{B}
        \int_{\R^4}\int_0^\infty e^{it\lambda^2}\lambda 
  \chi(\lambda)\log|x-x_1| \chi(\lambda p) [vS_1v](x_1,y_1)\chi(\lambda q) \log|y-y_1|  d\lambda dx_1 dy_1.          
 \end{align} 
Applying integration by parts once, the $\lambda$ integral of (\ref{B}) is equal to
\begin{align}\label{C}
  -\frac{1}{2it}-\frac{1}{4t^2}\int_0^\infty e^{it\lambda^2}\frac{d}{d\lambda}\big(\chi(\lambda)\chi(\lambda p) \chi(\lambda q)\big)d\lambda=-\frac{1}{2it}+O(t^{-5/4} \la x \ra\la x_1 \ra\la y \ra\la y_1 \ra ).
\end{align}
For the second inequality note that all the cut-off functions are infinitely differentiable. However two integration by parts would yield too large of a spatial weight. An easy calculation gives $\Big| \frac{\partial}{\partial \lambda} \big(\chi(\lambda)\chi(\lambda p )\chi(\lambda q) \big)\Big| \les \la x \ra\la x_1 \ra\la y \ra\la y_1 \ra $. And for $ \partial_{\lambda} \big( \frac{\partial_{\lambda}\big(\chi(\lambda)\chi(\lambda p )\chi(\lambda q) \big)}{\lambda} \big)$ the most delicate term comes when all the derivatives fall on either $\chi(\lambda p)$ or $ \chi(\lambda q)$. But since $ \chi ^k  (\lambda p)$ for $k\geq 1$ is supported when $ p \sim {\f 1 \lambda} $ we have 
$$ \bigg| \frac{ \chi(\lambda) \chi \prime \prime (\lambda p) p^2 \chi(\lambda q)}{\lambda} \bigg| \les \lambda^{-5/2} \la x \ra  \la x_1 \ra  $$
and that applying Lemma \ref{lem:ibp} yields (\ref{C}). 

The final result is therefore obtained as
\begin{align*}
  (\ref{SDS11}) &= -\frac{2}{\pi^2 i t} \int _{\R^4} \log|x-x_1| [vS_1 v](x_1,y_1)\log|y-y_1| dy_1 dx_1 \\
   & +O\Big( \dfrac{\la y \ra \la x\ra}{t^{3/2}}   \int_{\R^4}  k(x,x_1)\la x_1 \ra [vS_1 v](x_1,y_1)k(y,y_1)\la y_1 \ra   dx_1 dy_1   \Big), 
         \end{align*}
which finishes the proof of the lemma.
                  \end{proof} 
Multiplying the boundary term with $\dfrac{\|V\|_1 i}{32}$  gives $F_1$ in Proposition~ \ref{pprop}.\\                                
We next consider the contribution of $QD_0Q$, $SS_1$, and  $S_1S$, from (\ref{RV}) to (\ref{stone}). Let 
\begin{multline}\label{r2}
                 \mathcal R_2(\lambda,p,q) :=R_0^{+}(\lambda^2)(x,x_1) R_0^{+}(\lambda^2)(y_1,y) - R_0^{-}(\lambda^2)(x,x_1) R_0^{-}(\lambda^2)(y_1,y)\\ 
=-{\f i 8} \big[ J_0(\lambda p) Y_0(\lambda q) + Y_0(\lambda p) J_0(\lambda q) \big] 
\end{multline}
Note that using this expansion and the projection property of $Q$ the contribution of $QD_0Q$ can be handled as in Proposition \ref{pprop}. Infact, it does not leave any boundary term since \eqref{r2} does not contain the term $Y_0(\lambda p)Y_0(\lambda q) $ 

\begin{prop}\label{propss1}
For $t>2$ and $\alpha \leq \frac{1}{4}$  if $v(x)\les \la x\ra^{-3/2-}$, then we have
$$\bigg|\int_{\R^4}\int_0^\infty e^{it\lambda^2}\lambda \chi(\lambda)  \mathcal R_2(\lambda,p,q) [vSS_1v](x_1,y_1)
      d\lambda dx_1 dy_1 -  \f{1}{t} F_2(x,y)\bigg| \les \dfrac{\la x\ra \la y\ra}{t^{1+ \alpha}}, $$
 $$\bigg|\int_{\R^4}\int_0^\infty e^{it\lambda^2}\lambda \chi(\lambda)  \mathcal R_2(\lambda,p,q) [vS_1Sv](x_1,y_1)
      d\lambda dx_1 dy_1 -  \f{1}{t} F_3(x,y)\bigg| \les \dfrac{\la x\ra \la y\ra }{t^{1+ \alpha}}. $$
where $$ F_2(x,y) = \f{1}{8 \pi } \int_{\R^4} v(x_1)[SS_1](x_1,y_1)v(y_1) \log|y-y_1| dx_1dy_1, $$ 
$$ F_3(x,y) = \f{1}{8 \pi } \int_{\R^4} \log|x-x_1| v(y_1) [S_1S] (x_1,y_1) v(y_1) dy_1 dx_1 . $$ 
\end{prop}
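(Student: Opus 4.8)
The plan is to prove the first estimate, concerning $vSS_1v$; the second, for $vS_1Sv$, will then follow by the mirror-image argument with the roles of the pairs $(x,x_1)$ and $(y,y_1)$ interchanged, since $S_1S$ is the adjoint of $SS_1$ and now the projection acts on the left. The starting observation is that, because $S_1$ sits on the right of $SS_1$ and $S_1v=0$, we have $\int_{\R^2}[SS_1](x_1,y_1)v(y_1)\,dy_1=0$ for every $x_1$; hence, exactly as in \eqref{zero}, we may subtract from the right ($q$-dependent) Bessel factor any quantity that is independent of $y_1$, while no such freedom is available on the left ($x_1$) side. Accordingly I split $\mathcal{R}_2$ from \eqref{r2} into its two summands $J_0(\lambda p)Y_0(\lambda q)$ and $Y_0(\lambda p)J_0(\lambda q)$ and treat each by the three-region energy decomposition of Lemma~\ref{lem;asin}: (i) $\lambda p\les1,\ \lambda q\les1$; (ii) one argument small, the other large; (iii) both large.

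First I extract the boundary term, which arises only in the low--low regime of the summand $J_0(\lambda p)Y_0(\lambda q)$. There $J_0(\lambda p)=1+O((\lambda p)^2)$ and, by \eqref{Y0 def}, $Y_0(\lambda q)=\tfrac2\pi\log|y-y_1|+[\lambda\text{-only}]+\widetilde O((\lambda q)^2\log(\lambda q))$, the bracketed $\lambda$-only part being annihilated by the right projection. Keeping the leading pieces (replacing $J_0(\lambda p)$ by $1$ and $Y_0(\lambda q)$ by $\tfrac2\pi\log|y-y_1|$) and inserting the low-energy cutoffs, one integration by parts exactly as in \eqref{B}--\eqref{C} gives $\int_0^\infty e^{it\lambda^2}\lambda\chi(\lambda)\chi(\lambda p)\chi(\lambda q)\,d\lambda=\tfrac{i}{2t}+O(t^{-5/4}\la x\ra\la x_1\ra\la y\ra\la y_1\ra)$. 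Multiplying the constant $\tfrac{i}{2t}$ by the coefficient $-\tfrac i8$ of $\mathcal{R}_2$ and by $\tfrac2\pi\log|y-y_1|$ yields $\tfrac{1}{8\pi t}\log|y-y_1|$, which after integration against $v(x_1)[SS_1](x_1,y_1)v(y_1)$ is precisely $\tfrac1t F_2(x,y)$.

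Everything else must be shown to be $O(t^{-1-\alpha}\la x\ra\la y\ra)$. The corrections to the boundary piece (the factor $J_0(\lambda p)-1=O((\lambda p)^2)$, the $\widetilde O((\lambda q)^2\log(\lambda q))$ tail of $Y_0$, and the cutoff differences, controlled by the $K$-function of Lemma~\ref{cutoff}) all vanish at $\lambda=0$, so Lemma~\ref{lem:ibp2} applies and, with the bounds of Lemma~\ref{FG}, produces the decay. The low--high and high--high regimes are handled as in Cases~2--3 of Lemma~\ref{lem;asin}: one replaces the large-argument Bessel factor by the oscillatory expansion \eqref{largr}, invokes Lemma~\ref{lem:ibp2} (noting $\mathcal{E}(0)=0$), and interpolates the second-derivative bound (weight $\la\cdot\ra^{3/2}$) against the first-derivative bound (weight $\la\cdot\ra^{1/2}$) to reach weight below $\la\cdot\ra$. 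The spatial integrals converge because $v(x)\les\la x\ra^{-3/2-}$ forces $\|v(x_1)\la x_1\ra\,k(x,x_1)\|_{L^2_{x_1}}\les1$ and $SS_1$ is bounded on $L^2$.

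The main obstacle is exactly the one-sidedness of the projection. In the summand $Y_0(\lambda p)J_0(\lambda q)$ the singular factor $Y_0(\lambda p)\sim\log\lambda$ sits on the left, where \eqref{zero} gives us nothing and the usual device of replacing $Y_0\chi$ by the regularized $F$ of Lemma~\ref{FG} is unavailable. The resolution is to let the right factor do the work: since $J_0(\lambda q)=1+O((\lambda q)^2)$ and the constant $1$ is killed by the right projection, the effective integrand behaves like $\log\lambda\cdot(\lambda q)^2$ near $\lambda=0$, so $\mathcal{E}(0)=0$ and the extra $\lambda^2$ both licenses Lemma~\ref{lem:ibp2} and supplies the smallness needed for the $t^{-1-\alpha}$ bound, without any boundary contribution---consistent with $F_2$ depending only on $\log|y-y_1|$. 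Checking that this compensation is uniform across the three energy regions, and that the accompanying spatial weights stay within $\la x\ra\la y\ra$, is the delicate part of the argument.
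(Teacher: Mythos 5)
Your proposal is correct and follows essentially the same route as the paper's proof: you split $\mathcal R_2$ into its two summands, use the one\-/sided identity $\int_{\R^2}[SS_1](x_1,y_1)v(y_1)\,dy_1=0$ to regularize only the right-hand Bessel factor, extract the boundary term $\tfrac1t F_2$ from the $\tfrac2\pi\log|y-y_1|$ part of $Y_0(\lambda q)$ in the low--low regime via the integration by parts of \eqref{B}--\eqref{C}, kill the $\lambda$-only pieces with Lemma~\ref{cutoff}, and handle the remaining regimes as in Lemma~\ref{lem;asin} with Lemmas~\ref{lem:ibp} and \ref{lem:ibp2}, exactly as the paper does. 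The only cosmetic difference is that where you expand $J_0(\lambda q)=1+O((\lambda q)^2)$ and subtract the constant by hand in the $Y_0(\lambda p)J_0(\lambda q)$ term, the paper invokes the prepackaged difference $G(\lambda,y,y_1)$ of Lemma~\ref{FG}, which is the same subtraction.
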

\begin{proof} We consider the first assertion. By \eqref{r2} we have the following two integrals:
\begin{align}    
\label{ss1}
\int_{\R^4}\int_0^\infty e^{it\lambda^2}\lambda \chi(\lambda) Y_0( \lambda p) v(x_1)[SS_1](x_1,y_1)v(y_1) J_0(\lambda q)d\lambda dx_1 dy_1,
\end{align}
\begin{align}
\label{SSDS2}
\int_{\R^4}\int_0^\infty e^{it\lambda^2}\lambda \chi(\lambda)J_0(\lambda p)  v(x_1)[SS_1](x_1,y_1)v(y_1) Y_0(\lambda q)
      d\lambda dx_1 dy_1.
\end{align} 
Here the only caveat is that we have $S_1$ only on the right side, which means that we can perform addition and subtraction of $ J_0(\lambda(|y|+1))$ and $ Y_0(\lambda(|y|+1))$ only on the right side of $SS_1$. Hence the proofs for  high-low and high-high energy are not affected by this caveat. When $ \lambda p\les 1, \lambda q \gtrsim 1 $  we have the following two integrals for (\ref{ss1}) and (\ref{SSDS2}) respectively, 
 \begin{align}
 \label{c1}
        \int_{\R^4}\int_0^\infty e^{it\lambda^2}\lambda \chi(\lambda)\big[ 1+\widetilde{O}\big(\log(\lambda p)\big) \big] \chi(\lambda p) vSS_1v \widetilde{J_0}(\lambda q)
       d\lambda dx_1 dy_1
               \end{align}
  \begin{align} \label{c2}
   \int_{\R^4}\int_0^\infty e^{it\lambda^2}\lambda \chi(\lambda)\big[1+\widetilde{O}\big((\lambda p)^2 \big)] \chi(\lambda p) vSS_1v \widetilde{Y}_0(\lambda q)
        d\lambda dx_1 dy_1 
        \end{align}
 Letting $ \mathcal{E}(\lambda,p,q) = \big[ 1+\widetilde{O}\big(\log(\lambda p)\big) \big] \chi(\lambda p)\widetilde{J_0}(\lambda q)$ we have $\mathcal{E}(0)=0$. Using Lemma \ref{YJ} and the fact that $ (\lambda p) \les 1$, we obtain   
  \begin{align*}
\begin{split}
          & |\partial_\lambda \mathcal{E}(\lambda,p,q)|\les \lambda^{-1/2-} k(x,x_1) \la y \ra^{1/2} \la y_1 \ra^{1/2} \\
          & \big|\partial^2_{\lambda} \mathcal{E}(\lambda,p,q) \big) \big| \les  \lambda^{-3/2-} k(x,x_1) \la x_1 \ra ^{3/2}\la y \ra^{3/2} \la y_1 \ra^{3/2}\\
          & \big| \mathcal{E}(b)-\mathcal{E}(\lambda) \big| \les |b-\lambda|^{\alpha} \lambda^{-3/2-}k(x,x_1) \la x_1 \ra ^{1/2+\alpha}\la y_1 \ra ^{\f1{2}+\alpha} \la y \ra ^{\f1{2}+\alpha}
                \end{split} 
                   \end{align*}  
which gives $$(\ref{c1}) = O\Big( \frac{\la x \ra^{\f1{2}+\alpha} \la y \ra^{\f1{2}+\alpha}}{t^{1+\alpha}} \Big) $$
using Lemma~\ref{lem:ibp2}. With a similar argument  one can show that \eqref{c2} satisfies the same decay assumption with the same weight function. 
                      
For  the low-low case first note that $S_1$ being only on the right side of the operator allows us to exchange $J_0(\lambda q)$ with $G(\lambda,y,y_1$) in \eqref{ss1}, and  $Y_0(\lambda q)$ with $F(\lambda,y,y_1)$ in \eqref{SSDS2}. The decay rate of $G(\lambda,y,y_1$) cancels out the singularity of $\log \lambda $, which is the dominated term in the expansion \eqref{Y0 def} of $Y_0$. Therefore, we don't obtain any boundary term from \eqref{ss1} and can bound it by $\f1{t^{1+\alpha}}$ with the weight $k(x,x_1)\la y \ra \la x \ra $. However, this is not the case for \eqref{SSDS2}. The following lemma evaluates the contribution of this term.
\end{proof}
\begin{lemma}  Under that same conditions of Proposition~\ref{propss1}, for $\lambda p, \lambda q \les 1$ we have 
  \begin{align} \label{eq:ss1boundary} 
  \big|(\ref{SSDS2}) + \dfrac{1}{\pi i t}\int_{\R^4} v(x_1) [SS_1](x_1,y_1)v(y_1) \log|y-y_1| dx_1 dy_1 | \les \frac{ \la x \ra ^{1/2} \la y \ra ^ {1/2} }{t^{5/4}} 
      \end{align} 
\end{lemma}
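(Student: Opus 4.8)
The plan is to follow the proof of Lemma~\ref{lem;S_12} closely, the one new feature being that in the operator $SS_1$ the projection $S_1$ sits on the right, so the identity \eqref{zero} is available on the $y_1$-variable only. In the low--low regime I would insert the cutoffs $\chi(\lambda p)\chi(\lambda q)$ and use the expansions \eqref{J0 def}, \eqref{Y0 def} to write $J_0(\lambda p)=1+\widetilde O((\lambda p)^2)$ and
$$Y_0(\lambda q)=\f{2}{\pi}\log\lambda+\f{2\gamma}{\pi}-\f{2}{\pi}\log 2+\f{2}{\pi}\log|y-y_1|+\widetilde O\big((\lambda q)^2\log(\lambda q)\big).$$
The first three summands are independent of $y_1$, and since $[SS_1]v=0$ (because $S_1v=0$, cf.\ \eqref{zero}), they are annihilated upon pairing against $v$ on the $y_1$-side; in particular the singular factor $\log\lambda$ disappears. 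To justify this subtraction in the presence of $\chi(\lambda q)$, I would, exactly as in \eqref{A}, replace $\chi(\lambda q)$ by $\chi(\lambda r)+K(\lambda,y,y_1)$ with $r=|y|+1$ and $K$ as in Lemma~\ref{cutoff}: the $\chi(\lambda r)$ piece is $y_1$-independent and is killed by the projection, while the $K$ piece is an error governed by the bounds of Lemma~\ref{cutoff}.

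After these reductions the only contribution that fails to gain extra decay is the product of the constant $1$ (from $J_0(\lambda p)$) with the regular logarithm $\f{2}{\pi}\log|y-y_1|$, namely
$$\f{2}{\pi}\int_{\R^4}v(x_1)[SS_1](x_1,y_1)v(y_1)\log|y-y_1|\Big(\int_0^\infty e^{it\lambda^2}\lambda\chi(\lambda)\chi(\lambda p)\chi(\lambda q)\,d\lambda\Big)dx_1\,dy_1.$$
Applying Lemma~\ref{lem:ibp} to the inner integral (with $\mathcal E=\chi(\lambda)\chi(\lambda p)\chi(\lambda q)$, so $\mathcal E(0)=1$) extracts the value $\tfrac{i}{2t}$ from $\lambda=0$, where all cutoffs equal $1$; multiplying by $\f{2}{\pi}\log|y-y_1|$ reproduces precisely the term $-\f{1}{\pi i t}\int v[SS_1]v\log|y-y_1|$ recorded in \eqref{eq:ss1boundary}. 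This is the analogue of \eqref{B}--\eqref{C}.

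It remains to check that every other piece is $O\big(t^{-5/4}\la x\ra^{1/2}\la y\ra^{1/2}\big)$: (a) the term $(J_0(\lambda p)-1)Y_0(\lambda q)$; (b) the remainder $\widetilde O((\lambda q)^2\log(\lambda q))$ of $Y_0$; (c) the cutoff-mismatch terms carrying $K(\lambda,y,y_1)$. Each of these has a $\lambda$-profile $\mathcal E$ with $\mathcal E(0)=0$ (every summand carries a factor vanishing at $\lambda=0$), so I would apply Lemma~\ref{lem:ibp2} together with the derivative bounds of Lemma~\ref{YJ} and Lemma~\ref{FG}, as in Case~2 of Lemma~\ref{lem;asin}. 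The remaining error, coming from the derivative terms of Lemma~\ref{lem:ibp} applied to the main integral above, is handled as in \eqref{C}.

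The hard part is pulling the spatial weight down to the power $\tfrac12$ asserted in \eqref{eq:ss1boundary}: the naive estimates produce $\la x\ra$ (from $p^2$ inside $J_0(\lambda p)-1$, and from $p^2\chi''(\lambda p)$ in the error terms of Lemma~\ref{lem:ibp}), and likewise $\la y\ra$. The remedy is the same interpolation that yielded \eqref{eq;differ}, here in the form $\min(p^2,t)\les p^{1/2}t^{3/4}$: the cutoff derivatives $\chi'(\lambda p),\chi''(\lambda p)$ are supported on $\lambda p\sim1$, hence $p\sim\lambda^{-1}\les t^{1/2}$ on the range $\lambda\gtrsim t^{-1/2}$ surviving in Lemma~\ref{lem:ibp}, so the surplus powers of $p$ (resp.\ $q$) may be traded for powers of $t$ while preserving the $t^{-5/4}$ rate. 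After absorbing the resulting $\la x_1\ra^{1/2}$, $\la y_1\ra^{1/2}$ and the factor $\log|y-y_1|$ into $v(x_1),v(y_1)$ via $v\les\la\cdot\ra^{-3/2-}$, one is left with $\la x\ra^{1/2}\la y\ra^{1/2}$. The structural subtlety underlying all of this is the one-sidedness of the projection: because $S_1$ acts only in $y_1$, the $x_1$-cutoff $\chi(\lambda p)$ admits no $x_1$-independent substitute and must be carried through the integration by parts directly, which is exactly why the $x$-weight is controlled by $J_0(\lambda p)\approx1$ rather than by a logarithm.
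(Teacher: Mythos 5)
Your proposal is correct and is essentially the paper's own argument: the same low--low expansion of $J_0(\lambda p)Y_0(\lambda q)$ via \eqref{J0 def}--\eqref{Y0 def}, the same exploitation of the one-sided projection identity \eqref{zero} together with the cutoff-mismatch operator $K$ of Lemma~\ref{cutoff} to annihilate the $y_1$-independent summands (in particular the singular $\log\lambda$), and the same extraction of the value $i/(2t)$ from the integral against $\chi(\lambda)\chi(\lambda p)\chi(\lambda q)$, exactly as in \eqref{B}--\eqref{C}, to produce the boundary operator $-\frac{1}{\pi i t}\int v[SS_1]v\log|y-y_1|$. The only deviation is minor: for the error piece $(J_0(\lambda p)-1)Y_0(\lambda q)$ the paper again uses \eqref{zero} to replace $Y_0(\lambda q)\chi(\lambda q)$ by $F(\lambda,y,y_1)$ and then applies Lemma~\ref{lem:ibp}, obtaining the $t^{-5/4}$ rate exactly, whereas your reliance on Lemma~\ref{lem:ibp2} as in Case~2 of Lemma~\ref{lem;asin} yields $t^{-1-\alpha}$ with $\alpha<1/4$ (an epsilon short of the stated $t^{-5/4}$), which is still sufficient for Proposition~\ref{propss1}.
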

\begin{proof}
Note that multiplying the boundary term with $ -\f{i}{8} $ gives the the statement of Proposition~\ref{propss1}. 

Using the expansions \eqref{J0 def} and \eqref{Y0 def} for $J_0(\lambda p)$ and $Y_0(\lambda q) $ respectively we have
\begin{align*} 
\begin{split}
   J_0(\lambda p) Y_0(\lambda q) = & \Big[ 1+\widetilde{O}\big((\lambda p)^2\big) \Big] \Big[\frac{2}{\pi}  G_0(y,y_1) + c(1+ \log \lambda) + \widetilde{O} \big( (\lambda q) ^ {2-} \big) \big) \Big] \\
   &=\f 2{\pi} G_0(y,y_1) + c(1+ \log \lambda) + \widetilde{O} \big( (\lambda q)^ {2} \log(\lambda q) \big)  + O_2\big((\lambda p)^2\big) Y_0(\lambda q)
    \end{split}
         \end{align*}
Note that we can exchange $J_0(\lambda p) $ with $ F(\lambda,y,y_1)$ using (\ref{zero}) to obtain
\begin{multline*}
            \hspace{2mm}  (\ref{SSDS2})   = \f 2{\pi} \int_{\R^4}\int_0^\infty e^{it\lambda^2}\lambda \chi(\lambda)\chi(\lambda p))[vSS_1v])(x_1,y_1)\chi(\lambda q) G_0(y,y_1) d\lambda dx_1 dy_1 \\
            +\widetilde{O} \Bigg(  \int_{\R^4}\int_0^\infty e^{it\lambda^2}\lambda \chi(\lambda)\chi(\lambda p) [v SS_1v](x_1,y_1) \chi(\lambda q) (\lambda q)^{2} \log(\lambda q) d\lambda dx_1 dy_1 \\
                                \hspace{5mm} + \int_{\R^4}\int_0^\infty e^{it\lambda^2}\lambda \chi(\lambda)\chi(\lambda p) [v SS_1v](x_1,y_1)\chi(\lambda q)\big[1+\log \lambda\big] d\lambda dx_1 dy_1 \\
                                          + \int_{\R^4}\int_0^\infty e^{it\lambda^2}\lambda \chi(\lambda) \chi(\lambda p) (\lambda p)^2  [v SS_1v](x_1,y_1) F(\lambda ,y, y_1) d\lambda dx_1 dy_1 \Bigg).
                         \end {multline*}
The first integral is similar to \eqref{C}. We therefore have
\begin{align*}
         \f 2{\pi} \int_0^\infty e^{it\lambda^2}\lambda \chi(\lambda)\chi(\lambda p) \chi(\lambda q) G_0(y,y_1) d\lambda =  -\frac{1}{\pi i t}\log|y-y_1|  +O \big( t^{-5/4} \la x \ra\la x_1 \ra\la y \ra\la y_1 \ra \big).
           \end{align*}
The contribution of third integral follows as $A(\lambda,p,q)$ in Lemma~\ref{lem;S_12} and it can be bounded by $ t^{-3/2+} \la x_1 \ra \la y \ra \la y_1 \ra$. Using Lemma~\ref{lem:ibp}, the other two integrals give the same bound that $E_1(\lambda,p,q)$ in Lemma~\ref{lem;S_12} gives. The weights coming from the second derivative of the cut-off functions can be reduced as required using the support of $\chi^\prime(\lambda p)$ and $\chi^\prime(\lambda q)$. Hence, we obtain the inequality \eqref{eq:ss1boundary}.
\end{proof}

For the terms arising from  $h_{\pm}(\lambda)^{-1}SS_1S$ and $h_{\pm}(\lambda)^{-1}S $, which are the integrals 
\begin{align}
\label{ssdss} 
     \int_{\R^4}\int_0^\infty e^{it\lambda^2}\lambda \chi(\lambda)\mathcal{R}_3(\lambda,p,q) vSS_1Sv(x_1,y_1)d\lambda dx_1 dy_1 
          \end{align}
and 
   \begin{align}          
           \label{s}
 \int_{\R^4}\int_0^\infty e^{it\lambda^2}\lambda \chi(\lambda)\mathcal{R}_3(\lambda,p,q)] vSv(x_1,y_1) d\lambda dx_1 dy_1, 
   \end{align}
   where $\mathcal{R}^{\pm}_3=\dfrac{R_0^+(\lambda^2)(x,x_1)R_0^+(\lambda^2)(y,y_1)}{h_{+}(\lambda)}-\dfrac{R_0^+(\lambda^2)(x,x_1)R_0^+(\lambda^2)(y,y_1)}{h_{-}(\lambda)}$ we have the following Proposition, which is the generalized version of Proposition 4.4  in \cite{EGw}.
    \begin{prop}
Let $0< \alpha< 1/4 $, $\textit{v}(x)\les \la x \ra ^ {-3/2-\alpha} $. For any absolutely bounded operator $\Gamma$, we have 
\begin{align*}
   \int_{\R^4}\int_0^\infty & e^{it\lambda^2}\lambda \chi(\lambda) \mathcal{R}_3(\lambda,p,q)\textit{v}(x_1)\Gamma(x_1,y_1)\textit{v}(y_1) d\lambda dx_1dy_1 \\
 &= -\dfrac{1}{4\ \|V\|_1 t}\int_{\R^4}\textit{v}(x)\Gamma(x_1,y_1)\textit{v}(y_1)dx_1dy_1 + O\Big( \f{\sqrt{w(x)w(y)}  }{t\log^2(t)} \Big)+O\Big( \f{\la x\ra^{\f12+\alpha+} \la y\ra^{\f12+\alpha+}}{t^{1+\alpha} } \Big) 
      \end{align*}
\end{prop}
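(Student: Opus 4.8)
The plan is to substitute the free-resolvent expansion of Lemma~\ref{R0 exp cor} into $\mathcal{R}_3$ and organize the kernel by powers of $1/h_\pm(\lambda)$. Using the identity $g^\pm=h_\pm-c$ and setting $\tilde L_1=G_0(x,x_1)-c/\|V\|_1$, $\tilde L_2=G_0(y,y_1)-c/\|V\|_1$, I would write $R_0^\pm(\lambda^2)(x,x_1)=\frac{h_\pm(\lambda)}{\|V\|_1}+\tilde L_1+E_0^\pm(\lambda)(x,x_1)$ and likewise for $(y,y_1)$. Multiplying the two factors, dividing by $h_\pm$, and taking the $+,-$ difference yields $\frac{R_0^\pm R_0^\pm}{h_\pm}=\frac{h_\pm}{\|V\|_1^2}+\frac{\tilde L_1+\tilde L_2+E_1^\pm+E_2^\pm}{\|V\|_1}+\frac{(\tilde L_1+E_1^\pm)(\tilde L_2+E_2^\pm)}{h_\pm}$, so $\mathcal{R}_3$ splits into three groups: the $\lambda$-independent constant $\frac{h_+-h_-}{\|V\|_1^2}=\frac{i}{2\|V\|_1}$; the error difference $\frac{1}{\|V\|_1}[(E_1^+-E_1^-)+(E_2^+-E_2^-)]$ (the real, $\pm$-independent terms $\tilde L_1,\tilde L_2$ cancel); and the product term $\frac{(\tilde L_1+E_1^+)(\tilde L_2+E_2^+)}{h_+}-\frac{(\tilde L_1+E_1^-)(\tilde L_2+E_2^-)}{h_-}$.

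First I would treat the constant term. After the substitution $u=\lambda^2$ a single integration by parts gives $\int_0^\infty e^{it\lambda^2}\lambda\chi(\lambda)\,d\lambda=\frac{i}{2t}+O(t^{-N})$, since $\chi'$ is supported away from $\lambda=0$. Multiplying by $\frac{i}{2\|V\|_1}$ and pairing against $v\Gamma v$ produces exactly the main term $-\frac{1}{4\|V\|_1 t}\int_{\R^4}v\Gamma v$. The error-difference group has amplitude vanishing at $\lambda=0$ (because $|E_0^\pm|\lesssim\lambda^{1/2}|x-x_1|^{1/2}$), so no $1/t$ boundary term arises; applying Lemma~\ref{lem:ibp2} with the derivative and H\"older bounds of Lemma~\ref{R0 exp cor} and Corollary~\ref{lipbound}, exactly as in Cases 2--3 of Lemma~\ref{lem;asin}, bounds it by $\langle x\rangle^{1/2+\alpha}\langle y\rangle^{1/2+\alpha}t^{-1-\alpha}$.

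The product term is the crux. Its leading piece is $\tilde L_1\tilde L_2\,\Delta_h(\lambda)$ with $\Delta_h:=\frac{1}{h_+}-\frac{1}{h_-}=\frac{-i\|V\|_1/2}{h_+h_-}$; the remaining pieces pair a $\tilde L$ or $E$ against an $E$ and are disposed of as in the error-difference group. Since $\mathrm{Im}\,h_\pm=\pm\|V\|_1/4$, $h_\pm$ never vanishes and $|h_\pm(\lambda)|\sim|a_1\log\lambda|$ for small $\lambda$ with $a_1=-\|V\|_1/(2\pi)$, giving the decisive bounds $|\Delta_h|\lesssim|\log\lambda|^{-2}$, $|\Delta_h'|\lesssim\lambda^{-1}|\log\lambda|^{-3}$, $|\Delta_h''|\lesssim\lambda^{-2}|\log\lambda|^{-3}$. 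Because $\tilde L_1,\tilde L_2$ are $\lambda$-independent I pull them out and estimate $\int_0^\infty e^{it\lambda^2}\lambda\chi(\lambda)\Delta_h(\lambda)\,d\lambda$ via Lemma~\ref{lem:ibp2} (the amplitude again vanishes at $0$): the region $\lambda<t^{-1/2}$ contributes $\frac1t\int_0^{t^{-1/2}}\frac{d\lambda}{\lambda|\log\lambda|^3}\sim\frac{1}{t\log^2 t}$, and the remaining regions are $O(t^{-1}\log^{-3}t)$. Integrating against $v\Gamma v$, the bound $|\tilde L_i|\lesssim\log(2+|x|)+\log(2+|x_1|)+\log^-|x-x_1|+1$ together with $v(x)\lesssim\langle x\rangle^{-3/2-\alpha}$ and the absolute boundedness of $\Gamma$ factors out $\log(2+|x|)\log(2+|y|)=\sqrt{w(x)w(y)}$, yielding the stated $\frac{\sqrt{w(x)w(y)}}{t\log^2 t}$.

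The main obstacle is securing \emph{two} powers of $\log t$ rather than one. This hinges on the sharp derivative bound $|\Delta_h'(\lambda)|\lesssim\lambda^{-1}|\log\lambda|^{-3}$: a naive estimate matching the size of $\Delta_h$ itself, $|\log\lambda|^{-2}$, would only give $\frac{1}{t\log t}$ after the $\lambda$-integration. The extra logarithmic gain comes from the cancellation $\mathrm{Im}(h_++h_-)=0$, so that $(h_+h_-)'=a_1\lambda^{-1}(h_++h_-)\sim 2a_1^2\lambda^{-1}\log\lambda$ loses one further power of $\log\lambda$ against $(h_+h_-)^2\sim a_1^4(\log\lambda)^4$. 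Verifying this gain carefully, and confirming that each non-constant amplitude genuinely vanishes at $\lambda=0$ so that no spurious $1/t$ boundary term contaminates the $\log^2 t$ rate, is where the care is required.
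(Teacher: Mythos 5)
Your proof is correct and follows essentially the same route as the paper's, which is given by reference to Proposition~4.4 of \cite{EGw}: the same splitting $R_0^\pm=\f{h_\pm}{\|V\|_1}+\big(G_0-\f{c}{\|V\|_1}\big)+E_0^\pm$, the constant difference $h_+-h_-=\f{i\|V\|_1}{2}$ producing the $-\f{1}{4\|V\|_1 t}$ main term after one integration by parts, and the single-fraction derivative bound $\big|\partial_\lambda\big(h_+^{-1}-h_-^{-1}\big)\big|\les \lambda^{-1}|\log\lambda|^{-3}$ --- which holds because $h_+-h_-$ is an $O(1)$ \emph{constant}, so differentiating $\f{h_--h_+}{h_+h_-}$ only brings down $(h_+h_-)'\sim\lambda^{-1}\log\lambda$ against $(h_+h_-)^2\sim\log^4\lambda$ --- driving the $\f{1}{t\log^2 t}$ error exactly as in the cited argument. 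The one caveat is $\eps$-bookkeeping that the paper itself also elides: finiteness of $\|v\la\cdot\ra^{1/2+\alpha}\|_{L^2}$ requires $v(x)\les\la x\ra^{-3/2-\alpha-}$ rather than the borderline exponent, which is precisely why the proposition's error term carries the weights $\la x\ra^{1/2+\alpha+}\la y\ra^{1/2+\alpha+}$ with a ``$+$''.
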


\begin{corollary}\label{corol} Under the same conditions, we have
\begin{align*}
       \big| (\ref{ssdss}) -  {\f 1 t} F_4(x,y) \big| \les O\Big( \f{\sqrt{w(x)w(y)}  }{t\log^2(t)} \Big)+O\Big( \f{\la x\ra^{\f12+\alpha+} \la y\ra^{\f12+\alpha+}}{t^{1+\alpha} } \Big), \\
       \big| (\ref{s}) - {\f 1 t} F_5(x,y) \big| \les O\Big( \f{\sqrt{w(x)w(y)}  }{t\log^2(t)} \Big)+O\Big( \f{\la x\ra^{\f12+\alpha+} \la y\ra^{\f12+\alpha+}}{t^{1+\alpha} } \Big), 
\end{align*}
where 
  \begin{align*}
                       F_4(x,y)= - \dfrac{1}{4\ \|V\|_1 } \int_{\R^2}  v(x_1)[SS_1S](x_1,y_1)v(y_1)  dx_1 dy_1, \\
                           F_5(x,y)= - \dfrac{1}{4\ \|V\|_1 } \int_{\R^2}  v(x_1)[S](x_1,y_1)v(y_1)  dx_1 dy_1 .
       \end{align*}
\end{corollary}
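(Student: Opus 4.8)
The plan is to insert the expansion of Lemma~\ref{R0 exp cor} into $\mathcal R_3$ and isolate the single term that is rational in $h_\pm$, since that is the only source of the logarithmic gain. Writing $g^\pm(\lambda)=h_\pm(\lambda)-c$ and setting $L^\pm(x,x_1):=G_0(x,x_1)-\f{c}{\|V\|_1}+E_0^\pm(\lambda)(x,x_1)$, Lemma~\ref{R0 exp cor} gives $R_0^\pm(\lambda^2)(x,x_1)=\f{h_\pm(\lambda)}{\|V\|_1}+L^\pm(x,x_1)$, so that
\begin{align*}
\f{R_0^\pm(\lambda^2)(x,x_1)R_0^\pm(\lambda^2)(y_1,y)}{h_\pm(\lambda)}=\f{h_\pm(\lambda)}{\|V\|_1^2}+\f{L^\pm(x,x_1)+L^\pm(y_1,y)}{\|V\|_1}+\f{L^\pm(x,x_1)L^\pm(y_1,y)}{h_\pm(\lambda)}.
\end{align*}
Taking the $+/-$ difference and using $h_+-h_-=\f{\|V\|_1 i}{2}$ together with the fact that $G_0$ and $c$ are $\pm$-independent (so that $L^+-L^-=E_0^+-E_0^-$), I split $\mathcal R_3$ into a constant $\f{i}{2\|V\|_1}$, a linear remainder $\f1{\|V\|_1}\big[(E_0^+-E_0^-)(x,x_1)+(E_0^+-E_0^-)(y_1,y)\big]$, and the genuinely rational part $\f{L^+(x,x_1)L^+(y_1,y)}{h_+}-\f{L^-(x,x_1)L^-(y_1,y)}{h_-}$.

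The constant produces the main term: integrating by parts once (Lemma~\ref{lem:ibp}) gives $\int_0^\infty e^{it\lambda^2}\lambda\chi(\lambda)\,\f{i}{2\|V\|_1}\,d\lambda=-\f1{4\|V\|_1 t}+O(t^{-N})$, the rapidly decaying remainder coming from the $\chi'$-supported region; multiplying by $\int v\Gamma v$ yields exactly $-\f1{4\|V\|_1 t}\int v\Gamma v$. The linear remainder is governed by the bounds $|E_0^\pm|\les\lambda^{1/2}|x-x_1|^{1/2}$, $|\partial_\lambda E_0^\pm|\les\lambda^{-1/2}|x-x_1|^{1/2}$ of Lemma~\ref{R0 exp cor} and the H\"older bound of Corollary~\ref{lipbound}; since $E_0^\pm$ vanishes at $\lambda=0$, Lemma~\ref{lem:ibp2} applies, and after integrating against $v(x_1)\Gamma(x_1,y_1)v(y_1)$ with $v\les\la\cdot\ra^{-3/2-\alpha}$ this feeds the error $O\big(\f{\la x\ra^{1/2+\alpha+}\la y\ra^{1/2+\alpha+}}{t^{1+\alpha}}\big)$.

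The rational part is the main obstacle and the only source of the $\log^2 t$ gain. I split it further as
\begin{align*}
\f{L^+(x,x_1)L^+(y_1,y)}{h_+}-\f{L^-(x,x_1)L^-(y_1,y)}{h_-}=L^+(x,x_1)L^+(y_1,y)\Big(\f1{h_+}-\f1{h_-}\Big)+\f{L^+(x,x_1)L^+(y_1,y)-L^-(x,x_1)L^-(y_1,y)}{h_-}.
\end{align*}
The second summand again carries the small factor $L^+-L^-=E_0^+-E_0^-$ over the large $h_-$ and is disposed of exactly like the linear remainder. For the first summand the decisive algebraic identity is $\f1{h_+}-\f1{h_-}=\f{-i\|V\|_1/2}{h_+h_-}$; since $h_\pm(\lambda)=a_1\log\lambda+a_2\pm\f{\|V\|_1 i}{4}$ with $a_1\neq0$, the nonzero imaginary part gives $|h_\pm|\gtrsim1$ while the real part gives $|h_\pm|\gtrsim|\log\lambda|$ for small $\lambda$, whence $\f1{h_+}-\f1{h_-}=\widetilde O\big(\log^{-2}\lambda\big)$ with $\partial_\lambda$ of it $=\widetilde O\big(\lambda^{-1}\log^{-3}\lambda\big)$ (one extra logarithmic power is gained in the difference because $h_+'=h_-'$).

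Setting $\mathcal E(\lambda)=\chi(\lambda)L^+(x,x_1)L^+(y_1,y)\big(\f1{h_+}-\f1{h_-}\big)$, one has $\mathcal E(0)=0$, so Lemma~\ref{lem:ibp} bounds the integral by $\f1t\int_0^{t^{-1/2}}|\mathcal E'|\,d\lambda+|\mathcal E'(t^{-1/2})|t^{-3/2}+t^{-2}\int_{t^{-1/2}}^{\lambda_1}|(\mathcal E'/\lambda)'|\,d\lambda$. The decisive contribution arises when $\partial_\lambda$ falls on $\f1{h_+}-\f1{h_-}$ while the factors $L^\pm$ are kept undifferentiated and replaced by their $\lambda$-independent leading part $G_0-\f{c}{\|V\|_1}$: then $\int_0^{t^{-1/2}}\f{d\lambda}{\lambda|\log\lambda|^3}\sim\f1{\log^2 t}$, and the remaining two terms of Lemma~\ref{lem:ibp} are $O(t^{-1}\log^{-3}t)$, producing the factor $\f{1}{t\log^2 t}$ times $|L^+(x,x_1)||L^+(y_1,y)|$. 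Integrating against $v(x_1)\Gamma(x_1,y_1)v(y_1)$, I bound $|G_0(x,x_1)-\f{c}{\|V\|_1}|$ against $v$ to get $\|v(\cdot)|G_0(x,\cdot)|\|_{L^2}\les\log\la x\ra\les\sqrt{w(x)}$, and since $\Gamma$ is absolutely bounded on $L^2(\R^2)$, Cauchy--Schwarz gives the spatial integral $\les\sqrt{w(x)w(y)}$, i.e.\ the stated $O\big(\f{\sqrt{w(x)w(y)}}{t\log^2 t}\big)$. The delicate points are verifying the uniform lower bound on $|h_\pm|$ and checking that every remaining piece is strictly better than $t^{-1}\log^{-2}t$: in particular the terms where $\partial_\lambda$ hits $L^\pm$ (carrying $\partial_\lambda E_0^\pm\sim\lambda^{-1/2}$) or where the undifferentiated $L^\pm$ retains its $E_0^\pm$-part gain an extra $\lambda^{1/2}$, improving the time decay to $O(t^{-5/4}\log^{-2}t)\les t^{-1-\alpha}$ while trading the $\sqrt{w}$ weight for polynomial weights, so they land in the $O\big(\f{\la x\ra^{1/2+\alpha+}\la y\ra^{1/2+\alpha+}}{t^{1+\alpha}}\big)$ error and the single decisive power $\log^{-2}t$ comes solely from differentiating $\f1{h_+}-\f1{h_-}$.
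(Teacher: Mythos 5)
In the paper this corollary has a one-line proof: it is the Proposition stated immediately before it (the generalization of Proposition 4.4 in \cite{EGw}, which the paper quotes without proof) applied with $\Gamma=SS_1S$ and $\Gamma=S$, both of which are absolutely bounded. What you wrote is therefore really an attempted proof of that Proposition itself. Your core mechanism is the correct one, and it is the same engine as in \cite{EGw}: after writing $R_0^\pm=h_\pm/\|V\|_1+L^\pm$, the constant $(h_+-h_-)/\|V\|_1^2=i/(2\|V\|_1)$ produces the main term $-\f{1}{4\|V\|_1 t}\int v\Gamma v$, and the logarithmic gain comes precisely from the cancellation $h_+'=h_-'$, which upgrades $\partial_\lambda\big(1/h_+-1/h_-\big)$ from $O(\lambda^{-1}\log^{-2}\lambda)$ to $O(\lambda^{-1}|\log\lambda|^{-3})$, whence $\f1t\int_0^{t^{-1/2}}\f{d\lambda}{\lambda|\log\lambda|^{3}}\les \f{1}{t\log^2 t}$, with the $\sqrt{w(x)w(y)}$ weight coming from Cauchy--Schwarz against $\|v\,G_0(x,\cdot)\|_{L^2}\les\sqrt{w(x)}$ and the absolute boundedness of $\Gamma$.

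There is, however, a genuine gap in how you close the rational part. You feed the whole term $\mathcal E=\chi\, L^+L^+\big(1/h_+-1/h_-\big)$ into Lemma~\ref{lem:ibp}, whose last term requires $(\mathcal E'/\lambda)'$, i.e.\ genuine second $\lambda$-derivatives of $\mathcal E$. For the pieces of $L^\pm$ containing $E_0^\pm$ this does not close under the stated hypotheses: Lemma~\ref{R0 exp cor} only gives $|\partial_\lambda^2 E_0^\pm|\les\lambda^{-1/2}|x-x_1|^{3/2}$, so the second-derivative term carries the internal weight $\la x_1\ra^{3/2}$, and $v(x_1)\la x_1\ra^{3/2}\les\la x_1\ra^{-\alpha}$ is not square integrable on $\R^2$ when $v\les\la \cdot\ra^{-3/2-\alpha}$ with $\alpha<1/4$; the $dx_1$ (or $dy_1$) integration against $v\Gamma v$ then diverges, and your route would only work under much stronger decay, $v\les\la\cdot\ra^{-5/2-}$. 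So the parenthetical claim that these pieces merely ``trade the $\sqrt{w}$ weight for polynomial weights'' and land in the $t^{-1-\alpha}$ error is unjustified as written. The repair is exactly the tool you already use for the linear remainder, and which the paper uses systematically (Corollary~\ref{lipbound} and the interpolation leading to \eqref{eq;differ}): every piece containing an $E_0^\pm$ factor must be estimated via Lemma~\ref{lem:ibp2}, replacing second derivatives by the H\"older difference bound $|\partial_\lambda E_0^\pm(b)-\partial_\lambda E_0^\pm(a)|\les a^{-1/2}|b-a|^{\alpha}|x-x_1|^{1/2+\alpha}$, which caps the internal weight at $\la x_1\ra^{1/2+\alpha}$ and still yields $t^{-1-\alpha}$ decay; only the piece in which both factors are replaced by the $\lambda$-independent kernel $G_0-c/\|V\|_1$ should be run through Lemma~\ref{lem:ibp}. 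Finally, when specializing to \eqref{ssdss} and \eqref{s}, you should record explicitly that $SS_1S$ and $S$ are absolutely bounded, since that is what licenses the Cauchy--Schwarz step in your weight estimate.
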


Finally, the contribution of the error term $E(\lambda)(x,y)$ can be handled as in Proposition 4.9 in \cite{EGw}:
\begin{prop} \label{error} Let $0< \alpha< 1/4 $, $\textit{v}(x)\les \la x \ra ^ {-3/2-\alpha} $. We have the bound
\begin{align} 
                  \bigg| \int_{\R^4}\int_0^\infty e^{it\lambda^2}\lambda \chi(\lambda)  [\mathcal R^+_2 -\mathcal R_2^-] v(x_1) E (\lambda) (x_1,y_1) v(y_1)
             d\lambda dx_1 dy_1 \bigg| \les \f{\la x\ra^{\f12+\alpha+} \la y \ra^{\f12+\alpha+} }{t^{1+\alpha}}
                       \end{align}
\end{prop}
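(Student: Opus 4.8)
The plan is to recognize this as the contribution of the error operator $E^\pm(\lambda)$ from Corollary~\ref{cor}, which enters \eqref{stone} sandwiched as $R_0^\pm(\lambda^2)\,v\,E^\pm(\lambda)\,v\,R_0^\pm(\lambda^2)$; taking the $+$ minus $-$ difference produces the resolvent structure $\mathcal R_2^+-\mathcal R_2^-$ paired against the Hilbert--Schmidt kernel $v E^\pm(\lambda) v$. For fixed $x,y$ I would view the inner $\lambda$-integral as $\int_0^\infty e^{it\lambda^2}\lambda\,\mathcal E(\lambda)\,d\lambda$ with
$$\mathcal E(\lambda)=\chi(\lambda)\int_{\R^4}[\mathcal R_2^+-\mathcal R_2^-](\lambda,p,q)\,v(x_1)\,E^\pm(\lambda)(x_1,y_1)\,v(y_1)\,dx_1\,dy_1.$$
The key structural observation is that the bound $\|\sup_\lambda\lambda^{-\f12+}|E^\pm(\lambda)|\|_{HS}\les1$ in \eqref{omega} forces $E^\pm(0)=0$ in the Hilbert--Schmidt sense, hence $\mathcal E(0)=0$. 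This is the reason the error term produces no $\f1t$ piece: I may invoke Lemma~\ref{lem:ibp2} directly rather than Lemma~\ref{lem:ibp}, and no boundary term is generated. Since $E^+$ and $E^-$ obey identical bounds, it is immaterial whether one organizes the $\pm$ difference through the resolvents or through $E^\pm$; the same bilinear estimate applies either way.

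The second step is to estimate the $x_1,y_1$-integral by Cauchy--Schwarz across the Hilbert--Schmidt structure. Writing $g_x^\lambda(x_1)=(\text{Bessel factor in }\lambda p)\,v(x_1)$ and $h_y^\lambda(y_1)=v(y_1)(\text{Bessel factor in }\lambda q)$, one has $|\mathcal E(\lambda)|\les\chi(\lambda)\|g_x^\lambda\|_{L^2_{x_1}}\|E^\pm(\lambda)\|_{HS}\|h_y^\lambda\|_{L^2_{y_1}}$, and likewise for $\mathcal E'(\lambda)$ once the product rule distributes $\partial_\lambda$ onto the Bessel factors or onto $E^\pm$. The Bessel factors are controlled by the low/high-energy dichotomy used throughout Section~\ref{sec:scalar}: for $\lambda p\les1$ the factor is $O(1)$ for $J_0$ and $O(1+|\log(\lambda p)|)$ for $Y_0$, whereas for $\lambda p\gtrsim1$ Lemma~\ref{YJ} gives $|\widetilde{\mathcal C}(\lambda p)|\les\lambda^{\f12}|x-x_1|^{\f12}$ with one extra factor of $\lambda^{-1}|x-x_1|$ per $\lambda$-derivative. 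Since $v(x)\les\la x\ra^{-\f32-\alpha}$, the weighted norms $\||x-x_1|^{\f12}v(x_1)\|_{L^2_{x_1}}\les\la x\ra^{\f12}$ and $\||x-x_1|^{\f12+\alpha}v(x_1)\|_{L^2_{x_1}}\les\la x\ra^{\f12+\alpha}$ are finite and furnish exactly the claimed spatial growth; in the low--low regime no Bessel growth occurs and the norms are simply $\|v\|_{L^2}\les1$, so that case yields the required decay with no spatial weight at all.

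The third step feeds these into Lemma~\ref{lem:ibp2}. The first integral $\f1t\int_0^\infty(1+\lambda^2t)^{-1}|\mathcal E'(\lambda)|\,d\lambda$ is handled with $\|\lambda^{\f12}|\partial_\lambda E^\pm|\|_{HS}\les1$ and the $\lambda^{-\f12-}$ size of the differentiated Bessel factors; splitting at $\lambda=t^{-1/2}$ gives a contribution of order $t^{-5/4+}\la x\ra^{\f12}\la y\ra^{\f12}$, better than required. The delicate term is $\f1t\int_{t^{-1/2}}^\infty|\mathcal E'(b)-\mathcal E'(\lambda)|\,d\lambda$ with $b=\lambda\sqrt{1+\pi t^{-1}\lambda^{-2}}$, so that $b-\lambda\sim(t\lambda)^{-1}$. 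Expanding the increment of the product $\mathcal E'$, the decisive piece is the one in which the Hölder increment falls on $\partial_\lambda E^\pm$: here I use the third bound in \eqref{omega}, namely $\lambda^{\f12+\alpha}(b-\lambda)^{-\alpha}\|\partial_\lambda E^\pm(b)-\partial_\lambda E^\pm(\lambda)\|_{HS}\les1$, which makes this increment $\les\lambda^{-\f12-\alpha}(t\lambda)^{-\alpha}=t^{-\alpha}\lambda^{-\f12-2\alpha}$; the pieces in which the increment falls on the Bessel factors are reduced from the surplus weight $\la\cdot\ra^{\f32}$ down to $\la\cdot\ra^{\f12+\alpha}$ by interpolating the first- and second-derivative bounds of Lemma~\ref{YJ}, exactly as in the passage from \eqref{second} to \eqref{eq;differ}. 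The binding combination is low--low, where the worst $\lambda$-power $t^{-\alpha}\lambda^{-\f12-2\alpha-}$ occurs but the weight is $O(1)$, while the high-energy subcases supply the weight $\la\cdot\ra^{\f12+\alpha}$ with strictly better $\lambda$-decay; all are dominated by $t^{-\alpha}\lambda^{-\f12-2\alpha-}\la x\ra^{\f12+\alpha+}\la y\ra^{\f12+\alpha+}$. Since $\int_{t^{-1/2}}^{\lambda_1}\lambda^{-\f12-2\alpha-}\,d\lambda\les1$ precisely when $0<\alpha<\f14$, the $\f1t$ prefactor then yields the claimed $t^{-1-\alpha}\la x\ra^{\f12+\alpha+}\la y\ra^{\f12+\alpha+}$.

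The main obstacle is this last estimate, where two competing demands meet at once: one must exploit that $E^\pm(\lambda)$ is only Hölder-$\alpha$ in $\lambda$ in the Hilbert--Schmidt norm, while simultaneously keeping the spatial weight from the Bessel increments no larger than $\la x\ra^{\f12+\alpha}$, $\la y\ra^{\f12+\alpha}$ — and there is no slack, since $v\les\la x\ra^{-\f32-\alpha}$ barely renders $\||x-x_1|^{\f12+\alpha}v\|_{L^2}$ finite. The interpolation between the first and second $\lambda$-derivatives of the Bessel factors is what keeps the weight at $\tfrac12+\alpha$ rather than $\tfrac32$, and the constraint $\alpha<\tfrac14$ is exactly what makes $\lambda^{-\f12-2\alpha-}$ integrable down to $t^{-1/2}$, while the vanishing $E^\pm(0)=0$ is what permits Lemma~\ref{lem:ibp2} and thereby avoids an unwanted $\f1t$ boundary term. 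The argument otherwise follows the template of Proposition~4.9 in \cite{EGw}, with the finer error bounds of Corollary~\ref{cor} supplied by \eqref{omega}.
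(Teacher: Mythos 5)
Your proposal is correct and takes essentially the same route as the paper, which gives no proof of this proposition at all but simply defers to Proposition~4.9 of \cite{EGw} — precisely the template you reconstruct: $\mathcal E(0)=0$ (the $\lambda^{\f12-}$ Hilbert--Schmidt vanishing of $E^\pm$ beats the logarithmic growth of the free resolvents) so Lemma~\ref{lem:ibp2} applies with no boundary term, Cauchy--Schwarz through the bounds \eqref{omega}, the low/high-energy Bessel dichotomy with interpolation to keep the weights at $\la \cdot\ra^{\f12+\alpha}$, and the H\"older increment on $\partial_\lambda E^\pm$ giving $t^{-\alpha}\lambda^{-\f12-2\alpha}$, integrable down to $t^{-1/2}$ exactly when $\alpha<\f14$. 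One hairline point: at decay exactly $v\les\la x\ra^{-\f32-\alpha}$ the norm $\||x-x_1|^{\f12+\alpha}v\|_{L^2_{x_1}}$ in fact diverges logarithmically in $\R^2$ (the tail behaves like $\la x_1\ra^{-1}$), so your argument really uses the slightly stronger hypothesis $v\les\la x\ra^{-\f32-\alpha-}$ appearing in Lemma~\ref{lem:M_exp} and Corollary~\ref{cor}, which is available in the application since $\alpha<\beta-\f32$; the paper's own statement of the proposition elides the same epsilon.
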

 
Using  Proposition~\ref{freeevol}, Proposition~\ref{pprop}, Proposition~\ref{propss1}, Corollary~\ref{corol}, and Proposition~\ref{error}  in the expansion  \eqref{rv}  for $R^+_V -  R^{-}_V $ leads us to \eqref{weighteddecay} with   
  $$    F(x,y)=-\f1{4} +  \f 1{c_0^2\|V\|_1} \sum_{i=1}^4 F_i - F_5   .$$
The next proposition calculates $F(x,y)$ explicitly to finish the proof of Theorem~\ref{thm:mainineq}:
\begin{prop} \label{psipsi} Under the conditions of Theorem~\ref{main1},
\begin{align} \label{eq;Ftop} 
F(x,y) = - \f{1}{4c_0^2} \psi(x) \psi(y) 
   \end{align}
 where $\psi$ is an s-wave resonance. 
          \end {prop}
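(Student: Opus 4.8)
The plan is to evaluate the five boundary operators $F_1,\dots,F_5$ separately, using nothing more than the rank-one form of $S_1$ and the defining relations of the resonance, and then to check that they telescope. I would first assemble the algebraic input to be reused throughout: since the range of $S_1$ is one dimensional we may write $S_1(x_1,y_1)=\phi(x_1)\phi(y_1)$; the projection identities $PS_1=S_1P=0$, $QS_1=S_1Q=D_0S_1=S_1D_0=S_1$, $Pv=v$ and $Qv=0$; the scalar $\la v,T\phi\ra=\|V\|_1c_0$ from \eqref{j2}; and the resonance identity \eqref{j1}, which through $G_0(x,y)=-\f1{2\pi}\log|x-y|$ (see \eqref{G0 def}) converts each logarithmic convolution $\int_{\R^2}\log|x-x_1|\,v(x_1)\phi(x_1)\,dx_1$ appearing in $F_1,F_2,F_3$ into a multiple of $\psi(x)-c_0$. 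This is precisely the mechanism by which the spatial variables enter the $F_i$ only through $\psi$.

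Second, I would reduce the operator products that sit between the two copies of $v$. Writing $B:=P-QD_0QTP$, the block formula \eqref{S_defn} together with $PS_1=0$ and $QD_0QS_1=S_1$ collapses $SS_1$ to the rank-one operator $-BT\phi\,\la\phi,\cdot\ra$; by self-adjointness of $S$ and $S_1$ this gives $S_1S=-\phi\,\la BT\phi,\cdot\ra$, and then, using $S_1=S_1^2$, $SS_1S=BT\phi\,\la BT\phi,\cdot\ra$. The only scalar that survives is $\la v,BT\phi\ra$: the factor $Qv=0$ kills the $QD_0QTP$ part, while $Pv=v$ and \eqref{j2} give $\la v,PT\phi\ra=\|V\|_1c_0$, so $\la v,BT\phi\ra=\|V\|_1c_0$. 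Pushing these through the definitions of $F_1,\dots,F_4$, and noting for $F_5$ that the off-diagonal and lower-right blocks of $S$ are annihilated by $Qv=0$ so that $\la v,Sv\ra=\la v,Pv\ra=\|V\|_1$, one obtains
\[
F_1=-\tfrac{\|V\|_1}{4}(\psi(x)-c_0)(\psi(y)-c_0),\qquad
F_2=-\tfrac{\|V\|_1c_0}{4}(\psi(y)-c_0),
\]
\[
F_3=-\tfrac{\|V\|_1c_0}{4}(\psi(x)-c_0),\qquad
F_4=-\tfrac{\|V\|_1c_0^2}{4},\qquad F_5=-\tfrac14 .
\]

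Finally I would assemble. The operators $F_1,\dots,F_4$ all carry the common factor $-\f{\|V\|_1}{4}$, and with $a:=\psi(x)-c_0$ and $b:=\psi(y)-c_0$ the bracketed sum is the perfect expansion $ab+c_0a+c_0b+c_0^2=(a+c_0)(b+c_0)=\psi(x)\psi(y)$; hence $\sum_{i=1}^4F_i=-\f{\|V\|_1}{4}\psi(x)\psi(y)$. Substituting into $F=-\f14+\f1{c_0^2\|V\|_1}\sum_{i=1}^4F_i-F_5$ and observing that the free-evolution constant $-\f14$ of Proposition~\ref{freeevol} cancels against $-F_5=+\f14$, I arrive at $F(x,y)=-\f{\psi(x)\psi(y)}{4c_0^2}$, which is the assertion.

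I expect the real work to lie in the second step: faithfully simplifying the four-term block operator $S$ of \eqref{S_defn} when it is sandwiched against $S_1$ (or against itself) and the weights $v$. One has to apply $D_0S_1=S_1$, $QS_1=S_1$, $PS_1=0$, $Pv=v$ and $Qv=0$ in the right order so that $S$ collapses onto the single rank-one direction $BT\phi$, and one must track carefully which terms are annihilated and the precise constant and sign produced by $G_0$ in \eqref{j1}; an error there would spoil the clean cancellation $(\psi-c_0)+c_0=\psi$ on which the whole telescoping depends.
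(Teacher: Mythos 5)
Your proposal is correct and is essentially the paper's own proof: the same rank-one form $S_1=\phi\la \phi,\cdot\ra$, the same projection identities collapsing $S$ against $S_1$ and $v$ (the paper does this entry by entry rather than through your operator $B=P-QD_0QTP$, which is purely cosmetic), the same five boundary values $F_1,\dots,F_5$, and the same telescoping plus cancellation of $F_5$ against the free-resolvent constant. One caveat, which you share with the paper: taking \eqref{j1} as printed ($\psi=c_0+G_0v\phi$), your (correct) collapse $SS_1=-BT\phi\la\phi,\cdot\ra$ would actually produce $F_2,F_3$ with a $+$ sign and spoil the telescoping; the relation consistent with $H\psi=0$ and \eqref{j2} is $\psi=c_0-G_0v\phi$, under which your stated values are exact --- the paper's proof conceals the same issue by deriving $vSS_1=-vTS_1$ and then silently using $+vTS_1$, so the two sign slips compensate there as well.
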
        
\begin{proof} Recall  that  $S_1$ is a projection operator with the kernel $S_1(x,y)= \phi(x)\phi(y) $ for some $\|\phi\|_{L^2}=1$. Using this and the definition \eqref{g form} of $G_0f(x)$, $F_1$ can be written as
\begin{align*}
                          F_1( x,y)  
    =  -\frac{ \|V\|_1 } { 4} [G_0v\phi](x) [G_0v\phi](y).
                         \end{align*}
By Remark 3 we know that $G_0v\phi= \psi -c_0$, where $\psi$ is a resonance function. This gives us  
  $$ F_1(x,y) =-\frac{ \|V\|_1 } { 4}\big( \psi(x)-c_0 \big) \big(\psi(y)-c_0\big).  $$       
 
For $F_2$ and $F_3$ recall that 

  	$$ 	 S=\left[\begin{array}{cc} P & -PTQD_0Q\\ -QD_0QTP & QD_0QTPTQD_0Q 
		\end{array}\right] = \left[\begin{array}{cc} a_{11} & a_{12} \\ a_{21} & a_{22}
		\end{array}\right].$$
		
Note that multiplying $S$ by $v$ from the left side cancels $a_{21}$ and  $a_{22}$; and by $S_1$ from the right side cancels $a_{11}$. Hence, we see that $vSS_1= -vPTQD_0Q S_1 = -vTS_1$. Here we also used the fact that $S_1D_0=D_0S_1= S_1$.  Therefore, we have
                 \begin{multline*} 
                 F_2( x,y) = \f{1}{8 \pi } \int_{\R^4} v(x_1)[SS_1](x_1,y_1) v(y_1) \log|y-y_1|  dy_1dx_1\\
                 =\f{1}{8 \pi } \int_{\R^4} v(x_1)TS_1(x_1,y_1) v(y_1) \log|y-y_1|  dy_1dx_1
                                                             \end{multline*}
which is equal the following by using the definition of $S_1$ and $G_0f(x)$
\begin{multline*}
 \f{1}{8 \pi } \int_{\R^4} v(x_1)[T\phi](x_1) [\phi v](y_1) \log|y-y_1| dx_1 dy_1 = -\f1{4} \la v , T\phi \ra [G_0v\phi](y) \\
 = -\frac{ \|V\|_1 } { 4} c_0 \big( \psi(y) - c_0 \big).
             \end{multline*}                                                           
For the last equality we again used Remark 3. The same calculation shows that 
 \begin{align*}
                F_3( x,y) =  -\frac{ \|V\|_1 } { 4} c_0 \big( \psi(x) - c_0 \big).  
                                           \end{align*}
                                           
Using the above definition of $S$ and the same cancellations, we have $ vSS_1Sv=vTS_1Tv $ which results in                                       
\begin{align*}
                          F_4( x,y) &=  -\f 1{4 \|V\|_1}  \la v , T\phi \ra \la v , T\phi \ra= -  \frac{ \|V\|_1 } {4} \ c_0^2.   
                                            \end{align*} 
For   $ F_5( x,y) $, note that we have  $v(x)$ both on left and right side of S. Hence,  except $ P$ everything vanishes and we obtain
\begin{align*}
                          F_5( x,y) =  -{\f 1 {4 \|V\|_1}}  \int _{\R^2}  v(x_1) P(x_1,y_1) v(y_1) dx_1 dy_1 
                           = - {\f 1 4 }. 
                                            \end{align*} 
It is easy to see that $F_5$ cancels out the operator coming from the free resolvent. The other four sum up to $ -\frac{ \|V\|_1 } { 4}\psi(x) \psi(y) $ and that establishes the proof.
                                            
\end{proof}

We conclude this section by remarking that the bounds that we obtain in this section allows us to reach a similar estimate for the solution of the wave equation with some small modifications. Replacing Proposition~\ref{freeevol}, Proposition~\ref{propss1}, and Proposition~\ref{error} with Proposition~5.10, Proposition~5.11, and Proposition~5.15 in \cite{greenw} respectively one can obtain: 
\begin{multline*}
\Big| \int_0^\infty \big( \sin(t\lambda)+ \lambda \cos(t\lambda) \chi(\lambda) \big)[R_V^+(\lambda^2) - R_V^{-}(\lambda^2)](x,y) d\lambda - {\f 1 t} \widetilde{F}(x,y) \Big| \\
     \les \frac{(1+\log^+|x|)(1+\log^+|y|)}{t\log^2 t} + \frac{\la x\ra ^{{\f 12} +\alpha}\la y \ra ^{{\f 12} +\alpha}}{t^{1+\alpha}} .
 \end{multline*}
 
 This estimate gives us Theorem~\ref{main3} with no interpolation. Note that the interpolation with unweighted result (14) does not help us to decrease the weight function to $\log^2(2+|x|)$ and have the decay $(t \log^2t)^{-1}$. This is because we need to improve the time decay from $|t|^{-{1/2}}$ as opposed to Schr\"odinger time decay $|t|^{-1}$ .
  
Also note that we only need to subtract a finite rank operator from \eqref{stonesin}. The reason is the following identities ($\Lambda$ smooth and compactly supported)
$$
\int_0^\infty \cos(t\lambda)\lambda \Lambda(\lambda) d\lambda =-\frac1t\int_0^\infty \sin(t\lambda) \big(\lambda \Lambda(\lambda)\big)^\prime d\lambda,
$$
$$
\int_0^\infty \sin(t\lambda) \Lambda(\lambda) d\lambda = -\frac1t\Lambda(0)+\frac1t\int_0^\infty \cos(t\lambda) \Lambda^\prime(\lambda)d\lambda.
$$ 
The boundary term in the second identity will result in the finite rank operator, as in the proof of Theorem~\ref{main1}.
 
\section{MATRIX CASE } \label{matrixx}

We start to the matrix case by reminding that the following representation is valid for $(f,g) \in W^{2,2} \times W^{2,2} \cup X_{1+} $,(see, section~2 in \cite{ES2});
\begin{align} \label{pacrep} 
                \la e^{it\mathcal{H}}P_{ac}f,g  \ra = {\f 1 {2 \pi i}} \int_{|\lambda| > \mu} e^{it\lambda} \la \big[ \mathfrak{R}^+_V (\lambda) -  \mathfrak{R}^-_V (\lambda) \big] f,g \ra d\lambda.
            \end{align} 
           
 We prove the following two theorems and use the interpolation argument from the scalar case.
\begin{theorem} \label{t21}Under the assumptions of A1)-A4), if there is a resonance of the first kind at zero then we have, for any $ t \geq 0$,
\begin{align*} 
           \sup_{{x,y \in \R^2}, L>1} \Big| \int_{|\lambda|>\mu } e^{it\lambda} \chi(\lambda/L) \big[ \mathfrak{R}^+_V (\lambda) -  \mathfrak{R}^-_V (\lambda) \big] (x,y) d\lambda \Big| \les {\f 1 {|t|}} .
           \end{align*}
      \end{theorem}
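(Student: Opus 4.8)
The plan is to reduce the matrix oscillatory integral to the scalar analysis of Section~\ref{sec:scalar}. First I would insert the matrix symmetric resolvent identity
$$\mathfrak{R}_V^\pm(\lambda)=\mathfrak{R}_0^\pm(\lambda)-\mathfrak{R}_0^\pm(\lambda)\,v\,\mathcal{M}^\pm(\lambda)^{-1}\,v\,\mathfrak{R}_0^\pm(\lambda),\qquad \mathcal{M}^\pm(\lambda)=U+v\mathfrak{R}_0^\pm(\lambda)v,$$
as in \cite{ES2,EGm}, so that the integrand splits into a free part and a part carrying $\mathcal{M}^\pm(\lambda)^{-1}$. The free matrix resolvent $\mathfrak{R}_0^\pm(\lambda)$ is block diagonal, with upper block $R_0^\pm(\lambda-\mu)$ and lower block $-R_0^\pm(-\lambda-\mu)$; near the threshold $\lambda=\mu$ only the upper block is singular, since the argument $-\lambda-\mu\approx-2\mu$ of the lower block stays away from the spectrum. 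The change of variables $z^2=\lambda-\mu$ then turns $e^{it\lambda}\,d\lambda$ into $e^{it\mu}e^{itz^2}\,2z\,dz$ and replaces the singular block by the scalar free resolvent $R_0^\pm(z^2)$ already analyzed in Lemma~\ref{R0 exp cor}. Thus every low-energy contribution near $\mu$ becomes an oscillatory integral of exactly the scalar form $\int_0^\infty e^{itz^2}z[\cdots]\,dz$, and the threshold $-\mu$ is treated symmetrically.

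Next I would expand $\mathcal{M}^\pm(\lambda)^{-1}$ near the threshold in the resonance-of-first-kind case. Because the singular block reduces to $R_0^\pm(z^2)$, the derivation is a matrix analogue of Lemma~\ref{lem:M_exp} through Corollary~\ref{cor}: one obtains leading terms built from $h_\pm(z)S_1$, $SS_1$, $S_1S$, $h_\pm(z)^{-1}SS_1S$, $QD_0Q$ and $h_\pm(z)^{-1}S$, together with an error satisfying the bounds \eqref{omega}. Substituting into the oscillatory integral and applying the stationary-phase Lemmas~\ref{lem:ibp} and \ref{lem:ibp2} term by term, each contribution is $O(1/|t|)$. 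For the \emph{unweighted} bound I do not extract the boundary $\mathfrak{F}/t$ term; I simply keep it, since it is already uniformly bounded in $x,y$. Indeed, the absolutely bounded operators $S_1$, $S$, $QD_0Q$ are $L^2\to L^2$ bounded, and pairing them against $v\mathfrak{R}_0^\pm v$ through the orthogonality relations (the matrix analogue of \eqref{zero}) cancels the worst logarithmic growth of $Y_0$, while assumption A3 ($\beta>3$) makes the relevant kernels Hilbert--Schmidt. This yields a bound $\les 1/|t|$ uniform in $x,y$.

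For the high-energy region $|\lambda|\gtrsim 1$, and for the uniformity in $L$, I would use the large-argument asymptotics \eqref{largr} of the Bessel functions together with integration by parts in $z$ against $e^{itz^2}$; the derivatives of the cutoff $\chi(\lambda/L)$ are harmless because they are supported on $\lambda\sim L$ and carry a compensating factor $L^{-1}$, so no growth in $L$ appears. Alternatively one may quote the matrix high-energy estimate of \cite{EGm} from the regular case, which is insensitive to the threshold structure. Combining the low- and high-energy pieces and taking the supremum over $x,y$ and $L$ gives the claimed uniform bound.

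The main obstacle is obtaining the estimate \emph{uniformly in $x,y$ and without weights}: the $\log$-singularity of $Y_0$ and the $\lambda^{-1/2}$ decay of the error terms naturally produce spatial weights $\la x\ra,\la y\ra$ if estimated crudely. The resonance of the first kind is precisely the borderline case where one cannot beat the $1/t$ rate, so the point is to show that this borderline behavior appears only as the $1/t$ decay and not as growth in $x,y$. Controlling this requires systematically exploiting the cancellations $\int vS_1=\int S_1v=0$ and the $\sigma_3$-symmetry of the matrix spectral projections to absorb all spatial weights, exactly as in the passage from the weighted Proposition~\ref{pprop} to the unweighted scalar bound of \cite{EG}.
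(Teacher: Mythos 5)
Your setup matches the paper's: the symmetric resolvent identity, the change of variables $z^2=\lambda-\mu$ isolating the singular upper block, and the matrix analogue of the $M^{\pm}(\lambda)^{-1}$ expansion are exactly the ingredients behind the paper's expansion \eqref{matrixRV}, and you correctly identify that the whole difficulty is uniformity in $x,y$. But your resolution of that difficulty has a genuine gap. The claim that ``applying the stationary-phase Lemmas~\ref{lem:ibp} and \ref{lem:ibp2} term by term, each contribution is $O(1/|t|)$'' is false as stated: those lemmas, used as in the weighted analysis (proof of Theorem~\ref{t22}), yield errors of size $\la x\ra^{\f12+\alpha}\la y\ra^{\f12+\alpha}t^{-1-\alpha}$ and $\sqrt{w(x)w(y)}\,(t\log^2 t)^{-1}$, both of which grow in $x,y$, so no unweighted bound can be read off from them. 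Moreover, the cancellations $S_1v_2M_{11}=M_{11}v_1S_1=0$ of \eqref{m0} repair this only in the regime $\lambda|x-x_1|\les 1$, where they allow trading Bessel functions for the differences $F$, $G$, $K$ whose weights sit in the inner variables $x_1,y_1$ and are absorbed by $v$. In the regime $\lambda|x-x_1|\gtrsim 1$ they do nothing: every $\lambda$-derivative of $\widetilde J_0(\lambda p)=e^{\pm i\lambda p}\omega_{\pm}(\lambda p)$ produces a factor $p\sim\la x\ra\la x_1\ra$, so your proposed route of integrating by parts against $e^{itz^2}$ (with or without the $\chi(\lambda/L)$ bookkeeping) inevitably produces growth in $x$, not a uniform bound.

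The missing idea is the one the paper actually relies on: in the spatially separated regime one must pull the slower oscillation $e^{\pm i\lambda r}$ out of the Bessel factor and run stationary phase on the \emph{combined} phase $t\lambda^2\pm\lambda r$. This is the content of Theorem~3.1 and Lemma~3.8 of \cite{EG} and of Proposition~7.2 of \cite{EGw}, \cite{EGm}, which the paper quotes and extends to the terms $SS_1$, $S_1S$, $h_{\pm}^{-1}SS_1S$ by observing that those proofs use only absolute boundedness of the sandwiched operator. The single genuinely new term, $h_{\pm}(\lambda)S_1$, is then estimated by hand via the decomposition $\mathfrak{R}_1=A_1+A_2+A_3+A_4$: $A_1$ by EG Theorem~3.1, $A_4$ by the bounds \eqref{r2 bound}, and $A_2,A_3$ by a low/high split whose high-energy case is exactly the phase-absorption argument above. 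Note also that this unweighted $1/t$ machinery is logically independent of the weighted $(t\log^2 t)^{-1}$ machinery --- the paper proves Theorems~\ref{t21} and \ref{t22} separately and only afterwards interpolates --- whereas your proposal in effect tries to derive the former from the latter, which cannot work because the weighted errors grow in $x,y$.
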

\begin{theorem} \label{t22}Under the assumptions A1)-A4), if there is a resonance of the first kind at zero then we have, for any $ t>2 $,   
\begin{align*}      
            \sup_{ L>1} \Big| \int_{|\lambda|>\mu } e^{it\lambda} \chi(\lambda/L) \big[ \mathfrak{R}^+_V (\lambda) -  \mathfrak{R}^-_V (\lambda) \big] (x,y) d\lambda  - {\f 1 t} \mathfrak{F}
            (x,y) \Big| \les   \f{\sqrt{w(x)w(y)}  }{t\log^2(t)} + \f{\la x\ra^{3/2} \la y\ra^{3/2}}{t^{1+\alpha} }             
                \end{align*}
where $ 0<\alpha< {\f{\beta -3} 2 } $.               
          \end{theorem}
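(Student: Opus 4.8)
The plan is to mirror the scalar argument of Theorem~\ref{thm:mainineq}, exploiting the fact that near the threshold $\mu$ the only singular (resonance-carrying) part of the free matrix resolvent sits in a single diagonal block that behaves exactly like the scalar free resolvent at zero energy. First I would reduce to a fixed neighborhood of $\mu$: away from the thresholds $\pm\mu$ the matrix resolvent $\mathfrak{R}_V^\pm(\lambda)$ is smooth and bounded between weighted spaces, so its contribution to \eqref{pacrep} is controlled uniformly in $L$ by the intermediate- and high-energy estimates for the matrix operator (the analogues in \cite{ES2, EGm} of the high-energy theorem used in the scalar case); this disposes of the $\sup_{L>1}$ and leaves only the low-energy piece supported where $|\lambda - \mu| \ll 1$. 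On that piece I substitute $\lambda = \mu + z^2$ with $z>0$ small, so that $d\lambda = 2z\,dz$ and $e^{it\lambda} = e^{it\mu} e^{itz^2}$; the modulus-one factor $e^{it\mu}$ is harmless, and the integral takes precisely the scalar form $\int_0^\infty e^{itz^2} z\,\chi(z)[\cdots]\,dz$ to which Lemma~\ref{lem:ibp} and Lemma~\ref{lem:ibp2} apply.

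After this change of variables the free matrix resolvent is block-diagonal, with upper-left block equal to the scalar free resolvent $R_0^\pm(z^2)$ (so Lemma~\ref{R0 exp cor} applies verbatim, producing the $g^\pm(z)$, $G_0$ and error $E_0^\pm$) and lower-right block $-R_0(-2\mu - z^2)$, which is analytic in $z$ near $z=0$ and contributes only smooth, rapidly decaying terms. I would then run the symmetric resolvent identity \eqref{rv} with the matrix $M^\pm(z) = U + v\mathfrak{R}_0(\mu+z^2)v$, using the factorization furnished by the positivity assumptions A1 and A2 so that $v$ and $U$ are well-defined. Since the log-singularity is confined to the upper-left corner, $M^\pm(z) = g^\pm(z)P + T + E_1^\pm(z)$ with matrix-valued $P$, $T$ and an error obeying the Hilbert--Schmidt bounds of Lemma~\ref{lem:M_exp}. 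Feeding this into the Jensen--Nenciu inversion scheme (Lemma~\ref{closed} and Lemma~\ref{matrix}) under the resonance-of-first-kind hypothesis yields the matrix analogue of Corollary~\ref{cor}: a leading term $-h_\pm(z) S_1/(c_0^2\|V\|_1)$ together with the finite-rank boundary operators $SS_1$, $S_1S$, $SS_1S/h_\pm$, $QD_0Q$ and $S/h_\pm$, plus a remainder $E(z)$ satisfying \eqref{omega}.

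With this expansion in hand, each term is treated exactly as in Propositions~\ref{pprop} and \ref{propss1}, Corollary~\ref{corol}, and Proposition~\ref{error}: the stationary-phase Lemmas~\ref{lem:ibp} and \ref{lem:ibp2}, combined with Lemma~\ref{FG} and Lemma~\ref{cutoff} applied to the scalar-type kernels of the upper-left block, produce the $t^{-1-\alpha}$ weighted remainders. The $\sqrt{w(x)w(y)}/(t\log^2 t)$ term emerges from the $h_\pm(z)^{-1} S$ and $h_\pm(z)^{-1}SS_1S$ contributions, where the factor $1/h_\pm(z)$ furnishes the $\log^2$ gain exactly as in the scalar estimate underlying Corollary~\ref{corol}. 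The genuinely singular boundary contributions assemble the $\frac1t$ term, which I compute explicitly by the matrix version of Proposition~\ref{psipsi} to obtain $\mathfrak{F}(x,y) = c\,\psi(x)\sigma_3\psi(y)$; the $\sigma_3$ arises because $S_1$ and the resonance function enter through the $\sigma_3$-twisted factorization of $V$. The subsequent interpolation with the unweighted $|t|^{-1}$ bound of Theorem~\ref{t21}, via $\min(1,a/b)\les \log^2(a)/\log^2(b)$, is then performed when deducing Theorem~\ref{main2}, exactly paralleling the scalar reduction after Theorem~\ref{thm:mainineq}.

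The main obstacle is the second step: establishing the matrix $M^\pm(z)^{-1}$ expansion with the sharp H\"older-type control on $\partial_z E(z)$ required by \eqref{omega}. One must verify that the non-self-adjoint Jensen--Nenciu machinery applies, i.e. that $QTQ + S_1$ and $T_1 = S_1 TPTS_1$ are invertible in the matrix setting, and, crucially, that the smooth lower-right block contributes only to $T$ and to the error without creating new singular terms or altering $P$, $S_1$ and the constant $c_0$. Checking that the resonance function $\psi$ and $c_0$ are correctly identified through the $\sigma_3$-factorization, and that the cancellations of Proposition~\ref{psipsi} persist with the matrix operator $S$, is the delicate algebraic point; once it is confirmed, the oscillatory-integral estimates are a routine transcription of the scalar case.
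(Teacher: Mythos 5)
Your proposal is correct and follows essentially the same route as the paper: reduction to the threshold via $\lambda=\mu+z^2$, the block decomposition of $\mathfrak{R}_0(\mu+z^2)$ into a scalar-like $M_{11}$ block and a smooth $M_{22}$ block, the $\sigma_3$-twisted factorization with the Jensen--Nenciu inversion giving the matrix analogue of Corollary~\ref{cor}, term-by-term stationary-phase estimates in which the cancellation $M_{11}v_1S_1=S_1v_2M_{11}=0$ replaces $Qv=0$, and the explicit identification $\mathfrak{F}=-\frac{1}{4c_0^2}\psi\,\sigma_3\,\psi$ via Lemma~\ref{G0matrix}. Two details to fix in execution, both of which you already flag as the delicate points: the matrix operator is $M^{\pm}=I+v_2\mathfrak{R}_0(\mu+\lambda^2)v_1$ with $v_1=-\sigma_3 v$, $v_2=v$ rather than $U+v\mathfrak{R}_0v$ (no pointwise sign matrix $U$ exists, since $\sigma_3$ and $v$ do not commute when $V_2\neq 0$), and the lower-right blocks are not pure error terms --- because $R_2(\lambda^2)$ does not vanish at the threshold, the cross products $M_{11}M_{22}$, $M_{22}M_{11}$, $M_{22}M_{22}$ each leave genuine $\frac{1}{t}$ boundary contributions, which must be assembled with the $M_{11}$ part to produce the full $\mathcal{G}_0$-kernels in the operators $\mathfrak{F}_i$, and this is exactly what makes the $\psi\,\sigma_3\,\psi$ identification come out right.
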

  
\subsection{The free resolvent and resolvent expansion around zero in case of s-wave resonance}\hspace{10mm}\\
The aim of this part of the section is to show the spectral density $[\mathfrak{R}^{+}_V (\lambda)-\mathfrak{R}^{-}_V (\lambda)](x,y)$ has a similar expansion as in the scalar case. The free resolvent $ \mathfrak{R}_0 (z) $ of matrix Schrodinger equation is given by
$$ \mathfrak{R}_0 (z) = (\mathcal{H}_0 - z)^{-1} = \left[\begin{array}{cc} R_0(z-\mu) & 0 \\ 0 & -R_0(-z-\mu) \end{array}\right] $$
for $ z \in (-\infty , - \mu) \cup (\mu , \infty) $. Here $R_0 ( z) $ is the scalar free resolvent.
Writing $z= \mu + \lambda ^2 $ , $ \lambda >0 $ we have
$$ \mathfrak{R}_0 (\mu + \lambda ^2 )(x,y)= \left[\begin{array}{cc} R_0(\lambda ^2 )(x,y) & 0 \\ 0 & -{\f i 4 }H^+_0(i \sqrt{2\mu + \lambda ^2 }|x-y| )  \end{array}\right] .$$
Note that the bounds
\begin{align}\label{r2 bound}
     |R_2(\lambda ^2) (x,y)| \les 1 + \log^{-}|x-y| \les k(x,y)
     \hspace{1mm}, \hspace{1mm} |\partial_\lambda^k R_2(\lambda^2)(x,y)| \les 1 \hspace{5mm} k=1,2,...
     \end{align}
 can be seen directly from the large and small energy expansion of Hankel functions and $\mu$ being strictly greater than zero.
     
We will repeat some Lemmas and Corollaries from Section~\ref{sec:scalar} modified as needed for the matrix operator.\\
Define the matrices
$$M_{11}= \left[\begin{array}{cc} 1 & 0 \\ 0 & 0 \end{array} \right] , \hspace{10mm} M_{22}= \left[\begin{array}{cc} 0 & 0 \\ 0 & 1 \end{array} \right] .$$

\begin{lemma} \label{mR0 exp cor}The following expansion is valid for the kernel of the free resolvent 
                     $$ \mathfrak{R}_0 (\mu + \lambda ^2 )(x,y)= \mathfrak{g}^{\pm}(\lambda) M_{11}+ \mathcal{G}_0(x,y)+ \mathcal{E}_0^{\pm} (\lambda)(x,y), $$                           
where 
\begin{align*} 
              \mathfrak{g}^{\pm}(\lambda) = \pm \frac{i}{4}-\frac{1}{2\pi}\log(\lambda/2)-\frac{\gamma}{2\pi}, \\
  \vspace{10pt}
              \mathcal{G}_0(x,y)=   \left[\begin{array}{cc} G_0(x,y) & 0 \\ 0 & -{\f i 4} H_0^+(i \sqrt{2\mu}|x-y|) \end{array} \right],   
                     \end{align*}                                                    
                                            
 and  $\mathcal{E}_0^{\pm} (\lambda)(x,y) $ satisfies the bounds,
 \begin{align*}
		|\mathcal{E}_0^{\pm}|\les \la \lambda\ra^{\f 1 2 } \lambda^{\f 1 2} \la x-y \ra^{\frac{1}{2} }, \qquad
		|\partial_\lambda \mathcal{E}_0^{\pm}|\les \la \lambda\ra^{\f 1 2 } \lambda^{-\frac{1}{2} }\la x-y \ra^{\frac{1}{2} }, \qquad|\partial_\lambda^2 \mathcal{E}_0^{\pm}|\les \la \lambda\ra^{\f 1 2 } \lambda^{-\frac{1}{2} }\la x-y \ra^{\frac{3}{2}}.
	\end{align*}
        \end{lemma}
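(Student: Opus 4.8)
The plan is to exploit the fact that $\mathfrak{R}_0(\mu+\lambda^2)$, the leading term $\mathfrak{g}^\pm(\lambda)M_{11}+\mathcal{G}_0$, and the error $\mathcal{E}_0^\pm$ are all diagonal matrices, so the identity reduces to two independent scalar statements, one for each diagonal entry. Note also that the $\pm$ dependence sits entirely in the $(1,1)$ slot: the $(2,2)$ entry is the free scalar resolvent at the negative energy $-2\mu-\lambda^2$, which lies in the resolvent set and carries no limiting-absorption ambiguity.

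For the $(1,1)$ entry I would simply invoke the scalar expansion of Lemma~\ref{R0 exp cor}. Comparing with the definition \eqref{g form}, $g^\pm(\lambda)=\|V\|_1\bigl(\pm\frac{i}{4}-\frac{1}{2\pi}\log(\lambda/2)-\frac{\gamma}{2\pi}\bigr)$, so $\mathfrak{g}^\pm(\lambda)=\frac{1}{\|V\|_1}g^\pm(\lambda)$, and Lemma~\ref{R0 exp cor} reads $R_0^\pm(\lambda^2)(x,y)=\mathfrak{g}^\pm(\lambda)+G_0(x,y)+E_0^\pm(\lambda)(x,y)$, which is exactly the $(1,1)$ component of the asserted identity. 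The scalar error obeys $|E_0^\pm|\les\lambda^{1/2}|x-y|^{1/2}$, $|\partial_\lambda E_0^\pm|\les\lambda^{-1/2}|x-y|^{1/2}$, $|\partial_\lambda^2 E_0^\pm|\les\lambda^{-1/2}|x-y|^{3/2}$; since $\langle\lambda\rangle^{1/2}\geq 1$ and $|x-y|^k\les\langle x-y\rangle^k$, these are dominated by the stated matrix bounds. Thus the $(1,1)$ part supplies the dominant contribution to $\mathcal{E}_0^\pm$ and already matches the claim.

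The substance of the proof is the $(2,2)$ entry $f(\lambda):=-\frac{i}{4}H_0^+(i\sqrt{2\mu+\lambda^2}\,|x-y|)$. The key structural fact is that its energy $-2\mu-\lambda^2$ stays bounded away from the threshold uniformly for $0<\lambda<\lambda_1$, so $f$ is real-analytic in $\lambda$ near $0$. I would subtract its value at $\lambda=0$, which is precisely the $(2,2)$ entry of $\mathcal{G}_0$, and estimate $f(\lambda)-f(0)$. Writing $s(\lambda)=\sqrt{2\mu+\lambda^2}$ and $w=is\,|x-y|$, the chain rule gives $\frac{dw}{d\lambda}=i|x-y|\frac{\lambda}{s}$, so $f'(0)=0$ and hence $f(\lambda)-f(0)=O(\lambda^2)$ near $\lambda=0$, with each $\lambda$-derivative producing an extra factor $\lambda/s$. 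Using $H_0^+(iw)=\frac{2}{\pi i}K_0(w)$ together with the exponential decay of $K_0$ and its derivatives for $w\gtrsim 1$, the region $s|x-y|\gtrsim 1$ is handled trivially, the coefficients there being exponentially small in $|x-y|$.

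The only delicate region is $s|x-y|\ll 1$, where $K_0(w)=-\log(w/2)-\gamma+O(w^2\log w)$ has a logarithmic singularity in $|x-y|$. The decisive observation is that this singularity is \emph{identical} in $f(\lambda)$ and $f(0)$: via \eqref{J0 def}, \eqref{Y0 def} one finds $f(\lambda)=\frac{1}{2\pi}\log(s|x-y|/2)+\frac{\gamma}{2\pi}+O\bigl((s|x-y|)^2\log(s|x-y|)\bigr)$, so in the difference the terms $\log(s|x-y|)$ and $\log(\sqrt{2\mu}\,|x-y|)$ combine to $\log(s/\sqrt{2\mu})=\frac12\log(1+\lambda^2/2\mu)=O(\lambda^2)$, which is independent of $|x-y|$. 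Hence the $|x-y|$-singularity cancels and $f(\lambda)-f(0)$ is bounded and $O(\lambda^2)$. A parallel computation for the derivatives, where no subtraction is needed since $f(0)$ is constant in $\lambda$ and each differentiation carries the factor $\lambda/s$, shows $|\partial_\lambda f|\les\lambda$ and $|\partial_\lambda^2 f|$ bounded, with coefficients that remain bounded as $|x-y|\to 0$ and decay for large $|x-y|$; all of these lie well within $\langle\lambda\rangle^{1/2}\lambda^{\pm1/2}\langle x-y\rangle^{1/2}$ (respectively $\langle x-y\rangle^{3/2}$). Assembling the two diagonal errors into $\mathcal{E}_0^\pm$ then finishes the argument. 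I expect the main obstacle to be purely the bookkeeping of this small-argument regime, confirming the logarithmic cancellation and tracking the derivative coefficients, rather than anything structural, since the $(2,2)$ slot is evaluated off the spectrum and is therefore the benign part of the resolvent.
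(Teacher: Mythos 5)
Your proof is correct and takes essentially the route the paper intends: the paper states this lemma without an explicit proof, as a modification of the scalar Lemma~\ref{R0 exp cor}, with the $(2,2)$ entry controlled by the smoothness bounds \eqref{r2 bound} on $R_2$ at the strictly negative energy $-2\mu-\lambda^2$. Your diagonal splitting, reuse of the scalar expansion (noting $\mathfrak{g}^{\pm}=g^{\pm}/\|V\|_1$) for the $(1,1)$ slot, and the off-spectrum smoothness with logarithmic cancellation for the $(2,2)$ slot are precisely that argument carried out in detail.
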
       
\begin{corollary} \label{mlipbound} For $0<\alpha<1$  and $b>a>0$ we have,
$$
|\partial_\lambda \mathcal{E}_0^\pm(b)-\partial_\lambda \mathcal{E}_0^\pm(a)|\les a^{-\f12} |b-a|^{\alpha} \la x-y \ra ^{\frac12+\alpha}.
$$
\end{corollary}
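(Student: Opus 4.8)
The plan is to obtain the claimed H\"older bound by interpolating two elementary one-sided estimates for the increment $\partial_\lambda \mathcal{E}_0^\pm(b)-\partial_\lambda \mathcal{E}_0^\pm(a)$, exactly as in the scalar Corollary~\ref{lipbound}. The only genuinely new feature in the matrix setting is that the error term carries the bracket weight $\la x-y\ra$ rather than the bare $|x-y|$; this arises from the lower-right entry $-\f i4 H_0^+(i\sqrt{2\mu+\lambda^2}|x-y|)$, whose Hankel function decays for large argument, and it does not alter the structure of the argument at all.

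First I would record the two bounds that come directly from Lemma~\ref{mR0 exp cor}. Since $0<a<b<\lambda_1$, the factor $\la\lambda\ra^{1/2}$ is comparable to $1$ throughout, so the first-derivative bound combined with the triangle inequality and $\lambda^{-1/2}\les a^{-1/2}$ gives
$$
|\partial_\lambda \mathcal{E}_0^\pm(b)-\partial_\lambda \mathcal{E}_0^\pm(a)|\les a^{-\f12}\la x-y\ra^{\f12}.
$$
Second, the mean value theorem together with the second-derivative bound (again using $\lambda^{-1/2}\les a^{-1/2}$ for $\lambda\in[a,b]$) yields
$$
|\partial_\lambda \mathcal{E}_0^\pm(b)-\partial_\lambda \mathcal{E}_0^\pm(a)|\les |b-a|\,a^{-\f12}\la x-y\ra^{\f32}.
$$

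Then I would raise the first estimate to the power $1-\alpha$, the second to the power $\alpha$, and multiply. The prefactor reproduces itself since $-\f12(1-\alpha)-\f12\alpha=-\f12$, the increment produces $|b-a|^\alpha$, and the spatial weight accumulates as $\f12(1-\alpha)+\f32\alpha=\f12+\alpha$, giving precisely
$$
|\partial_\lambda \mathcal{E}_0^\pm(b)-\partial_\lambda \mathcal{E}_0^\pm(a)|\les a^{-\f12}|b-a|^\alpha\la x-y\ra^{\f12+\alpha}.
$$
I do not expect any real obstacle here: the statement is a routine consequence of the derivative bounds already established in Lemma~\ref{mR0 exp cor}, and the interpolation is the same device used repeatedly in Section~\ref{sec:scalar} (for example in passing from \eqref{second} to \eqref{eq;differ}). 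The only point demanding a little care is checking that the extra $\la\lambda\ra^{1/2}$ factors in the matrix bounds are harmless, which is immediate because $\lambda$ is confined to the fixed small interval $(0,\lambda_1)$ where these factors are bounded.
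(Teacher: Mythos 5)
Your proof is correct and is exactly the argument the paper relies on: Corollary~\ref{mlipbound} is stated without proof as an immediate consequence of Lemma~\ref{mR0 exp cor}, via the same interpolation between the first-derivative bound and the mean-value-theorem/second-derivative bound that the paper spells out elsewhere (e.g.\ in passing from \eqref{second} to \eqref{eq;differ}). Your one point of care --- that the $\la \lambda \ra^{1/2}$ factors are harmless because the corollary is only ever invoked for $0<a<b<\lambda_1$ --- is the right observation, since for large $a$ the stated bound would not follow from the lemma alone.
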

We write $V= -\sigma_3vv:=v_1v_2 $ where  $v_1=  -\sigma_3v $ , $v_2= v $, and 
 $$v={\f 1 2}  \left[\begin{array}{cc} \sqrt{V_1+V_2}+\sqrt{V_1-V_2} &  \sqrt{V_1+V_2}- \sqrt{V_1-V_2} \\\sqrt{V_1+V_2}-\sqrt{V_1-V_2}  & \sqrt{V_1+V_2}+\sqrt{V_1-V_2}  \end{array} \right]: =  \left[\begin{array}{cc} a & b \\ b & a \end{array} \right].                                   $$
Using symmetric resolvent identity, we have
\begin{align*}
          \mathfrak{R}_V (\mu + \lambda ^2 )= \mathfrak{R}_0 (\mu + \lambda ^2 ) - \mathfrak{R}_0 (\mu + \lambda ^2 ) v_1 M^{\pm} (\lambda)^{-1} v_2 \mathfrak{R}_0 (\mu + \lambda ^2 ),
          \end{align*}
 where
 \begin{align*} 
            M^{\pm} (\lambda) = I +  v_2  \mathfrak{R}_0 (\mu + \lambda ^2 ) v_1.
            \end{align*}
Employing Lemma \ref{mR0 exp cor}, 
\begin{align*}
     M^{\pm} (\lambda)=  \mathfrak{g}^{\pm}(\lambda) v_2 M_{11}v_1 + T + v_2 \mathcal{E}_0^{\pm} v_1  
     \end{align*}                              
where T has kernel $T(x,y) = I + v_2(x) \mathcal{G}_0(x,y)v_1(y) $. 
\begin{lemma} Let $ 0< \alpha < 1$. The following expansion is valid for $\lambda > 0 $ 
$$ M^{\pm} (\lambda)= -\|a^2+b^2\|_{L_1(\R^2)}\mathfrak{g}^{\pm}(\lambda) P + T + \mathcal{E}_1^{\pm}(\lambda), $$ 
where $P$ is the orthogonal projection onto the span of the vector $(a,b)^T$ in $L^2 \times L^2 $. Further, we have
 \begin{multline*}
		\big\| \sup_{0<\lambda<\lambda_1} \lambda^{-\frac{1}{2}} |\mathcal{E}_1^{\pm}(\lambda)|\big\|_{HS}
		+\big\| \sup_{0<\lambda<\lambda_1} \lambda^{\frac{1}{2}} |\partial_\lambda \mathcal{E}_1^{\pm}(\lambda)|\big\|_{HS}	
		\\+\big\| \sup_{0<\lambda<b<\lambda_1} \lambda^{\frac{1}{2}} (b-\lambda)^{-\alpha} |\partial_\lambda \mathcal{E}_1^{\pm}(b)-\partial_\lambda \mathcal{E}_1^\pm(\lambda)|\big\|_{HS}	
		\les 1,
	\end{multline*}                                     
                                            
provided that $a(x),b(x) \les \la x \ra ^ { -3/2-\alpha - }$.  
\end{lemma}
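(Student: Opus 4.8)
The plan is to start from the expansion
$$M^{\pm}(\lambda)=\mathfrak{g}^{\pm}(\lambda)\,v_2 M_{11} v_1+T+v_2\mathcal{E}_0^{\pm}v_1,$$
obtained by inserting Lemma~\ref{mR0 exp cor} into $M^{\pm}(\lambda)=I+v_2\mathfrak{R}_0(\mu+\lambda^2)v_1$, and then to (i) rewrite the leading term as a scalar multiple of the stated projection and (ii) verify that $\mathcal{E}_1^{\pm}:=v_2\mathcal{E}_0^{\pm}v_1$ satisfies the three Hilbert--Schmidt bounds.

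For the leading term, the crucial observation is that the coefficient $\mathfrak{g}^{\pm}(\lambda)M_{11}$ from Lemma~\ref{mR0 exp cor} is \emph{constant} in the variables $(x,y)$: the logarithmic threshold singularity resides only in the $(1,1)$ entry, because for $\mu>0$ the argument $i\sqrt{2\mu+\lambda^2}\,|x-y|$ of the $(2,2)$ Hankel function stays away from the origin and is hence smooth in $\lambda$ near $0$ (so it is absorbed into $\mathcal{G}_0$). Therefore $v_2 M_{11}v_1$ is a rank-one operator with kernel $v_2(x)M_{11}v_1(y)$, and using $v_2=v=\left[\begin{smallmatrix} a & b \\ b & a\end{smallmatrix}\right]$ together with $v_1=-\sigma_3 v=\left[\begin{smallmatrix} -a & -b \\ b & a\end{smallmatrix}\right]$, a direct multiplication yields
$$v_2(x)M_{11}v_1(y)=-\binom{a(x)}{b(x)}\big(a(y),\,b(y)\big).$$

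I would then identify this with the orthogonal projection $P$ onto the span of $\phi=(a,b)^T$ in $L^2\times L^2$. Since $\|\phi\|_{L^2\times L^2}^2=\int_{\R^2}(a^2+b^2)=\|a^2+b^2\|_{L^1(\R^2)}$, the kernel of $P$ equals $\|a^2+b^2\|_{L^1}^{-1}\binom{a(x)}{b(x)}\big(a(y),\,b(y)\big)$; hence $v_2M_{11}v_1=-\|a^2+b^2\|_{L^1}P$, which produces precisely the stated leading term $-\|a^2+b^2\|_{L^1}\,\mathfrak{g}^{\pm}(\lambda)P$. I would also record that $P^2=P=P^*$ because $a,b$ are real, so $P$ is genuinely an orthogonal projection.

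Finally, the bounds on $\mathcal{E}_1^{\pm}=v_2\mathcal{E}_0^{\pm}v_1$ follow the scalar Lemma~\ref{lem:M_exp} verbatim. On the range $0<\lambda<\lambda_1$ the factor $\la\lambda\ra^{1/2}$ appearing in Lemma~\ref{mR0 exp cor} is comparable to $1$, so the pointwise estimates on $\mathcal{E}_0^{\pm}$ and the Lipschitz bound of Corollary~\ref{mlipbound} have the same shape as the scalar $E_0^{\pm}$. The three estimates then reduce to the standard fact that the kernel $v_1(x)\la x-y\ra^{k}v_2(y)$ is Hilbert--Schmidt on $L^2\times L^2$ whenever the entries of $v$ decay like $\la x\ra^{-k-1-}$; for $k\le\tfrac12+\alpha$ this is exactly the hypothesis $a,b\les\la x\ra^{-3/2-\alpha-}$. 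The one step demanding genuine care is the algebraic identification above — tracking the $\sigma_3$ twist separating $v_1$ from $v_2$, which fixes both the sign and the normalizing factor $\|a^2+b^2\|_{L^1}$ — while the error estimates are a direct transcription of the scalar argument.
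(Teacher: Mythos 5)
Your proposal is correct and follows essentially the same route as the paper: the paper likewise computes $v_2M_{11}v_1=-\left[\begin{smallmatrix} a & 0 \\ b & 0 \end{smallmatrix}\right]\left[\begin{smallmatrix} a & b \\ 0 & 0 \end{smallmatrix}\right]=-\|a^2+b^2\|_{L^1(\R^2)}P$ and then cites the matrix free-resolvent expansion (Lemma~\ref{mR0 exp cor}) and its corollary together with the decay hypothesis on $a,b$ for the Hilbert--Schmidt bounds on $\mathcal{E}_1^{\pm}=v_2\mathcal{E}_0^{\pm}v_1$. Your write-up simply makes explicit the sign/normalization bookkeeping and the reduction to the scalar Hilbert--Schmidt criterion that the paper leaves implicit.
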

\begin{proof}
Note that the formulas for $v_1$ and $v_2$ give us,
 $$ \mathfrak{g}^{\pm} (\lambda) v_2M_{11}v_1=-\mathfrak{g}^{\pm} (\lambda) \left[\begin{array}{cc} a & 0 \\ b & 0 \end{array} \right] \left[\begin{array}{cc} a & b \\ 0 & 0 \end{array} \right] = - \|a^2+b^2\|_{L_1(\R^2)}\mathfrak{g}^{\pm}(\lambda)P. $$ 
 The Hilbert-Schmidt bound comes by the assumption on $a(x),b(x)$, Lemma \ref{mR0 exp cor}, and its corollary.                                    
\end{proof}
 Recall $P$ in the scalar case is defined as projection onto $v$ whereas in matrix case it is defined as projection onto the span of the vector $(a,b)^T$. In light of this difference we will give the following modified version of Definition \ref{swave}. Let $Q:=\mathbbm{1}-P$.
 \begin{defin} \label{swavem}
 (1) $\mu$ is defined to be a regular point of the spectrum of $\mathcal{H}= -\Delta +V $ if $QTQ $ is invertible on $Q(L^2 \times L^2)$.\\
(2) If $\mu$ is not a regular point of spectrum then $QTQ+S_1$ is invertible on $Q(L^2 \times L^2)$ and we define $D_0=(QTQ+S_1)^{-1}$ as an operator on $Q(L^2 \times L^2)$. Here $S_1$ is defined as Riesz projection onto the Kernel of $QTQ$ as an operator on $Q(L^2 \times L^2)$. \\
(3) We say there is a resonance of the first kind at zero if the operator $T_1:= S_1TPTS_1$ is invertible on $S_1 Q(L^2 \times L^2)$ and we define $D_1$ as the inverse of $T_1$.
      \end{defin}
    With the following lemma we can have a representation for the space $S_1$ as in the scalar case.
\begin{lemma} \label{G0matrix} If $|a(x)|+|b(x)| \les \la x \ra ^{-1 -}$ and if $\phi \in S_1(\L^2 \times \L^2 )$ , then  $\phi(x)=v_2 \psi_1 = \psi_2 v_1 $ where $\psi_1,\psi_2 \in L^{\infty} \times L^{\infty} $ and $(\mathcal{H}_0-\mu I )\psi_i = 0 $ for $i=1,2$ in the sense of distribution. Also we have 
$$ \psi_1(x)= - \int_{\R^2} \mathcal{G}_0(x,y) v_1(x) \phi(x)dx + (c_0,0) \hspace{2mm},\hspace{2mm} \psi_2(x)= - \int_{\R^2} \phi(x) v_2(x)\mathcal{G}_0 (x,y) -(c_0,0), $$
with 
$$ c_0 = \frac{1}{ \|a^2+b^2\|_1 } \la T\phi , (a,b) \ra.$$ 
\end{lemma}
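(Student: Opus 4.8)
The plan is to mirror the scalar computation of Remark~3, with the extra structural input that, \emph{at the threshold}, the operator $T=I+v_2\mathcal G_0v_1$ is real and symmetric. First I would record the two facts that make this work: since $-\sigma_3V\ge0$ (A1) the matrix $v$ is real symmetric, and the $(2,2)$-block of $\mathcal G_0$ equals $-\f i4H_0^+(i\sqrt{2\mu}\,|x-y|)=-\f1{2\pi}K_0(\sqrt{2\mu}\,|x-y|)$, which is \emph{real} and exponentially decaying, while the $(1,1)$-block is the real kernel $G_0$. Because $\mathcal G_0$ commutes with $\sigma_3$ and has symmetric scalar blocks, the kernel of $T$ satisfies $T(x,y)^{T}=T(y,x)$ and is real; hence $QTQ$ is self-adjoint and, under the resonance-of-the-first-kind hypothesis (Definition~\ref{swavem}) together with the one-dimensionality of $\mathrm{Ran}\,S_1$, the projection $S_1$ is the orthogonal rank-one projection onto a real $\phi$ with $\|\phi\|_{L^2\times L^2}=1$.

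From $\phi\in S_1(L^2\times L^2)$ I extract $Q\phi=\phi$ and $QTQ\phi=QT\phi=0$, so $T\phi\in\ker Q=\mathrm{Ran}\,P$. Since $P$ projects onto the span of $(a,b)^T=v_2(1,0)^T$ and $\|(a,b)^T\|^2=\|a^2+b^2\|_1$, reading off the projection coefficient gives $T\phi=c_0\,v_2(1,0)^T$ with $c_0=\f1{\|a^2+b^2\|_1}\la T\phi,(a,b)\ra$, exactly the asserted constant. Writing $T\phi=\phi+v_2\mathcal G_0v_1\phi$ and cancelling $v_2$ yields $\phi=v_2\psi_1$ with $\psi_1:=(c_0,0)^T-\mathcal G_0v_1\phi$. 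Transposing the symmetric identity gives $\phi^{T}T=c_0(1,0)v_2$, and factoring $v_1$ to the right produces $\phi^{T}=\psi_2^{T}v_1$ (the statement's $\phi=\psi_2v_1$) with $\psi_2:=(c_0,0)^T-\mathcal G_0v_2\phi$; the stray signs in the displayed formulas are only a convention.

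To prove $\psi_i\in L^\infty\times L^\infty$ I isolate the only possibly growing contribution, the $(1,1)$-block $-\f1{2\pi}\int\log|x-y|\,(v_1\phi)_1(y)\,dy$: as $|x|\to\infty$ its leading part is $-\f{\log|x|}{2\pi}\int(v_1\phi)_1$, and since $v_1=-\sigma_3v$ one has $\int(v_1\phi)_1=-\la\phi,(a,b)^T\ra=0$ precisely because $P\phi=0$. The hypothesis $|a|+|b|\les\la x\ra^{-1-}$ makes $v_1\phi\in L^1$, so after this cancellation the remainder $\int[\log|x-y|-\log|x|]\,(v_1\phi)_1\,dy$ is bounded, while the $(2,2)$-block is bounded (indeed decaying) because its $K_0$ kernel is integrable; the same estimate applies to $\psi_2$. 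Both $\psi_i$ then lie in $L^\infty\times L^\infty$ and fail to be in any $L^p$, $p<\infty$, since their first components tend to the nonzero constant $c_0$ (nonvanishing is the s-wave/first-kind condition).

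Finally, for the distributional eigenequation I apply $\mathcal H_0-\mu$, using $(\mathcal H_0-\mu)(c_0,0)^T=0$ (a bounded harmonic first component is constant and the second vanishes) and the threshold identity $(\mathcal H_0-\mu)_x\mathcal G_0(x,y)=I\,\delta(x-y)$, which follows from $-\Delta G_0=\delta$ and $(\Delta-2\mu)\big(-\f i4H_0^+(i\sqrt{2\mu}\,|\cdot|)\big)=\delta$. This gives $(\mathcal H_0-\mu)\psi_1=-v_1\phi=-v_1v_2\psi_1=-V\psi_1$, i.e. $\mathcal H\psi_1=\mu\psi_1$; the companion computation with $v_2$ and the symmetry $V^*=\sigma_3V\sigma_3$ gives $\mathcal H^*\psi_2=\mu\psi_2$ (so the ``$\mathcal H_0$'' in the statement should be read as the full-operator eigenequation, matching the scalar Remark~3). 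I expect the main obstacle to be the structural step---verifying that $T$ is real and symmetric at the threshold so that $S_1$ is an orthogonal rank-one projection and the transposed factorization is legitimate; once that is secured, the boundedness and the distributional identity are routine, the one genuine analytic point being the logarithmic cancellation forced by $P\phi=0$.
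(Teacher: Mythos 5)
Your proposal is correct, and its algebraic core coincides with the paper's: both arguments hinge on the observation that $\phi\in S_1(L^2\times L^2)$ forces $T\phi$ into $\mathrm{Ran}\,P$, whose projection coefficient is exactly $c_0=\|a^2+b^2\|_1^{-1}\la T\phi,(a,b)\ra$, after which writing $T=I+v_2\mathcal{G}_0v_1$ and factoring out $v_2$ (resp.\ $v_1$) gives the resonance representation. The differences lie in how everything around this identity is handled, and they are worth recording. The paper obtains the two factorizations $\phi=v_2\psi_1$ and $\phi=\psi_2v_1$ by two separate kernel computations, using $\phi$ as a right null vector of $QTQ$ (citing Lemma~4.4 of \cite{EGm}) and as a left null vector ($\int\phi(x)\,QTQ(x,y)\,dx=0$), without ever justifying why $\phi$ annihilates $QTQ$ from the left; your structural step --- $v$ real symmetric, $\mathcal{G}_0$ real, diagonal, symmetric in $x,y$ and commuting with $\sigma_3$, hence $T(x,y)^{T}=T(y,x)$ and $QTQ$ self-adjoint --- is precisely what legitimizes that, and deriving the second factorization by transposing $T\phi=c_0v_2(1,0)^T$ is cleaner than redoing the computation. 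Likewise, the paper outsources $\psi_i\in L^\infty\times L^\infty$ to Lemma~5.1 of \cite{EG} and leaves the eigenvalue equation entirely to the references, whereas you prove both: the cancellation $\int(v_1\phi)_1=-\la\phi,(a,b)^T\ra=0$ forced by $P\phi=0$ is exactly the mechanism that tames the logarithmic $G_0$ block, and $(\mathcal{H}_0-\mu)\mathcal{G}_0=\delta I$ together with $\phi=v_2\psi_1$ yields the full-operator equation $(\mathcal{H}-\mu)\psi_1=0$ (and the adjoint equation for $\psi_2$), which is indeed the only sensible reading of the statement's ``$\mathcal{H}_0$''. Two caveats. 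First, since $(1,0)v_2=(-1,0)v_1$, factoring $v_1$ to the right forces $\psi_2=-\mathcal{G}_0v_2\phi-(c_0,0)^T$: the sign of the $(c_0,0)$ term is not a convention but is dictated by $v_1=-\sigma_3 v$, so your formula for $\psi_2$ has the wrong sign there (the lemma's displayed formula has it right, even though the paper's own proof display contains a compensating sign error). Second, the lemma assumes only $\phi\in S_1(L^2\times L^2)$, not a first-kind resonance, so the rank-one structure of $S_1$ and the claim $c_0\neq0$ should not be invoked; fortunately your argument never actually needs them, since transposition involves no conjugation and works for complex $\phi$ as well.
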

\label{mphi}
\begin{proof}We will prove  $\phi(x) = \psi_2 v_1 $. Note that for any $\phi \in S_1(\L^2 \times \L^2) $ since $ S_1 \leq Q $ we have $ \phi Q = \phi $. Also using $ Q=1-P $ we have 
\begin{align*}
  \begin{split}
   0 &= \int_{\R^2} \phi(x) QTQ(x,y) dx  = \int_{\R^2} \phi(x)T(I-P)(x,y) dx \\
    &= \int_{\R^2} \phi(x) [I+v_2\mathcal{G}_0v_1](x,y) dx + \int_{\R^2} \phi(x)[T P](x,y) dx\\
    &= \phi+ \int_{\R^2} \phi(x) v_2(x)\mathcal{G}_0 (x,y) dx \ v_1(y) +(- c_0,0)v_1(y)= \phi + \psi_2v_1
      \end{split}
         \end{align*}
For the second equality we used the definition of $T$. For the third equality we used the definition of $P$ to obtain
 $$  \int_{\R^2} \phi(x)[T P] (x,y) dx =  \frac{\la (a,b)^T, T\phi \ra }{\|a^2+b^2\|_1 }(a,b)= c_0(a,b)=c_0(-1 ,0)v_1(y).$$
 For the proof  of the first part one can see Lemma 4.4 in \cite{EGm}. Indeed, it follows with 
$$ 0= \int_{\R^2} QTQ(x,y)\phi(x) dx \ with \ c_0 \begin{pmatrix} 
                                                    					a\\
											b
											\end{pmatrix} =  c_0 v_2  \begin{pmatrix} 
                                                    					1\\
											0
											\end{pmatrix}  $$
For $\psi $ being in $L^{\infty}$ one can see Lemma 5.1 in \cite{EG}.											
  \end{proof} 
{\bf Remarks.}
\begin{itemize}
    \item  $\phi(x)=v_2 (x)\psi_1(x) = \psi_2(x) v_1(x) $ gives us $\sigma_3 \psi_1  = \psi_2 $.
    \item If there is a resonance of the first kind at zero, Range $S_1$ is one dimensional and if we take $ \|\phi\|_{L^2 \times L^2 }=1$ with $\phi \in S_1(\L^2 \times \L^2 )$ then,  $S_1f= \big(\phi_1,\phi_2 \big) \la \phi , f \ra  $ where $\phi$ is as in the Lemma~.\ref{mphi}.
    \item By Lemma~\ref{mphi}, we have
            $$ D_1= \frac 1{\|a^2+b^2\|_1c_0^2}S_1. $$
         \end{itemize}            
         
Definition~\ref{swavem} and Lemma~\ref{mphi} followed by the steps in scalar case gives us the same expansion for $ M^{\pm}(\lambda)^{-1}$ with $\|a^2+b^2\|_1$ instead of $\|V\|_1$. Hence, for $h_{\pm}(\lambda)= -\|a^2+b^2\|_1\mathfrak{g}^{\pm}(\lambda)+c$ where $c\in \R$ and for $ 0<\lambda < \lambda_1$, we have
 \begin{multline} \label{matrixRV}
      \mathfrak{R}_V^{\pm}(\lambda)=  \mathfrak{R}^{\pm}_0(\lambda ^2)  + \mathfrak{R}^{\pm}_0(\lambda ^2) v_1 \big[ \frac{h_{\pm}}{\|a^2+b^2\|_1c_0^2} (\lambda)S_1+\f {SS_1}{\|a^2+b^2\|_1c_0^2}D\\ +\f {S_1S}{\|a^2+b^2\|_1c_0^2}+\f1{\|a^2+b^2\|_1c_0^2}h_{\pm}^{-1}SS_1S
                -h_{\pm}^{-1}(\lambda)S-QD_0Q-E^{\pm}(\lambda) \big]  v_2 \mathfrak{R}^{\pm}_0(\lambda ^2)
                               \end{multline}                                           
 with  $ E(\lambda)(x,y) $ is such that 
 \begin{multline*}
 	 		\big\| \sup_{0<\lambda<\lambda_1} \lambda^{-\frac{1}{2}+ } |E^{\pm}(\lambda)|\big\|_{HS}
 	 		+\big\| \sup_{0<\lambda<\lambda_1} \lambda^{\frac{1}{2} } |\partial_\lambda E^{\pm}(\lambda)|\big\|_{HS}	\\
 	 		+\big\| \sup_{0<\lambda<b<\lambda_1}  \lambda^{\frac{1}{2}+\alpha} (b-\lambda)^{-\alpha} |\partial_\lambda E^{\pm}(b)-\partial_\lambda E^\pm(a)| \big\|_{HS}	
 	 		\les 1.
 	 	\end{multline*}
 Here the matrix $S$ has the same definition \eqref{S_defn} as in the scalar case.
 \subsection{Proof of the Theorem \ref{t22}}  \hspace{2mm}\\
The proof of Theorem \ref{t22} is  similar to the proof of Theorem~\ref{thm:mainineq}. The cancellation property  $Qv=0$ that we used repeatedly is replaced with 
\begin{align} \label{m0}
      M_{11}v_1S_1= S_1v_2M_{11} = 0,
         \end{align}
which allows us to use Lemma~\ref{FG} to gain extra time decay.   Furthermore, as in the scalar case,  the boundary terms arise only in the low-low energy evolution.   For this reason, we present the proof of Theorem \ref{t22} for the case $\lambda p$, $\lambda q \les 1$, and   omit the cases in which high energy is involved. For high energies one can apply the same methods that we applied in the scalar case using the bound \eqref{r2 bound} in addition to the bound \eqref{largr}, see \cite{EGm} for similar arguments.
 
 For convenience we write 
$$  \mathfrak{R}_0 (\mu + \lambda ^2 )(x,y)= R_0(\lambda ^2 )(x,y) M_{11} + R_2(\lambda ^2 )(x,y) M_{22}.$$

 The following Proposition takes care of the contribution of 
 \begin{align}  \label{r1int} 
     \int_{\R^4} \int_0^\infty e^{it\lambda^2} \lambda \chi(\lambda) \mathfrak{R}_1(\lambda,p,q) [v_1S_1 v_2](x_1,y_1) d\lambda dx_1 dy_1
       \end{align}
to \eqref{pacrep} where 
\begin{multline*}
 \mathfrak{R}_1(\lambda,p,q):= h^{+}(\lambda) \mathfrak{R}^{+}(\mu + \lambda ^2 )(x,x_1) \mathfrak{R}^{+}(\mu + \lambda ^2 )(y,y_1)\\
    - h^{-}(\lambda) \mathfrak{R}^{-}(\mu + \lambda ^2 )(x,x_1) \mathfrak{R}^{-}(\mu + \lambda ^2 )(y,y_1). 
   \end{multline*}     
   \begin{prop} Let $0<\alpha \leq 1/4$. If $|a(x)|+|b(x)| \les \la x \ra ^{-3/2 -}$ then, we have
\begin{align*}
 \Big|  \eqref{r1int}  - {\f 1 t} \mathfrak{F}_1 (x,y) \Big| \les \frac{\la x \ra \la y \ra }{t^{1+\alpha}}, 
                 \end{align*}
 where
\begin{align*}
         \mathfrak{F}_1 (x,y) = \frac{\|a^2+b^2\|_1} {4 } \int_{\R^4}\mathcal{G}_0(x,x_1)v_1(x_1)S_1(x_1,y_1)v_2(y_1) \mathcal{G}_0(y,y_1) dx_1dy_1.
          \end{align*}
\end{prop}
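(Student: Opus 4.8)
The plan is to follow the proof of the scalar Proposition~\ref{pprop} step by step, replacing the scalar cancellation $S_1v=0$ (equivalently \eqref{zero}) by the matrix identities \eqref{m0}. As noted after the statement of Theorem~\ref{t22}, I restrict throughout to the low--low regime $\lambda p,\lambda q\les1$; the high-energy pieces are controlled by the smooth bounds \eqref{r2 bound} on the massive block together with the large-argument expansion \eqref{largr}, precisely as in \cite{EGm}. First I would substitute $\mathfrak{R}_0(\mu+\lambda^2)=R_0(\lambda^2)M_{11}+R_2(\lambda^2)M_{22}$ into $\mathfrak{R}_1$ and invoke Lemma~\ref{mR0 exp cor} to isolate the $\lambda$-singular constant $\mathfrak{g}^\pm(\lambda)M_{11}$ of each free-resolvent factor. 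The identities $M_{11}v_1S_1=0$ and $S_1v_2M_{11}=0$ annihilate these constants on the left and right of $v_1S_1v_2$, so the factors effectively seen reduce to $\mathcal{G}_0+\mathcal{E}_0^\pm$ with no $\mathfrak{g}^\pm(\lambda)$ surviving; this is exactly the matrix analogue of replacing $Y_0(\lambda p)\chi(\lambda p)$ and $J_0(\lambda q)\chi(\lambda q)$ by the subtracted functions $F(\lambda,x,x_1),G(\lambda,y,y_1)$ of Lemma~\ref{FG} in Lemma~\ref{lem;asin}.

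Next I would expand the $\pm$ difference as in \eqref{calc}, writing $h^\pm(\lambda)$ as (real, $\log$-singular) $\mp\frac{\|a^2+b^2\|_1 i}{4}$ and using \eqref{J0 def}, \eqref{Y0 def} for the scalar $(1,1)$-entries and the smooth bounds \eqref{r2 bound} for the $(2,2)$-entries. The terms carrying the surviving $\log\lambda$ prefactor multiply products of $Y_0J_0$ or $J_0J_0$ type; in the low--low regime these are handled exactly as in Case~1 of Lemma~\ref{lem;asin}, by first using \eqref{m0} to replace the singular Bessel factors by $F(\lambda,x,x_1),G(\lambda,y,y_1)$ and then applying Lemma~\ref{lem:ibp}. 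This yields $t^{-5/4}$-type decay with weights of power one, hence the claimed $\la x\ra\la y\ra\,t^{-1-\alpha}$ for $0<\alpha\le\frac14$; since the weight budget $\la x\ra\la y\ra$ is looser than the scalar $\la x\ra^{1/2+\alpha}\la y\ra^{1/2+\alpha}$, this step is if anything less delicate than its scalar counterpart.

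The boundary term comes from the constant (imaginary-part) coefficient multiplying the doubly-regular products, and here the matrix case carries one extra feature relative to Lemma~\ref{lem;S_12}. In addition to the $G_0\otimes G_0$ contribution from the doubly-$(1,1)$ (that is, $Y_0Y_0$-type) piece, the smooth massive block contributes a boundary term of its own: for the massive--massive piece the difference $h^+-h^-=-\frac{\|a^2+b^2\|_1 i}{2}$ is a constant (the $\log\lambda$ cancels) times a smooth integrand whose value at $\lambda=0$ is the product of the $(2,2)$-blocks of $\mathcal{G}_0$, so Lemma~\ref{lem:ibp} returns a genuine $\frac1t$ term through its $\mathcal{E}(0)$ part, and the mixed pieces behave similarly. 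Assembling the $G_0$ and massive boundary contributions reconstitutes the full matrix kernel $\mathcal{G}_0$ on both sides; tracking the constants through the single integration by parts that produces the $-\frac1{2it}$ term (the computation leading to \eqref{C}, with the cutoff-derivative remainder controlled via the support of $\chi'$ as there) then yields $\frac1t\mathfrak{F}_1$ with $\mathfrak{F}_1=\frac{\|a^2+b^2\|_1}{4}\int_{\R^4}\mathcal{G}_0(x,x_1)v_1S_1v_2\mathcal{G}_0(y_1,y)$.

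I expect the main obstacle to be the matrix bookkeeping in this last step: one must check entry by entry that the $\log\lambda$ singularity is confined to the $(1,1)$-block, where \eqref{m0} applies, that the $(2,2)$-block $R_2$ enters only through its regular, uniformly-differentiable values \eqref{r2 bound}, and that the separate $G_0\otimes G_0$ and massive boundary contributions combine with exactly the constant $\frac{\|a^2+b^2\|_1}{4}$ to produce the full $\mathcal{G}_0\otimes\mathcal{G}_0$ kernel of $\mathfrak{F}_1$. Isolating this $\frac1t$ term with only a single integration by parts, so that the remainder keeps weight no worse than $\la x\ra\la y\ra$, is the most delicate part of the estimate.
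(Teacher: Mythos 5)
Your proposal is correct and takes essentially the same route as the paper: the paper performs the identical block decomposition $\mathfrak{R}_0(\mu+\lambda^2)=R_0(\lambda^2)M_{11}+R_2(\lambda^2)M_{22}$, splitting $\mathfrak{R}_1$ into a doubly-$(1,1)$ piece (treated as in the scalar Proposition~\ref{pprop} via the cancellation \eqref{m0} and the arguments of Lemmas~\ref{lem;asin} and \ref{lem;S_12}), two mixed pieces, and a massive--massive piece whose constant coefficient $h^+-h^-$ times the smooth $R_2R_2$ integrand yields a genuine $\tfrac1t$ term through the $\mathcal{E}(0)$ part of Lemma~\ref{lem:ibp}, exactly as you describe. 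Your key structural point --- that all four block combinations produce boundary terms which reassemble into the full kernel $\frac{\|a^2+b^2\|_1}{4}\,\mathcal{G}_0 v_1S_1v_2\,\mathcal{G}_0$ --- is precisely how the paper concludes.
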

\begin{proof}
$\mathfrak{R}_1(\lambda, p, q)$ can be calculated as
\begin{align*}
  \begin{split} 
              &h^{+}(\lambda)R_0^{+}(\lambda^2)(x,x_1)M_{11}M_{11} R_0^{+}(\lambda^2)(y_1,y)-h^{-}(\lambda)R_0^{-}(\lambda^2)(x,x_1)M_{11}M_{11} R_0^{-}(\lambda^2)(y_1,y)\\
             &\hspace{10mm}+  [h^{+}(\lambda)R_0^{+}(\lambda^2)(x,x_1)-h^{-}(\lambda)R_0^{-}(\lambda^2)(x,x_1)]M_{11}M_{22}R_2(\lambda^2)(y_1,y) \\
              &\hspace{15mm}+R_2(\lambda^2)(x,x_1) M_{22}M_{11}[h^{+}(\lambda)R_0^{+}(\lambda^2)(y_1,y))-h^{-}(\lambda)R_0^{-}(\lambda^2)(y_1,y))]\\
              &\hspace{20mm}+ [h^{+}(\lambda)-h^{-}(\lambda)]M_{22}M_{22} R_2(\lambda^2)(x,x_1)R_2(\lambda^2)(y_1,y)\\
              & \hspace{25mm}=A_1(\lambda,p,q)+A_2(\lambda,p,q)+A_3(\lambda,p,q)+A_4(\lambda,p,q).
                    \end{split}
                                \end{align*}
Note that $A_1(\lambda,p,q)$ is similar to \eqref{calc}. Hence, using the projection property \eqref{m0}, its contribution to the integral  \eqref{r1int} can be obtained as
 \begin{align} \label{op1}
     \frac{\|a^2+b^2\|_1 } {16 \pi^2} \int_{\R^4} G_0(x,x_1)M_{11}v_1S_1v_2M_{11}G_0(y,y_1) dx_1 dy_1  
     + O\Big(\frac{\la x\ra^{\frac{1}{2}+\alpha} \la y\ra^{\frac{1}{2}+\alpha}}{t^{1+\alpha}}\Big). 
                 \end{align}
Next we consider $A_4(\lambda,p,q)$. First note that
 \begin{multline*} 
 [h^{+}(\lambda)-h^{-}(\lambda)]R_2(\lambda^2)(x,x_1)R_2(\lambda^2)(y_1,y) = \\ -\frac{\|a^2+b^2\|_1 i} {32}H_0^+(i \sqrt{2\mu + \lambda^2}\ p)  H_0^+(i \sqrt{2\mu+ \lambda ^2 }\ q).
   \end{multline*}
  Taking $ \mathcal{E}(\lambda,p,q) = \chi(\lambda) H_0^+(i \sqrt{2\mu + \lambda^2}\ p)  H_0^+(i \sqrt{2\mu+ \lambda ^2 }\ q)$ we see that $\mathcal{E}(0) \\ =H_0^+(i  \sqrt{2\mu}|x-x_1|)  H_0^+(i \sqrt{2\mu}|y-y_1| )$. Also the bounds \eqref{r2 bound} leads us to:
\begin{align} \label{firstder} 
       \Big|\frac{\partial}{\partial\lambda} [ \chi(\lambda) H_0^+(i \sqrt{2\mu + \lambda^2}|x-x_1|)  H_0^+(i \sqrt{2\mu+ \lambda ^2 }|y-y_1|)] \Big| \les k(x,x_1) k(y,y_1), \\
\Big|\partial \big(\frac{\frac{\partial}{\partial\lambda} [ \chi(\lambda) H_0^+(i \sqrt{2\mu + \lambda^2}|x-x_1|)  H_0^+(i \sqrt{2\mu+ \lambda ^2 }|y-y_1|)] }{\lambda}\big)\Big| \les \lambda^{-2}k(x,x_1) k(y,y_1). \label{secondder}
             \end{align}
Hence, using Lemma~\ref{lem:ibp}  with the bounds \eqref{firstder} and \eqref{secondder} we obtain the contribution of $A_4(\lambda,p,q)$ to the $\lambda$-integral in \eqref{r1int} as 
 \begin{align} \label{op2}
      \frac{\|a^2+b^2\|_1 } {64 t } H_0^+(i \sqrt{2\mu}\ p )M_{22} M_{22} H_0^+(i \sqrt{2\mu}\ q) + O \Big( \frac{k(x,x_1) k(y,y_1)} {t^{3/2}}\Big).
          \end{align}                  
 For  $A_2(\lambda,p,q)$, we have
\begin{align}\label{52}
       \begin{split}
         &[h^{+}(\lambda)R_0^{+}(\lambda^2)(x,x_1)-h^{-}(\lambda)R_0^{-}(\lambda^2)(x,x_1)]R_2(\lambda^2)(y_1,y) \\
         &\hspace{5mm}= C J_0(\lambda p) (\log(\lambda)+1) R_2(\lambda^2)(y_1,y) + i \frac{\|a^2+b^2\|_1} {8}Y_0(\lambda p) R_2(\lambda^2)(y_1,y) 
         \end{split}
          \end{align} 
   for some  $ C \in \mathbb{C}$.  
    
Note that we can apply \eqref{m0} to the left side of this sum and replace $G(\lambda,x,x_1)$ with $J_0(\lambda p)$.                                                                                   
Hence, Lemma~\ref{lem:ibp} together with the bounds in \eqref{r2 bound} and Lemma~ \ref{FG} gives us the contribution of the left side to $\lambda$-integral in \eqref{r1int} as $t^{-{\f 54}} \la x \ra \la x_1 \ra k(y,y_1) $.

To find the contribution of  the right side of the sum in (\ref{52}) recall that
$ Y_0(\lambda|x-x_1|)= \chi(\lambda p) [{\f 2 \pi} \log(\frac{\lambda p}{2})+c+\widetilde{O}((\lambda p)^{2}\log(\lambda p))] $. Multiplying this with $R_2(\lambda^2)(y_1,y)$ we have
\begin{multline*}
 \f2{\pi}\log|x-x_1| \chi(\lambda p) R_2(\lambda^2)(y_1,y)+ [\log \lambda +c] \chi(\lambda p) R_2(\lambda^2)(y_1,y) \\ + \widetilde{O}(\lambda p)^{2}\log(\lambda p) \chi(\lambda p)R_2(\lambda^2)(y_1,y).
    \end{multline*}
Using Lemma \ref{cutoff} and (\ref{m0}),  the contribution of the second term to $\lambda$ integral in \eqref{r1int} can be obtained as $ \la x_1 \ra k(y,y_1) t^{-3/2} $ in a similar way as in $A(\lambda,y,y_1) $ in Lemma \ref{lem;S_12}. And the contribution of the third term follows as $ \frac{\la x \ra \la x_1 \ra k(y,y_1)}{t^{3/2}}$ with Lemma~\ref{lem:ibp}. 

Finally, for the first term we take $\mathcal{E}(\lambda,p,q) =  \f2{\pi}\log|x-x_1| \chi(\lambda p) R_2(\lambda^2)(y_1,y)$ and see $\mathcal{E}_1(0,p,q) =  -\f{i} {2 \pi}\log|x-x_1| \chi(\lambda p) H_0^+(i\sqrt 2\mu q)$. Using Lemma~\ref{lem:ibp} with the bounds of $R_2(\lambda)$ the contribution of $A_2(\lambda,p,q)$ is obtained as
\begin{multline} \label{op3}
             \frac{i \|a^2+b^2\|_1 } {32 \pi t} \int_{\R^4}G_0(x,x_1)M_{11} [ v_1 S_1 v_2](x_1,y_1)M_{22} H_0^+(i \sqrt{2\mu}\ q)dx_1dy_1 \\
     + O \Big(  \frac{\la x \ra \la x_1 \ra k(y,y_1)}{t^{1+\alpha}}\Big). 
           \end{multline}
With a similar argument the contribution of $A_3(\lambda,p,q)$ is
 \begin{multline} \label{op4}
     \frac{i\|a^2+b^2\|_1} {32 \pi t} \int_{\R^4} H_0^+(i \sqrt{2\mu}\ p) M_{22} [ v_1 S_1 v_2](x_1,y_1) M_{11}G_0(y,y_1) dx_1 dy_1  \\
     + O \Big(  \frac{\la x \ra \la x_1 \ra k(y,y_1)}{t^{1+\alpha}}\Big) . 
           \end{multline}  
 Adding up \eqref{op1}, \eqref{op2}, \eqref{op3}, \eqref{op4} gives the statement.                    
           \end{proof}
To find the contribution of the terms $SS_1$ and $S_1S$ to \eqref{pacrep} we define
 \begin{align*}
  \mathfrak{R}^+_2(\lambda,p,q):= \mathfrak{R}^+_0(\lambda^2)(x,x_1) \mathfrak{R}^+_0(\lambda^2)(y_1,y)-\mathfrak{R}^{-}_0(\lambda^2)(x,x_1) \mathfrak{R}^{-}_0(\lambda^2)(y_1,y).
      \end{align*}     
\begin{prop}  If $|a(x)|+|b(x)| \les \la x \ra ^{-3/2 -}$, then we have
\begin{align}
\label{msdss}
        \Big|   \int_{\R^4} \int_0^\infty e^{it\lambda^2} \lambda \chi(\lambda) \mathfrak{R}^+_2(\lambda,p,q) [v_1 S_1S v_2](x_1,y_1) d\lambda dx_1 dy_1 - {\f 1 t} \mathfrak{F}_2 (x,y) \Big| \les \frac{ \la x \ra \la y \ra }{t^{1+\alpha}}, 
       \end{align}
\begin{align}
          \Big|   \int_{\R^4} \int_0^\infty e^{it\lambda^2} \lambda \chi(\lambda)  \mathfrak{R}^+_2(\lambda,p,q)v_1SS_1v_2](x_1,y_1) d\lambda dx_1 dy_1 - {\f 1 t} \mathfrak{F}_3 (x,y) \Big| \les \frac{ \la x \ra \la y \ra}{t^{1+\alpha}},
                     \end{align}
where
\begin{align*}
        \mathfrak{F}_2 (x,y)= -\dfrac{1}{4}\int_{\R^4}  \mathcal{G}_0(x,x_1)  v_1(x_1)[S_1S](x_1,y_1)v_2(y_1)M_{11} dx_1 dy_1,   
           \end{align*}
\begin{align*}
        \mathfrak{F}_3 (x,y)= -\dfrac{1}{4}\int_{\R^4}M_{11} v_1(x_1)[SS_1](x_1,y_1)v_2(y_1) \mathcal{G}_0(y,y_1)  dx_1 dy_1.  
    \end{align*}

           \end{prop}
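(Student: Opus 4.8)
The plan is to run the matrix analogue of Proposition~\ref{propss1}, decorating the scalar argument with the block structure carried by $M_{11}$ and $M_{22}$. First I would insert the decomposition $\mathfrak{R}_0(\mu+\lambda^2)(x,y)=R_0(\lambda^2)(x,y)M_{11}+R_2(\lambda^2)(x,y)M_{22}$ into both resolvent factors, recalling that in the symmetric identity \eqref{matrixRV} the operator kernel $[v_1 S_1 S v_2](x_1,y_1)$ is \emph{sandwiched} between the two free resolvents. Expanding the difference then produces exactly three nonzero blocks: $M_{11}(\cdot)M_{11}$ with scalar weight $R_0^+(x,x_1)R_0^+(y_1,y)-R_0^-(x,x_1)R_0^-(y_1,y)$, the mixed block $M_{11}(\cdot)M_{22}$ with weight $[R_0^+(x,x_1)-R_0^-(x,x_1)]R_2(y_1,y)$, and $M_{22}(\cdot)M_{11}$ with weight $R_2(x,x_1)[R_0^+(y_1,y)-R_0^-(y_1,y)]$; the block $M_{22}(\cdot)M_{22}$ drops out because $R_2$ is independent of the $\pm$ boundary value. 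As in the proposition preceding this one, only the regime $\lambda p,\lambda q\les1$ can generate a boundary term, so I would restrict attention there and note that the high-energy pieces are controlled exactly as in the scalar case by combining \eqref{r2 bound}, \eqref{largr}, and the stationary phase Lemmas~\ref{lem:ibp}--\ref{lem:ibp2}, cf.\ \cite{EGm}.

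For the $M_{11}(\cdot)M_{11}$ block the scalar weight is precisely the kernel $\mathcal R_2$ treated in Proposition~\ref{propss1}, now contracted against $M_{11}v_1 S_1 S v_2 M_{11}$. Here the cancellation \eqref{m0}, $M_{11}v_1 S_1=0$, plays the role that \eqref{zero} played in the scalar case: it annihilates the singular constant term $\mathfrak{g}^\pm(\lambda)$ of the left free resolvent, so that after subtracting the reference $\chi(\lambda r)\mathcal C(\lambda r)$ the left factor may be replaced by $F(\lambda,x,x_1)$ or $G(\lambda,x,x_1)$ of Lemma~\ref{FG} and leaves behind the logarithmic kernel $G_0(x,x_1)$, i.e.\ the $M_{11}$-slot of $\mathcal G_0(x,x_1)$. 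Since no such cancellation is available on the right (the operator ends in $S v_2$, not $S_1 v_2$), the right free-resolvent difference integrates to the $-\tfrac1{4t}$ boundary term of Proposition~\ref{freeevol}, producing the factor $\tfrac1t M_{11}$; the remaining contributions are bounded by $t^{-1-\alpha}\la x\ra\la y\ra$ exactly as in the companion lemma to Proposition~\ref{propss1} that gives \eqref{eq:ss1boundary}.

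The two mixed blocks are handled as the terms $A_2,A_3$ of the preceding proposition. In $M_{22}(\cdot)M_{11}$ the left factor $R_2(x,x_1)$ is smooth and bounded by \eqref{r2 bound}, so no cancellation is needed: its value at $\lambda=0$ supplies $-\tfrac{i}{4}H_0^+(i\sqrt{2\mu}\,|x-x_1|)$, the $M_{22}$-slot of $\mathcal G_0(x,x_1)$, while the right $R_0^\pm$ difference again integrates to $\tfrac1t M_{11}$ by Lemma~\ref{lem:ibp} together with the small-energy expansions \eqref{J0 def}, \eqref{Y0 def}, the cut-off corrections being absorbed through Lemma~\ref{cutoff}. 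Adding the $M_{11}$- and $M_{22}$-slots recovered from the two $(\cdot)M_{11}$ blocks reconstitutes the full matrix $\mathcal G_0(x,x_1)$ on the left, giving precisely $\mathfrak{F}_2$. By contrast, $M_{11}(\cdot)M_{22}$ carries the cancellation \eqref{m0} on the left and a smooth $R_2$ on the right, so both factors are regular enough that the oscillatory integral decays faster than $t^{-1}$; this block contributes only to the $t^{-1-\alpha}$ remainder.

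The second assertion is the mirror image: for $v_1 SS_1 v_2$ the projection $S_1$ sits next to $v_2$, so one applies $S_1 v_2 M_{11}=0$ on the right, the $\tfrac1t M_{11}$ now emerges from the left free-resolvent integration, and the surviving right factor assembles into $\mathcal G_0(y,y_1)$, yielding $\mathfrak{F}_3$. The main obstacle I anticipate is organizational rather than analytic: one must track which matrix slot each of the two resolvent factors occupies, apply the cancellation \eqref{m0} on the correct side, and isolate the single $\tfrac1t$ boundary term in the low-low regime while certifying that every other contribution---including the cut-off derivative terms and the genuinely two-dimensional spatial weights---collapses into the $t^{-1-\alpha}\la x\ra\la y\ra$ error, exactly as in the scalar companion lemma.
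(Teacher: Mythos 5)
Your proposal is correct and takes essentially the same route as the paper's proof: the same decomposition into the three nonvanishing blocks $M_{11}(\cdot)M_{11}$, $M_{11}(\cdot)M_{22}$, $M_{22}(\cdot)M_{11}$ (the $M_{22}(\cdot)M_{22}$ block dropping out because $R_2$ has no $\pm$ jump), the same use of the cancellation \eqref{m0} on the side where $S_1$ sits, the boundary terms extracted from the $M_{11}(\cdot)M_{11}$ block via the scalar argument of Proposition~\ref{propss1} and from the $M_{22}(\cdot)M_{11}$ block via Lemma~\ref{lem:ibp} evaluated at $\lambda=0$, and their sum reassembling $\mathcal{G}_0$ to produce $\mathfrak{F}_2$ (and, mirrored, $\mathfrak{F}_3$). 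The only differences are cosmetic bookkeeping of which small-energy lemmas are cited for each block, which does not change the argument.
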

\begin{proof} We consider only (\ref{msdss}). Note that                         
\begin{multline*}  
        \mathfrak{R}^+_2(\lambda,p,q)  
              =[R_0^{+}(\lambda^2)(x,x_1)M_{11}M_{11} R_0^{+}(\lambda^2)(y_1,y)-R_0^{-}(\lambda^2)(x,x_1)M_{11}M_{11} R_0^{-}(\lambda^2)(y_1,y)] \\
             +  [R_0^{+}(\lambda^2)(x,x_1)-R_0^{-}(\lambda^2)(x,x_1)]M_{11}M_{22}R_2(\lambda^2)(y_1,y) \hspace{10mm} \\
             +R_2(\lambda^2)(x,x_1) M_{22}M_{11}[R_0^{+}(\lambda^2)(y_1,y))-R_0^{-}(\lambda^2)(y_1,y))]\\
             = B_1(\lambda,p,q)+B_2(\lambda,p,q)+B_3(\lambda,p,q)  \hspace{40mm}
                          \end{multline*}  
Again a similar kernel to $B_1(\lambda,p,q) $ is examined in the scalar case. It has the contribution 
\begin{align}\label{b1}
    \frac{1} {8 \pi } \int_{\R^4} G_0(x ,x_1) M_{11} [v_1S_1S v_2](x_1,y_1)M_{11}  dx_1dy_1
      + O \Big( \frac{\la x \ra \la x_1 \ra}{t^{1+\alpha}} \Big)
           \end{align}
to the integral in \eqref{msdss}.           
For $B_2(\lambda,p,q) = J_0(\lambda p)M_{11}M_{22}R_2(\lambda^2)(y_1,y) $ we can use the property (\ref{m0}) on the left side of $S_1S$ and exchange $J_0(\lambda p)$ with $G(\lambda , x,x_1)$. Then Lemma~\ref{lem:ibp2} together with the bounds in Lemma \ref{FG} and (\ref{r2 bound}) gives us 
\begin{align}\label{b2}
     \Big|  \int_0^\infty e^{it\lambda^2}\lambda \chi(\lambda) B_{2}(\lambda,p,q)  [v_1S_1S v_2](x_1,y_1) d\lambda \Big| 
      \les  \frac{ \la x_1 \ra k(y,y_1)}{t^{1+\alpha}}.
           \end{align}
Lastly we consider $B_3(\lambda,p,q)= {\f i 2}R_2(\lambda^2)(x,x_1) J_0(\lambda|y-y_1|) \chi(\lambda q)$. Applying  Lemma \ref{lem:ibp}, we have
\begin{align} \label{b3}
        \int_0^\infty e^{it\lambda^2} \lambda \chi(\lambda) B_3(\lambda,p,q) d\lambda 
        = - {\f i {16  t}}  H_0^+(i \sqrt{2\mu}|x-x_1|) M_{22}M_{11}   + O \Big( \frac{k(x,x_1) \la y \ra \la y_1 \ra}{t^{\f 3 2}} \Big) 
        \end{align}
since $ \partial_\lambda J_0(\lambda|y-y_1|) \les \la y\ra \la y_1\ra $  and $\partial^2_\lambda J_0(\lambda|y-y_1|) \les \lambda^{-1} \la y\ra \la y_1\ra $ for $\lambda q \les 1$; and the support of $\chi^\prime(\lambda q) $ allows us to reduce the spatial weight. 
Hence, (\ref{b1}), (\ref{b2}), and (\ref{b3}) establishes the proof. 
                  \end{proof}                                  
The following Proposition will take care of the contributions of the following two integrals:
 \begin{align} \label{mssdss}
   \int_{\R^4} \int_0^{\infty} e^{it\lambda^2} \lambda \chi(\lambda) \mathfrak{R}_3(\lambda,p,q)v_1 SS_1S v_2 (x_1,y_1) d\lambda dx_1 dy_1,
       \end{align}
\begin{align} \label{ms}
      \int_{\R^4} \int_0^{\infty} e^{it\lambda^2} \lambda \chi(\lambda) \mathfrak{R}_3(\lambda,p,q)v_1 S v_2 (x_1,y_1) d\lambda dx_1 dy_1,
       \end{align}
   where 
 $$ \mathfrak{R}_3(\lambda,p,q):=  \frac{\mathfrak{R}_0^+(\mu+\lambda^2)\mathfrak{R}_0^+(\mu+\lambda^2)}{ h_+(\lambda)} -  \frac {\mathfrak{R}_0^{-}(\mu+\lambda^2) \mathfrak{R}_0^{-}(\mu+\lambda^2)}{ h_{-}(\lambda)}. $$            

\begin{prop} [Proposition 5.5 in \cite{EGm}] \label{prop:s} Let $0<\alpha< 1/4$. If $|a(x)|+|b(x)| \les \la x \ra ^{-3/2-\alpha-}$ then for any absolutely bounded operator $\Gamma$ we have
\begin{multline*} 
               \int_{\R^4} \int_0^{\infty} e^{it\lambda^2} \mathfrak{R}_3(\lambda,p,q)v_1 \Gamma v_2 (x_1,y_1) d\lambda dx_1 dy_1 \\          
     = -\frac {1}{4 \|a^2+b^2\|_1} \int_{\R^4} M_{11} v_1 \Gamma v_2 dx_1 dy_1 
        + O\Big( \f{\sqrt{w(x)w(y)}  }{t\log^2(t)} \Big)+O\Big( \f{\la x\ra^{\f12+\alpha+} \la y\ra^{\f12+\alpha+}}{t^{1+\alpha} } \Big). 
         \end{multline*}                              
                    \end{prop}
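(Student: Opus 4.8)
The plan is to exploit the block structure of the matrix free resolvent, which decouples the singular channel from the regular one. Writing $\mathfrak{R}_0(\mu+\lambda^2)=R_0(\lambda^2)M_{11}+R_2(\lambda^2)M_{22}$ and using the elementary orthogonality $M_{11}M_{22}=M_{22}M_{11}=0$, the product $\mathfrak{R}_0^\pm\,\mathfrak{R}_0^\pm$ retains only its diagonal blocks, so that
$$
\mathfrak{R}_3(\lambda,p,q)=\Big[\f{R_0^+(\lambda^2)(x,x_1)R_0^+(\lambda^2)(y_1,y)}{h_+(\lambda)}-\f{R_0^-(\lambda^2)(x,x_1)R_0^-(\lambda^2)(y_1,y)}{h_-(\lambda)}\Big]M_{11}+\Big[\f1{h_+(\lambda)}-\f1{h_-(\lambda)}\Big]R_2(\lambda^2)(x,x_1)R_2(\lambda^2)(y_1,y)M_{22}.
$$
The first bracket is exactly the scalar kernel $\mathcal R_3$ carrying the matrix factor $M_{11}$, while the second is a genuinely new but far more regular contribution. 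I would treat the two pieces separately.

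For the $M_{11}$ piece, I would observe that, apart from the harmless matrix factor $M_{11}$ and the replacement of $\|V\|_1$ by $\|a^2+b^2\|_1$ in the definition of $h_\pm$, the $\lambda$-integral is identical to the one analyzed in the scalar analogue proved in Section~\ref{sec:scalar}. Invoking that proposition verbatim, the $M_{11}$ piece produces the stated main term $-\f1{4\|a^2+b^2\|_1 t}\int_{\R^4}M_{11}v_1\Gamma v_2$ together with errors of size $\f{\sqrt{w(x)w(y)}}{t\log^2 t}$ and $\f{\la x\ra^{1/2+\alpha+}\la y\ra^{1/2+\alpha+}}{t^{1+\alpha}}$. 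The only thing to verify is that the sign change $\mathfrak g^\pm\mapsto-\|a^2+b^2\|_1\mathfrak g^\pm$ preserves the structural form $h_\pm(\lambda)=\tilde a_1\log\lambda+\tilde a_2\mp i\|a^2+b^2\|_1/4$ with $\tilde a_1>0$ that the scalar argument relies on, which is immediate.

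For the $M_{22}$ piece, the key point is that the mass gap $\mu>0$ makes $R_2(\lambda^2)$, the kernel of $-\f i4 H_0^+(i\sqrt{2\mu+\lambda^2}\,|\cdot|)$, regular at $\lambda=0$: by \eqref{r2 bound} one has $|R_2(\lambda^2)(x,y)|\les k(x,y)$ with all $\lambda$-derivatives bounded and no singularity. Moreover, from $h_\pm(\lambda)=\tilde a_1\log\lambda+\tilde a_2\mp i\|a^2+b^2\|_1/4$ one computes $\f1{h_+}-\f1{h_-}=\f{i\|a^2+b^2\|_1/2}{(\tilde a_1\log\lambda+\tilde a_2)^2+(\|a^2+b^2\|_1/4)^2}=\widetilde O(\log^{-2}\lambda)$, with $\partial_\lambda\big(\f1{h_+}-\f1{h_-}\big)=\widetilde O\big(\lambda^{-1}\log^{-3}\lambda\big)$. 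Setting $\mathcal E(\lambda)=\chi(\lambda)\big(\f1{h_+}-\f1{h_-}\big)R_2(\lambda^2)(x,x_1)R_2(\lambda^2)(y_1,y)$, we have $\mathcal E(0)=0$ since $1/h_\pm(\lambda)\to0$ as $\lambda\to0$, so the $\f{i\mathcal E(0)}{2t}$ term in Lemma~\ref{lem:ibp} vanishes and no additional $1/t$ main term is produced. The remaining terms of Lemma~\ref{lem:ibp} are controlled using $\int_0^{t^{-1/2}}\f{d\lambda}{\lambda\log^3\lambda}\sim\f1{\log^2 t}$, which is precisely the mechanism generating the $\f1{t\log^2 t}$ rate; the accumulated spatial factors $k(x,x_1)$ and $k(y_1,y)$, once integrated against $v_1\Gamma v_2$, collapse to $\sqrt{w(x)w(y)}=\log(2+|x|)\log(2+|y|)$. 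Thus the $M_{22}$ piece contributes only to the first error term.

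I expect the main obstacle to be the $M_{22}$ analysis, and specifically the balancing act inside Lemma~\ref{lem:ibp}: one must track the competition between the logarithmic smallness of $\f1{h_+}-\f1{h_-}$ and the logarithmic blow-up produced by each $\lambda$-derivative, so as to land exactly on $\f1{t\log^2 t}$ rather than a weaker rate, while confirming that the resulting $k$-weights integrate to the advertised $\sqrt{w(x)w(y)}$. The block reduction of the first paragraph and the transfer of the scalar proposition to the $M_{11}$ piece are, by contrast, routine once the orthogonality $M_{11}M_{22}=0$ and the regularity \eqref{r2 bound} of $R_2$ are in hand.
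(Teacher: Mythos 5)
Your opening reduction is where the argument breaks down, and the problem is a misreading of the notation rather than a fixable slip. In $\mathfrak{R}_3$ the two factors $\mathfrak{R}_0^\pm$ are never multiplied against each other: as the expansion \eqref{matrixRV} (from which \eqref{mssdss} and \eqref{ms} arise) makes clear, the matrix kernel $v_1\Gamma v_2$ sits \emph{between} them, so the object to estimate is the kernel of $h_\pm^{-1}\,\mathfrak{R}_0^\pm\, v_1\Gamma v_2\,\mathfrak{R}_0^\pm$. Writing $\mathfrak{R}_0^\pm=R_0^\pm M_{11}+R_2M_{22}$ therefore produces \emph{four} pieces, flanked by $M_{11}\cdots M_{11}$, $M_{11}\cdots M_{22}$, $M_{22}\cdots M_{11}$ and $M_{22}\cdots M_{22}$, and the two cross pieces do not vanish: $M_{11}[v_1\Gamma v_2]M_{22}$ is nonzero in general, because the identity $M_{11}M_{22}=0$ never comes into play; the only cancellation available, \eqref{m0}, concerns $S_1$ and does not apply to a general absolutely bounded $\Gamma$ such as $S$ or $SS_1S$. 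This is exactly how the neighbouring propositions treat the parallel kernels $\mathfrak{R}_1$ and $\mathfrak{R}_2^+$: their cross terms $A_2,A_3$ and $B_2,B_3$ survive and are estimated separately in \eqref{op3}, \eqref{op4}, \eqref{b2}, \eqref{b3}. (The paper itself gives no proof of the present proposition --- it cites Proposition 5.5 of \cite{EGm} --- but its own $A_i$/$B_i$ computations fix the intended reading beyond doubt.) As written, your proof silently discards half of the kernel.

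The omission is quantitative, not merely formal. A cross piece contains $\f{R_0^+(x,x_1)}{h_+}-\f{R_0^-(x,x_1)}{h_-}$ multiplied by the $\pm$-independent, harmless kernel $R_2(\lambda^2)(y_1,y)$ of \eqref{r2 bound}. Each summand equals $-\f{1}{\|a^2+b^2\|_1}+O(1/|\log\lambda|)$, so after the trivial cancellation of the constants the difference is naively only $O(1/|\log\lambda|)$ with derivative $O(\lambda^{-1}|\log\lambda|^{-2})$, and Lemma~\ref{lem:ibp} then yields $\f1t\int_0^{t^{-1/2}}\f{d\lambda}{\lambda|\log\lambda|^{2}}\sim\f{1}{t\log t}$, which falls short of the claimed $\f1{t\log^2 t}$. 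The stated rate requires the same cancellation you invoked for the $M_{22}\cdots M_{22}$ block, now extracted through Lemma~\ref{mR0 exp cor}: writing $R_0^\pm=\mathfrak{g}^\pm+G_0+E_0^\pm$ and $\mathfrak{g}^\pm=(c-h_\pm)/\|a^2+b^2\|_1$, one finds
\begin{align*}
\f{R_0^+(x,x_1)}{h_+(\lambda)}-\f{R_0^-(x,x_1)}{h_-(\lambda)}
=\Big(\f{c}{\|a^2+b^2\|_1}+G_0(x,x_1)\Big)\Big(\f{1}{h_+(\lambda)}-\f{1}{h_-(\lambda)}\Big)
+\f{E_0^+(\lambda)}{h_+(\lambda)}-\f{E_0^-(\lambda)}{h_-(\lambda)},
\end{align*}
where $\f1{h_+}-\f1{h_-}=O(|\log\lambda|^{-2})$ with derivative $O(\lambda^{-1}|\log\lambda|^{-3})$. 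Only after this does the cross piece land in the stated errors: the $G_0$ part gives the $\sqrt{w(x)w(y)}/(t\log^2t)$ term, the $E_0^\pm$ part has polynomial smallness in $\lambda$ and goes into the $t^{-1-\alpha}$ term via Lemma~\ref{lem:ibp2} and a H\"older bound as in \eqref{eq;differ}; and since this difference vanishes at $\lambda=0$, no extra $1/t$ term is created, consistent with $\mathfrak{F}_4,\mathfrak{F}_5$ carrying only $M_{11}\cdots M_{11}$. Your handling of the two diagonal blocks (transfer of the scalar proposition to the $M_{11}$ piece; the $\mathcal E(0)=0$ and $\lambda^{-1}|\log\lambda|^{-3}$ analysis of the $M_{22}$ piece) is sound, but without the cross pieces the proof is incomplete.
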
    
 \begin{corollary} Under the same conditions of Proposition~\ref{prop:s} we have
 \begin{align*}
   & |(\ref{mssdss})-{\f 1 t}\mathfrak{F}_4(x,y)| \les  O\Big( \f{\sqrt{w(x)w(y)}  }{t\log^2(t)} \Big)+O\Big( \f{\la x\ra^{\f12+\alpha+} \la y\ra^{\f12+\alpha+}}{t^{1+\alpha} } \Big), \\
    & |(\ref{ms})-{\f 1 t}\mathfrak{F}_5(x,y)| \les  O\Big( \f{\sqrt{w(x)w(y)}  }{t\log^2(t)} \Big)+O\Big( \f{\la x\ra^{\f12+\alpha+} \la y\ra^{\f12+\alpha+}}{t^{1+\alpha} } \Big),  
         \end{align*}
where 
\begin{align*}
    & \mathfrak{F}_4(x,y) = -\frac {1}{4 \|a^2+b^2\|_1} \int_{\R^4} M_{11} v_1(x_1) [SS_1S](x_1,y_1) v_2(y_1)M_{11} dx_1 dy_1, \\
      & \mathfrak{F}_5(x,y) =  -\frac {1}{4 \|a^2+b^2\|_1} \int_{\R^4} M_{11} v_1(x_1) S(x_1,y_1) v_2(y_1) M_{11}dx_1 dy_1 .    
          \end{align*}                                             
                                            \end{corollary}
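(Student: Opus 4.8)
The plan is to deduce this corollary directly from Proposition~\ref{prop:s}, exactly as Corollary~\ref{corol} was obtained from its scalar counterpart. Proposition~\ref{prop:s} evaluates $\int_{\R^4}\int_0^\infty e^{it\lambda^2}\lambda\chi(\lambda)\,\mathfrak{R}_3(\lambda,p,q)\,v_1\Gamma v_2(x_1,y_1)\,d\lambda\,dx_1\,dy_1$, up to the two stated error terms, for \emph{any} absolutely bounded operator $\Gamma$, extracting the main term $-\frac{1}{4\|a^2+b^2\|_1}\int_{\R^4}M_{11}v_1\Gamma v_2\,dx_1\,dy_1$ carrying the $\f1t$ decay. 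Thus the entire argument reduces to specializing $\Gamma=SS_1S$ for \eqref{mssdss} and $\Gamma=S$ for \eqref{ms}, and identifying the resulting main terms with $\mathfrak{F}_4$ and $\mathfrak{F}_5$.

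The only hypothesis that must be checked is that $SS_1S$ and $S$ are absolutely bounded on $L^2\times L^2$. I would argue this from finite rank: by \eqref{S_defn} the matrix $S$ is finite rank with real-valued kernel, and by the remark following Lemma~\ref{mphi} the projection $S_1$ has rank one, $S_1f=(\phi_1,\phi_2)\la\phi,f\ra$. Since a composition of finite-rank operators is finite rank, both $S$ and $SS_1S$ are finite rank, and finite-rank operators are absolutely bounded, as recorded earlier in the paper. This licenses the two applications of Proposition~\ref{prop:s}.

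Reading off the conclusions, $\Gamma=SS_1S$ produces the main term $\f1t\mathfrak{F}_4(x,y)$ and $\Gamma=S$ produces $\f1t\mathfrak{F}_5(x,y)$, in each case with the error being the sum of the $\sqrt{w(x)w(y)}/(t\log^2 t)$ and $\la x\ra^{\f12+\alpha+}\la y\ra^{\f12+\alpha+}/t^{1+\alpha}$ terms; these are precisely the two displayed estimates. The one place that needs care is the matrix bookkeeping: $\mathfrak{F}_4$ and $\mathfrak{F}_5$ carry an $M_{11}$ on \emph{both} sides, while Proposition~\ref{prop:s} is written with a single $M_{11}$. I would account for the second $M_{11}$ by tracking the structure of $\mathfrak{R}_3$: its singular factor $1/h_\pm(\lambda)$ multiplies a product of free resolvents, and the only logarithmically divergent block of $\mathfrak{R}_0(\mu+\lambda^2)=R_0(\lambda^2)M_{11}+R_2(\lambda^2)M_{22}$ is the upper-left $M_{11}$ block. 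The regular $M_{22}$ block contributes no $\f1t$ boundary term (its contribution is controlled as in the $A_4$ and $B_3$ estimates above), so the surviving leading term is projected onto $M_{11}$ on the $y$-side as well, matching $\mathfrak{F}_4$ and $\mathfrak{F}_5$.

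Since all of the stationary-phase and resolvent-expansion work is encapsulated in Proposition~\ref{prop:s}, there is no substantial analytic obstacle here; the corollary is a bookkeeping consequence. The only steps one should not skip are the verification of absolute boundedness of $SS_1S$ and $S$, and the correct placement of the $M_{11}$ factors so that the main terms coincide with $\mathfrak{F}_4$ and $\mathfrak{F}_5$.
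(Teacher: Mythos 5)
Your proposal is correct and takes the same route as the paper: the paper states this corollary with no further argument, as an immediate specialization of Proposition~\ref{prop:s} to $\Gamma=SS_1S$ and $\Gamma=S$, both of which are absolutely bounded since $S$ is finite rank and $S_1$ is rank one, exactly the checks you supply. Your bookkeeping for the second $M_{11}$ is also the right conclusion, though the precise reason is that the $1/h_{\pm}(\lambda)$ factors make the $M_{22}M_{22}$ and cross blocks of $\mathfrak{R}_3$ vanish like $1/\log^2\lambda$ at $\lambda=0$ (so they leave no boundary term), rather than an analogy with the $A_4$ and $B_3$ estimates, which by themselves do produce $\f1t$ terms.
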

 The contribution of $ E(\lambda)(x,y) $ can be handled as in Proposition 4.9 in \cite{EGw} and we can obtain the following proposition.
 \begin{prop}Let $0<\alpha< 1/4$. If $|a(x)|+|b(x)| \les \la x \ra ^{-3/2-\alpha-}$, then we have
 \begin{align*}
          \int_0^{\infty} e^{it\lambda^2} \lambda \chi(\lambda) \big[ \mathcal{R}_0^+(\mu+\lambda^2)v_1 E v_2 \mathcal{R}_0^+(\mu+\lambda^2) - \mathcal{R}_0^{-}(\mu+\lambda^2)v_1 E v_2 \mathcal{R}_0^{-}(\mu+\lambda^2) \big] (x,y) d\lambda  \\
    = O\big(\frac{\la x \ra ^{{\f12}+\alpha}  \la y \ra ^{{\f12}+\alpha}}{ t^{1+\alpha}}  \big).                                                                        
                        \end{align*} 
                        \end{prop}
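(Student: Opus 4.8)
The plan is to mirror the scalar argument of Proposition~\ref{error} (Proposition 4.9 in \cite{EGw}), taking advantage of the splitting $\mathfrak{R}_0(\mu+\lambda^2)=R_0(\lambda^2)M_{11}+R_2(\lambda^2)M_{22}$. The $M_{22}$ entries are controlled by the harmless bounds \eqref{r2 bound} (all $\lambda$-derivatives of $R_2$ are $O(1)$ because $\mu>0$ keeps that channel away from the threshold), while the $M_{11}$ entries carry exactly the scalar free-resolvent singularities already treated above. First I would set
$$
\mathcal E(\lambda)(x,y):=\chi(\lambda)\int_{\R^4}\big[\mathfrak{R}_0^+(\mu+\lambda^2)(x,x_1)\,v_1(x_1)E^+(\lambda)(x_1,y_1)v_2(y_1)\,\mathfrak{R}_0^+(\mu+\lambda^2)(y_1,y)-(+\to-)\big]\,dx_1dy_1,
$$
and record that the Hilbert--Schmidt bound $\big\|\sup_\lambda\lambda^{-1/2+}|E^\pm(\lambda)|\big\|_{HS}\les1$ forces $E^\pm(0)=0$, hence $\mathcal E(0)=0$ and Lemma~\ref{lem:ibp2} is applicable.

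The core task is to estimate $\partial_\lambda\mathcal E(\lambda)$ and the H\"older increment $|\partial_\lambda\mathcal E(b)-\partial_\lambda\mathcal E(\lambda)|$ with the weights $\la x\ra^{1/2+\alpha+}\la y\ra^{1/2+\alpha+}$. I would write the spatial integral as the Hilbert--Schmidt pairing $\la v_1\mathfrak{R}_0^\pm(x,\cdot),\,E^\pm(\lambda)\,v_2\mathfrak{R}_0^\pm(\cdot,y)\ra$ and apply Cauchy--Schwarz, which factorizes every estimate into the $L^2_{x_1}$-norm of $v_1(x_1)\mathfrak{R}_0^\pm(x,x_1)$, the $L^2_{y_1}$-norm of $v_2(y_1)\mathfrak{R}_0^\pm(y_1,y)$, and the HS norms of $E^\pm$, $\partial_\lambda E^\pm$ supplied after \eqref{matrixRV}. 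For the resolvent factors I would use Lemma~\ref{YJ} and the small-energy Bessel expansions on the $M_{11}$ channel together with \eqref{r2 bound} on the $M_{22}$ channel; since each $\lambda$-derivative landing on a resolvent produces one extra spatial half-power $|x-x_1|^{1/2}\les\la x\ra^{1/2}\la x_1\ra^{1/2}$, the crude second derivative generates the weight $\la\cdot\ra^{3/2}$.

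The decisive step, exactly as in the passage producing \eqref{eq;differ}, is to never differentiate the resolvents twice: whenever a second derivative would otherwise be needed I interpolate between the first- and second-derivative bounds, converting $|b-\lambda|$ into $|b-\lambda|^\alpha$ and lowering the spatial weight from $\la\cdot\ra^{3/2}$ to $\la\cdot\ra^{1/2+\alpha+}$; in the complementary terms the H\"older clause of the HS bound on $\partial_\lambda E^\pm$ furnishes the matching $(b-\lambda)^{-\alpha}$ when the increment is taken on $E$. The decay hypothesis $|a(x)|+|b(x)|\les\la x\ra^{-3/2-\alpha-}$ is precisely what guarantees $\big\|v_i(\cdot)\la\cdot\ra^{1/2+\alpha}k(x,\cdot)\big\|_{L^2_{x_1}}\les1$, so the residual weights $k(x,x_1)\la x_1\ra^{1/2+\alpha}$ integrate to constants. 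Substituting these bounds into Lemma~\ref{lem:ibp2} then delivers the stated $t^{-1-\alpha}$ decay.

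The principal obstacle is that $E(\lambda)$ is a genuine operator error with no rank-one or projection structure, so none of the cancellation devices exploited for the $S_1$-terms---the identity \eqref{m0}, nor the subtraction of cut-offs via Lemma~\ref{FG} and Lemma~\ref{cutoff}---is available; the only regularity one may lean on is the H\"older continuity of $\partial_\lambda E^\pm$ in Hilbert--Schmidt norm. The careful bookkeeping needed to keep every contribution---including the mixed terms where one derivative falls on a resolvent and the other on $E$---within the weight $\la x\ra^{1/2+\alpha}\la y\ra^{1/2+\alpha}$ is the delicate part of the argument.
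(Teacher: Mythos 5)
Your proposal is correct and follows essentially the same route as the paper, whose proof of this proposition is precisely an adaptation of Proposition 4.9 in \cite{EGw}: split $\mathfrak{R}_0(\mu+\lambda^2)$ into the singular scalar channel $R_0(\lambda^2)M_{11}$ and the regular channel $R_2(\lambda^2)M_{22}$ controlled by \eqref{r2 bound}, note $\mathcal E(0)=0$ so that Lemma~\ref{lem:ibp2} applies, and close by Cauchy--Schwarz against the Hilbert--Schmidt bounds on $E^\pm$, $\partial_\lambda E^\pm$ and the H\"older increment clause, using the interpolation device of \eqref{eq;differ} to keep the spatial weights at $\la x\ra^{\f12+\alpha}\la y\ra^{\f12+\alpha}$. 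No gaps to flag.
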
 
 We found the boundary terms $ \mathfrak{F}_i( x,y) $, $i=1,..,5$  that has $\f1{t}$ decay for every term appearing in the expansion \eqref{matrixRV}. Also we note that the contribution of free resolvent is calculated in \cite{EGm} as
\begin{align} \label{freematrix}
 \int_0^{\infty} e^{it\lambda^2}\lambda \chi(\lambda)[\mathcal{R}_0^{+}(\mu+\lambda^2)-\mathcal{R}_0^{-}(\mu+\lambda^2)] (x,y)d\lambda= -{\f 1 {4t}} M_{11}+O\Big(\frac{\la x \ra ^{\f 32} \la y \ra ^{\f 32}}{t^{\f 54}}\Big).
     \end{align}   

Considering this and the expansion \eqref{matrixRV} we see that the assertion of Theorem~\ref{t22} is satisfied for 
$$ \mathfrak{F}(x,y) = \mathfrak{F}_0(x,y)+\f1{\|a^2+b^2\|_{L_1(\R^2)}c_0^2} \sum_{i=1}^4 \mathfrak{F}_i(x,y)-\mathfrak{F}_5(x,y). $$ 
 
 The following proposition concludes the explicit representation of $ \mathfrak{F}(x,y)$ in Theorem~\ref{t22}.

\begin{prop} Under the conditions of Theorem ~\ref{main2} we have
$$ \mathfrak{F}(x,y) = -\f 1{4 c_0^2}
\psi(x)\sigma_3 \psi(y) $$ where $(\mathcal{H}_0-\mu I)\psi = 0 $ in the sense of distribution and $ \psi \in L^{\infty}(\R^2) \times L^{\infty}(\R^2).$
      \end{prop}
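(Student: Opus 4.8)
The plan is to mimic the scalar computation of Proposition~\ref{psipsi}, carrying along the extra matrix structure, and to combine the explicit boundary terms already computed for each piece of the expansion \eqref{matrixRV}. Substituting into
\[
\mathfrak{F}=\mathfrak{F}_0+\tfrac{1}{\|a^2+b^2\|_1 c_0^2}\sum_{i=1}^4\mathfrak{F}_i-\mathfrak{F}_5,
\]
I would first record that $S_1$ is the rank one projection with kernel $S_1(x_1,y_1)=\phi(x_1)\phi(y_1)^T$, so every $\mathfrak{F}_i$ factors into a left integral in $x_1$ times a right integral in $y_1$. Using the symmetry $\mathcal{G}_0(x,y)=\mathcal{G}_0(y,x)$, Lemma~\ref{mphi} then identifies the two basic blocks
\[
\int_{\R^2}\mathcal{G}_0(x,x_1)v_1(x_1)\phi(x_1)\,dx_1=(c_0,0)^T-\psi_1(x),\qquad \int_{\R^2}\phi(y_1)^Tv_2(y_1)\mathcal{G}_0(y,y_1)\,dy_1=-(c_0,0)-\psi_2(y).
\]

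Next I would establish the matrix analogues of the scalar cancellations $Pv=v$, $Qv=0$. From $v=\begin{pmatrix}a&b\\b&a\end{pmatrix}$, $v_1=-\sigma_3 v$, $v_2=v$ one computes that $M_{11}v_1$ sends every vector into a multiple of $(1,0)^T$ and that the range of $v_2M_{11}$ is exactly $\mathrm{span}\,(a,b)^T=\mathrm{Range}\,P$; hence $M_{11}v_1P=M_{11}v_1$, $M_{11}v_1Q=0$ and $Pv_2M_{11}=v_2M_{11}$, $Qv_2M_{11}=0$. Feeding these into the block form \eqref{S_defn} of $S$, together with $QD_0QS_1=S_1$ and $PS_1=S_1P=0$, reduces the sandwiched operators to $M_{11}v_1SS_1=-M_{11}v_1TS_1$, $S_1Sv_2M_{11}=-S_1Tv_2M_{11}$, $M_{11}v_1SS_1Sv_2M_{11}=M_{11}v_1TS_1Tv_2M_{11}$, and $M_{11}v_1Sv_2M_{11}=M_{11}v_1Pv_2M_{11}$. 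Since $\mathcal{G}_0$ and $\sigma_3$ are both diagonal they commute, so $T=I-v\mathcal{G}_0\sigma_3 v$ is self-adjoint; combined with $\langle T\phi,(a,b)\rangle=\|a^2+b^2\|_1c_0$ this turns each surviving $T$-pairing into the scalar $\|a^2+b^2\|_1c_0$ and the $P$-pairing into $\|a^2+b^2\|_1$.

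With these reductions each $\mathfrak{F}_i$ becomes an explicit rank one matrix. Writing $u=(c_0,0)^T$, I expect $\mathfrak{F}_1=\tfrac{\|a^2+b^2\|_1}{4}(u-\psi_1(x))(-u^T-\psi_2(y)^T)$, $\mathfrak{F}_2=\tfrac{\|a^2+b^2\|_1}{4}(u-\psi_1(x))\,u^T$, $\mathfrak{F}_3=\tfrac{\|a^2+b^2\|_1}{4}\,u\,(u^T+\psi_2(y)^T)$, $\mathfrak{F}_4=\tfrac{\|a^2+b^2\|_1 c_0^2}{4}M_{11}$, $\mathfrak{F}_5=\tfrac14 M_{11}$, while the free term gives $\mathfrak{F}_0=-\tfrac14 M_{11}$. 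Summing $\mathfrak{F}_1+\cdots+\mathfrak{F}_4$ the mixed terms $\psi_1(x)u^T$ and $u\psi_2(y)^T$ telescope away exactly as in the scalar case, leaving $\tfrac{\|a^2+b^2\|_1}{4}\bigl(2\,uu^T+\psi_1(x)\psi_2(y)^T\bigr)$; after dividing by $\|a^2+b^2\|_1c_0^2$ and adding $\mathfrak{F}_0-\mathfrak{F}_5$ every $M_{11}=uu^T/c_0^2$ contribution cancels. The relation $\sigma_3\psi_1=\psi_2$ from the remark following Lemma~\ref{mphi} then rewrites the surviving $\psi_1(x)\psi_2(y)^T=\psi(x)\psi(y)^T\sigma_3$, which is the asserted $-\tfrac{1}{4c_0^2}\psi(x)\sigma_3\psi(y)$.

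The main obstacle is bookkeeping rather than analysis: one must verify the projection identities $M_{11}v_1P=M_{11}v_1$ and $Pv_2M_{11}=v_2M_{11}$ inside the Feshbach $P\oplus Q$ decomposition, confirm that $T$ is genuinely self-adjoint so that $\langle\phi,T(a,b)\rangle=\langle T\phi,(a,b)\rangle=\|a^2+b^2\|_1c_0$, and track the placements of $M_{11}$, the transposes, and $\sigma_3$ (note the asymmetry between the left block $(c_0,0)^T-\psi_1$ and the right block $-(c_0,0)-\psi_2$ in Lemma~\ref{mphi}) precisely enough that the $M_{11}$ pieces cancel and $\sigma_3$ lands in the correct slot. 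Once the projection identities are in place, the cancellation of the $c_0$ cross-terms is automatic and formally identical to the scalar telescoping in Proposition~\ref{psipsi}.
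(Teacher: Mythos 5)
Your route is the same as the paper's: factor $S_1=\phi\phi^T$, evaluate the two $\mathcal{G}_0$-blocks with Lemma~\ref{G0matrix}, use the projection identities $M_{11}v_1Q=0$, $Qv_2M_{11}=0$, $M_{11}v_1P=M_{11}v_1$, $Pv_2M_{11}=v_2M_{11}$ to collapse the block form \eqref{S_defn} of $S$, and telescope. Your reductions $M_{11}v_1SS_1=-M_{11}v_1TS_1$, $S_1Sv_2M_{11}=-S_1Tv_2M_{11}$, $M_{11}v_1SS_1Sv_2M_{11}=M_{11}v_1TS_1Tv_2M_{11}$, $M_{11}v_1Sv_2M_{11}=M_{11}v_1Pv_2M_{11}$ are all correct, the rank-one forms you list for $\mathfrak{F}_1,\dots,\mathfrak{F}_5$ are exactly what the stated definitions give, and the telescoping with cancellation of every $M_{11}$-term against $\mathfrak{F}_0-\mathfrak{F}_5$ checks out. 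In two places you are in fact more careful than the paper's own write-up: you make explicit that the symmetry of $T$ is what turns the $\phi^T T$-pairings into $\la T\phi,(a,b)^T\ra=\|a^2+b^2\|_1c_0$, and your $\mathfrak{F}_3$, $\mathfrak{F}_4$ carry the minus sign in $M_{11}v_1=-M_{11}v$, which the paper's display ``$M_{11}v_1[T\phi]=[(a,b)\cdot T\phi](1,0)^T$'' drops (and which is needed for the mixed terms to cancel).

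The gap is your last sentence. Your sum is $\mathfrak{F}=+\frac{1}{4c_0^2}\psi_1(x)\psi_2(y)^T$; invoking the remark $\sigma_3\psi_1=\psi_2$ turns this into $+\frac{1}{4c_0^2}\psi(x)\psi(y)^T\sigma_3$, and you then declare it to be ``the asserted $-\frac{1}{4c_0^2}\psi(x)\sigma_3\psi(y)$.'' Those two expressions differ by an overall sign, and nothing in your argument produces that sign: as written, the proof terminates at the negative of what it claims. The repair is that the remark you quoted is itself mis-signed. From the formulas of Lemma~\ref{G0matrix}, using $\sigma_3 v_1=\sigma_3(-\sigma_3 v)=-v_2$, that $\sigma_3$ commutes with the diagonal kernel $\mathcal{G}_0$, and that the constant vectors $(c_0,0)$ enter $\psi_1$ and $\psi_2$ with \emph{opposite} signs (the very asymmetry you flagged in your closing paragraph but did not exploit), one gets
\begin{equation*}
\sigma_3\psi_1=\int_{\R^2}\mathcal{G}_0 v_2\phi\,dy+(c_0,0)^T=-\psi_2,
\end{equation*}
not $+\psi_2$. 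Substituting $\psi_2=-\sigma_3\psi_1$ into your sum gives $\mathfrak{F}=-\frac{1}{4c_0^2}\psi_1(x)\psi_1(y)^T\sigma_3$, i.e.\ precisely the proposition with $\psi=\psi_1$ (your $\sigma_3$ sits on the $y$-side while the paper's conclusion puts it on the $x$-side; this is a transpose and immaterial to the stated claim). So your proof closes after a one-line correction of the sign in $\sigma_3\psi_1=\pm\psi_2$; note that the paper's own proof reaches the stated form only through compensating sign slips in \eqref{m1} and \eqref{m3}, which is why a literal comparison of intermediate formulas shows mismatches even though your computation is the internally consistent one.
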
 
\begin{proof} By definition of $S_1$, it has the kernel $ S_1= \phi^T(x) \phi (y)$. Using this in the operator obtained as $\mathfrak{F}_1$ we have 
\begin{multline} 
\label{m1}
                         \mathfrak{F}_1( x,y) = - \frac{ \|a^2+b^2\|_1} {4} \int _{\R^2} \mathcal{G}_0(x, x_1) [v_1 \phi](x_1) dx_1 \int _{\R^2}[\phi v_2](y_1) \mathcal{G}_0 (y_1,y)dy_1 
 \\
    =  \frac{ \|a^2+b^2 \|_1  } { 4} \big( (-c_0,0)-\psi_2(x) \big) \big( \psi_1(y) - (c_0,0) \big). 
                      \end{multline} 
For the second equality we used Lemma~ \ref{G0matrix}.

For $\mathfrak{F}_2(x,y)$ recall the equality $Qv_2M_{11} = 0$. With a similar calculation in scalar case we conclude that  $S_1Sv_2M_{11} = -S_1Tv_2M_{11} $. Then using the definition of $S_1$ and Lemma~\ref{G0matrix}, we obtain                   
\begin{multline} 
\label{m2}
                          \mathfrak{F}_2( x,y) =-\dfrac{1}{4}\int_{\R^4}  \mathcal{G}_0(x,x_1)  v_1(x_1)[S_1S](x_1,y_1)v_2(y_1)M_{11} dx_1 dy_1\\
                             =  \f  1 {4} \int _{\R^4} \mathcal{G}_0(x,x_1) v_1(x_1)\phi (x_1) [\phi T](y_1) v_2 (y_1)M_{11} dx_1 dy_1\\= 
 \f  1 {4}  \int _{\R^2}   [\mathcal{G}_0 v_1\phi](x, x_1) dx_1 \ \la (a,b) , T\phi \ra (1,0)\\
  =  \frac{\|a^2+b^2\|_1}{4} (c_0,0)\big( (c_0,0) - \psi_1 (x)  \big). 
                                            \end{multline} 
For the third equality note that
$$  [T\phi] v_2 M_{11} = T\phi  \left[\begin{array}{cc} a & 0 \\ b & 0 \end{array} \right]=[(a,b) \cdot T\phi] (1,0). $$
Consequently, using
        $$ M_{11} v_1 [T\phi] = T\phi  \left[\begin{array}{cc} a & 0 \\ b & 0 \end{array} \right]=[(a,b) \cdot T\phi] (1,0)^T 
        $$
 we have                                           
 \begin{align}
 \label{m3}
                      \mathfrak{F}_3( x,y) =  \frac{ \|a^2+b^2\|_1  } { 4} \big( (c_0,0)+\psi_2(y) \big) (c_0,0).
                                           \end{align}
Again as in the scalar case using the definition of $S$, $S_1$, and the equality \eqref{m0} we have $M_{11}v_1SS_1Sv_2M_{11} = M_{11}v_1TS_1Tv_2M_{11} $, then using the definition of $S_1$ we have   
  \begin{multline}
 \label{m4}
                          \mathfrak{F}_4( x,y) =  {\f 1 {4 \|a^2+b^2\|_1}}  \int _{\R^2} M_{11}v_1(x_1)[T\phi](x_1) [\phi T] (y_1) v_2(y_1)M_{11} dx_1 dy_1\\
                          = \frac{  \|a^2+b^2\|_1} {4}[ \la (a,b) , T\phi \ra (1,0) \cdot \la (a,b) , T\phi \ra (1,0) ]
                           = \frac{  \|a^2+b^2\|_1} {4} c_0^2 .  
                           \end{multline}

Finally we have $M_{11}v_1Sv_2M_{11} = M_{11}v_1 v_2M_{11} $. Using this,  $\mathfrak{F}_4( x,y)$ can be written explicitly as \begin{multline} \label{m5} 
                          \mathfrak{F}_5( x,y) = - {\f 1 {4 \|a^2+b^2\|_1}}  \int _{\R^2} M_{11}v_1 v_2M_{11} dx_1 dy_1
                           = {\f 1 {4 \|a^2+b^2\|_1}} \|a^2+b^2\|_1 = {\f 1 4} M_{11}.
                                      \end{multline}                             

Multiplying \eqref{m1}, \eqref{m2}, \eqref{m3}, \eqref{m4}, \eqref{m5} with required constants and summing up together with the boundary term \eqref{freematrix} from the free resolvent for matrix Schr\"dinger operator we obtain
$$  \sum_{i=0}^5 \mathfrak{F}_i(x,y) =  - \frac{ 1 } {4 c_0^2} \psi_2(x) \psi_1(y)= - \frac{ 1 } {4 c_0^2} \sigma_3 \psi_1(x)  \psi_1(y).  $$ 
\end{proof}                              
                                                                                                     
                                                                                                                                                                        \subsection{Proof of the Theorem \ref{t21}}  \hspace{3mm}\\                                            
The ${\f 1 t }$  bound for the free resolvent, for a similar error term to $E$, and for the term $h_{\pm}(\lambda)^{-1}S$ were examined in \cite{EGm} in Proposition~5.4, Proposition~7.5, and Proposition~7.2 respectively. Since the proof of Proposition~7.2 requires the operator $S$ only to be absolutely bounded it can be extended to the term $h_{\pm}(\lambda)^{-1}SS_1S$.    

For the operators $QD_0Q$, $SS_1$, and $S_1S$ recall the expansion:
\begin{multline*}
 \mathfrak{R}^+_0(\lambda^2)(x,x_1) \mathfrak{R}^+_0(\lambda^2)(y_1,y)-\mathfrak{R}^{-}_0(\lambda^2)(x,x_1) \mathfrak{R}^{-}_0(\lambda^2)(y_1,y)\\
             = B_1(\lambda,p,q)+B_2(\lambda,p,q)+B_3(\lambda,p,q).  
                          \end{multline*}
 The ${\f 1 t }$  bound for a similar kernel to $ B_1(\lambda,p,q) $ is established in Proposition~3.11 in \cite{EG} for the operator $QD_0Q$, $SS_1$, and $S_1S$. Furthermore, Proposition~7.2 in \cite{EGw} shows that $ B_2(\lambda,p,q)$ and $B_3(\lambda,p,q)$ can be also estimated by ${\f 1 t }$ for the operator $QD_0Q$. Since the proof of Proposition~7.2  requires the operator $QD_0Q$ only to be absolutely bounded it can be adopted to $SS_1$ and $S_1S$. 

 Hence, it is enough to establish the ${\f 1 t }$ bound for the operator  $h_{\pm}(\lambda) S_1$. The following Proposition will conclude Theorem \ref{t21} 
\begin{prop} If $|a(x| +|b(x)|\les \la x \ra ^{-1-} $ then we have,                                            
     $$   \int_{\R^4} \int_0^\infty e^{it\lambda^2} \lambda \chi(\lambda) \mathfrak{R}^+_1(\lambda,p,q) [v_1S_1 v_2] (x_1,y_1) d\lambda dx_1 dy_1 = O \Big({\f1t }\Big). $$ 
        \end{prop}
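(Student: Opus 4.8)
The plan is to reuse the decomposition $\mathfrak R_1^+=A_1+A_2+A_3+A_4$ from the weighted Proposition above, but to extract only the unweighted $O(1/t)$ rate, uniformly in $x,y$, rather than isolating a boundary term and gaining the extra $t^{-\alpha}$. Since $S_1$ is a rank-one projection, $[v_1S_1v_2](x_1,y_1)$ factors as $[v_1\phi](x_1)\otimes[\phi v_2](y_1)$, so after carrying out the $\lambda$--integral it suffices to produce a pointwise bound of the form $t^{-1}\,\Xi(x,x_1)\,\Xi'(y,y_1)$ and then apply Cauchy--Schwarz together with $\|S_1\|_{L^2\times L^2\to L^2\times L^2}\les1$; the spatial decay of $v_1,v_2$ enters only through the finiteness, uniform in $x,y$, of $\|v_1\,\Xi(x,\cdot)\|_{L^2}$ and $\|v_2\,\Xi'(y,\cdot)\|_{L^2}$.

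As in Theorem~\ref{t22}, the only genuine difficulty is the low--low regime $\lambda p,\lambda q\les1$; the high-energy pieces I would treat by replacing the Bessel factors with their large-argument form \eqref{largr} and using Lemma~\ref{YJ} together with \eqref{r2 bound}, the extra oscillation supplying the decay exactly as in \cite{EGm,EGw}. In the low--low regime I would use the cancellation \eqref{m0} on the $M_{11}$-block of each resolvent to replace $Y_0(\lambda p),J_0(\lambda p)$ by the differences $F(\lambda,x,x_1),G(\lambda,x,x_1)$ of Lemma~\ref{FG} (and symmetrically in $y$), the $M_{22}$-block factors being the regular kernels $R_2$ controlled by \eqref{r2 bound}. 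The decisive feature, absent from the weighted computation, is that $|F|\les k(x,x_1)$ and $|G|\les\lambda^{1/2}\la x_1\ra^{1/2}$ carry no growth in the outer variable, and that Lemma~\ref{FG} furnishes the integrated bounds $\int_0^{\lambda_1}|\partial_\lambda F|\,d\lambda\les k(x,x_1)$ and $\int_0^{\lambda_1}|\partial_\lambda G|\,d\lambda\les\la x_1\ra^{1/2}$. A single integration by parts, $\lambda e^{it\lambda^2}=\f1{2it}\partial_\lambda e^{it\lambda^2}$, then reduces each $\lambda$-integral to $t^{-1}|\mathcal E(0)|+t^{-1}\int_0^\infty|\mathcal E'(\lambda)|\,d\lambda$. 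For the $Y_0Y_0$ part of $A_1$ this produces $\mathcal E(0)=F(0,x,x_1)F(0,y,y_1)$, whose contribution is precisely $t^{-1}\mathfrak F_1(x,y)$; by \eqref{m1} and Lemma~\ref{G0matrix} this equals a multiple of $\big((-c_0,0)-\psi_2(x)\big)\big(\psi_1(y)-(c_0,0)\big)$ with $\psi_1,\psi_2\in L^\infty$, hence is uniformly bounded and may simply be retained inside $O(1/t)$. The $\log\lambda\,Y_0J_0$ parts have $\mathcal E(0)=0$, since $G$ vanishes like $\lambda^{1/2}$ and absorbs the logarithm; for these I would use Lemma~\ref{lem:ibp2}.

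The mixed terms $A_2,A_3$ combine one singular $R_0$-kernel (in the $M_{11}$-block, amenable to \eqref{m0}) with a bounded $R_2$-kernel and follow the same scheme, the standalone factor $h^\pm$ or $[h^+-h^-]$ producing an $M_{22}$-block boundary term $\sim H_0^+(i\sqrt{2\mu}\,\cdot)$ that is uniformly bounded by \eqref{r2 bound}; the purely regular term $A_4$ is handled directly by Lemma~\ref{lem:ibp}, since all of its $\lambda$-derivatives are bounded.

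I expect the main obstacle to be obtaining all of this uniformly in $x,y$, with no spatial weight on the right. This is what forces the two economies absent from the weighted proof: integrating by parts only once, so that only first $\lambda$-derivatives of $F,G$ occur and cost at most the weight $\la x_1\ra^{1/2}$ rather than the $\la x_1\ra$ that $\partial_\lambda^2 G$ would entail; and using the integrated derivative bounds of Lemma~\ref{FG} in place of the pointwise $1/\lambda$ bound, which is not integrable at the origin. Finally, the boundedness in $(x,y)$ of the retained $t^{-1}$ boundary terms is not automatic: it is exactly here that the resonance structure $\psi\in L^\infty$ from Lemma~\ref{G0matrix} is indispensable, as it cancels the logarithmic growth carried by the individual $\mathcal{G}_0$ kernels.
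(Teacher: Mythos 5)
Your treatment of the low--low regime and of $A_4$ is sound and is essentially the paper's own argument: use the cancellation \eqref{m0} to pass to the differences $F,G$ of Lemma~\ref{FG}, integrate by parts once, control $\int_0^{2\lambda_1}|\partial_\lambda F|\,d\lambda$ by the integrated bounds of Lemma~\ref{FG} rather than the non-integrable pointwise bound $1/\lambda$, and note that the surviving $t^{-1}$ boundary terms carry only $k(x,x_1)k(y,y_1)$-type weights, hence are uniform in $x,y$ after Cauchy--Schwarz against $v_1S_1v_2$. Two quibbles there: the boundedness of those boundary terms already follows from $|F(0,x,x_1)|\les k(x,x_1)$, so the resonance structure $\psi_i\in L^\infty$ of Lemma~\ref{G0matrix} is a convenient reformulation of this cancellation rather than an ``indispensable'' extra input; and the $Y_0Y_0$ boundary term is only the $M_{11}$-block piece of $\mathfrak F_1$, the other blocks coming from $A_2,A_3,A_4$. (The paper handles $A_1$ by citing Theorem~3.1 of \cite{EG} together with \eqref{m0}; your direct low-energy computation is a legitimate substitute for the low-energy half of that citation.)

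The genuine gap is the high-energy regime, which is exactly where the paper has to work (Case~2 of its lemma for $A_2,A_3$, and the high-energy part of Theorem~3.1 of \cite{EG} for $A_1$). Your plan there --- replace the Bessel factors by \eqref{largr} and ``use Lemma~\ref{YJ} together with \eqref{r2 bound}, the extra oscillation supplying the decay'' --- is internally inconsistent: Lemma~\ref{YJ} consists of absolute-value bounds, e.g. $|\partial_\lambda\widetilde J_0(\lambda p)|\les\lambda^{-1/2}p^{1/2}$ with $p^{1/2}\les\la x\ra^{1/2}\la x_1\ra^{1/2}$, so the moment you invoke it you have discarded the phase $e^{\pm i\lambda p}$ and are forced to pay the outer weight $\la x\ra^{1/2}$. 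That is precisely why these bounds appear in the weighted Theorem~\ref{t22} (and in \cite{EGw}, which is therefore the wrong reference here), and why they cannot produce a bound uniform in $x,y$. For the unweighted $O(1/t)$ estimate one must keep the oscillation: apply \eqref{m0} once more, now in the regime $\lambda p\gtrsim1$, to form the difference $\widetilde J_0(\lambda p)-\widetilde J_0(\lambda(1+|x|))$, expand both terms via \eqref{largr}, pull out the slower oscillation $e^{\pm i\lambda r}$ with $r=\min(|x-x_1|,1+|x|)$, and estimate the resulting integral with phase $t(\lambda^2\pm\lambda r t^{-1})$ --- whose stationary point $\lambda\sim r/t$ can lie inside the support of the integrand --- uniformly in $s,r$. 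This is the content of Lemma~3.8 and Theorem~3.1 in \cite{EG}, adapted to the matrix setting in \cite{EGm}, and it is the step your proposal neither reproduces nor correctly cites; without it the high-energy pieces of $A_1,A_2,A_3$ (each containing an $R_0$ factor) are left with unbounded weights in the outer variables.
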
 
Recall the calculation;
 $$\mathfrak{R}_1(\lambda,p,q)= A_1(\lambda,p,q)+ A_2(\lambda,p,q)+ A_3(\lambda,p,q)+ A_4(\lambda,p,q). $$
             
Not that Theorem~3.1 in \cite{EG} establishes the $1/t$ bound for a similar operator to $ A_1(\lambda,p,q) $. Using \eqref{m0} one can adopt the same proof to $ A_1(\lambda,p,q) $.

 Using the bounds \eqref{r2 bound}, the contribution of $A_4(\lambda,p,q)= {\f i2} R_2(x,x_1)M_{22}M_{22} R_2(y_1,y) $ can be handled as
\begin{multline*} 
       \int_0^\infty e^{it\lambda^2} \lambda \chi(\lambda) {\f i2} R_2(\lambda^2)(x,x_1)R_2(\lambda^2)(y_1,y) d\lambda \\
       \les {\f 1t} \int_0^{2\lambda_1} \big| \partial_{\lambda}[R_2(\lambda^2)(x,x_1)R_2(\lambda^2)(y_1,y)]\big| d\lambda 
             \les k(x,x_1)k(y,y_1)  O \Big({\f1t }\Big).
      \end{multline*}
The assertion for $A_4(\lambda,p,q)$ follows with $\|v_1(x_1) (k(x,x_1)\|_{L^2_{x_1}} \les 1$.
   
To prove the contribution of the operators $A_2(\lambda,p,q)$ and $A_3(\lambda,p,q) $  we need the following Lemma. 
\begin{lemma} Under the same conditions of the previous proposition we have, 
$$    \int_{\R^4} \int_0^\infty e^{it\lambda^2} \lambda \chi(\lambda) A_2(\lambda,p,q) v_1(x_1) [S_1](x_1,y_1) v_2(y_1) d\lambda dx_1 dy_1 = O \Big({\f1t }\Big). $$ 
The same bound is valid for $A_3(\lambda,p,q) $.
\end{lemma}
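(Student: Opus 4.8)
The plan is to reduce the $A_2$-contribution to the scalar stationary-phase lemmas already in hand, using the matrix cancellation \eqref{m0} exactly where the scalar proof used \eqref{zero}. First I would invoke \eqref{52}, which writes the left factor as
\[
   h^{+}(\lambda)R_0^{+}(\lambda^2)(x,x_1)-h^{-}(\lambda)R_0^{-}(\lambda^2)(x,x_1)
   = C\,J_0(\lambda p)(\log\lambda+1)+\f{i\|a^2+b^2\|_1}{8}\,Y_0(\lambda p),
\]
with $p=|x-x_1|$, so that the $\lambda$-integral splits into a \emph{$J_0$-piece} and a \emph{$Y_0$-piece}, each multiplied on the right by $R_2(\lambda^2)(y_1,y)$. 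By \eqref{r2 bound} this massive resolvent and all its $\lambda$-derivatives are bounded by $k(y_1,y)$ uniformly in $\lambda$, so the entire $y$-dependence is harmless from the outset; only the singular left factor needs attention.

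Second, since the matrix factor $M_{11}$ stands to the left of $v_1S_1$, the identity $M_{11}v_1S_1=0$ of \eqref{m0} allows me to subtract from $J_0(\lambda p)\chi(\lambda p)$ and $Y_0(\lambda p)\chi(\lambda p)$ the corresponding $x_1$-independent expressions evaluated at $r=|x|+1$; that is, I replace them by $G(\lambda,x,x_1)$ and $F(\lambda,x,x_1)$ of Lemma~\ref{FG}. This is the step that tames the $\log\lambda$ singularity at the origin: in the $J_0$-piece the resulting amplitude $\mathcal E(\lambda)=\chi(\lambda)(\log\lambda+1)G(\lambda,x,x_1)R_2(\lambda^2)(y_1,y)$ satisfies $\mathcal E(0)=0$ because $G(0,x,x_1)=0$, while in the $Y_0$-piece $\mathcal E(0)=F(0,x,x_1)R_2(0)(y_1,y)$ is finite and bounded by $k(x,x_1)k(y_1,y)$.

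Third, I would feed each $\mathcal E$ into Lemma~\ref{lem:ibp} and check that all four of its terms are $\les k(x,x_1)k(y_1,y)/t$. The boundary term is immediate; the term $\f1t\int_0^{t^{-1/2}}|\mathcal E'|\,d\lambda$ is controlled because, although $|\partial_\lambda F|\les\lambda^{-1}$ only pointwise, Lemma~\ref{FG} supplies the integrated bound $\int_0^{2\lambda_1}|\partial_\lambda F|\,d\lambda\les k(x,x_1)$ (and $|\partial_\lambda G|\les\lambda^{-1/2}\la x_1\ra^{1/2}$ remains integrable even against the extra $\log\lambda$); the last two terms are handled by the $\lambda^{-2}$ second-derivative bounds together with the cutoff $\chi$ at $\lambda_1$. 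Integrating the resulting pointwise estimate against $v_1S_1v_2$ and using that $S_1$ is absolutely bounded while $\|k(x,\cdot)v_1\|_{L^2}+\|v_2\,k(\cdot,y)\|_{L^2}\les1$ uniformly in $x,y$ whenever $|a|+|b|\les\la x\ra^{-1-}$, I obtain the claimed $O(1/t)$ with no spatial weight. The regime $\lambda p\gtrsim1$ is faster-decaying and is treated with the large-energy expansion \eqref{largr} exactly as in the cases of Lemma~\ref{lem;asin}, and the estimate for $A_3$ follows by the mirror-image argument based on $S_1v_2M_{11}=0$.

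The step I expect to be the real obstacle is keeping the bound \emph{uniform in $x$ and $y$}: the naive large-argument expansions of $J_0(\lambda p),Y_0(\lambda p)$ produce growth like $\la x\ra^{1/2}$, which the unweighted $1/t$ statement cannot absorb. This is exactly why the cancellation \eqref{m0} must be used to pass to $F$ and $G$, whose bounds grow only in the \emph{internal} variables $x_1,y_1$ (and only logarithmically, through $k(x,x_1)$, in $x$), and why the non-integrable pointwise bound $|\partial_\lambda F|\les\lambda^{-1}$ has to be replaced by its integrated form from Lemma~\ref{FG} before Lemma~\ref{lem:ibp} is applied.
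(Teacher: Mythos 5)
Your handling of the low-energy regime $\lambda p\les 1$ is essentially the paper's own argument: the paper likewise starts from the decomposition \eqref{52}, uses the cancellation \eqref{m0} to trade $\chi(\lambda p)Y_0(\lambda p)$ and $\chi(\lambda p)J_0(\lambda p)$ for $F(\lambda,x,x_1)$ and $G(\lambda,x,x_1)$, and then bounds the resulting integrals \eqref{s1}, \eqref{s2} by $k(x,x_1)k(y,y_1)/t$ and $\la x_1\ra^{1/2}k(y,y_1)/t$ (the paper does this with a single integration by parts rather than Lemma~\ref{lem:ibp}, an immaterial difference, and your point about using the integrated bound $\int_0^{2\lambda_1}|F|+|\partial_\lambda F|\,d\lambda\les k(x,x_1)$ in place of the non-integrable pointwise bound is exactly right).

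The genuine gap is the regime $\lambda p\gtrsim 1$. You claim it is ``treated with the large-energy expansion \eqref{largr} exactly as in the cases of Lemma~\ref{lem;asin},'' but those arguments (Lemma~\ref{lem:ibp2} applied to $\widetilde{J}_0$, $\widetilde{Y}_0$) yield bounds of the form $\la x\ra^{\f12+\alpha}\la y\ra^{\f12+\alpha}t^{-1-\alpha}$, and the factor $\la x\ra^{\f12+\alpha}$ is precisely the growth that your own closing paragraph admits the unweighted $O(1/t)$ statement cannot absorb; so this regime is simply not covered by the weighted machinery you import. Nor does your proposed remedy work there: $F$ and $G$ are built from the \emph{low-energy} cutoff $\chi(\lambda p)$, so passing to them via \eqref{m0} says nothing about the piece $\widetilde{\chi}(\lambda p)J_0(\lambda p)$. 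What the paper actually does in this case is different in kind: it uses \eqref{m0} to form the difference $\widetilde{J}_0(\lambda p)-\widetilde{J}_0(\lambda(1+|x|))$ as in \eqref{int:large}, sets $s=\max(|x-x_1|,1+|x|)$, $r=\min(|x-x_1|,1+|x|)$, pulls the slower oscillation $e^{\pm i\lambda r}$ into the phase to arrive at \eqref{last} with amplitude $\widetilde{G}^{\pm}(\lambda,s,r)=\widetilde{\chi}(\lambda s)\omega_{\pm}(\lambda s)-e^{\pm i\lambda(s-r)}\widetilde{\chi}(\lambda r)\omega_{\pm}(\lambda r)$, and then invokes Lemma~3.8 of \cite{EG}, which bounds such oscillatory integrals by $1/t$ \emph{uniformly} in $s,r$. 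This difference-plus-slow-oscillation analysis is the missing idea, and it is not cosmetic: without the cancellation encoded in $\widetilde{G}^{\pm}$, the stationary point of the phase $t\lambda^2-\lambda p$ at $\lambda_0\sim p/t$ contributes, after stationary phase, an amount that carries either growth in $p\sim|x|$ or a logarithmic loss in $t$, so no rearrangement of the low-energy tools (Lemma~\ref{FG}, Lemma~\ref{lem:ibp}, \eqref{r2 bound}) can produce the uniform $O(1/t)$ bound claimed in the lemma.
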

\begin{proof}
We have to consider the large and the small energy contribution separately.\\
Case 1: $ \lambda|x-x_1| \les 1 $. Recall that 
$A_2(\lambda,p,q) = C J_0(\lambda p) (\log(\lambda)+1) R_2(\lambda^2)(y_1,y) + z Y_0(\lambda p) R_2(\lambda^2)(y_1,y)$ for some $C \in \R$ and $z \in \mathbb{C} $.
Taking this expansion and the projection property \eqref{m0} of $S_1$ into account  it is enough to consider the contribution of the following two integrals
\begin{align} \label{s1}
   \int_0^\infty e^{it\lambda^2} \lambda \chi(\lambda) F(\lambda,x,x_1) R_2(\lambda^2)(y_1,y) d\lambda, 
   \end{align}
\begin{align} \label{s2}
   \int_0^\infty e^{it\lambda^2} \lambda \chi(\lambda) \log(\lambda)G(\lambda,x,x_1) R_2(\lambda^2)(y_1,y) d\lambda.  
   \end{align} 
By Lemma \ref{FG}  and integration by part once, we have 
\begin{multline*}
\begin{split}
& | \ref{s1}| \les {\f1 t} \int_0^\infty e^{it\lambda^2} \chi^{\prime}(\lambda)  F(\lambda,x,x_1) R_2(\lambda^2)(y_1,y) d\lambda\\
                          &\hspace{10mm}+ {\f1 t} \int_0^{2\lambda_1}\Big|\partial_{\lambda} F(\lambda,x,x_1) R_2(\lambda^2)(y_1,y)\Big| d\lambda 
                        + {\f1 t} \int_0^{2\lambda_1} \Big|F(\lambda,x,x_1) \partial_{ \lambda} R_2(\lambda^2)(y_1,y)\Big| d\lambda\\
                        &\hspace{10mm}\les  {\f{k(y,y_1)} t} \int_0^{2\lambda_1}  |F(\lambda,x,x_1)|+\big|\partial_{\lambda} F(\lambda,x,x_1)\big| d\lambda \les \frac{ k(y,y_1)k(x,x_1)}{t}.
                         \end{split}
                          \end{multline*}
 With a similar argument  $\big| \ref{s2}\big| \les \frac{ \la x_1 \ra ^{1/2}k(y,y_1)}{t}$.
 
Case 2: $ \lambda|x-x_1| \gtrsim 1 $. For this case we give a sketch of the proof and refer Lemma~3.8 in \cite{EG} to the reader for details. 

Note that using \eqref{m0} the $\lambda$-integral of
  \begin{align*}
  \int_{\R^4} \int_0^{\infty} e^{it\lambda^2} \lambda \chi(\lambda) \log(\lambda)\widetilde{J}_0(\lambda p)M_{11} [v_1Sv_2] M_{22} (x_1,y_1) R_2(\lambda^2)(y_1,y) d\lambda dx_1dy_1
         \end{align*}
 can be written as 
\begin{align} \label{int:large}
 \int_0^{\infty} e^{it\lambda^2} \lambda \chi(\lambda) \log(\lambda)[\widetilde{J}_0(\lambda p)- \widetilde{J}_0(\lambda (1+|x|)] R_2(\lambda^2)(y_1,y) d\lambda. 
    \end{align}
 Let $s= \max(|x-x_1|, 1+|x|)$ and $r= \min(|x-x_1|, 1+ |x|)$. Using the large energy representation \eqref{largr} of Bessel functions and pulling the slower oscillation $e^{\pm i \lambda r}$ out, \eqref{int:large} can be rewritten as the sum of  
 \begin{align} \label{last} 
  \int_0^{\infty} e^{it  (\lambda^2 \pm \lambda r t^{-1}) } \lambda \chi(\lambda) \log(\lambda) \widetilde{G}_\pm(\lambda,s,r) R_2(\lambda^2)(y_1,y) d\lambda, 
   \end{align}  
  where  
  $$  \widetilde{G}^{\pm}(\lambda,s,r) := \widetilde{\chi}(\lambda s) \omega_{\pm}(\lambda s) - e^{{\pm}i\lambda (s-r)}\widetilde{\chi}(\lambda r) \omega_{\pm}(\lambda r).  $$  

 We finish the proof by recalling that in Lemma~3.8 of \cite{EG}, a similar integral to \eqref{last} is bounded by ${\f 1t}$. We note that the only difference between integral \eqref{last} and the one examined in \cite{EG} is that $ R_2(\lambda^2)(y_1,y) $ is replaced with $F(\lambda,x,x_1)$.
    
   \end{proof}

\end{document}